\newtheorem{theorem}{Theorem}[section]
\newtheorem{lemma}[theorem]{Lemma}
\newtheorem{corollary}[theorem]{Corollary}
\newtheorem{definition}[theorem]{Definition}
\newtheorem{proposition}[theorem]{Proposition}
\newtheorem{remark}[theorem]{Remark}
\begin{document}
\title[Lusztig sheaves and  integrable highest weight modules]
{Lusztig sheaves and  integrable highest weight modules}
\author[Fang,Lan,Xiao]{Jiepeng Fang,Yixin Lan,Jie Xiao}
\address{School of mathematical secience, Peking University, Beijing 100871, P. R. China}
\email{fangjp@math.pku.edu.cn (J.Fang)}

\address{Academy of Mathematics and Systems Science, Chinese Academy of Sciences, Beijing 100190, P.R.China}
 \email{lanyixin@amss.ac.cn (Y.Lan)}

\address{School of mathematical seciences, Beijing Normal University, Beijing 100875, P. R. China}
\email{jxiao@bnu.edu.cn (J.Xiao)}

\begin{abstract}
	We consider the localization $\mathcal{Q}_{\mathbf{V},\mathbf{W}}/\mathcal{N}_{\mathbf{V}}$ of Lusztig's sheaves for framed quivers, and define functors  $E^{(n)}_{i},F^{(n)}_{i},K^{\pm}_{i},n\in \mathbb{N},i \in I$ between the localizations. With these functors, the Grothendieck group of localizations realizes the irreducible integrable highest weight modules $L(\Lambda)$ of quantum groups. Moreover, the nonzero simple perverse sheaves in localizations form the canonical bases of $L(\Lambda)$. We also compare our realization (at $v \rightarrow 1$) with Nakajima's realization via quiver varieties and prove that  the transition matrix between canonical bases and fundamental classes is upper triangular with diagonal entries all equal to $\pm 1$.
\end{abstract}

\keywords{perverse sheaves, quantum groups, integrable highest weight modules,Nakajima quiver varieties}

\subjclass[2000]{16G20, 17B37}

\date{\today}

\bibliographystyle{abbrv}

\maketitle

\setcounter{tocdepth}{1}\tableofcontents

\section{Introduction}\label{sec:intro}
Given a symmetric Cartan datum $(I,(-,-))$, we can associate an acyclic quiver $Q$ and define its Kac-Moody Lie algebra $\mathfrak{g}$ and the quantized enveloping algebra (or quantum group) $\mathbf{U}=\mathbf{U}_{v}(\mathfrak{g})$. In \cite{MR1088333},\cite{MR1227098} and \cite{MR1653038}, G.Lusztig  considered the moduli space $\mathbf{E}_{\mathbf{V},\Omega}$ of quiver representations and introduced his category $\mathcal{Q}_{\mathbf{V}}$  of semisimple perverse sheaves (complexes).  Perverse sheaves in $\mathcal{Q}_{\mathbf{V}}$ are called Lusztig sheaves. By using Grothendieck's six operators, the induction functor  $\mathbf{Ind}^{\mathbf{V}}_{\mathbf{V}',\mathbf{V}''}$ and the restriction functor $\mathbf{Res}^{\mathbf{V}}_{\mathbf{T},\mathbf{W}}$ have been defined. Together with the induction and restriction functors, the Grothendieck group $\mathcal{K}$ of the category $\coprod \limits_{\mathbf{V}}\mathcal{Q}_{\mathbf{V}}$ becomes a bialgebra, which is canonically isomorphic to the integral form ${_{\mathcal{A}}\mathbf{U}^{+}}$ (or ${_{\mathcal{A}}\mathbf{U}^{-}}$) of the positive (or negative) part of the quantum group. (Here $\mathcal{A}=\mathbb{Z}[v,v^{-1}]$.) Moreover, the set $\mathcal{P}$ of simple Lusztig sheaves forms a basis of ${_{\mathcal{A}}\mathbf{U}^{+}}$,  which is called the canonical basis by Lusztig. The canonical basis has many remarkable properties, such as integral property and positive property.

Given a dominant weight $\Lambda$, one can define the irreducible highest weight module $L(\Lambda)$. Even though Lusztig didn't provide a categorification of $L(\Lambda)$, he constructed the canonical basis of $L(\Lambda)$. In fact, if we identify $\mathcal{K}$ with ${_{\mathcal{A}}\mathbf{U}}^{-}$ (viewed as the Verma module) and consider the canonical map $$\pi: {_{\mathcal{A}}\mathbf{U}}^{-} \rightarrow {_{\mathcal{A}}\mathbf{U}}^{-}/ \sum\limits_{i \in I } {_{\mathcal{A}}\mathbf{U}}^{-} f_{i}^{\langle \Lambda, \alpha_{i}^{\vee} \rangle +1} \cong {_{\mathcal{A}}L}(\Lambda) $$  
then $\{ \pi([L])\neq 0| L\in \mathcal{P} \}$ forms a basis of ${_{\mathcal{A}}L}(\Lambda)$. However, a categorical realization  of ${_{\mathcal{A}}L}(\Lambda)$ and its canonical basis is still expected.

H.Zheng took a breakthrough in his work \cite{MR3200442}. He categorified the irreducible integrable highest weight modules and their tensor products by using classes of micro-local perverse sheaves $\mathfrak{D}_{\overrightarrow{\omega}}$ on moduli stacks of framed quivers. Later, Y.Li in \cite{MR3177922}  pointed out that there is a vector bundle $\pi_{\mathbf{W}}$ which implies the relation between Zheng's construction and Lusztig's canonical basis.   \nocite{MR4379282}

Compared with Lusztig's theory, another approach to categorify the quantum group is to use the projective representations of quiver Hecke algebras \cite{MR2525917} ,\cite{MR2763732}, \cite{MR2837011}. S-J.Kang and M.Kashiwara considered the cyclotomic quiver Hecke algebras $R^{\Lambda}$ in \cite{MR2995184}, which is a quotient of quiver Hecke algebras. They defined functors $F^{\Lambda}_{i},E^{\Lambda}_{i},i \in I$ for the category of modules of cyclotomic quiver Hecke algebras, which satisfy the following relation
\begin{equation*}
	q_{i}^{-2}F^{\Lambda}_{i}E^{\Lambda}_{i} \oplus \bigoplus \limits_{k \geqslant 0}^{ \langle h_{i},\Lambda \rangle -1} q_{i}^{2k} Id \cong E^{\Lambda}_{i} F^{\Lambda}_{i},  \langle h_{i},\Lambda \rangle \geqslant 0,
\end{equation*}
\begin{equation*}
	q_{i}^{-2}F^{\Lambda}_{i}E^{\Lambda}_{i}  \cong E^{\Lambda}_{i} F^{\Lambda}_{i} \oplus \bigoplus \limits_{k \geqslant 0}^{ -\langle h_{i},\Lambda \rangle -1} q_{i}^{2k} Id,  \langle h_{i},\Lambda \rangle \leqslant 0.
\end{equation*}
Then the Grothendieck group of projective modules becomes a $\mathbf{U}$-module and there is an isomorphism $[Proj(R^{\Lambda})] \cong {_{\mathcal{A}}L(\Lambda)}.$ The key ingredient of their construction is the following exact sequence (See \cite[Theorem 4.7]{MR2995184} for details.) 
\begin{equation*}
	0 \rightarrow \bar{F}_{i}M \rightarrow F_{i}M \rightarrow F^{\Lambda}_{i}M \rightarrow 0,
\end{equation*}
which categorifies the following equation 
\begin{equation*}
	[e_{i}, P]= \frac{K^{-1}_{i}e'_{i}(P)-K_{i}e''_{i}(P) }{q_{i}^{-1}-q_{i}}.
\end{equation*}
This provides a successful model to categorify $L(\Lambda)$ \nocite{MR3084241}. 

Inspired by H. Zheng's work \nocite{zheng2007geometric} \cite{MR3200442} and with our puzzles about his proof, we go back to Lusztig's theory and give a self-contained categorical realization of $L(\Lambda)$ in the present paper. We define  a certain localization $\mathcal{L}_{\mathbf{V}}(\Lambda)=\mathcal{Q}_{\mathbf{V},\mathbf{W}}/\mathcal{N}_{\mathbf{V}}$ of Lusztig's category on the moduli space of framed quivers. Here, $\mathcal{N}_{\mathbf{V}}$ is a  thick subcategory such that objects in $\mathcal{Q}_{\mathbf{V},\mathbf{W}} \cap \mathcal{N}_{\mathbf{V}}$ correspond to elements in the left ideal $\sum\limits_{i \in I } \mathbf{U}^{-} f_{i}^{\langle \Lambda, \alpha_{i}^{\vee} \rangle +1}$, hence the Grothendieck group of $\mathcal{Q}_{\mathbf{V},\mathbf{W}}/\mathcal{N}_{\mathbf{V}}$ corresponds to the weight space of $L(\Lambda)$ via Lusztig's categorification of $\mathbf{U}^{-}$. 

We also need to define functors $E^{(n)}_{i},F^{(n)}_{i}$ and $K_{i}^{\pm}$, which realize the operators $E^{(n)}_{i},F^{(n)}_{i}$ and $K_{i}^{\pm}$ on $L(\Lambda)$ respectively. Recall that  ${_{\mathcal{A}}\mathbf{U}}^{-}/ \sum\limits_{i \in I } {_{\mathcal{A}}\mathbf{U}}^{-} f_{i}^{\langle \Lambda, \alpha_{i}^{\vee} \rangle +1}$ is naturally an ${_{\mathcal{A}}\mathbf{U}}^{-}$-module, hence to make ${_{\mathcal{A}}\mathbf{U}}^{-}/ \sum\limits_{i \in I } {_{\mathcal{A}}\mathbf{U}}^{-} f_{i}^{\langle \Lambda, \alpha_{i}^{\vee} \rangle +1}$ an ${_{\mathcal{A}}\mathbf{U}}$-module, the space $\sum\limits_{i \in I } {_{\mathcal{A}}\mathbf{U}}^{-} f_{i}^{\langle \Lambda, \alpha_{i}^{\vee} \rangle +1}$ needs to be an ${_{\mathcal{A}}\mathbf{U}}^{+}$-submodule, which can be proved by  some detailed calculation at algebraic level.  This problem is more difficult at categorical level. In order to overcome this difficulty, we study the commutative relations between some new functors $\mathcal{F}^{(n),\vee}_{j}$ and $\mathcal{E}_{i,r,n}$ in Lemma \ref{commute4} and generalize Lusztig's key lemma to non-semisimple objects in Lemma \ref{key0}. After that, we prove our Proposition \ref{keypro}, which tells us that the functor $E^{(n)}_{i}$ sends objects of $\mathcal{N}_{\mathbf{V}}$ to those of $\mathcal{N}_{\mathbf{V}'}$. In particular, $E^{(n)}_{i}$ is a well-defined functor on the localization. Our first main theorem is the following:
\begin{theorem}
	With the action of linear operators induced by functors $E^{(n)}_{i},F^{(n)}_{i},K^{\pm}_{i}$ for $n\in \mathbb{N},i \in I$, the Grothendieck group $\mathcal{K}_{0}(\Lambda)$ of $\coprod\limits_{\mathbf{V}}\mathcal{Q}_{\mathbf{V},\mathbf{W}}/\mathcal{N}_{\mathbf{V}}$  becomes a $_{\mathcal{A}}\mathbf{U}$-module, and there exists a canonical isomorphism of $_{\mathcal{A}}\mathbf{U}$-modules
	\begin{equation*}
		\varsigma^{\Lambda}:\mathcal{K}_{0}(\Lambda) \rightarrow {_{\mathcal{A}}L(\Lambda)}.
	\end{equation*}
	The morphism $\varsigma^{\Lambda}$ sends the image of constant sheaf $[\overline{\mathbb{Q}}_{l}]$ on
	$\mathbf{E}_{0,\mathbf{W},\hat{\Omega}}$
	to the highest weight vector $v_{\Lambda}$ in  ${_{\mathcal{A}}L(\Lambda)}$. The Euler form induces a contravariant bilinear form on $\mathcal{K}_{0}(\Lambda)$. Moreover, the set
	$\{\varsigma^{\Lambda}([L])|L$ is a simple perverse sheaf in $\mathcal{L}_{\mathbf{V}}(\Lambda)\}$ form a bar-invariant and almost orthogonal $\mathcal{A}$-basis of ${_{\mathcal{A}}L_{\mathbf{V}}(\Lambda)}$, which is exactly the canonical basis of $_{\mathcal{A}}L(\Lambda)$.
\end{theorem}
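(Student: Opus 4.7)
The plan is to lift Lusztig's categorification of $_{\mathcal{A}}\mathbf{U}^-$ to the framed setting and then descend through the quotient by $\mathcal{N}_{\mathbf{V}}$. First I would show that the framed induction functors $F^{(n)}_i$ make $\mathcal{K}_0(\coprod_{\mathbf{V}}\mathcal{Q}_{\mathbf{V},\mathbf{W}})$ into a cyclic $_{\mathcal{A}}\mathbf{U}^-$-module generated by the class of the constant sheaf on $\mathbf{E}_{0,\mathbf{W},\hat{\Omega}}$, and that it is free of rank one via the framed analogue of Lusztig's theorem. By the defining property of $\mathcal{N}_{\mathbf{V}}$, its image in $_{\mathcal{A}}\mathbf{U}^-$ under this identification is precisely the left ideal $\sum_i {_{\mathcal{A}}\mathbf{U}^-}f_i^{\langle\Lambda,\alpha_i^\vee\rangle+1}$, so passing to the quotient gives an abelian group isomorphism $\varsigma^\Lambda\colon \mathcal{K}_0(\Lambda)\to {_{\mathcal{A}}L(\Lambda)}$ sending $[\overline{\mathbb{Q}}_{l}]$ on $\mathbf{E}_{0,\mathbf{W},\hat{\Omega}}$ to $v_\Lambda$.

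Next I would upgrade this to a $_{\mathcal{A}}\mathbf{U}$-module isomorphism by checking the defining relations for $E^{(n)}_i, F^{(n)}_i, K^{\pm}_i$ on the localization. The $F^{(n)}_i$ relations (divided powers and $q$-Serre) are inherited from Lusztig's induction functor; $K^{\pm}_i$ acts diagonally with respect to the $(\mathbf{V},\mathbf{W})$-grading and its relations are immediate. The nontrivial relation is the categorical commutator
\begin{equation*}
E_i F_j - F_j E_i \cong \delta_{ij}\,\frac{K_i - K_i^{-1}}{v_i - v_i^{-1}},
\end{equation*}
which I would establish by analysing distinguished triangles obtained from base change across the fibered product of the correspondences defining $E_i$ and $F_j$ over $\mathbf{E}_{\mathbf{V},\mathbf{W},\hat{\Omega}}$. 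Lemma \ref{commute4} supplies the core commutation between $\mathcal{F}^{(n),\vee}_j$ and $\mathcal{E}_{i,r,n}$; Lemma \ref{key0} extends Lusztig's key lemma to non-semisimple objects as needed in the framed setting; and Proposition \ref{keypro} ensures that $E^{(n)}_i$ descends to the localization so that the two sides live in the same category. The $q$-Serre relations for the $E^{(n)}_i$ are handled by a parallel fibered-product argument.

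For the last assertions I would introduce the contravariant bilinear form on $\mathcal{K}_0(\Lambda)$ via $\sum_n (-1)^n \dim \mathrm{Hom}^n(L,L')$ in the derived category of the localization, showing it descends from the Euler form on framed Lusztig sheaves because $\mathcal{N}_{\mathbf{V}}$-objects are Hom-perpendicular to the simples surviving in the quotient. Bar-invariance of $\varsigma^\Lambda([L])$ for $L$ a simple perverse sheaf follows from Verdier self-duality combined with the compatibility between Verdier duality and the bar involution in Lusztig's framework. Almost orthogonality $(\varsigma^\Lambda([L]),\varsigma^\Lambda([L']))\in \delta_{[L],[L']}+v^{-1}\mathbb{Z}[[v^{-1}]]$ is a standard purity/Euler-form estimate. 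The identification with the canonical basis of $_{\mathcal{A}}L(\Lambda)$ then follows by unwinding the construction: simple perverse sheaves in $\mathcal{Q}_{\mathbf{V},\mathbf{W}}$ correspond to Lusztig's canonical basis of $_{\mathcal{A}}\mathbf{U}^-$, and $\varsigma^\Lambda$ realises Lusztig's projection $\pi$, so the nonzero $\varsigma^\Lambda([L])$ are precisely the canonical basis of $_{\mathcal{A}}L(\Lambda)$.

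The principal obstacle is the categorical $[E_i,F_j]$ relation. Algebraically it hinges on a delicate cancellation between two coweight contributions, so geometrically one must extract both the $K_i$ and the $-K_i^{-1}$ summands with their exact shifts and signs from a Mackey-type filtration on the fibered product, while controlling the extra strata introduced by the framing $\mathbf{W}$. What makes this feasible at all is Proposition \ref{keypro}: without it, $E^{(n)}_i$ is not even well-defined on the localization, and in fact the verification of \ref{keypro} is where the main combinatorial work hides. A second subtle point is establishing the bijection between nonzero simples in $\mathcal{L}_{\mathbf{V}}(\Lambda)$ and those canonical basis elements of $_{\mathcal{A}}\mathbf{U}^-$ whose image under $\pi$ is nonzero, i.e.\ showing that a simple perverse sheaf in $\mathcal{Q}_{\mathbf{V},\mathbf{W}}$ belongs to $\mathcal{N}_{\mathbf{V}}$ if and only if its Lusztig class lies in the Serre ideal; this is what transports bar-invariance and almost orthogonality to the canonical basis of $_{\mathcal{A}}L(\Lambda)$.
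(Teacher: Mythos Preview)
Your overall architecture matches the paper's, but you have conflated two distinct pieces of the argument. Lemma~\ref{commute4} is \emph{not} what establishes the commutator $[E_i,F_j]$: it concerns $\mathcal{F}^{(n),\vee}_j$, the \emph{right} induction $\mathbf{Ind}^{\mathbf{V}'\oplus\mathbf{W}}_{\mathbf{V}\oplus\mathbf{W},\mathbf{V}''}(-\boxtimes\overline{\mathbb{Q}}_l)$, which is a different functor from $F_j=\mathcal{F}^{(n)}_j$. Its sole purpose, together with Lemma~\ref{key0}, is to feed into Proposition~\ref{keypro} and show that $E^{(n)}_i$ preserves each $\mathcal{N}_{\mathbf{V},j}$. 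The commutator relation itself is Lemmas~\ref{lemma c1} and~\ref{lemma c2}: after passing through the principal bundle $\phi_{\mathbf{V},i}$, both $\mathcal{E}^{(n)}_i$ and $\mathcal{F}_j$ become explicit push--pull correspondences over $\dot{\mathbf{E}}_{\mathbf{V},\mathbf{W},i}\times\mathbf{Gr}$ or $\dot{\mathbf{E}}_{\mathbf{V},\mathbf{W},i}\times\mathbf{Fl}$, and the comparison reduces to computing $(r_a)_!\overline{\mathbb{Q}}_l$ for two fibrations $r_1,r_2$ over the same base whose fibers are different projective spaces. One obtains genuine direct-sum isomorphisms of functors, not merely triangles, because the complexes in play are semisimple; the ``Mackey-type filtration'' you describe is not how the paper extracts the $K_i$ and $K_i^{-1}$ terms.

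Two further points. First, the paper does not prove the $q$-Serre relations for $E_i$ by a parallel fibered-product argument: Proposition~\ref{relation3} observes that $\mathcal{F}^{(n)}_i$ is left adjoint to $\mathcal{E}^{(n)}_i[n(2\nu_i-\tilde{\nu}_i-n)]$ (since $q_1,q_2$ are proper and smooth), so the Serre relation for $E_i$ follows formally from that for $F_i$. Second, your plan omits the step that actually pins down $\mathcal{K}_0(\Lambda)$ as $L(\Lambda)$. Checking the defining relations shows only that $\mathcal{K}_0(\Lambda)$ is a highest-weight $_{\mathcal{A}}\mathbf{U}$-module; the paper then verifies \emph{integrability} directly---$(E_i)^{(N)}$ vanishes once $N>\nu_i$, and $(F_i)^{(N)}$ vanishes once $\nu_i+N>\sum_{h'=i}\nu_{h''}+d_i$ because then $\mathbf{E}^{\geqslant 1}_{\mathbf{V},\mathbf{W},i}=\mathbf{E}_{\mathbf{V},\mathbf{W},\hat{\Omega}^i}$ and the localization is zero---and invokes uniqueness of the irreducible integrable highest-weight module. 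Your route (first build the abelian-group isomorphism via the ideal description, then check relations) does not by itself make $\varsigma^\Lambda$ a $\mathbf{U}$-map: you would still need either this integrability argument or an induction showing the $E_i$-action is determined by the commutator relation on a cyclic module.

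Your treatment of the canonical basis is essentially what the paper does: it checks via the support condition that $(\pi_{\mathbf{W}})^*L$ lies in $\mathcal{N}_{\mathbf{V}}$ iff $L$ is supported on $\mathbf{E}^{\geqslant d_i+1}_{\mathbf{V},\Omega^i}$ for some $i$, i.e.\ iff $[L]\in\sum_i{_{\mathcal{A}}\mathbf{U}^-}f_i^{(d_i+1)}$, and then reads off the commutative square with Lusztig's projection $\tilde{\pi}$. Almost orthogonality is handled in a separate proposition using the perverse $t$-structure on the localization and the adjunction between $F_i$ and $E_iK_i^{-1}[1]$.
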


In particular, since the functor $F^{(n)}_{i}$ is defined by Lusztig's induction functor, the $\mathbf{U}^{-}$-module structure of $\mathcal{K}_{0}(\Lambda)$ is compatible with Lusztig's algebraic construction of $L(\Lambda)$. In particular, the vector bundle $\pi_{\mathbf{W}}$ allows us to compare our construction with Lusztig's construction of canonical basis directly, without relying on the uniqueness of the canonical basis.

Another progress is that we compare our functor $E_{i}$ with Lusztig's restriction functors. More precisely,  we  define category $\hat{\mathcal{Q}}_{\mathbf{V},\mathbf{W}}$ and functors $\hat{\mathcal{R}}^{\Lambda}_{i}$ and $ {_{i}\hat{\mathcal{R}}^{\Lambda}}$, which categorify linear operators $\frac{1  }{v-v^{-1}}\bar{r}_{i}$ and $\frac{ v^{(i,|\mathbf{V}'\oplus \mathbf{W}|) }}{v-v^{-1}} {_{i}\bar{r}}: {_{\mathcal{A}}\mathbf{U}^{-}  } \rightarrow \mathbf{U}^{-} \rightarrow L(\Lambda)$ respectively, then we obtain a split exact sequence as follows.
\begin{theorem}
	For any object $L$ of $\mathcal{Q}_{\mathbf{V},\mathbf{W}}$, there is a split exact sequence in $\hat{\mathcal{Q}}_{\mathbf{V}',\mathbf{W}}/ \mathcal{N}_{\mathbf{V}'}$ 
	\begin{equation*}
		0 \rightarrow {_{i}\hat{\mathcal{R}}^{\Lambda}}(L)  \rightarrow {\hat{\mathcal{R}}^{\Lambda}_{i}}(L)  \rightarrow E_{i}(L) \rightarrow 0.
	\end{equation*}
\end{theorem}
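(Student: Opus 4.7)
The plan is to read the stated sequence as a geometric incarnation of the quantum commutator
\[
e_i \cdot u \;=\; u \cdot e_i \;+\; \frac{K_i^{-1} e_i'(u) - K_i e_i''(u)}{v^{-1}-v},
\]
restricted to the action on $L(\Lambda)$, where $u \in \mathbf{U}^-$ acts on $v_\Lambda$ and $K_i$ is a scalar on each weight space. In our realization, $\hat{\mathcal{R}}^{\Lambda}_i$ and ${_{i}\hat{\mathcal{R}}^{\Lambda}}$ categorify (up to this scalar) the two summands in the numerator, while $E_i$ must categorify the total quotient. Hence in the Grothendieck group the identity $[E_i(L)] = [\hat{\mathcal{R}}^{\Lambda}_i(L)] - [{_{i}\hat{\mathcal{R}}^{\Lambda}}(L)]$ is forced by Theorem~1; the task is to lift this identity to a split exact sequence of sheaves.

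I would then exhibit the triangle geometrically. Any subrepresentation of dimension $\alpha_i$ inside a framed representation on $\mathbf{V}\oplus\mathbf{W}$ is a line in $\mathbf{V}_i$ annihilated by the outgoing arrows, \emph{including} the framing map at $i$. Inside the restriction flag variety there is a natural locally closed stratification: the closed stratum consists of lines on which the framing map vanishes (so the subrepresentation ``lives in $\mathbf{V}$''); the open complement consists of lines that map non-trivially into $\mathbf{W}$. Running Lusztig's restriction functor with this stratification and applying the open--closed recollement, one obtains a distinguished triangle
\[
{_{i}\hat{\mathcal{R}}^{\Lambda}}(L) \longrightarrow \hat{\mathcal{R}}^{\Lambda}_i(L) \longrightarrow E_i(L) \xrightarrow{+1}
\]
in $\hat{\mathcal{Q}}_{\mathbf{V}',\mathbf{W}}$, once the cohomological shifts are arranged according to the codimensions of the strata and the conventions of Lemma \ref{commute4}. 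The closed piece captures ${_{i}\hat{\mathcal{R}}^{\Lambda}}(L)$ and the open piece yields precisely $E_i(L)$.

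The genuine work lies in passing from a distinguished triangle to a \emph{split} short exact sequence in the quotient $\hat{\mathcal{Q}}_{\mathbf{V}',\mathbf{W}}/\mathcal{N}_{\mathbf{V}'}$. The strategy is to prove that the connecting map $E_i(L)\to {_{i}\hat{\mathcal{R}}^{\Lambda}}(L)[1]$ factors through an object of $\mathcal{N}_{\mathbf{V}'}$; equivalently, one produces an explicit splitting coming from the open embedding combined with the adjunction properties of Lusztig's restriction. For this I would invoke Lemma \ref{key0} (the non-semisimple extension of Lusztig's key lemma) to decompose the relevant complexes modulo $\mathcal{N}_{\mathbf{V}'}$, and Proposition \ref{keypro} to confirm that the outer terms continue to live in the expected subcategories after localization. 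This splitting step is the main obstacle: triangles in the bounded derived category do not split on the nose, so the argument must exploit that $\mathcal{N}_{\mathbf{V}'}$ is exactly the thick subcategory encoding the Serre relation $f_i^{\langle\Lambda,\alpha_i^\vee\rangle+1}=0$, forcing any potential obstruction class to vanish after passing to the localization.
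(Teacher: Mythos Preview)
Your approach diverges substantially from the paper's, and the core geometric mechanism you propose does not match the actual relationship between the two functors.

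The main gap is in how you relate ${_{i}\hat{\mathcal{R}}^{\Lambda}}$ and $\hat{\mathcal{R}}^{\Lambda}_i$. These are \emph{two different restriction functors} (one with $\mathbf{V}''$ as subobject, the other with $\mathbf{V}'\oplus\mathbf{W}$ as subobject), not two pieces of a single stratified pushforward. Your description of ``lines on which the framing map vanishes versus does not'' does not produce a recollement triangle with these two functors as terms: once you have chosen a stable line for the subobject functor, the framing is already forced to vanish on it, so there is no open stratum of the kind you describe. What the paper actually uses (Lemma~\ref{lemma29}) is Braden's hyperbolic localization: choosing $i$ to be a source makes $\iota_\Omega=\mathrm{id}$, so that $\hat{\mathcal{R}}^{\Lambda}_i$ becomes $\bigoplus_{n\ge0}(\kappa_\Omega)_*[\cdots]$ and ${_{i}\hat{\mathcal{R}}^{\Lambda}}$ becomes $\bigoplus_{n\ge0}(\kappa_\Omega)_![\cdots]$. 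The comparison is then between $(\kappa_\Omega)_!$ and $(\kappa_\Omega)_*$ of the same map, not an open--closed decomposition.

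The second gap is in identifying the cofiber with $E_i(L)$. You assert that ``the open piece yields precisely $E_i(L)$'', but give no mechanism. The paper does \emph{not} do this geometrically. Instead it computes both $(\kappa_\Omega)_!(L_{\boldsymbol{\nu}\boldsymbol{d}})$ and $(\kappa_\Omega)_*(L_{\boldsymbol{\nu}\boldsymbol{d}})$ explicitly as direct sums of shifted $L_{\boldsymbol{\omega}\boldsymbol{d}}$'s, compares the shifts term by term, and shows one is a direct summand of the other after killing terms that lie in $\mathcal{N}_{\mathbf{V}',i}$ (this is where the inequality $\langle|\mathbf{V}'\oplus\mathbf{W}|,i\rangle\le f_{\boldsymbol{\omega}}$ enters). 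The complement $C(L_{\boldsymbol{\nu}\boldsymbol{d}})$ is then identified with $E_i(L_{\boldsymbol{\nu}\boldsymbol{d}})$ \emph{purely algebraically}: one checks that the operator $[L_{\boldsymbol{\nu}\boldsymbol{d}}]\mapsto[C(L_{\boldsymbol{\nu}\boldsymbol{d}})]$ satisfies the same commutation relations with the $F_j$'s as $E_i$ does, and since both vanish on the highest weight vector they agree on all $L_{\boldsymbol{\nu}\boldsymbol{d}}$ by induction. Semisimplicity then upgrades the Grothendieck-group equality to an isomorphism of complexes, and one extends to arbitrary $L$ by writing $L$ as a direct complement inside a sum of $L_{\boldsymbol{\nu}\boldsymbol{d}}$'s (using Lemma~\ref{lkey'} and Proposition~\ref{lt'}).

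In particular, the splitting is not obtained by arguing that a connecting morphism factors through $\mathcal{N}_{\mathbf{V}'}$; it is obtained directly because on the generators everything is already a direct sum of shifted simples, so ``summand'' and ``split'' are automatic. Your invocation of Lemma~\ref{key0} and Proposition~\ref{keypro} is not what is needed here; those results control how $\mathcal{E}_i^{(n)}$ interacts with $\mathcal{N}_{\mathbf{V}}$, not the splitting of this particular sequence.
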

In fact, the theorem above is analogous to S.-J.Kang and M.Kashiwara's exact sequence in \cite[Theorem 4.7]{MR2995184}   and categorifies the equation $$E_{i}(x \cdot  v_{\Lambda} )=( v^{(i,|x|-i)- \langle \Lambda,\alpha^{\vee}_{i} \rangle}{_{i} \bar{r}}(x)\cdot  v_{\Lambda} - v^{\langle \Lambda,\alpha^{\vee}_{i} \rangle} \bar{r}_{i}(x)\cdot  v_{\Lambda} ) /(v^{-1}-v ).  $$

Recall that H.Nakajima provided a construction of irreducible highest weight $\mathfrak{g}$-modules (denoted by $L_{0}(\Lambda)$) via Borel-Moore cohomology groups of quiver varieties $\mathfrak{m}(\nu,\omega)$ and $\mathfrak{L}(\nu,\omega)$ in \cite{MR1302318} and \cite{MR1604167}. He defined operators $E_{i},F_{i},i \in I$ by using Hecke correspondences and proved that with operators $E_{i},F_{i}$, $\bigoplus \limits_{\nu}{\rm{H}}_{top}( \mathfrak{L}(\nu,\omega))$ becomes a $\mathbf{U}(\mathfrak{g})$-module, which is isomorphic to the irreducible integrable highest weight $\mathfrak{g}$-module $L(\Lambda)$. ( We denote this isomorphism by $\varkappa^{\Lambda}$.) Moreover, the fundamental classes of irreducible components of  $\mathfrak{L}(\nu,\omega)$ form a basis of $L_{0}(\Lambda)$. The last main result is the comparison between our sheaf realization (taking $v \rightarrow 1$) and Nakajima's construction via  quiver varieties. We provide a correspondence $\Phi^{\Lambda}$ between  the set of simple objects in localizations and the set of  irreducible components of  $\mathfrak{L}(\nu,\omega)$. Indeed, Lusztig's key lemma induces a left graph of the canonical basis of $L(\Lambda)$, which is isomorphic to the left graph of Nakajima's quiver varieties. As a corollary, these two different construction share the same monomial basis. After defining  orders $\preceq$ and $\preceq'$ on these bases, we can state our last main theorem:
\begin{theorem}
	The transition matrix from the canonical basis to the fundamental classes is upper triangular and with diagonal entries all equal to $\pm 1$.
	More precisely, if $X$ is an irreducible component of $\mathfrak{L}(\nu,\omega)$  and $[L]=\Phi^{\Lambda}(X)$, then we have 
	\begin{equation*}
		\varsigma^{\Lambda}([L])=\varkappa^{\Lambda}(sgn(X)[X]) +\sum\limits_{X \preceq X' } c_{X'}\varkappa^{\Lambda} (sgn(X')[X'])
	\end{equation*}
	where $c_{X'} \in \mathbb{Q}$ are constants.
\end{theorem}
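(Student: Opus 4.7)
The plan is to exploit the fact that both the canonical basis $\{\varsigma^{\Lambda}([L])\}$ and the (signed) fundamental-class basis $\{\varkappa^{\Lambda}(sgn(X)[X])\}$ admit triangular expansions with respect to a common monomial basis coming from applying sequences $F_{i_{1}}^{(n_{1})}\cdots F_{i_{r}}^{(n_{r})}$ to the highest weight vector $v_{\Lambda}$. Under the correspondence $\Phi^{\Lambda}$, a simple perverse sheaf $L$ and an irreducible component $X$ are both distinguished ``leading'' constituents of the same monomial, so showing each individual expansion is upper triangular (for $\preceq$, resp.\ $\preceq'$) with $\pm 1$ on the diagonal, together with a compatibility of the two orders under $\Phi^{\Lambda}$, will yield the claim directly.

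First I would unpack the correspondence $\Phi^{\Lambda}$: by construction, to each irreducible component $X$ of $\mathfrak{L}(\nu,\omega)$ one associates a Lusztig datum $(i_{1}^{(n_{1})},\ldots,i_{r}^{(n_{r})})$ such that $X$ is the unique open stratum (maximal in $\preceq'$) in the Nakajima convolution computing $F_{i_{1}}^{(n_{1})}\cdots F_{i_{r}}^{(n_{r})}v_{\Lambda}$, and the simple perverse sheaf $L=\Phi^{\Lambda}(X)$ is the unique summand with full support (maximal in $\preceq$) of $\mathcal{F}_{i_{1}}^{(n_{1})}\cdots \mathcal{F}_{i_{r}}^{(n_{r})}[\overline{\mathbb{Q}}_{l}]$ in $\mathcal{L}_{\mathbf{V}}(\Lambda)$. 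The fact that $\Phi^{\Lambda}$ is a bijection and identifies $\preceq$ with $\preceq'$ is exactly the ``isomorphism of left graphs'' asserted before the statement; this follows from Lusztig's key lemma combined with its non-semisimple generalization (Lemma~\ref{key0}) and the parallel Hecke-correspondence analysis on the quiver variety side.

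Next I would establish the two triangular expansions separately. On the sheaf side, applying Lusztig's decomposition theorem to $\mathcal{F}_{i_{1}}^{(n_{1})}\cdots\mathcal{F}_{i_{r}}^{(n_{r})}[\overline{\mathbb{Q}}_{l}]$ in $\mathcal{L}_{\mathbf{V}}(\Lambda)$ and then specializing $v\to 1$ expresses the corresponding monomial as $\varsigma^{\Lambda}([L])$ plus a $\mathbb{Z}$-linear combination of $\varsigma^{\Lambda}([L'])$ with $L\preceq L'$. On the quiver-variety side, Nakajima's analysis of the Hecke correspondence shows the same monomial, under $\varkappa^{\Lambda}$, equals $sgn(X)[X]$ plus a $\mathbb{Z}$-linear combination of signed fundamental classes of strictly larger components. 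Inverting the second (unitriangular) expansion and substituting into the first yields the asserted formula, with the rational coefficients $c_{X'}$ arising from the entries of the inverse matrix.

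The main obstacle will be aligning the signs $sgn(X)$ on the Nakajima side with the perverse-degree shifts on the Lusztig side in the specialization $v\to 1$. The shifts appearing in $\mathcal{F}_{i}^{(n)}$ and in the Hecke correspondence differ by contributions measuring the codimension of the ``diagonal'' locus, and one must verify that the parity of this codimension is exactly the sign needed to make the leading monomial coefficient equal to $+1$ on both sides. This amounts to tracking, cell by cell in the stratification induced by the Lusztig datum, that the Euler characteristic comparison between fibers of the resolution of $\mathfrak{m}(\nu,\omega)$ and the perverse cohomology of the iterated induction functor matches up to the correct sign. Once this signed leading-term identification is in place, the upper triangularity is formal from the compatibility of $\preceq$ and $\preceq'$ under $\Phi^{\Lambda}$.
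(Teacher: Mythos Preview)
Your overall strategy is exactly that of the paper: both the canonical basis and the (signed) fundamental-class basis are unitriangular with respect to a \emph{common} monomial basis of $L_0(\Lambda)$, and the composition of the two triangular matrices gives the result. The key compatibility is that $\underline{s}(\Phi^\Lambda(X))=\underline{s}'(X)$, so that $\varsigma^\Lambda(M^\Lambda_{\underline{s}([L])})$ and $\varkappa^\Lambda(m^\Lambda_{\underline{s}'(X)})$ are literally the same element $F_{i_{n_1}}^{(m_1)}\cdots F_{i_{n_k}}^{(m_k)}v_\Lambda$.

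Two corrections are worth making. First, your ``main obstacle'' is not an obstacle at all: the function $sgn(X)$ is \emph{defined} so that the leading coefficient of $[X]$ in $m^\Lambda_{\underline{s}'(X)}$ is $+1$ after multiplying by $sgn(X)$. There is no parity or Euler-characteristic computation to perform; the sign is absorbed into the basis $\{sgn(X)[X]\}$ by fiat, making both transition matrices to the monomial basis genuinely unitriangular. Second, your citation of Lemma~\ref{key0} is misplaced: that lemma is about $\mathcal{E}_i$ preserving the thick subcategory $\mathcal{N}_{\mathbf{V},j}$ and plays no role here. The isomorphism of left graphs (Theorem~\ref{thm2}) instead goes through the bijection $\Phi$ between simple perverse sheaves and irreducible components of Lusztig's nilpotent variety $\Lambda_{\mathbf{V}}$, together with the vector bundle $\pi_{\mathbf{W}}:\Lambda_{\mathbf{V},\mathbf{W}}\to\Lambda_{\mathbf{V}}$ identifying $Irr\,\mathfrak{L}(\nu,\omega)$ with a subset of $Irr\,\Lambda_{\mathbf{V}}$.
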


In section 2, we recall the construction of Lusztig's sheaves and  the  categorification theory of the positive part of quantum group. In section 3, we define the localization $\mathcal{Q}_{\mathbf{V},\mathbf{W}}/\mathcal{N}_{\mathbf{V}}$ and functors $E^{(n)}_{i},F^{(n)}_{i},K^{\pm}_{i}$ to categorify $L(\Lambda)$. We also compare the functor $E^{(n)}_{i}$ for $n=1$ with Lusztig's restriction functors, which categorify derivative operators, and prove the split exact sequence in Theorem 1.2. In section 4, we compare our construction at $v \rightarrow 1$ with Nakajima's construction and prove the last main theorem. 

Obviously, as what has been done in  \cite{MR1865400},  \cite{MR3077693} and \cite{MR3200442},  a realization of tensor products of integrable highest weight modules via Lusztig sheaves is expected. We have done this work in the preprint \cite{fang2023tensor}. 

\section{Lusztig sheaves and quantum groups}
In this section, we recall Lusztig's theory of semisimple perverse sheaves and refer \cite{MR1227098} for details.
\subsection{Induction functor and restriction functor}
Assume $(I,(-,-))$ is a given Cartan datum, let $\mathbf{\Gamma}$ be the finite graph without loops associated to $(I,(-,-))$, where $I$ is the set of vertices and $H$ is the set of pairs consisting of edges with an orientation. More precisely, to give an edge $h$ with an orientation is equivalent to give $h',h'' \in I$ and we adapt the notation $h' \xrightarrow{h} h''$. Let $-:h \mapsto \bar{h}$ be the involution of $H$ such that $\bar{h}'=h'',\bar{h}''=h'$ and $\bar{h} \neq h$. An orientation of the graph $\Gamma$ is a subset $\Omega \subset H$ such that $\Omega \cap \bar{\Omega} =\emptyset$ and $\Omega \cup \bar{\Omega} = H$. We denote the oriented graph $(I,H,\Omega)$ by $Q$.

Let $\mathbf{k}=\overline{\mathbb{F}}_q$ be the algebraic closure of the finite field $\mathbb{F}_q$. Given $\nu \in \mathbb{N}[I]$, a subset $\tilde{H} \subseteq H$ and an $I$-graded $\mathbf{k}$-vector space $\mathbf{V}$ of dimension vector $|\mathbf{V}|=\nu$,  we take
\begin{center}
	$\mathbf{E}_{\mathbf{V}}= \bigoplus\limits_{h \in H} \mathbf{Hom}(\mathbf{V}_{h'},\mathbf{V}_{h''})$, \\
	$\mathbf{E}_{\mathbf{V}, \tilde{H}}= \bigoplus\limits_{h \in \tilde{H}} \mathbf{Hom}(\mathbf{V}_{h'},\mathbf{V}_{h''}).$
\end{center} 
In particular, for an orientation $\Omega$, we have 
\begin{center}
	$\mathbf{E}_{\mathbf{V}, \Omega}= \bigoplus\limits_{h \in \Omega} \mathbf{Hom}(\mathbf{V}_{h'},\mathbf{V}_{h''}).$
\end{center}

The algebraic group $G_{\mathbf{V}}= \prod\limits_{i \in I} \mathbf{GL}(\mathbf{V}_{i})$ acts on $\mathbf{E}_{\mathbf{V}},\mathbf{E}_{\mathbf{V},\tilde{H}}$ and $\mathbf{E}_{\mathbf{V}, \Omega}$ by $$(g \cdot x)_{h} =g_{h''} x_{h} g_{h'}^{-1}. $$ 

Let $\mathcal{D}^{b}_{G_{\mathbf{V}}}(\mathbf{E}_{\mathbf{V},\Omega})$ be the  $G_{\mathbf{V}}$-equivariant derived category of constructible sheaves on $\mathbf{E}_{\mathbf{V},\Omega}$. For any $n \in \mathbb{Z}$, we denote the $n$-times shift functors by $[n]$. We say complexes $A$ and $B$ are isomorphic up to shifts, if  $A$ and $B[n]$ are isomorphic for some $n \in \mathbb{Z}$. 

Given $\nu'+\nu''=\nu \in \mathbb{N}[I] $ and graded vector spaces $\mathbf{V},\mathbf{V}',\mathbf{V}''$ of dimension vectors $\nu, \nu', \nu''$ respectively, let $\mathbf{E}'_{\Omega}$ be the variety consisting of $(x,\tilde{\mathbf{W}}, \rho_{1}, \rho_{2})$, where $x \in \mathbf{E}_{\mathbf{V},\Omega}$,$\tilde{\mathbf{W}}$ is an $I$-graded $x$-stable  subspace of $\mathbf{V}$ of dimension vector $\nu''$, $ \rho_{1}: \mathbf{V}/\tilde{\mathbf{W}} \simeq \mathbf{V}', \rho_{2}:\tilde{\mathbf{W}} \simeq \mathbf{V}''$ are linear isomorphisms. Here we say $\tilde{\mathbf{W}}$ is $x$-stable if and only if $x_{h}(\tilde{\mathbf{W}}_{h'}) \subset \tilde{\mathbf{W}}_{h''}$ for any $h \in \Omega$.  Let $\mathbf{E}''_{\Omega}$ be the variety consisting of $(x,\tilde{\mathbf{W}})$ as above.
Consider the following diagram
\begin{center}
	$\mathbf{E}_{\mathbf{V}',\Omega} \times \mathbf{E}_{\mathbf{V}'',\Omega} \xleftarrow{p_{1}} \mathbf{E}'_{\Omega} \xrightarrow{p_{2}} \mathbf{E}''_{\Omega} \xrightarrow{p_{3}} \mathbf{E}_{\mathbf{V},\Omega}$
\end{center}
where
\begin{equation}\label{Lind}
	\begin{split}
		&p_{1}(x,\tilde{\mathbf{W}},\rho_{1},\rho_{2})=(\rho_{1,\ast}(\bar{x}|_{\mathbf{V}/\tilde{\mathbf{W}}}),\rho_{2,\ast}(x|_{\tilde{\mathbf{W}}})),\\
		&p_{2}(x,\tilde{\mathbf{W}},\rho_{1},\rho_{2}) =(x, \tilde{\mathbf{W}}),\\
		&p_{3}(x,\tilde{\mathbf{W}})=x.
	\end{split}
\end{equation}
Here $\bar{x}|_{\mathbf{V}/\tilde{\mathbf{W}}}$ is the natural linear map induced by $x$ on the quotient space $\mathbf{V}/\tilde{\mathbf{W}}$ and $x|_{\tilde{\mathbf{W}}}$ is the restriction of $x$ on the subspace $\tilde{\mathbf{W}}$, then $ \rho_{1,\ast}(\bar{x}|_{\mathbf{V}/\tilde{\mathbf{W}}})= \rho_{1} (\bar{x}|_{\mathbf{V}/\tilde{\mathbf{W}}}) \rho_{1}^{-1}\in \mathbf{E}_{\mathbf{V}'}$ and $\rho_{2,\ast}(x|_{\tilde{\mathbf{W}}})=\rho_{2}(x|_{\tilde{\mathbf{W}}}) \rho_{2}^{-1}\in \mathbf{E}_{\mathbf{V}''}$. Notice that  $p_{1}$ is smooth with connected fibers, $p_{2}$ is a principle $G_{\mathbf{V}'} \times G_{\mathbf{V}''}$-bundle and $p_{3}$ is proper.  Let $d_{1}$ be the dimension of the fibers of $p_{1}$ and $d_{2}$ be the dimension of the fibers of $p_{2}$. 

Lusztig's induction functor is defined by
\begin{align*}
	&\mathbf{Ind}^{\mathbf{V}}_{\mathbf{V'},\mathbf{V''}}: \mathcal{D}^{b}_{G_{\mathbf{V}'}}(\mathbf{E}_{\mathbf{V}',\Omega}) \times \mathcal{D}^{b}_{G_{\mathbf{V}''}}(\mathbf{E}_{\mathbf{V}'',\Omega}) \rightarrow \mathcal{D}^{b}_{G_{\mathbf{V}}}(\mathbf{E}_{\mathbf{V},\Omega})\\
	&\mathbf{Ind}^{\mathbf{V}}_{\mathbf{V'},\mathbf{V''}}(A\boxtimes B)= (p_{3})_{!}(p_{2})_{\flat}(p_{1})^{\ast}(A\boxtimes B)[d_{1}-d_{2}].
\end{align*}
Here for principle $G$-bundle  $f:Y \rightarrow X$, the functor $f_{\flat}$ is defined to be the quasi-inverse of $f^{\ast}$, which gives an equivalence $\mathcal{D}^{b}(X) \cong \mathcal{D}^{b}_{G}(Y)$. See  \cite[Theorem 6.5.9]{Achar}.

We fix a decomposition $\mathbf{T} \oplus \mathbf{W} =\mathbf{V}$ of graded vector space such that $|\mathbf{T}|=\nu'$ and $|\mathbf{W}|=\nu''$, let $F_{\Omega}$ be the closed subvariety of $\mathbf{E}_{\mathbf{V},\Omega}$ consisting of $x$ such that $\mathbf{W}$ is $x$-stable. 
Consider the following diagram
\begin{center}
	$\mathbf{E}_{\mathbf{T},\Omega} \times \mathbf{E}_{\mathbf{W},\Omega} \xleftarrow{\kappa_{\Omega} } F_{\Omega} \xrightarrow{\iota_{\Omega}} \mathbf{E}_{\mathbf{V},\Omega}$
\end{center} 
where $\iota_{\Omega}$ is the natural embedding and $\kappa_{\Omega}(x)=(\overline{x}|_{\mathbf{T}},x|_{\mathbf{W}}) \in \mathbf{E}_{\mathbf{T},\Omega} \times \mathbf{E}_{\mathbf{W},\Omega} $ for any $x \in F_{\Omega}$. Notice that $\kappa_{\Omega}$ is a vector bundle. 

Lusztig's restriction functor is defined by
\begin{align*}
	&\mathbf{Res}^{\mathbf{V}}_{\mathbf{T},\mathbf{W}}: \mathcal{D}^{b}_{G_{\mathbf{V}}}(\mathbf{E}_{\mathbf{V},\Omega}) \rightarrow \mathcal{D}^{b}_{G_{\mathbf{T}} \times G_{\mathbf{W}}}(\mathbf{E}_{\mathbf{T},\Omega}\times \mathbf{E}_{\mathbf{W},\Omega})\\
	&\mathbf{Res}^{\mathbf{V}}_{\mathbf{T},\mathbf{W}}(C)=(\kappa_{\Omega})_{!} (\iota_{\Omega})^{\ast}(C)[-\langle\nu',\nu''\rangle],
\end{align*}
where $\langle \nu',\nu''\rangle=\sum_{i\in I}\nu'_i\nu''_i-\sum_{h\in \Omega}\nu'_{h'}\nu''_{h''}$ is the  Euler form associated to $Q$.

We denote by $\mathcal{S}_{|\mathbf{V}|}$ the set of sequences $\boldsymbol{\nu}=(\nu^{1},\nu^{2},\cdots, \nu^{m})$ such that each $\nu^{l}$ is of the form $(i_{l})^{a_{l}} \in \mathbb{N}[I]$ for some $i_l\in I,a_l\in \mathbb{N}$ and $\sum_{l=1}^m\nu^l=|\mathbf{V}|$. (Here we denote $a_{l}i_{l} \in \mathbb{N}[I]$ by $(i_{l})^{a_{l}}$ in order to be compatible with the multiplication of $\mathbf{U}$.) We call an element $\boldsymbol{\nu} \in \mathcal{S}_{|\mathbf{V}|}$ a flag type of $\mathbf{V}$.  For any flag type $\boldsymbol{\nu}=(\nu^{1},\nu^{2},\cdots, \nu^{m}) \in \mathcal{S}_{|\mathbf{V}|}$, the flag variety $\mathcal{F}_{\boldsymbol{\nu},\Omega}$ is the variety consisting of $(x,f)$, where $x \in \mathbf{E}_{\mathbf{V},\Omega}$ and $f=(0=\mathbf{V}^{m} \subseteq \mathbf{V}^{m-1} \subseteq \cdots \subseteq \mathbf{V}^{0}=\mathbf{V})$  is a flag of $\mathbf{V}$ such that  $x(\mathbf{V}^{k}) \subseteq \mathbf{V}^{k}$ and $|\mathbf{V}^{k-1}/\mathbf{V}^{k}|=\nu^{k}$ for every $1\leqslant k \leqslant m$. 

The flag variety $\mathcal{F}_{\boldsymbol{\nu},\Omega}$ is smooth and the natural projection map $\pi_{\boldsymbol{\nu},\Omega}:\mathcal{F}_{\boldsymbol{\nu},\Omega} \rightarrow \mathbf{E}_{\mathbf{V},\Omega}$ is proper. Then by the decomposition theorem in \cite{BBD}, the complex $L_{\boldsymbol{\nu}}= (\pi_{\boldsymbol{\nu},\Omega})_{!} \bar{\mathbb{Q}}_{l}[\dim \mathcal{F}_{\boldsymbol{\nu},\Omega}]$ is a semisimple complex on $\mathbf{E}_{\mathbf{V},\Omega}$, where $\bar{\mathbb{Q}}_{l}$ is the constant sheaf on $\mathcal{F}_{\boldsymbol{\nu},\Omega}$. 

Let $\mathcal{P}_{\mathbf{V},\Omega}$ be the full subcategory of $\mathcal{D}^{b}_{G_{\mathbf{V}}}(\mathbf{E}_{\mathbf{V},\Omega})$ consisting of simple perverse sheaves appearing as direct summands of some $L_{\boldsymbol{\nu}}, \boldsymbol{\nu}\in \mathcal{S}_{|\mathbf{V}|}$ up to $[n]$ shifts, and let $\mathcal{Q}_{\mathbf{V},\Omega}$ be the full subcategory of $\mathcal{D}^{b}_{G_{\mathbf{V}}}(\mathbf{E}_{\mathbf{V},\Omega})$ consisting of direct sums of  shifts of objects in $\mathcal{P}_{\mathbf{V},\Omega}$. Objects in $\mathcal{Q}_{\mathbf{V},\Omega}$ are called Lusztig sheaves in \cite{MR3202708}.

\begin{proposition}\cite[Lemma 3.2, Proposition 4.2]{MR1088333} \label{indres formula}
	For any $\nu'+\nu''=\nu$ and flag types $\boldsymbol{\nu}'\in \mathcal{S}_{|\mathbf{V}'|},\boldsymbol{\nu}''\in \mathcal{S}_{|\mathbf{V}''|}$,
	\begin{equation*}
		\mathbf{Ind}^{\mathbf{V}}_{\mathbf{V}',\mathbf{V}''}(L_{\boldsymbol{\nu}'} \boxtimes L_{\boldsymbol{\nu}''})= L_{\boldsymbol{\nu}' \boldsymbol{\nu}''},
	\end{equation*}
	where $\boldsymbol{\nu}' \boldsymbol{\nu}''=((i'_{1})^{a'_{1}} , \cdots ,(i'_{m})^{a'_{m}},(i''_{1})^{a''_{1}},\cdots,(i''_{n})^{a''_{n}} )$ is the flag type obtained by connecting $\boldsymbol{\nu}'=((i'_{1})^{a'_{1}} , \cdots ,(i'_{m})^{a'_{m}})$ and $\boldsymbol{\nu}''=((i''_{1})^{a''_{1}},\cdots,(i''_{n})^{a''_{n}} ).$
	
	For $ \boldsymbol{\nu}=(i_{1}^{a_{1}},i_{2}^{a_{2}},\cdots,i_{m}^{a_{m}} )\in \mathcal{S}_{|\mathbf{V}|}$,
	\begin{equation*}
		\mathbf{Res}^{\mathbf{V}}_{\mathbf{T},\mathbf{W}}( L_{\boldsymbol{\nu}}) =\bigoplus \limits_{\boldsymbol{\tau}+\boldsymbol{\omega}=\boldsymbol{\nu} } L_{\boldsymbol{\tau}} \boxtimes L_{\boldsymbol{\omega}}[M(\boldsymbol{\tau},\boldsymbol{\omega})],
	\end{equation*}
	here the notation $\boldsymbol{\tau}+\boldsymbol{\omega}=\boldsymbol{\nu}$ means that the flag types $$\boldsymbol{\tau}=(i_{1}^{b_{1}},i_{2}^{b_{2}},\cdots,i_{m}^{b_{m}} ) \in \mathcal{S}_{|\mathbf{T}|},~
	\boldsymbol{\omega}=(i_{1}^{c_{1}},i_{2}^{c_{2}},\cdots,i_{m}^{c_{m}} )\in \mathcal{S}_{|\mathbf{W}|}$$ satisfy $b_{k}+c_{k}=a_{k}$ for every $1 \leqslant k \leqslant m$, and the integer $	M(\boldsymbol{\tau},\boldsymbol{\omega})$ is given by the following formula
	\begin{equation*}
		\begin{split}
			M(\boldsymbol{\tau},\boldsymbol{\omega})=&-\sum\limits_{h \in H, l' < l}(\tau^{l'}_{h'}\omega^{l}_{h''}+\tau^{l'}_{h''}\omega^{l}_{h'})
			+\sum\limits_{h \in H}({\rm{dim} \mathbf{T}_{h'}}{\rm{dim} \mathbf{W}_{h''}}+{\rm{dim} \mathbf{T}_{h''}}{\rm{dim} \mathbf{W}_{h'}}) \\
			&-\sum\limits_{i \in I,l < l'} \tau_{i}^{l'}\omega_{i}^{l}+\sum\limits_{i \in I,l > l'} \tau_{i}^{l'}\omega_{i}^{l}-\sum\limits_{i \in I}{\rm{dim} \mathbf{T}_{i}}{\rm{dim} \mathbf{W}_{i}} .   
		\end{split}
	\end{equation*}
\end{proposition}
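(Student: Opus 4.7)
The plan is to prove both formulas by combining proper base change with explicit identifications of fiber products as flag varieties, and then to determine the homological shifts by a separate dimension count.

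\emph{Induction formula.} First I would form the fiber product of $p_{1}:\mathbf{E}'_{\Omega}\to\mathbf{E}_{\mathbf{V}',\Omega}\times\mathbf{E}_{\mathbf{V}'',\Omega}$ with the product of flag projections $\pi_{\boldsymbol{\nu}',\Omega}\times\pi_{\boldsymbol{\nu}'',\Omega}$. This fiber product parametrizes tuples $(x,\tilde{\mathbf{W}},\rho_{1},\rho_{2},f',f'')$ where $f''$ is an $x$-stable flag on $\tilde{\mathbf{W}}$ of type $\boldsymbol{\nu}''$ and $f'$ is an $\bar x$-stable flag on $\mathbf{V}/\tilde{\mathbf{W}}$ of type $\boldsymbol{\nu}'$. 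Descending along the principal $G_{\mathbf{V}'}\times G_{\mathbf{V}''}$-bundle $p_{2}$ removes the dependence on $\rho_{1},\rho_{2}$, and the remaining data becomes a flag on $\mathbf{V}$ of type $\boldsymbol{\nu}'\boldsymbol{\nu}''$ with a distinguished intermediate step $\tilde{\mathbf{W}}$. Forgetting $\tilde{\mathbf{W}}$ and pushing forward along $p_{3}$ is then the flag projection $\pi_{\boldsymbol{\nu}'\boldsymbol{\nu}'',\Omega}$, so smooth and proper base change deliver $L_{\boldsymbol{\nu}'\boldsymbol{\nu}''}$; a direct dimension count verifies that the shift $[d_{1}-d_{2}]$ combined with the intrinsic shifts of $L_{\boldsymbol{\nu}'}\boxtimes L_{\boldsymbol{\nu}''}$ reproduces $[\dim\mathcal{F}_{\boldsymbol{\nu}'\boldsymbol{\nu}'',\Omega}]$.

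\emph{Restriction formula.} I would apply proper base change to $\iota_{\Omega}^{*}(\pi_{\boldsymbol{\nu},\Omega})_{!}\bar{\mathbb{Q}}_{l}$, obtaining $(\kappa_{\Omega})_{!}$ applied to the constant sheaf on $Z:=\pi_{\boldsymbol{\nu},\Omega}^{-1}(F_{\Omega})$. I would then stratify $Z$ according to the sequence of intersections $\omega^{k}:=|\mathbf{V}^{k-1}\cap\mathbf{W}/\mathbf{V}^{k}\cap\mathbf{W}|$, with $\tau^{k}:=\nu^{k}-\omega^{k}$; each locally closed stratum $Z_{\boldsymbol{\tau},\boldsymbol{\omega}}$ fibers over $\mathcal{F}_{\boldsymbol{\tau},\Omega}\times\mathcal{F}_{\boldsymbol{\omega},\Omega}$ with affine-space fibers recording the off-diagonal blocks of $x$ relative to the splitting $\mathbf{V}^{k}=(\mathbf{V}^{k}\cap\mathbf{T})\oplus(\mathbf{V}^{k}\cap\mathbf{W})$. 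An induction on the number of strata combined with the long exact sequence attached to each open-closed decomposition separates the direct summands, and since each $L_{\boldsymbol{\tau}}\boxtimes L_{\boldsymbol{\omega}}$ is semisimple perverse the extensions split, yielding the claimed decomposition indexed by splittings $\boldsymbol{\tau}+\boldsymbol{\omega}=\boldsymbol{\nu}$.

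The most delicate step is the explicit computation of the shift $M(\boldsymbol{\tau},\boldsymbol{\omega})$. It assembles from three contributions: the global shift $-\langle\nu',\nu''\rangle$ built into $\mathbf{Res}$; the difference of normalizing shifts $\dim\mathcal{F}_{\boldsymbol{\nu},\Omega}-\dim\mathcal{F}_{\boldsymbol{\tau},\Omega}-\dim\mathcal{F}_{\boldsymbol{\omega},\Omega}$; and the relative dimension of the affine-bundle map $Z_{\boldsymbol{\tau},\boldsymbol{\omega}}\to\mathcal{F}_{\boldsymbol{\tau},\Omega}\times\mathcal{F}_{\boldsymbol{\omega},\Omega}$, computed as the rank of the space of compatible off-diagonal blocks. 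Expanding each of these as a sum over $h\in H$ and $i\in I$, and using $H=\Omega\sqcup\bar\Omega$ to symmetrize so that the orientation dependence cancels, rearranges exactly to the four summands in $M(\boldsymbol{\tau},\boldsymbol{\omega})$; I would sanity check the reorganization on the building blocks of a single arrow and a single vertex, where only one term on each line survives, and then extend by additivity.
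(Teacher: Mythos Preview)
The paper does not supply its own proof of this proposition; it is quoted directly from \cite[Lemma 3.2, Proposition 4.2]{MR1088333}, and your outline follows that standard argument faithfully (fiber product identification for induction; stratification of $\pi_{\boldsymbol{\nu},\Omega}^{-1}(F_{\Omega})$ by intersection type, with each stratum an affine bundle over $\mathcal{F}_{\boldsymbol{\tau},\Omega}\times\mathcal{F}_{\boldsymbol{\omega},\Omega}$, for restriction).

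One point deserves tightening. You write that the long exact sequences split ``since each $L_{\boldsymbol{\tau}}\boxtimes L_{\boldsymbol{\omega}}$ is semisimple perverse.'' That is not sufficient: semisimple perverse sheaves can have nontrivial $\mathrm{Ext}^{1}$ between them, so a distinguished triangle with semisimple ends need not split. The actual mechanism is the one the paper records as Lemma~\ref{Lusztig BBD} (from \cite[8.1.6]{MR1227098}): since each stratum map factors as a vector bundle followed by a proper morphism from a smooth variety, the contribution of each stratum to $(\kappa_{\Omega}\circ\pi_{\boldsymbol{\nu},\Omega})_{!}\bar{\mathbb{Q}}_{l}$ is pure (a Tate twist of a decomposition-theorem output), and the triangles then split by a weight argument over a finite-field model. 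Once you invoke that lemma your argument is complete.
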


As a corollary, the induction and restriction functors can be restricted to
\begin{align*}
	&\mathbf{Ind}^{\mathbf{V}}_{\mathbf{V}',\mathbf{V}''}:\mathcal{Q}_{\mathbf{V}'}\times \mathcal{Q}_{\mathbf{V}''}\rightarrow \mathcal{Q}_{\mathbf{V}},\\
	&\mathbf{Res}^{\mathbf{V}}_{\mathbf{T},\mathbf{W}}:\mathcal{Q}_{\mathbf{V}}\rightarrow \mathcal{Q}_{\mathbf{T}}\boxtimes \mathcal{Q}_{\mathbf{W}}.
\end{align*}

\begin{remark}
	Let $\mathcal{D}^{b,ss}_{G_{\mathbf{V}}}(\mathbf{E}_{\mathbf{V},\Omega})$ be the full subcatgory of $\mathcal{D}^{b}_{G_{\mathbf{V}}}(\mathbf{E}_{\mathbf{V},\Omega})$ consisting of semisimple complexes, then the induction functor and restrcition functor also restrict to 
	\begin{align*}
		&\mathbf{Ind}^{\mathbf{V}}_{\mathbf{V}',\mathbf{V}''}:\mathcal{D}^{b,ss}_{G_{\mathbf{V}'}}(\mathbf{E}_{\mathbf{V}',\Omega})\times \mathcal{D}^{b,ss}_{G_{\mathbf{V}''}}(\mathbf{E}_{\mathbf{V}'',\Omega})\rightarrow \mathcal{D}^{b,ss}_{G_{\mathbf{V}}}(\mathbf{E}_{\mathbf{V},\Omega}),\\
		&\mathbf{Res}^{\mathbf{V}}_{\mathbf{T},\mathbf{W}}:\mathcal{D}^{b,ss}_{G_{\mathbf{V}}}(\mathbf{E}_{\mathbf{V},\Omega})\rightarrow \mathcal{D}^{b,ss}_{G_{\mathbf{T}}\times G_{\mathbf{W}}}(\mathbf{E}_{\mathbf{T},\Omega} \times \mathbf{E}_{\mathbf{W},\Omega}).
	\end{align*}
\end{remark}

Let $\mathcal{K}_{\mathbf{V},\Omega}$ be the Grothendieck group of
$\mathcal{Q}_{\mathbf{V},\Omega}$ and $\mathcal{K}_{\Omega}=\bigoplus\limits_{\mathbf{V}} \mathcal{K}_{\mathbf{V},\Omega}$ which have an $\mathcal{A}$-module structure given by
\begin{center}
	$v[L]=[L[1]].$ 
\end{center} 

\begin{theorem}\cite[Theorem 10.17]{MR1088333}\label{Lusztig1}
	With the induction and restriction functors, the Grothendieck group $\mathcal{K}$ becomes a bialgebra, and is canonically isomorphic to the (integral form of) positive part of the quantized enveloping algebra ${_{\mathcal{A}}}\mathbf{U}^{+}$:
	\begin{equation*}
		\varsigma:[L_{i^{(p)}}] \mapsto E_{i}^{(p)}
	\end{equation*} 
	where $L_{i^{(p)}}$ is the constant sheaf on $\mathbf{E}_{\mathbf{V},\Omega}$ with $|\mathbf{V}|=pi$.
	Moreover, the images of simple perverse sheaves in $\mathcal{P}_{\mathbf{V}}$ form a $\mathcal{A}=\mathbb{Z}[v,v^{-1}]$-basis of ${_{\mathcal{A}}}\mathbf{U}_{|\mathbf{V}|}^{+}$, which is called the canonical basis. The canonical basis is bar-invariant and has positivity.
\end{theorem}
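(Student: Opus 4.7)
The plan is to follow Lusztig's original argument. First I would show that induction and restriction endow $\mathcal{K}=\bigoplus_\mathbf{V}\mathcal{K}_{\mathbf{V},\Omega}$ with a graded bialgebra structure: associativity of multiplication follows from transitivity of the induction diagrams (the two-step iterated flag diagrams agree as a single three-step flag diagram), and coassociativity of comultiplication is dual. The crucial bialgebra axiom — that restriction is an algebra homomorphism up to a twist by the Euler form — is proved by applying proper base change to the fiber product of the induction and restriction correspondences; this yields a geometric Green-type formula that decategorifies to the Ringel--Green identity in Hall algebras.

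Next I would define $\varsigma$ by sending $[L_{i^{(p)}}]$ to $E_i^{(p)}$ and extending multiplicatively; by Proposition~\ref{indres formula} this sends $[L_{\boldsymbol{\nu}}]$ to the ordered monomial $E_{i_1}^{(a_1)}\cdots E_{i_m}^{(a_m)}$. Well-definedness amounts to verifying that the quantum Serre combination vanishes in $\mathcal{K}$, which can be checked by an explicit induction computation exhibiting pairwise cancellation of flag-variety pushforwards. Surjectivity of $\varsigma$ is immediate from the divided-power presentation of ${}_\mathcal{A}\mathbf{U}^+$; injectivity follows by comparing the Euler form $(A,B)=\sum_k(-1)^k\dim\mathrm{Ext}^k_{G_\mathbf{V}}(A,B)$ on $\mathcal{K}_{\mathbf{V},\Omega}$ with Lusztig's standard bilinear form on ${}_\mathcal{A}\mathbf{U}^+$ — both are non-degenerate on the relevant integral lattices, so matching on monomial generators forces $\varsigma$ to be an isomorphism.

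For the canonical basis statement, each simple $L\in\mathcal{P}_\mathbf{V}$ is Verdier self-dual, so $[L]$ is bar-invariant under $v\mapsto v^{-1}$. The decomposition theorem applied to the proper map $\pi_{\boldsymbol{\nu},\Omega}$ writes each $[L_{\boldsymbol{\nu}}]$ as a positive Laurent polynomial combination of such $[L]$'s, and purity of Lusztig sheaves (in the sense of Beilinson--Bernstein--Deligne) yields the almost-orthogonality estimate $([L],[L'])\in\delta_{L,L'}+v^{-1}\mathbb{Z}[[v^{-1}]]$. Lusztig's uniqueness theorem for a bar-invariant, almost-orthonormal basis inside the $\mathbb{Z}[v^{-1}]$-lattice spanned by the $[L_{\boldsymbol{\nu}}]$'s then identifies $\{[L]:L\in\mathcal{P}_\mathbf{V}\}$ with the canonical basis of ${}_\mathcal{A}\mathbf{U}^+_{|\mathbf{V}|}$, while positivity is automatic since all transition coefficients are $\mathrm{Ext}$-dimensions.

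The main obstacle is the geometric Green-type formula controlling $\mathbf{Res}\circ\mathbf{Ind}$: the dimension-shift bookkeeping in the base change — tracking the Euler form correction $\langle\nu',\nu''\rangle$ and the stratification of the relevant fiber product by sub-quotient splittings — is the technical heart of Lusztig's argument. Once this compatibility is established, the Serre relations, the identification with ${}_\mathcal{A}\mathbf{U}^+$, and the canonical basis properties all follow from standard bialgebra manipulations combined with the decomposition theorem.
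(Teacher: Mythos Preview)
The paper does not give its own proof of this theorem: it is stated as a citation to Lusztig's original paper \cite[Theorem 10.17]{MR1088333} and used as a black box throughout. Your sketch is a faithful outline of Lusztig's original argument --- associativity via transitivity of the flag correspondences, the bialgebra axiom via base change on the fiber product of the induction and restriction diagrams (the geometric Green formula), the Serre relations from explicit cancellation among the $L_{\boldsymbol{\nu}}$, and the canonical basis properties from Verdier self-duality plus the decomposition theorem. One minor comment: for injectivity of $\varsigma$, Lusztig's own route is not a direct non-degeneracy argument with the Euler form but rather the analysis at a sink (what appears in this paper as Lemma~\ref{lkey'} and Proposition~\ref{lt'}), which shows inductively that the graded ranks of $\mathcal{K}_{\mathbf{V}}$ and ${}_{\mathcal{A}}\mathbf{U}^+_{|\mathbf{V}|}$ coincide; your bilinear-form approach is a legitimate alternative but you would need to be more careful about why the Euler form on $\mathcal{K}$ is non-degenerate over $\mathcal{A}$ before invoking it.
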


\subsection{Fourier-Deligne transform}
We fix a nontrivial character $\mathbb{F}_{q} \rightarrow \bar{\mathbb{Q}}_{l}^{\ast}$. This character defines an Artin-Schreier local system of rank $1$ on $\mathbf{k}$. 

For two orientations $\Omega,\Omega'$, we define $T: \mathbf{E}_{\mathbf{V},\Omega \cup \Omega'} \rightarrow k$ by $T(x)=\sum \limits_{h \in \Omega \backslash \Omega'}tr(x_{h}x_{\bar{h}})$. Then the inverse image of the Artin-Schreier local system under $T$ is a well-defined $G_{\mathbf{V}}$-equivariant local system of rank $1$ on $\mathbf{E}_{\mathbf{V},\Omega \cup \Omega'}$, denote by $\mathcal{L}_{T}$.

Consider the following diagram
\begin{center}
	$\mathbf{E}_{\mathbf{V},\Omega} \xleftarrow{\delta} \mathbf{E}_{\mathbf{V},\Omega \cup \Omega'} \xrightarrow{\delta'} \mathbf{E}_{\mathbf{V},\Omega'}$
\end{center}
where $\delta,\delta'$ are the forgetting maps defined by
\begin{center}
	$\delta((x_{h})_{h \in \Omega \cup \Omega'} )= ((x_{h})_{h \in \Omega}),$\\
	$\delta'((x_{h})_{h \in \Omega \cup \Omega'} )= ((x_{h})_{h \in \Omega'}).$
\end{center}

Lusztig defined the Fourier-Deligne transform for quivers to be the functor
\begin{align*}
	&\mathcal{F}_{\Omega,\Omega'}:\mathcal{D}^{b}_{G_{\mathbf{V}}}(E_{\mathbf{V},\Omega}) \rightarrow \mathcal{D}^{b}_{G_{\mathbf{V}}}(E_{\mathbf{V},\Omega'})\\ &\mathcal{F}_{\Omega,\Omega'}(L)=\delta'_{!}(\delta^{\ast}(L)\otimes \mathcal{L}_{T})[D],
\end{align*}  
here $D=\sum\limits_{h \in \Omega \backslash \Omega'}\dim \mathbf{V}_{h'}\dim\mathbf{V}_{h''}$.

\begin{proposition} \cite[Theorem 5.4]{MR1088333} \label{FD0}
	With the notations above, for any semisimple perverse sheaves $L_{1}$ and $L_{2}$,
	\begin{center}
		$\mathcal{F}_{\Omega,\Omega'}(\mathbf{Ind}^{\mathbf{V}} _{\mathbf{V}',\mathbf{V}''}(L_{1} \boxtimes L_{2})) \cong  \mathbf{Ind}^{\mathbf{V}} _{\mathbf{V}',\mathbf{V}''}(\mathcal{F}_{\Omega,\Omega'}(L_{1}) \boxtimes \mathcal{F}_{\Omega,\Omega'}(L_{2})).$
	\end{center}
\end{proposition}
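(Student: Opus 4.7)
The plan is to express both sides of the claimed isomorphism through a single enlarged diagram in which Lusztig's induction construction and the Fourier-Deligne construction commute by proper base change and the projection formula. The key conceptual input is a trace-splitting identity making the Artin-Schreier twist $\mathcal{L}_T$ compatible with induction.

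First I would enlarge the induction diagram to the full edge set $H=\Omega\cup\Omega'$. Let $\mathbf{E}'_H$ consist of tuples $(x,\tilde{\mathbf{W}},\rho_1,\rho_2)$ with $x\in\mathbf{E}_{\mathbf{V},H}$ and $\tilde{\mathbf{W}}$ stable under $x_h$ for \emph{every} $h\in H$, and define $\mathbf{E}''_H$ analogously. Extend $p_1,p_2,p_3$ to maps $\tilde{p}_1,\tilde{p}_2,\tilde{p}_3$ over $\mathbf{E}_{\mathbf{V}',H}\times\mathbf{E}_{\mathbf{V}'',H}$ and $\mathbf{E}_{\mathbf{V},H}$. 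The forgetful maps from the $H$-versions to the $\Omega$-versions and $\Omega'$-versions produce two Cartesian squares linking $(\tilde{p}_i)$ to $(p_i)$ on one side and to $(p'_i)$ on the other; this is the geometric base-change skeleton of the proof.

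Next I would establish the trace identity. For $x\in\mathbf{E}_{\mathbf{V},H}$ with $\tilde{\mathbf{W}}$ stable under all $x_h$, block-triangularity of both $x_h$ and $x_{\bar h}$ with respect to a linear splitting $\mathbf{V}_i=\tilde{\mathbf{W}}_i\oplus U_i$ forces the composite $x_h x_{\bar h}$ to be block-triangular as well, whence
\begin{equation*}
T(x)=T(x|_{\tilde{\mathbf{W}}})+T(\bar{x}|_{\mathbf{V}/\tilde{\mathbf{W}}})
\end{equation*}
for each $h\in\Omega\setminus\Omega'$, and thus globally. Pulling the Artin-Schreier sheaf back along $T$ then yields a canonical isomorphism $\tilde{p}_3^{\ast}\mathcal{L}_T\cong\tilde{p}_2^{\flat}\tilde{p}_1^{\ast}(\mathcal{L}_{T'}\boxtimes\mathcal{L}_{T''})$, so the Fourier twist propagates through the induction diagram exactly as a constant sheaf would.

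Finally I would assemble the isomorphism. Starting from $\mathcal{F}_{\Omega,\Omega'}\mathbf{Ind}^{\mathbf{V}}_{\mathbf{V}',\mathbf{V}''}(L_1\boxtimes L_2)=\delta'_!\bigl(\delta^{\ast}(p_3)_!(p_2)_{\flat}(p_1)^{\ast}(L_1\boxtimes L_2)\otimes\mathcal{L}_T\bigr)[D+d_1-d_2]$, I would successively apply proper base change to commute $\delta^{\ast}$ past $(p_3)_!$, principal-bundle/smooth base change for $(p_2)_{\flat}$ and $(p_1)^{\ast}$, and then the projection formula together with the trace identity to replace $\otimes\,\mathcal{L}_T$ by the induction of the external product $\mathcal{L}_{T'}\boxtimes\mathcal{L}_{T''}$. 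A symmetric manipulation pushes $\delta'_!$ inside and reconstitutes $\mathbf{Ind}^{\mathbf{V}}_{\mathbf{V}',\mathbf{V}''}(\mathcal{F}_{\Omega,\Omega'}L_1\boxtimes\mathcal{F}_{\Omega,\Omega'}L_2)$. The main obstacle is the careful bookkeeping of the dimension shifts: one must verify that the Fourier shift $D$ on $\mathbf{V}$ together with the induction shift $d_1-d_2$ for $\Omega$ matches the sum of the Fourier shifts $D',D''$ on $\mathbf{V}',\mathbf{V}''$ and the induction shift $d'_1-d'_2$ for $\Omega'$, and that no spurious shifts appear from the enlarged principal bundle $\tilde{p}_2$. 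Once this shift-matching is settled, the isomorphism is the composite of the above base-change steps.
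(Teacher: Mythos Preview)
The paper does not give a proof here; it cites Lusztig's original Theorem 5.4. Your outline follows the right template but has a genuine gap at the Cartesian-square claim.

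With your definition of $\mathbf{E}''_H$ (requiring $\tilde{\mathbf{W}}$ stable under every $h\in H$), the forgetful square over $p_3$ and $\delta$ is \emph{not} Cartesian: the true fiber product of $\delta:\mathbf{E}_{\mathbf{V},\Omega\cup\Omega'}\to\mathbf{E}_{\mathbf{V},\Omega}$ with $p_3$ consists of pairs $(x,\tilde{\mathbf{W}})$ with $\tilde{\mathbf{W}}$ stable only for the $\Omega$-arrows, and your $\mathbf{E}''_H$ is a proper closed subvariety of it, so proper base change does not apply to your diagram. The same obstruction arises symmetrically on the $\Omega'$ side. If instead you work on the correct fiber product, your trace identity fails: in block form with respect to $\tilde{\mathbf{W}}$ one finds $\mathrm{tr}(x_hx_{\bar h})=\mathrm{tr}(\alpha_h a_h)+\mathrm{tr}(\gamma_h d_h)+\mathrm{tr}(\beta_h c_h)$, where $c_h$ is the component of $x_{\bar h}$ measuring failure of $\bar h$-stability and $\beta_h$ is the off-diagonal block of $x_h$, and the cross-term $\sum_h\mathrm{tr}(\beta_h c_h)$ does not drop out. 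Lusztig's actual proof works on the correct fiber product and disposes of this cross-term by a further Fourier-type pushforward: along the affine-space directions parametrized by the extra blocks, the function $T$ is affine with linear part given precisely by these cross-terms, so integrating $\mathcal{L}_T$ along those directions is a relative Fourier transform that collapses (up to shift) onto the locus where the dual variables vanish. This cancellation is what effectively restricts the computation to the $H$-stable locus and simultaneously produces the shift-matching you flag at the end; it is the missing idea, not merely bookkeeping.
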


\begin{corollary}\cite[Proposition 10.14]{MR1088333} \label{FD1}
	The functor $\mathcal{F}_{\Omega,\Omega'}$ induces an algebra isomorphism between $\mathcal{K}_{\Omega}$ and $\mathcal{K}_{\Omega'}$. 
\end{corollary}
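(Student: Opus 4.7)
The plan is to reduce everything to Proposition \ref{FD0} together with the well-known fact that the Fourier--Deligne transform is an equivalence of derived categories, so that the entire issue is showing $\mathcal{F}_{\Omega,\Omega'}$ preserves the subcategory $\mathcal{Q}$ and then reading off the algebra structure from the induction compatibility.

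First, I would establish that $\mathcal{F}_{\Omega,\Omega'}$ sends $\mathcal{Q}_{\mathbf{V},\Omega}$ into $\mathcal{Q}_{\mathbf{V},\Omega'}$. Since every object of $\mathcal{Q}_{\mathbf{V},\Omega}$ is a direct sum of shifts of simple summands of the generators $L_{\boldsymbol{\nu}}$, and the Fourier--Deligne transform commutes with shifts and direct sums, it suffices to show $\mathcal{F}_{\Omega,\Omega'}(L_{\boldsymbol{\nu}}) \cong L_{\boldsymbol{\nu}}^{\Omega'}$ (up to shift) for every $\boldsymbol{\nu} \in \mathcal{S}_{|\mathbf{V}|}$, where $L_{\boldsymbol{\nu}}^{\Omega'}$ denotes the analogous flag complex on $\mathbf{E}_{\mathbf{V},\Omega'}$. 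I would do this by induction on the length $m$ of $\boldsymbol{\nu}$. For $m=1$ one has $\boldsymbol{\nu}=(i^a)$ and, since $\mathbf{\Gamma}$ has no loops, $\mathbf{E}_{\mathbf{V},\Omega}$ is a single point for any orientation $\Omega$; hence $L_{i^{(a)}}$ is a shift of the constant sheaf on a point and its Fourier transform is the analogous object for $\Omega'$. For $m>1$ I would write $L_{\boldsymbol{\nu}} = \mathbf{Ind}^{\mathbf{V}}_{\mathbf{V}',\mathbf{V}''}(L_{\boldsymbol{\nu}'} \boxtimes L_{\boldsymbol{\nu}''})$ via Proposition \ref{indres formula} and apply Proposition \ref{FD0} to push $\mathcal{F}_{\Omega,\Omega'}$ through the induction, then invoke the inductive hypothesis on $\boldsymbol{\nu}'$ and $\boldsymbol{\nu}''$.

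Once the subcategory is preserved, we get a well-defined $\mathcal{A}$-linear map $[\mathcal{F}_{\Omega,\Omega'}]:\mathcal{K}_{\Omega}\to \mathcal{K}_{\Omega'}$ (it is $\mathcal{A}$-linear because $\mathcal{F}_{\Omega,\Omega'}$ commutes with shifts). Multiplicativity is then immediate from Proposition \ref{FD0}: the bialgebra product on $\mathcal{K}$ is induced by $\mathbf{Ind}$, so
\begin{equation*}
[\mathcal{F}_{\Omega,\Omega'}]([L_1]\cdot[L_2]) = [\mathcal{F}_{\Omega,\Omega'}(\mathbf{Ind}(L_1\boxtimes L_2))] = [\mathbf{Ind}(\mathcal{F}_{\Omega,\Omega'}(L_1)\boxtimes \mathcal{F}_{\Omega,\Omega'}(L_2))]
\end{equation*}
which is exactly $[\mathcal{F}_{\Omega,\Omega'}]([L_1])\cdot[\mathcal{F}_{\Omega,\Omega'}]([L_2])$.

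Finally, for the isomorphism statement I would use the standard Fourier inversion formula: there is a natural isomorphism $\mathcal{F}_{\Omega',\Omega}\circ \mathcal{F}_{\Omega,\Omega'}\cong \mathrm{id}$ (up to a uniform shift and Tate twist absorbed into the normalisation $[D]$), so both compositions $[\mathcal{F}_{\Omega,\Omega'}]\circ[\mathcal{F}_{\Omega',\Omega}]$ and $[\mathcal{F}_{\Omega',\Omega}]\circ[\mathcal{F}_{\Omega,\Omega'}]$ are the identity on the Grothendieck groups, yielding the desired algebra isomorphism. The main technical point I expect to spend care on is the bookkeeping of the shift constants $D$ and the Euler-form shifts hidden inside $\mathbf{Ind}$: one must check that the degree shifts match on both sides of Proposition \ref{FD0} when the $L_{\boldsymbol{\nu}}$'s are normalised as perverse sheaves, so that the inductive identification $\mathcal{F}_{\Omega,\Omega'}(L_{\boldsymbol{\nu}})\cong L_{\boldsymbol{\nu}}^{\Omega'}$ really holds on the nose up to shift rather than only up to some orientation-dependent correction that would destroy the multiplicativity of $[\mathcal{F}_{\Omega,\Omega'}]$.
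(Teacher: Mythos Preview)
Your proposal is correct and follows the standard argument. The paper itself does not give a proof of this corollary, deferring instead to Lusztig's original paper \cite[Proposition 10.14]{MR1088333}; the argument there is precisely the one you outline: deduce $\mathcal{F}_{\Omega,\Omega'}(L_{\boldsymbol{\nu}})\cong L_{\boldsymbol{\nu}}^{\Omega'}$ by induction on the length of $\boldsymbol{\nu}$ using Proposition \ref{FD0} and Proposition \ref{indres formula}, read off multiplicativity from Proposition \ref{FD0}, and invoke Fourier inversion for the isomorphism.

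Your only worry---the shift bookkeeping---is in fact a non-issue. Proposition \ref{FD0} is stated as an exact isomorphism, not merely one up to shift, and the base case $\boldsymbol{\nu}=(i^a)$ also matches exactly (the Fourier transform of the constant sheaf on a point is the constant sheaf on a point, with the normalising shift $[D]$ vanishing since $D=0$ when $\mathbf{V}$ is concentrated at a single vertex). Consequently the induction gives $\mathcal{F}_{\Omega,\Omega'}(L_{\boldsymbol{\nu}})\cong L_{\boldsymbol{\nu}}^{\Omega'}$ on the nose, and no orientation-dependent correction arises.
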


\begin{corollary}\cite[Corollary 5.6]{MR1088333} \label{FD2}
	The functor $\mathcal{F}_{\Omega,\Omega'}$ induces an equivalence of categories $\mathcal{Q}_{\mathbf{V},\Omega} \cong \mathcal{Q}_{\mathbf{V},\Omega'}$ and a bijection $\eta_{\Omega,\Omega'}:\mathcal{P}_{\Omega}\rightarrow \mathcal{P}_{\Omega'}$. Moreover, for orientations $\Omega,\Omega',\Omega''$, we have $\eta_{\Omega',\Omega''}\eta_{\Omega,\Omega'}=\eta_{\Omega,\Omega''}$. 
\end{corollary}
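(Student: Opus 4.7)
The plan is to reduce everything to the compatibility of $\mathcal{F}_{\Omega,\Omega'}$ with induction established in Proposition \ref{FD0}. I would use without proof the standard fact that the Fourier-Deligne transform is an equivalence of equivariant derived categories with quasi-inverse $\mathcal{F}_{\Omega',\Omega}$ (up to a fixed shift), and that it is t-exact up to a fixed shift, so it automatically sends simple perverse sheaves to shifts of simple perverse sheaves. The content of the statement is then that it preserves the subcategory $\mathcal{Q}_{\mathbf{V},\Omega}$ of Lusztig sheaves and induces a well-behaved bijection on the sets $\mathcal{P}_{\Omega}$.

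The first step is to show that for every flag type $\boldsymbol{\nu}\in\mathcal{S}_{|\mathbf{V}|}$ one has $\mathcal{F}_{\Omega,\Omega'}(L_{\boldsymbol{\nu}})\cong L_{\boldsymbol{\nu}}[n_{\boldsymbol{\nu}}]$ for some shift $n_{\boldsymbol{\nu}}\in\mathbb{Z}$, where the two sides refer to the Lusztig sheaves in $\mathbf{E}_{\mathbf{V},\Omega}$ and $\mathbf{E}_{\mathbf{V},\Omega'}$ respectively. The argument is induction on the length $m$ of $\boldsymbol{\nu}$. For the base case $\boldsymbol{\nu}=(i^{p})$, since the graph is loop-free, the space $\mathbf{E}_{\mathbf{V},\Omega}=\mathbf{E}_{\mathbf{V},\Omega'}$ reduces to a single point when $\mathbf{V}$ is concentrated at a single vertex $i$, and the Fourier-Deligne transform over a point is trivially the identity. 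The inductive step combines the factorization $L_{\boldsymbol{\nu}'\boldsymbol{\nu}''}=\mathbf{Ind}^{\mathbf{V}}_{\mathbf{V}',\mathbf{V}''}(L_{\boldsymbol{\nu}'}\boxtimes L_{\boldsymbol{\nu}''})$ from Proposition \ref{indres formula} with the commutation formula of Proposition \ref{FD0}, which yields the isomorphism up to a shift depending only on the dimension vectors involved.

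Once step 1 is in place, the equivalence $\mathcal{Q}_{\mathbf{V},\Omega}\cong\mathcal{Q}_{\mathbf{V},\Omega'}$ and the bijection $\eta_{\Omega,\Omega'}$ follow almost automatically. Each simple $L\in\mathcal{P}_{\Omega}$ is a direct summand of some $L_{\boldsymbol{\nu}}$, so $\mathcal{F}_{\Omega,\Omega'}(L)$ is a direct summand of $L_{\boldsymbol{\nu}}[n_{\boldsymbol{\nu}}]$ in orientation $\Omega'$, hence a shift of an element of $\mathcal{P}_{\Omega'}$; this defines $\eta_{\Omega,\Omega'}$, and bijectivity comes from the quasi-inverse $\mathcal{F}_{\Omega',\Omega}$. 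For the composition rule $\eta_{\Omega',\Omega''}\eta_{\Omega,\Omega'}=\eta_{\Omega,\Omega''}$, both $\mathcal{F}_{\Omega,\Omega''}$ and $\mathcal{F}_{\Omega',\Omega''}\circ\mathcal{F}_{\Omega,\Omega'}$ send $L_{\boldsymbol{\nu}}$ in $\Omega$ to $L_{\boldsymbol{\nu}}$ in $\Omega''$ up to a shift, so they induce the same bijection between the simple perverse summands on each side. The main technical obstacle I expect is avoiding the need for a genuine functor isomorphism $\mathcal{F}_{\Omega',\Omega''}\circ\mathcal{F}_{\Omega,\Omega'}\cong\mathcal{F}_{\Omega,\Omega''}$---which would require careful bookkeeping of the Artin-Schreier characters---by instead arguing purely at the level of isomorphism classes of simple summands of the $L_{\boldsymbol{\nu}}$, where only the image of each $L_{\boldsymbol{\nu}}$ up to shift is required.
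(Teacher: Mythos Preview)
The paper does not supply its own proof here; the corollary is quoted directly from Lusztig's original paper. Your outline is essentially Lusztig's argument and is correct. One sharpening worth making: the shifts $n_{\boldsymbol{\nu}}$ you introduce are in fact all zero, since the base case carries no shift and Proposition~\ref{FD0} introduces none, so $\mathcal{F}_{\Omega,\Omega'}(L_{\boldsymbol{\nu}})\cong L_{\boldsymbol{\nu}}$ on the nose and the transform is genuinely perverse $t$-exact. With that in hand, the composition rule is most cleanly finished by passing to the Grothendieck group rather than arguing directly on summands: both $\mathcal{F}_{\Omega',\Omega''}\circ\mathcal{F}_{\Omega,\Omega'}$ and $\mathcal{F}_{\Omega,\Omega''}$ fix every class $[L_{\boldsymbol{\nu}}]$, these classes span $\mathcal{K}_{\mathbf{V}}$, and since both functors send simple perverse sheaves to simple perverse sheaves, the two induced bijections on $\mathcal{P}$ must agree.
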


With two corollaries above, we can denote $\mathcal{K}_{\Omega}$ by $\mathcal{K}$ and $\mathcal{P}_{\mathbf{V},\Omega}$ by $\mathcal{P}_{\mathbf{V}}$ respectively, if there is no ambiguity.

\begin{remark}\label{remarkFD}
	(1) In fact, the Fourier-Deligne transform gives equivalence between the bounded derived categories $\mathcal{D}^{b}_{G_{\mathbf{V}}}(\mathbf{E}_{\mathbf{V},\Omega})$ and $\mathcal{D}^{b}_{G_{\mathbf{V}}}(\mathbf{E}_{\mathbf{V},\Omega'})$. (See the proof of \cite[Theorem 3.13]{MR3202708}.) 
	
	(2) Similarly as \cite[Proposition 10.4.5]{Achar}, the Fourier-Deligne transform also commutes with the induction functor   $\mathbf{Ind}^{\mathbf{V}}_{\mathbf{V'},\mathbf{V''}}: \mathcal{D}^{b}_{G_{\mathbf{V}'}}(\mathbf{E}_{\mathbf{V}',\Omega}) \times \mathcal{D}^{b}_{G_{\mathbf{V}''}}(\mathbf{E}_{\mathbf{V}'',\Omega}) \rightarrow \mathcal{D}^{b}_{G_{\mathbf{V}}}(\mathbf{E}_{\mathbf{V},\Omega})$ between bounded derived categories as in Proposition \ref{FD0}. In fact, the authour in \cite{Achar} considered varieties over the complex field and Fourier-Laumon transforms, but his proof also works in our case by \cite[\uppercase\expandafter{\romannumeral3}. Theorem 13.2 and 13.3]{MR1855066}.  
\end{remark}

\subsection{Analysis at sink}
Fix $i \in I$ and an orientation $\Omega$ such that $i$ is a sink, for any $p\in \mathbb{N}$, we define $\mathbf{E}_{\mathbf{V},i,p}$ to be the locally closed subset of $\mathbf{E}_{\mathbf{V},\Omega}$ consisting of $x$ such that ${\rm{codim}}_{\mathbf{V}_{i}} ( {\rm{Im}} \sum\limits_{h \in \Omega, h''=i} x_{h}) =p$. Then $\mathbf{E}_{\mathbf{V}}$ has a partition $\mathbf{E}_{\mathbf{V},\Omega}= \bigcup \limits_{p} \mathbf{E}_{\mathbf{V},i,p}$, and the union $\mathbf{E}_{\mathbf{V},i, \geqslant p}= \bigcup\limits_{p' \geqslant p} \mathbf{E}_{\mathbf{V},i, p'}$ is a closed subset of $\mathbf{E}_{\mathbf{V},\Omega}$.

Given a simple perverse sheaf $L$, there exists a unique integer $t$ such that $$\textrm{supp}(L) \subseteq \mathbf{E}_{\mathbf{V},i, \geqslant t},~ \textrm{supp}(L) \nsubseteq \mathbf{E}_{\mathbf{V},i, \geqslant t+1}$$ and we set $t_{i}(L)=t$. Notice that $t_{i}(L) \leqslant \nu_{i}$.

The following lemma is the key lemma  \cite[Lemma 6.4]{MR1088333} in Lusztig's categorification theory. In fact, Lusztig's proof works not only for simple objects in $\mathcal{P}_{\mathbf{V}}$, but also for all simple perverse sheaves.
\begin{lemma} \label{lkey'}
	With the notation above, fix $0 \leqslant t \leqslant \nu_{i}$ and assume $|\mathbf{T}|=|\mathbf{V}'|=ti$. 
	
	(1) Let $L\in\mathcal{D}^{b,ss}_{G_{\mathbf{V}}}(\mathbf{E}_{\mathbf{V},\Omega})$ be a simple perverse sheaf such that $t_{i}(L)=t$, then the complex $\mathbf{Res}^{\mathbf{V}}_{\mathbf{T},\mathbf{W}}(L)$ is a direct sum of finitely many summands of the form $K'[f']$ for various simple perverse sheaves $K'\in\mathcal{D}^{b,ss}_{G_{\mathbf{W}}}(\mathbf{E}_{\mathbf{W},\Omega})$ and $f' \in \mathbb{Z}$. Moreover, exactly one of these summands, denoted by $K[f]$, satisfies $t_{i}(K)=0$ and $f=0$ and the others satisfy $t_{i}(K')> 0$. If $L \in \mathcal{P}_{\mathbf{V}}$, then those $K'$ are also Lusztig's sheaves.
	
	(2) Let $K \in \mathcal{D}^{b,ss}_{G_{\mathbf{V}''}}(\mathbf{E}_{\mathbf{V}'',\Omega})$ be a simple perverse sheaf such that $t_{i}(K)=0$, then $\mathbf{Ind}^{\mathbf{V}}_{\mathbf{V}',\mathbf{V}''}(\bar{\mathbb{Q}}_{l} \boxtimes K)$ is a direct sum of finitely many summands of the form $L'[g']$ for various simple perverse sheaves $L' \in \mathcal{D}^{b,ss}_{G_{\mathbf{V}}}(\mathbf{E}_{\mathbf{V},\Omega})$ and $g' \in \mathbb{Z}$. Moreover, exactly one of these summands, denoted by $L[g]$, satisfies $t_{i}(L)=t$ and $g=0$ and the others satisfy $t_{i}(L')> t$. If $K \in \mathcal{P}_{\mathbf{V}''}$, then those $L'$ are also Lusztig's sheaves.
	
	(3) There is a bijection $\pi_{i,t}$ between the set $\{K \textrm{ is a simple object in }  \mathcal{D}^{b,ss}_{G_{\mathbf{V}''}}(\mathbf{E}_{\mathbf{V}'',\Omega})$ and $t_{i}(K)=0 \}$  and the set $\{L \textrm{ is a simple object in }  \mathcal{D}^{b,ss}_{G_{\mathbf{V}}}(\mathbf{E}_{\mathbf{V},\Omega})$ and $t_{i}(L)=t \}$, which is induced by the decompositions of the direct sums above. Moreover, the bijection above also restricts to a bijection between the set $\{K \in \mathcal{P}_{\mathbf{V}''}|t_{i}(K)=0 \}$  and the set $\{L \in \mathcal{P}_{\mathbf{V}}|t_{i}(L)=t \}$.
\end{lemma}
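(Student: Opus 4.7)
The plan is to carry out Lusztig's geometric argument for \cite[Lemma 6.4]{MR1088333}, verifying that each step uses only the simple-perverse-sheaf hypothesis rather than membership in $\mathcal{P}_\mathbf{V}$; the one new ingredient is Braden's hyperbolic localization theorem, which supplies the semisimplicity of the restriction that Lusztig originally extracted from the explicit formula in Proposition~\ref{indres formula}. Under the sink hypothesis with $\mathbf{T}=\mathbf{V}'$ concentrated at $i$, the space $\mathbf{E}_{\mathbf{T},\Omega}$ is a point, $\kappa_\Omega$ identifies $F_\Omega$ with $\mathbf{E}_{\mathbf{W},\Omega}$, and $\iota_\Omega:\mathbf{E}_{\mathbf{W},\Omega}\hookrightarrow\mathbf{E}_{\mathbf{V},\Omega}$ becomes the zero section of the vector bundle $\mathbf{E}_{\mathbf{V},\Omega}\to\mathbf{E}_{\mathbf{W},\Omega}$ of rank $r=t\sum_{h:h''=i}\dim\mathbf{W}_{h'}$. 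A direct computation of $\sum_h\mathrm{Im}(x_h)$ for $x=\iota_\Omega(y)$ gives the key identity
\[
t_i(\iota_\Omega(y))=t+t_i(y),
\]
so $\iota_\Omega^{-1}(\mathbf{E}_{\mathbf{V},i,t})=\mathbf{E}_{\mathbf{W},i,0}$ and $\iota_\Omega^{-1}(\mathbf{E}_{\mathbf{V},i,\geq t+1})=\mathbf{E}_{\mathbf{W},i,\geq 1}$.

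For part (1), the restriction functor collapses to $\mathbf{Res}^\mathbf{V}_{\mathbf{T},\mathbf{W}}(L)\cong\iota_\Omega^{\ast}L[-\langle\nu',\nu''\rangle]$ because $\kappa_\Omega$ is an isomorphism. Writing the simple perverse sheaf $L$ with $t_i(L)=t$ as the intermediate extension of a simple $G_\mathbf{V}$-equivariant local system $\mathcal{L}$ on an open orbit $\mathcal{O}\subseteq\mathbf{E}_{\mathbf{V},i,t}$, the key identity shows that $\iota_\Omega^{\ast}L$ is generically the transport of $\mathcal{L}$ to an open piece of $\mathbf{E}_{\mathbf{W},i,0}$; its intermediate extension is a simple perverse sheaf $K$ with $t_i(K)=0$, and the normalization $-\langle\nu',\nu''\rangle$ is calibrated so that this distinguished summand appears at shift $f=0$, while all remaining summands are supported in $\mathbf{E}_{\mathbf{W},i,\geq 1}$ and therefore have $t_i>0$. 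The main obstacle is the semisimplicity of $\iota_\Omega^{\ast}L$, since closed-immersion $\ast$-pullback does not preserve perverse or semisimple objects in general. I would resolve this by Braden's hyperbolic localization theorem: consider the one-parameter subgroup of $\mathrm{GL}(\mathbf{V}_i)\subset G_\mathbf{V}$ acting with weight $1$ on $\mathbf{T}_i$ and weight $0$ on $\mathbf{W}_i$. Its fixed locus on $\mathbf{E}_{\mathbf{V},\Omega}$ is $\mathbf{E}_{\mathbf{W},\Omega}$ and its attracting locus is all of $\mathbf{E}_{\mathbf{V},\Omega}$, so $\iota_\Omega^{\ast}L$ is, up to a shift and a Tate twist, identified with the hyperbolic restriction of $L$. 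Since $L$ is $G_\mathbf{V}$-equivariant it is automatically $\mathbb{G}_m$-equivariant, and Braden's theorem then guarantees that the hyperbolic restriction is semisimple, which is exactly what is needed.

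For part (2), semisimplicity is automatic: $p_1^{\ast}$ sends simple perverse sheaves to shifted simple perverse sheaves (smooth with connected fibers), $(p_2)_\flat$ is an equivalence, and $p_3$ is proper, so the decomposition theorem of \cite{BBD} gives $\mathbf{Ind}^{\mathbf{V}}_{\mathbf{V}',\mathbf{V}''}(\overline{\mathbb{Q}}_l\boxtimes K)$ as a direct sum of shifts of simple perverse sheaves. The same key identity shows that over $\mathbf{E}_{\mathbf{V},i,t}$ the condition $\sum_h\mathrm{Im}(x_h)\subseteq\tilde{\mathbf{W}}_i$ combined with $\mathrm{codim}(\tilde{\mathbf{W}}_i)=t$ forces $\tilde{\mathbf{W}}_i=\sum_h\mathrm{Im}(x_h)$, so the fibre of $p_3$ is a single point and $p_3$ restricts to a $G_{\mathbf{V}}$-equivariant isomorphism over this open stratum, transporting $\mathcal{L}$ back to $\mathbf{E}_{\mathbf{V},i,t}$; its intermediate extension is the distinguished summand $L[0]$ with $t_i(L)=t$, while every other summand is supported over $\mathbf{E}_{\mathbf{V},i,\geq t+1}$ and thus has $t_i>t$. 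Part (3) is then immediate from (1) and (2): the two constructions are mutually inverse because they produce the same local system $\mathcal{L}$ on the open strata under the canonical identifications above, and preservation of $\mathcal{P}_\mathbf{V}$ and $\mathcal{P}_{\mathbf{V}''}$ is visible from Proposition~\ref{indres formula}.
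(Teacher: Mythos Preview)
Your argument is correct and is exactly the fleshing-out the paper intends when it says ``Lusztig's proof works for all simple perverse sheaves'': the paper offers no proof beyond that remark, and your hyperbolic-localization step is the same tool the authors themselves invoke later (see the proof of Lemma~\ref{lemma29}, citing \cite{Barden} and \cite{MR4524567}) to extend the semisimplicity of $\mathbf{Res}$ beyond Lusztig's $\mathcal{P}_\mathbf{V}$-specific restriction formula. Two small refinements: replace ``open orbit $\mathcal{O}$'' by ``smooth $G_\mathbf{V}$-invariant locally closed subvariety'' (for wild quivers there are infinitely many orbits, so a simple equivariant perverse sheaf need not be supported on an orbit closure), and note that Braden's theorem as stated only identifies the two hyperbolic localizations $(\pi^+)_!(g^+)^*\cong(\pi^-)_*(g^-)^!$ --- to obtain semisimplicity one combines this with preservation of purity and the BBD decomposition of pure complexes, which is how \cite{MR4524567} packages the statement.
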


If $|\mathbf{V}'|=ri,|\mathbf{V}''|=|\mathbf{V}|-ri$, we denote $\mathbf{V'}$ by $\mathbf{V}'_{ri}$ and $\mathbf{V}''$ by $\mathbf{V}''_{\nu-ri}$. For an orientation $\Omega'$ and a simple perverse sheaf $L$, we define $s_{i}(L)$ to be the largest integer $r$ satisfying that there exists a semisimple complex $L'$ such that $L$ is isomorphic to a shift of a direct summand of $\mathbf{Ind}^{\mathbf{V}}_{\mathbf{V}'_{ri},\mathbf{V}''_{\nu-ri}}(\bar{\mathbb{Q}}_{l} \boxtimes L')$. Notice that the definition of $s_{i}(L)$ does not depend on the choice of $\Omega'$ by Proposition \ref{FD0}. Lusztig's proof of Proposition 6.6 in \cite{MR1088333} can also be generalized to  $\mathcal{D}^{b,ss}_{G_{\mathbf{V}}}(\mathbf{E}_{\mathbf{V},\Omega'})$ as the following.
\begin{proposition}\label{lt'}
	Let $L\in\mathcal{D}^{b,ss}_{G_{\mathbf{V}}}(\mathbf{E}_{\mathbf{V},\Omega'})$ be a simple perverse sheaf and $s_{i}(L)=r$.
	
	(1)  There exist semisimple complexes $L'_{r'} \in\mathcal{D}^{b,ss}_{G_{\mathbf{V}''_{\nu-r'i}}}(\mathbf{E}_{\mathbf{V}''_{\nu-r'i},\Omega'}) $ for $r' > s_{i}(L)$ and semisimple complexes $L''_{r'} \in\mathcal{D}^{b,ss}_{G_{\mathbf{V}''_{\nu-r'i}}}(\mathbf{E}_{\mathbf{V}''_{\nu-r'i},\Omega'})$ for $r' \geqslant s_{i}(L)$ such that 
	\begin{center}
		$L \oplus \bigoplus \limits_{r' >s_{i}(L)}\mathbf{Ind}^{\mathbf{V}}_{\mathbf{V}'_{r'i},\mathbf{V}''_{\nu-r'i}}(\bar{\mathbb{Q}}_{l} \boxtimes L'_{r'}) \cong \bigoplus \limits_{r' \geqslant s_{i}(L)}\mathbf{Ind}^{\mathbf{V}}_{\mathbf{V}'_{r'i},\mathbf{V}''_{\nu-r'i}}(\bar{\mathbb{Q}}_{l} \boxtimes L''_{r'}) .$
	\end{center}
	
	Moreover, if $L \in \mathcal{P}_{\mathbf{V},\Omega'}$, then those $L'_{r'}$ and $L''_{r'}$ can be chosen in $\mathcal{Q}_{\mathbf{V}''_{\nu-r'i},\Omega'}$.
	
	(2) $s_{i}(L)=t_{i}(L)$, if $i$ is a sink in $\Omega'$.
	
\end{proposition}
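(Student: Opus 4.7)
The plan is a two-step reduction: reduce (1) for an arbitrary orientation $\Omega'$ to the case where $i$ is a sink via Fourier-Deligne transform, and treat the sink case by downward induction on $r=s_i(L)$ using Lusztig's key lemma (Lemma \ref{lkey'}) together with part (2).

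First I would establish (2). Since $i$ is a sink in $\Omega'$, the image of $p_3$ in the induction diagram with $|\mathbf{V}'|=ri$ consists of those $x\in\mathbf{E}_{\mathbf{V},\Omega'}$ admitting an $x$-stable subspace $\tilde{\mathbf{W}}$ whose quotient is supported only at $i$ of dimension $r$; as no arrow in $\Omega'$ starts at $i$, this $x$-stability reduces to $\tilde{\mathbf{W}}_i\supseteq\sum_{h:h''=i}x_h(\mathbf{V}_{h'})$, forcing $\mathrm{codim}\,\sum_h x_h(\mathbf{V}_{h'})\geqslant r$. Hence the support of any $\mathbf{Ind}^{\mathbf{V}}_{\mathbf{V}'_{ri},\mathbf{V}''_{\nu-ri}}(\bar{\mathbb{Q}}_l\boxtimes M)$ lies in $\mathbf{E}_{\mathbf{V},i,\geqslant r}$, which yields $t_i(L)\geqslant s_i(L)$. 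Conversely, Lemma \ref{lkey'}(3) applied at $t=t_i(L)$ writes $L=\pi_{i,t}(K)$ with $K$ simple and $t_i(K)=0$, and Lemma \ref{lkey'}(2) exhibits $L$ as a shifted direct summand of $\mathbf{Ind}^{\mathbf{V}}_{\mathbf{V}'_{ti},\mathbf{V}''_{\nu-ti}}(\bar{\mathbb{Q}}_l\boxtimes K)$, so $s_i(L)\geqslant t_i(L)$.

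Next, for (1) in the sink case, I proceed by downward induction on $r=s_i(L)\leqslant\nu_i$ (the case $r=\nu_i$ being either vacuous or immediate since $t_i\leqslant\nu_i$). Let $K=\pi_{i,r}^{-1}(L)$; by Lemma \ref{lkey'}(2),
\begin{equation*}
\mathbf{Ind}^{\mathbf{V}}_{\mathbf{V}'_{ri},\mathbf{V}''_{\nu-ri}}(\bar{\mathbb{Q}}_l\boxtimes K)\cong L\oplus\bigoplus_j L_j[n_j],
\end{equation*}
with each simple $L_j$ satisfying $t_i(L_j)>r$, hence $s_i(L_j)>r$ by part (2). Applying the inductive hypothesis to each $L_j$ at its level $s_i(L_j)\geqslant r+1$ produces equations in which every correction index $r'$ satisfies $r'>r$; rearranging and setting $L''_r=K$ while bundling the contributions from the $L_j$'s level by level yields the required identity. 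The Lusztig-sheaf refinement is inherited because the summands produced by Lemma \ref{lkey'}(2) stay in $\mathcal{P}$ when one starts in $\mathcal{P}$.

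Finally, to pass to an arbitrary orientation $\Omega'$, choose $\Omega_0$ in which $i$ is a sink and apply $\mathcal{F}_{\Omega',\Omega_0}$. For $|\mathbf{V}'|=r'i$ the space $\mathbf{E}_{\mathbf{V}'_{r'i},\Omega'}$ is a single point (no loops at $i$), so $\mathcal{F}_{\Omega',\Omega_0}(\bar{\mathbb{Q}}_l)=\bar{\mathbb{Q}}_l$; by Proposition \ref{FD0} and Remark \ref{remarkFD} the transform commutes with the induction functor and preserves semisimple complexes, so the invariant $s_i$ is preserved and the isomorphism obtained at $\Omega_0$ transports back to $\Omega'$ via $\mathcal{F}_{\Omega_0,\Omega'}$. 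The main obstacle I anticipate is the bookkeeping in the inductive step: one must verify that the correction terms extracted from the inductive hypothesis applied to the various $L_j$ can be collected into single semisimple complexes $L'_{r'}$ and $L''_{r'}$ at each level $r'>r$, which works precisely because part (2) forces $s_i(L_j)>r$ and Lemma \ref{lkey'}(2) controls the simple constituents that appear.
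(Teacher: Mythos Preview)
Your proposal is correct and follows the same line as Lusztig's original proof of \cite[Proposition 6.6]{MR1088333}, which is what the paper is invoking: establish $s_i=t_i$ at a sink from Lemma \ref{lkey'}, run a downward induction on $r$ using the decomposition in Lemma \ref{lkey'}(2), and transport to an arbitrary orientation via the Fourier--Deligne transform (the paper already records, just before the statement, that $s_i$ is orientation-independent for this reason). The bookkeeping you flag as a potential obstacle is harmless: after cancelling $L$ in the semisimple category, the summed inductive identities for the $L_j$ only involve indices $r'>r$, so one simply defines $L''_r=K$ and, for $r'>r$, takes $L'_{r'}$ and $L''_{r'}$ to be the direct sums over $j$ (with the shifts $[n_j]$) of the corresponding sheaves produced by the inductive hypothesis; the Lusztig-sheaf refinement is preserved because Lemma \ref{lkey'} keeps all summands in $\mathcal{Q}$.
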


\subsection{Analysis at source}

Fix $i \in I$ and an orientation $\Omega$ such that $i$ is a source, we define $\mathbf{E}_{\mathbf{V},i}^{p}$ to be the subset of $\mathbf{E}_{\mathbf{V},\Omega}$ consisting of $x$ such that ${\rm{dim}}( {\rm{Ker}} \bigoplus\limits_{h \in \Omega, h'=i} x_{h}) =p$. Then $\mathbf{E}_{\mathbf{V}}$ has a partition $\mathbf{E}_{\mathbf{V},\Omega}= \bigcup \limits_{p} \mathbf{E}_{\mathbf{V},i}^{p}$. and the union $\mathbf{E}_{\mathbf{V},i}^{\geqslant p}= \bigcup\limits_{p' \geqslant p} \mathbf{E}_{\mathbf{V},i}^{p'}$ is a closed subset.

Given a simple perverse sheaf $L$, there exists a unique integer $t$ such that $\textrm{supp}(L) \subseteq \mathbf{E}_{\mathbf{V},i}^{ \geqslant t}$ but $\textrm{supp}(L) \nsubseteq \mathbf{E}_{\mathbf{V},i}^{\geqslant t+1}$ and we write $t_{i}^{\ast}(L)=t$. Notice that $t_{i}^{\ast}(L) \leqslant \nu_{i}$.

The following lemmas are dual to Lemma \ref{lkey'}.

\begin{lemma}\label{rkey'}
	With the notation above, fix $0 \leqslant t \leqslant \nu_{i}$ and assume $|\mathbf{W}|=|\mathbf{V}''|=ti$.
	
	(1) Let $L\in\mathcal{D}^{b,ss}_{G_{\mathbf{V}}}(\mathbf{E}_{\mathbf{V},\Omega})$ be a simple perverse sheaf such that $t_{i}^{\ast}(L)=t$, then the complex $\mathbf{Res}^{\mathbf{V}}_{\mathbf{T},\mathbf{W}}(L) \in \mathcal{D}^{b,ss}_{G_{\mathbf{T}}}(\mathbf{E}_{\mathbf{T},\Omega})$ is a direct sum of finitely many summands of the form $K'[f']$ for various simple perverse sheaves $K'$ and $f' \in \mathbb{Z}$. Moreover, exactly one of these summands, denoted by $K[f]$, satisfies $t_{i}^{\ast}(K)=0$ and $f=0$ and the others satisfy $t_{i}^{\ast}(K')> 0$. If $L \in \mathcal{P}_{\mathbf{V}}$, then those $K'$  are also Lusztig's sheaves.
	
	(2) Let $K \in \mathcal{D}^{b,ss}_{G_{\mathbf{V}'}}(\mathbf{E}_{\mathbf{V}',\Omega})$ be a simple perverse sheaf such that $t_{i}^{\ast}(K)=0$, then $\mathbf{Ind}^{\mathbf{V}}_{\mathbf{V}',\mathbf{V}''}(K \boxtimes \bar{\mathbb{Q}}_{l} )$ is a direct sum of finitely many summands of the form $L'[g']$ for various simple perverse sheaves $L' \in \mathcal{D}^{b,ss}_{G_{\mathbf{V}}}(\mathbf{E}_{\mathbf{V},\Omega})$ and $g' \in \mathbb{Z}$. Moreover, exactly one of these summands, denoted by $L[g]$, satisfies $t_{i}^{\ast}(L)=t$ and $g=0$ and the others satisfy $t_{i}^{\ast}(L')> t$.  If $K \in \mathcal{P}_{\mathbf{V}'}$, then those $L'$  are also Lusztig's sheaves.
	
	(3) There is a bijection $\pi^{\ast}_{i,t}$ between the set $\{K \textrm{ is a simple object in }  \mathcal{D}^{b,ss}_{G_{\mathbf{V}'}}(\mathbf{E}_{\mathbf{V}',\Omega})$ and $ t_{i}^{\ast}(K)=0 \}$ and the set $\{L \textrm{ is a simple object in } \mathcal{D}^{b,ss}_{G_{\mathbf{V}}}(\mathbf{E}_{\mathbf{V},\Omega})$ and $t_{i}^{\ast}(L)=t \} $, which is  induced by the decompositions of the direct sums above. Moreover, the bijection above also restricts to a bijection between the set $\{K \in \mathcal{P}_{\mathbf{V}'}|t^{\ast}_{i}(K)=0 \}$  and the set $\{L \in \mathcal{P}_{\mathbf{V}}|t^{\ast}_{i}(L)=t \}$.
\end{lemma}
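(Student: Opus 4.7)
The strategy is to reduce Lemma \ref{rkey'} to its sink counterpart Lemma \ref{lkey'} via the Fourier-Deligne transform. Since $i$ is a source in $\Omega$ and $\Gamma$ has no loops at $i$, the orientation $\Omega''$ obtained from $\Omega$ by reversing every arrow incident to $i$ has $i$ as a sink. Let $\mathcal{F} = \mathcal{F}_{\Omega, \Omega''}$. This is an equivalence of bounded derived $G_{\mathbf{V}}$-equivariant categories by Remark \ref{remarkFD}(1); it restricts to an equivalence $\mathcal{Q}_{\mathbf{V}, \Omega} \cong \mathcal{Q}_{\mathbf{V}, \Omega''}$ and a bijection on simple objects by Corollary \ref{FD2}; and it commutes with both $\mathbf{Ind}^{\mathbf{V}}_{\mathbf{V}', \mathbf{V}''}$ and $\mathbf{Res}^{\mathbf{V}}_{\mathbf{T}, \mathbf{W}}$ by Proposition \ref{FD0} and Remark \ref{remarkFD}(2) (the analogous restriction compatibility being proved identically, and standard).

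The geometric heart of the argument is the claim that $\mathcal{F}$ intertwines the source invariant with the sink invariant: for a simple perverse sheaf $L$ on $\mathbf{E}_{\mathbf{V}, \Omega}$, the transform $\mathcal{F}(L)$ is, up to a shift, a simple perverse sheaf on $\mathbf{E}_{\mathbf{V}, \Omega''}$ with $t_{i}(\mathcal{F}(L)) = t_{i}^{\ast}(L)$. Because $\Omega$ and $\Omega''$ differ only in the arrows at $i$, $\mathcal{F}$ acts as the equivariant Fourier transform on the single factor $\mathbf{Hom}(\mathbf{V}_{i}, \bigoplus_{h \in \Omega,\, h' = i} \mathbf{V}_{h''})$, sending it to the Fourier-dual factor $\mathbf{Hom}(\bigoplus_{h \in \Omega,\, h' = i} \mathbf{V}_{h''}, \mathbf{V}_{i})$. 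The source stratum $\mathbf{E}_{\mathbf{V}, i}^{\geqslant t}$ is cut out by the condition that the $i$-factor has rank $\leqslant \dim \mathbf{V}_{i} - t$ in its domain, while the sink stratum $\mathbf{E}_{\mathbf{V}, i, \geqslant t}$ is cut out by the same rank condition on the Fourier-dual codomain. Fourier duality for these determinantal loci, together with the rigid constraint imposed by $G_{\mathbf{V}}$-equivariance, then forces $\mathrm{supp}(\mathcal{F}(L))$ into the correct sink stratum of level $t$.

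Given this intertwining, parts (1)--(3) follow from the corresponding parts of Lemma \ref{lkey'} applied in $\Omega''$. For (1), apply $\mathcal{F}$ to $L$, decompose $\mathbf{Res}^{\mathbf{V}}_{\mathbf{T}, \mathbf{W}}(\mathcal{F}(L))$ via Lemma \ref{lkey'}(1), and transport the decomposition back by $\mathcal{F}^{-1}$; the convention $|\mathbf{W}| = ti$ in the source case, versus $|\mathbf{T}| = ti$ in the sink case, reflects the quotient--subspace duality swapped by $\mathcal{F}$. Part (2) is handled analogously, with the constant factor $\bar{\mathbb{Q}}_{l}$ migrating from the $\mathbf{V}'$ side (sink) to the $\mathbf{V}''$ side (source) for the same reason. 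Part (3) is then formal: set $\pi^{\ast}_{i, t} = \mathcal{F}^{-1} \circ \pi_{i, t} \circ \mathcal{F}$, and the restriction to Lusztig sheaves follows from Corollary \ref{FD2}.

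The main technical obstacle is the Fourier-duality claim of the second paragraph: verifying at the level of simple perverse sheaves that the rank filtration on $\mathbf{Hom}(\mathbf{V}_{i}, W)$, which defines $t_{i}^{\ast}$, corresponds under the Fourier-Deligne transform to the rank filtration on $\mathbf{Hom}(W, \mathbf{V}_{i})$, which defines $t_{i}$. This can be handled either by a microlocal argument -- singular supports are conjugated by the canonical symplectic pairing on $T^{\ast}\mathbf{Hom}(\mathbf{V}_{i}, W) \cong \mathbf{Hom}(\mathbf{V}_{i}, W) \oplus \mathbf{Hom}(W, \mathbf{V}_{i})$ -- or alternatively by reprising Lusztig's original stratification argument of Lemma \ref{lkey'} in the dual setting. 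Once the stratum intertwining is established, the rest of the proof is a purely formal transport of structure from Lemma \ref{lkey'}.
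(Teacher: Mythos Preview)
Your Fourier-transform reduction has a genuine gap: the Fourier--Deligne transform commutes with $\mathbf{Ind}^{\mathbf{V}}_{\mathbf{V}',\mathbf{V}''}$ and $\mathbf{Res}^{\mathbf{V}}_{\mathbf{T},\mathbf{W}}$ \emph{without} swapping the factors. Proposition~\ref{FD0} says precisely that $\mathcal{F}(\mathbf{Ind}^{\mathbf{V}}_{\mathbf{V}',\mathbf{V}''}(L_{1}\boxtimes L_{2}))\cong\mathbf{Ind}^{\mathbf{V}}_{\mathbf{V}',\mathbf{V}''}(\mathcal{F}L_{1}\boxtimes\mathcal{F}L_{2})$, with $\mathbf{V}'$ and $\mathbf{V}''$ in the same positions. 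So if you start from the source situation of Lemma~\ref{rkey'}(2), where $|\mathbf{V}''|=ti$ and one studies $\mathbf{Ind}^{\mathbf{V}}_{\mathbf{V}',\mathbf{V}''}(K\boxtimes\bar{\mathbb{Q}}_{l})$, and apply $\mathcal{F}$ to land in the sink orientation $\Omega''$, you obtain $\mathbf{Ind}^{\mathbf{V}}_{\mathbf{V}',\mathbf{V}''}(\mathcal{F}(K)\boxtimes\bar{\mathbb{Q}}_{l})$ at a sink with the $ti$ piece still sitting in the \emph{sub} slot $\mathbf{V}''$. But Lemma~\ref{lkey'}(2) is stated with $|\mathbf{V}'|=ti$, i.e.\ the $ti$ piece in the \emph{quotient} slot. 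These are different induction problems, and Lemma~\ref{lkey'} does not apply to the one you land in. The same mismatch occurs for part~(1) with the restriction functor. Your sentence ``the quotient--subspace duality swapped by $\mathcal{F}$'' is therefore the point where the argument breaks: no such swap happens.

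The paper's intended proof is not a Fourier reduction at all. The word ``dual'' before the statement means \emph{formally dual}: one reruns Lusztig's stratification argument from \cite[Lemma~6.4]{MR1088333} with every occurrence of ``image of incoming maps'' replaced by ``kernel of outgoing maps'', and the roles of sub and quotient in the induction/restriction diagrams interchanged throughout. This is a direct argument in the source orientation, with no change of orientation and no Fourier transform. You actually gesture at this in your final paragraph (``reprising Lusztig's original stratification argument \dots\ in the dual setting''), and that is the correct proof; but it is a self-contained argument, not a corollary of Lemma~\ref{lkey'} via $\mathcal{F}$. Even granting your intertwining claim $t_{i}(\mathcal{F}L)=t_{i}^{\ast}(L)$, the functorial mismatch above means the Fourier route would still require proving, at a sink, a variant of Lemma~\ref{lkey'} with the $ti$ piece on the opposite side---which is exactly the work of proving Lemma~\ref{rkey'} directly.
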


In this section, if $|\mathbf{V}'|=|\mathbf{V}|-ri$ and $|\mathbf{V}''|=ri$, we denote $\mathbf{V}'$ by $\mathbf{V}'_{\nu-ri}$ and $\mathbf{V}''$ by $\mathbf{V}''_{ri}$. For an orientation $\Omega'$ and a simple perverse sheaf $L$, we define $s_{i}^{\ast}(L)$ to be the largest integer $r$ satisfying that there exists $L'$ such that $L$ is isomorphic to a direct summand of $\mathbf{Ind}^{\mathbf{V}}_{\mathbf{V}'_{\nu-ri},\mathbf{V}''_{ri}}(L' \boxtimes \bar{\mathbb{Q}}_{l})$. The following Proposition is dual to Proposition \ref{lt'}.

\begin{proposition}\label{rt'}
	Let $L \in \mathcal{D}^{b,ss}_{G_{\mathbf{V}}}(\mathbf{E}_{\mathbf{V},\Omega'})$ be a simple perverse sheaf with $s^{\ast}_{i}(L)=r$.
	
	(1)  There exist semisimple complexes $L'_{r'} \in \mathcal{D}^{b,ss}_{G_{\mathbf{V}'_{\nu-r'i}}}(\mathbf{E}_{\mathbf{V}'_{\nu-r'i},\Omega'})$ for $r' > s_{i}^{\ast}(L)$ and semisimple complexes $L''_{r'} \in  \mathcal{D}^{b,ss}_{G_{\mathbf{V}'_{\nu-r'i}}}(\mathbf{E}_{\mathbf{V}'_{\nu-r'i},\Omega'})$ for $r' \geqslant s_{i}^{\ast}(L)$ such that 
	\begin{center}
		$L \oplus \bigoplus \limits_{r' >s_{i}^{\ast}(L)}\mathbf{Ind}^{\mathbf{V}}_{\mathbf{V}'_{\nu-r'i},\mathbf{V}''_{r'i}}(L'_{r'} \boxtimes \bar{\mathbb{Q}}_{l}) \cong \bigoplus \limits_{r' \geqslant s_{i}^{\ast}(L)}\mathbf{Ind}^{\mathbf{V}}_{\mathbf{V}'_{\nu-r'i},\mathbf{V}''_{r'i}}(L''_{r'} \boxtimes \bar{\mathbb{Q}}_{l}). $
	\end{center}
	Moreover, if $L \in \mathcal{P}_{\mathbf{V},\Omega'}$, then $L'_{r'}$ and $L''_{r'}$ can be chosen in $\mathcal{Q}_{\mathbf{V}'_{\nu-r'i},\Omega' }$.
	
	(2) $s_{i}^{\ast}(L)=t_{i}^{\ast}(L)$, if $i$ is a source in $\Omega'$.
	
\end{proposition}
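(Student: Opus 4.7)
The plan is to mirror the proof of Proposition \ref{lt'} step-by-step on the source side, using Lemma \ref{rkey'} as the dual counterpart of Lemma \ref{lkey'}. First I would dispose of the orientation dependence: for a general orientation $\Omega'$, the Fourier-Deligne transform $\mathcal{F}_{\Omega,\Omega'}$ from an orientation $\Omega$ in which $i$ is a source gives an equivalence of categories which, by Proposition \ref{FD0}, commutes with $\mathbf{Ind}^{\mathbf{V}}_{\mathbf{V}'_{\nu-ri},\mathbf{V}''_{ri}}(-\boxtimes\bar{\mathbb{Q}}_{l})$ for every $r$. Thus $s^{\ast}_{i}(L)$ and the precise form of the decomposition in (1) are preserved under $\mathcal{F}_{\Omega,\Omega'}$, and it suffices to prove both statements in an orientation $\Omega$ in which $i$ is a source.

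For part (2), working in such $\Omega$, the source-side analogue of Lusztig's argument goes as follows. If $L$ is a direct summand of $\mathbf{Ind}^{\mathbf{V}}_{\mathbf{V}'_{\nu-ri},\mathbf{V}''_{ri}}(L'\boxtimes\bar{\mathbb{Q}}_{l})$, then a direct inspection of the diagram defining $\mathbf{Ind}$ shows that every point in $\textrm{supp}(L)$ comes from an $(x,\tilde{\mathbf{W}},\rho_{1},\rho_{2})$ with $\tilde{\mathbf{W}}$ an $x$-stable subspace of dimension vector $ri$, which forces $\tilde{\mathbf{W}}\subseteq \ker\bigoplus_{h\in\Omega,h'=i}x_{h}$, i.e.\ $\textrm{supp}(L)\subseteq \mathbf{E}_{\mathbf{V},i}^{\geqslant r}$. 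Taking $r=s^{\ast}_{i}(L)$ gives $t^{\ast}_{i}(L)\geqslant s^{\ast}_{i}(L)$. Conversely, Lemma \ref{rkey'}(2) applied to the $K$ associated with $L$ under the bijection $\pi^{\ast}_{i,t}$ (with $t=t^{\ast}_{i}(L)$) exhibits $L$ as a direct summand of an induction with $\bar{\mathbb{Q}}_{l}$ in the second slot and $|\mathbf{V}''|=ti$, hence $s^{\ast}_{i}(L)\geqslant t^{\ast}_{i}(L)$.

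For part (1), I would argue by descending induction on $r=s^{\ast}_{i}(L)=t^{\ast}_{i}(L)$. The base case $r=\nu_{i}$ is trivial since then $L$ is itself of the desired form (up to shift). For the inductive step, let $K$ be the simple perverse sheaf with $t^{\ast}_{i}(K)=0$ corresponding to $L$ via the bijection $\pi^{\ast}_{i,r}$ of Lemma \ref{rkey'}(3). Then Lemma \ref{rkey'}(2) gives a decomposition
\begin{equation*}
\mathbf{Ind}^{\mathbf{V}}_{\mathbf{V}'_{\nu-ri},\mathbf{V}''_{ri}}(K\boxtimes \bar{\mathbb{Q}}_{l})\cong L\oplus \bigoplus_{j} L'_{j}[g_{j}]
\end{equation*}
in which every other summand $L'_{j}$ satisfies $t^{\ast}_{i}(L'_{j})>r$, hence $s^{\ast}_{i}(L'_{j})>r$ by part (2) already established. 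Applying the inductive hypothesis to each $L'_{j}[g_{j}]$ and rearranging terms produces the required identity; choosing $L''_{r}$ to contain the $K$-contribution and $L'_{r'}$ for $r'>r$ to absorb the $L'_{j}$ and their own inductive expansions closes the induction. The Lusztig-sheaf refinement comes for free because the bijection $\pi^{\ast}_{i,t}$ in Lemma \ref{rkey'}(3) already restricts to $\mathcal{P}_{\mathbf{V}}$.

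The hard part is mostly bookkeeping: verifying that the multiplicity of $L$ on the right-hand side of Lemma \ref{rkey'}(2) is exactly one (so that subtracting it in the inductive step is legitimate), tracking the shifts carefully so that the resulting $L'_{r'}$ and $L''_{r'}$ remain semisimple (respectively in $\mathcal{Q}_{\mathbf{V}'_{\nu-r'i},\Omega'}$), and confirming that the Fourier-Deligne transfer at the very beginning really preserves the integer $s^{\ast}_{i}(L)$ rather than merely the property of being obtainable from an induction. Once these verifications are in place, the argument is a direct source-side transcription of Lusztig's sink-side proof.
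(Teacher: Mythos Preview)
Your proposal is correct and follows exactly the approach the paper intends: the paper simply states that this proposition is dual to Proposition~\ref{lt'} (which in turn is Lusztig's Proposition~6.6 generalized to all simple perverse sheaves), and your argument is precisely the source-side transcription of that sink-side proof, including the Fourier--Deligne reduction, the support analysis for one inequality in (2), the appeal to Lemma~\ref{rkey'} for the other, and the descending induction on $r$ for (1).
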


\section{Realization of the integrable highest weight modules}

Given a symmetric Cartan datum $(I,(-,-))$, we denote by $\alpha_{i}^{\vee}$ the simple coroot for $i\in I$. In this section, we fix a dominant weight $\Lambda$ and set $d_{i}=\langle \Lambda,\alpha_{i}^{\vee} \rangle\in \mathbb{N}$ for $i \in I$. 

Consider the framed quiver $\hat{Q}=(I \cup \hat{I},\hat{H},\hat{\Omega})$, where $$\hat{I}= \{\hat{i}| i\in I \},~ \hat{H}=H \cup \{i \rightarrow \hat{i},\hat{i} \rightarrow i| i\in I \},~ \hat{\Omega}=\Omega \cup \{i \rightarrow \hat{i}| i\in I \}.$$ Take an $\hat{I}$-graded space $\mathbf{W}$ such that $\textrm{dim} \mathbf{W}_{\hat{i}}=d_{i}$ for any $i \in I$ and set 
$$\mathbf{E}_{\mathbf{V},\mathbf{W},\hat{\Omega}}=\mathbf{E}_{\mathbf{V},\Omega}\oplus \bigoplus\limits_{i \in I} \mathbf{Hom}(\mathbf{V}_{i},\mathbf{W}_{\hat{i}}). $$

The algebraic group $G_{\mathbf{V}}$ acts naturally on $\mathbf{E}_{\mathbf{V},\mathbf{W},\hat{\Omega}}$ and there is a natural projection $$\pi_{\mathbf{W}}: \mathbf{E}_{\mathbf{V},\mathbf{W},\hat{\Omega}} \rightarrow \mathbf{E}_{\mathbf{V},\Omega}, $$
which is a  $G_{\mathbf{V}}$-equivariant trivial vector bundle.

Let $\mathcal{Q}_{\mathbf{V},\mathbf{W}}$ be the full subcategory of $\mathcal{D}^{b}_{G_{\mathbf{V}}}(\mathbf{E}_{\mathbf{V},\mathbf{W},\hat{\Omega}} )$ consisting objects of the form $(\pi_{\mathbf{W}})^{\ast}(L)$ for some object $L$ in $\mathcal{Q}_{\mathbf{V}}$. In particular, the set of simple objects in $\mathcal{Q}_{\mathbf{V},\mathbf{W}}$ is naturally bijective to $\mathcal{P}_{\mathbf{V}}$ via $(\pi_{\mathbf{W}})^{\ast}[\textrm{rank}(\pi_{\mathbf{W}})]$, since $(\pi_{\mathbf{W}})^{\ast}$ is fully faithful. The following proposition follows from an observation of Y.Li in \cite{MR3177922}. 

\begin{proposition}
	Fix any ordrer $i_{1},i_{2},\cdots,i_{n}$ of $I$, let $\boldsymbol{d}$ be the flag type of $\mathbf{W}$ such that $\boldsymbol{d}=((\hat{i}_{1})^{d_{\hat{i}_{1}}},(\hat{i}_{2})^{d_{\hat{i}_{2}}},\cdots, (\hat{i}_{n})^{d_{\hat{i}_{n}}})$. Then for any flag type $\boldsymbol{\nu}$ of $\mathbf{V}$, $L_{\boldsymbol{\nu} \boldsymbol{d}}\cong (\pi_{\mathbf{W}})^{\ast}(L_{\boldsymbol{\nu}})[\sum\limits_{ i \in I }\nu_{i}d_{i}]$.  In particular, $\mathcal{Q}_{\mathbf{V},\mathbf{W}}$ is the full subcategory consisting of direct sums of shifted summands of $L_{\boldsymbol{\nu}\boldsymbol{d}}$ for any flag types $\boldsymbol{\nu}$. 
\end{proposition}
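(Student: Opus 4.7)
\bigskip

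\noindent\textbf{Proof proposal.} The plan is to identify the flag variety $\mathcal{F}_{\boldsymbol{\nu}\boldsymbol{d},\hat{\Omega}}$ as a Cartesian fiber product involving $\mathcal{F}_{\boldsymbol{\nu},\Omega}$ and the vector bundle $\pi_{\mathbf{W}}$, and then apply base change to the constant sheaf.

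First I would analyze the structure of a flag of type $\boldsymbol{\nu}\boldsymbol{d}=((i_1)^{a_1},\ldots,(i_m)^{a_m},(\hat i_1)^{d_{\hat i_1}},\ldots,(\hat i_n)^{d_{\hat i_n}})$ on $\mathbf{V}\oplus\mathbf{W}$. Since the last $n$ successive quotients are supported on $\hat I$ and together exhaust $\mathbf{W}$, the $m$-th term $\mathbf{U}^m$ must coincide with $\mathbf{W}$ and the remaining tail flag on $\mathbf{W}$ of type $\boldsymbol{d}$ is forced to be the unique $\hat I$-graded filtration $\mathbf{W}\supseteq \mathbf{W}_{\hat i_2}\oplus\cdots\oplus\mathbf{W}_{\hat i_n}\supseteq\cdots\supseteq 0$. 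Hence specifying such a flag is equivalent to specifying a flag $f_0=(0\subseteq \mathbf{V}^m\subseteq\cdots\subseteq \mathbf{V}^0=\mathbf{V})$ of $\mathbf{V}$ of type $\boldsymbol{\nu}$.

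Next I would check the $x$-stability condition. Writing $x=(x_1,x_2)$ with $x_1\in\mathbf{E}_{\mathbf{V},\Omega}$ and $x_2\in\bigoplus_i \mathbf{Hom}(\mathbf{V}_i,\mathbf{W}_{\hat i})$, the subspaces $\mathbf{U}^k=\mathbf{V}^k\oplus\mathbf{W}$ ($k\le m$) are $x$-stable iff $x_1(\mathbf{V}^k)\subseteq\mathbf{V}^k$; the constraint coming from $x_2$ is automatic since $x_2(\mathbf{V}^k)\subseteq\mathbf{W}\subseteq\mathbf{U}^k$, and the $\mathbf{W}$-part of the flag is stable because no arrow of $\hat{\Omega}$ leaves $\hat I$. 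This produces a canonical isomorphism of varieties
\begin{equation*}
\mathcal{F}_{\boldsymbol{\nu}\boldsymbol{d},\hat{\Omega}}\;\cong\;\mathcal{F}_{\boldsymbol{\nu},\Omega}\times_{\mathbf{E}_{\mathbf{V},\Omega}}\mathbf{E}_{\mathbf{V},\mathbf{W},\hat{\Omega}},
\end{equation*}
sitting in a Cartesian square whose bottom row is $\pi_{\mathbf{W}}$ and whose right column is $\pi_{\boldsymbol{\nu},\Omega}$. The top arrow $q:\mathcal{F}_{\boldsymbol{\nu}\boldsymbol{d},\hat{\Omega}}\to\mathcal{F}_{\boldsymbol{\nu},\Omega}$ is itself a trivial vector bundle of rank $\sum_{i\in I}\nu_i d_i$ (the same rank as $\pi_{\mathbf{W}}$), since it is obtained by pulling back $\pi_{\mathbf{W}}$.

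Finally I would apply proper base change (using properness of $\pi_{\boldsymbol{\nu},\Omega}$, or equivalently smooth base change along $\pi_{\mathbf{W}}$) to the constant sheaf $\bar{\mathbb{Q}}_l=q^{\ast}\bar{\mathbb{Q}}_l$ on $\mathcal{F}_{\boldsymbol{\nu}\boldsymbol{d},\hat{\Omega}}$, obtaining
\begin{equation*}
(\pi_{\boldsymbol{\nu}\boldsymbol{d},\hat{\Omega}})_{!}\bar{\mathbb{Q}}_l\;\cong\;\pi_{\mathbf{W}}^{\ast}(\pi_{\boldsymbol{\nu},\Omega})_{!}\bar{\mathbb{Q}}_l.
\end{equation*}
Shifting by $\dim\mathcal{F}_{\boldsymbol{\nu}\boldsymbol{d},\hat{\Omega}}$ and rewriting in terms of $L_{\boldsymbol{\nu}}$ using $\dim\mathcal{F}_{\boldsymbol{\nu}\boldsymbol{d},\hat{\Omega}}-\dim\mathcal{F}_{\boldsymbol{\nu},\Omega}=\mathrm{rank}(\pi_{\mathbf{W}})=\sum_{i\in I}\nu_i d_i$ yields the claimed identity. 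The ``in particular'' assertion is then immediate: since $\pi_{\mathbf{W}}^{\ast}$ is fully faithful and commutes with taking shifted direct summands, the subcategory of direct sums of shifted summands of the $L_{\boldsymbol{\nu}\boldsymbol{d}}$ coincides with $\pi_{\mathbf{W}}^{\ast}(\mathcal{Q}_{\mathbf{V}})=\mathcal{Q}_{\mathbf{V},\mathbf{W}}$.

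I don't expect a real obstacle here; the only subtlety is verifying precisely that the combinatorics of the flag type $\boldsymbol{\nu}\boldsymbol{d}$ rigidly forces $\mathbf{U}^m=\mathbf{W}$ and the tail flag to be the canonical one, which is what turns the situation into a clean base-change computation.
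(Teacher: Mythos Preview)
Your argument is correct. The paper's own proof is shorter because it invokes existing machinery: it first observes that the induction functor $\mathbf{Ind}^{\mathbf{V}\oplus\mathbf{W}}_{\mathbf{V},\mathbf{W}}(-\boxtimes L_{\boldsymbol{d}})$ is isomorphic to $(\pi_{\mathbf{W}})^{\ast}[\sum_i\nu_id_i]$ (because in the induction diagram the only $x$-stable subspace of dimension $|\mathbf{W}|$ is $\mathbf{W}$ itself, so $p_3$ is an isomorphism and $p_1$ collapses to $\pi_{\mathbf{W}}$), and then applies the induction formula $\mathbf{Ind}(L_{\boldsymbol{\nu}}\boxtimes L_{\boldsymbol{d}})\cong L_{\boldsymbol{\nu}\boldsymbol{d}}$ from Proposition~\ref{indres formula}.

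What you do instead is unwind both steps simultaneously by working directly with the flag varieties: your Cartesian square $\mathcal{F}_{\boldsymbol{\nu}\boldsymbol{d},\hat{\Omega}}\cong\mathcal{F}_{\boldsymbol{\nu},\Omega}\times_{\mathbf{E}_{\mathbf{V},\Omega}}\mathbf{E}_{\mathbf{V},\mathbf{W},\hat{\Omega}}$ is precisely the content of Lusztig's proof of the induction formula specialized to this situation, combined with the degeneration of the induction diagram. Your route is more elementary and self-contained (no need to quote Proposition~\ref{indres formula}); the paper's route is shorter and situates the result within the induction formalism that is used repeatedly afterwards. Both are valid and essentially equivalent once unpacked.
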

\begin{proof}
	By definition of the induction functor, one can easily check that $$(\pi_{\mathbf{W}})^{\ast}[\sum\limits_{ i \in I }\nu_{i}d_{i} ] \cong \mathbf{Ind}^{\mathbf{V}\oplus\mathbf{W}}_{\mathbf{V},\mathbf{W}}(- \boxtimes L_{\boldsymbol{d}}). $$ By Proposition \ref{indres formula}, we finish the proof.
\end{proof}

Now we define the induction functor for framed quivers. For graded spaces $\mathbf{V}, \mathbf{V}',\mathbf{V}''$ of dimension vectors $\nu,\nu',\nu''$ respectively, such that $\nu'+\nu''=\nu$.
The induction functor $$\mathbf{Ind}^{\mathbf{V}\oplus\mathbf{W}}_{\mathbf{V}',\mathbf{V}''\oplus \mathbf{W}}(-\boxtimes-)=(p_{3})_{!}(p_{2})_{\flat}(p_{1})^{\ast}(-\boxtimes-)[d_{1}-d_{2}] $$  is defined by the following diagram
\begin{center}
	$\mathbf{E}_{\mathbf{V}',0,\hat{\tilde{\Omega}}} \times \mathbf{E}_{\mathbf{V}'',\mathbf{W},\hat{\Omega}}\xleftarrow{p_{1}} \mathbf{E}'_{\mathbf{V},\mathbf{W},\hat{\Omega}} \xrightarrow{p_{2}} \mathbf{E}''_{\mathbf{V},\mathbf{W},\hat{\Omega}} \xrightarrow{p_{3}} \mathbf{E}_{\mathbf{V},\mathbf{W},\hat{\Omega}},$
\end{center}
here $\mathbf{E}'_{\mathbf{V},\mathbf{W},\hat{\Omega}}$ is the variety consisting of $(x,\mathbf{S}, \rho_{1}, \rho_{2})$, where $x \in \mathbf{E}_{\mathbf{V},\mathbf{W},\hat{\Omega}}$ and $\mathbf{S}$ is an $x$-stable subspace of $\mathbf{V}\oplus \mathbf{W}$ with dimension vector $|\mathbf{V}''\oplus \mathbf{W}|$, $ \rho_{1}: \mathbf{V}\oplus\mathbf{W}/\mathbf{S} \simeq \mathbf{V}'$ is a linear isomorphism of graded spaces, and $\rho_{2}:\mathbf{S} \simeq \mathbf{V}''\oplus\mathbf{W}$ is a linear isomorphism of graded spaces such that $\rho_{2}|_{\mathbf{W}}=id_{\mathbf{W}}$. (In this case, we also forget $\mathbf{W}$ and just say $\rho_{2}$ is a linear isomorphism of $\mathbf{S} \simeq \mathbf{V}''$.)  Here $\mathbf{E}''_{\mathbf{V},\mathbf{W},\hat{\Omega}}$ is the variety consisting of $(x,\mathbf{S})$ with the same conditions as above.  The morphisms $p_{1},p_{2},p_{3}$ are defined by same formulas in equations (\ref{Lind}). The shifts $d_{1},d_{2}$ are given by the relative dimensions of $p_{1}$ and $p_{2}$ respectively.

Notice that $p_{1}$ and $p_{3}$ are $G_{\mathbf{V}}$-equivariant and $p_{2}$ is a $G_{\mathbf{V}'}\times G_{\mathbf{V}''}$ principal bundle, we can see that the induction functor, defined by the composition
\begin{equation*}
	\begin{split}
		&\mathcal{D}^{b}_{G_{\mathbf{V}'}}(\mathbf{E}_{\mathbf{V}',\Omega}) \times \mathcal{D}^{b}_{G_{\mathbf{V}''}}(\mathbf{E}_{\mathbf{V}'',\mathbf{W},\hat{\Omega}}) \cong \mathcal{D}^{b}_{G_{\mathbf{V}'}}(\mathbf{E}_{\mathbf{V}',0,\hat{\Omega}}) \times \mathcal{D}^{b}_{G_{\mathbf{V}''}}(\mathbf{E}_{\mathbf{V}'',\mathbf{W},\hat{\Omega}}) \xrightarrow{\boxtimes} \\ &\mathcal{D}^{b}_{G_{\mathbf{V}'}\times G_{\mathbf{V}''}}(\mathbf{E}_{\mathbf{V}',0,\hat{\Omega}} \times\mathbf{E}_{\mathbf{V}'',\mathbf{W},\hat{\Omega}}) \xrightarrow{(p_{1})^{\ast}}  \mathcal{D}^{b}_{G_{\mathbf{V}'}\times G_{\mathbf{V}''} \times\mathbf{G}_{\mathbf{V}}}(\mathbf{E}'_{\mathbf{V},\mathbf{W},\hat{\Omega}})\xrightarrow{(p_{2})_{\flat}} \\ &\mathcal{D}^{b}_{\mathbf{G}_{\mathbf{V}}}(\mathbf{E}''_{\mathbf{V},\mathbf{W},\hat{\Omega}}) \xrightarrow{(p_{3})_{!}} \mathcal{D}^{b}_{G_{\mathbf{V}}}(\mathbf{E}_{\mathbf{V},\mathbf{W},\hat{\Omega}}),
	\end{split}
\end{equation*}
denfines a functor $\mathcal{D}^{b}_{G_{\mathbf{V}'}}(\mathbf{E}_{\mathbf{V}',\Omega}) \times \mathcal{D}^{b}_{G_{\mathbf{V}''}}(\mathbf{E}_{\mathbf{V}'',\mathbf{W},\hat{\Omega}}) \rightarrow  \mathcal{D}^{b}_{G_{\mathbf{V}}}(\mathbf{E}_{\mathbf{V},\mathbf{W},\hat{\Omega}})$. Dually, we can also define $\mathbf{Ind}^{\mathbf{V}\oplus\mathbf{W}}_{\mathbf{V}'\oplus \mathbf{W},\mathbf{V}''}: \mathcal{D}^{b}_{G_{\mathbf{V}'}}(\mathbf{E}_{\mathbf{V}',\mathbf{W},\hat{\Omega}}) \times \mathcal{D}^{b}_{G_{\mathbf{V}''}}(\mathbf{E}_{\mathbf{V}'',\Omega}) \rightarrow  \mathcal{D}^{b}_{G_{\mathbf{V}}}(\mathbf{E}_{\mathbf{V},\mathbf{W},\hat{\Omega}})$.

Recall that in the definition of Lusztig's induction functor, one need to consider the following diagram
\begin{center}
	$\mathbf{E}_{\mathbf{V}',0,\hat{\tilde{\Omega}}} \times \mathbf{E}_{\mathbf{V}'',\mathbf{W},\hat{\Omega}}\xleftarrow{p^{\diamond}_{1}} \mathbf{E}^{\diamond}_{\mathbf{V},\mathbf{W},\hat{\Omega}} \xrightarrow{p^{\diamond}_{2}} \mathbf{E}''_{\mathbf{V},\mathbf{W},\hat{\Omega}} \xrightarrow{p_{3}} \mathbf{E}_{\mathbf{V},\mathbf{W},\hat{\Omega}},$
\end{center}
where $\mathbf{E}^{'}_{\mathbf{V},\mathbf{W},\hat{\Omega}}$ is replaced by the variety $\mathbf{E}^{\diamond}_{\mathbf{V},\mathbf{W},\hat{\Omega}}$, which consists of $(x,\mathbf{S}, \rho_{1}, \rho_{2})$ such that $x \in \mathbf{E}_{\mathbf{V},\mathbf{W},\hat{\Omega}}$ and $\mathbf{S}$ is an $x$-stable subspace of $\mathbf{V}\oplus \mathbf{W}$ with dimension vector $|\mathbf{V}''\oplus \mathbf{W}|$, $ \rho_{1}: \mathbf{V}\oplus\mathbf{W}/\mathbf{S} \simeq \mathbf{V}', \rho_{2}:\mathbf{S} \simeq \mathbf{V}''\oplus\mathbf{W}$ are  linear isomorphisms of graded spaces. The morphisms are defined by equations  (\ref{Lind}). Then $p_{1},p_{2},p^{\diamond}_{1},p^{\diamond}_{2}$ form  commutative diagrams
\[
\xymatrix{
	&  \mathbf{E}^{\diamond}_{\mathbf{V},\mathbf{W},\hat{\Omega}}  \ar[d]^{p^{\diamond}_{2}} \ar[ld]_{\rho}
	\\
	\mathbf{E}^{'}_{\mathbf{V},\mathbf{W},\hat{\Omega}} \ar[r]^{p_{2}}
	&  	\mathbf{E}^{''}_{\mathbf{V},\mathbf{W},\hat{\Omega}}  ,
}
\]
\[
\xymatrix{
	\mathbf{E}_{\mathbf{V}',0,\hat{\tilde{\Omega}}} \times \mathbf{E}_{\mathbf{V}'',\mathbf{W},\hat{\Omega}}
	&  \mathbf{E}^{\diamond}_{\mathbf{V},\mathbf{W},\hat{\Omega}} \ar[l]_-{p^{\diamond}_{1}}  
	\\
	\mathbf{E}^{'}_{\mathbf{V},\mathbf{W},\hat{\Omega}}\ar[u]^{p_{1}} \ar[ur]_{e}
	&  	 ,
}
\]
where $\rho$ is the obvious $G_{\mathbf{W}}$-principal bundle and $e$ is the inclusion. These commutative diagrams imply that $(p_{2})_{\flat}(p_{1})^{\ast} \mathbf{For}^{G_{\mathbf{W}}} \cong \mathbf{For}^{G_{\mathbf{W}}}(p^{\diamond}_{2})_{\flat}(p^{\diamond}_{1})^{\ast} $, here $\mathbf{For}^{G_{\mathbf{W}}}$ is the functor forgetting the $G_{\mathbf{W}}$-equivariant structures. It implies that if $B$ is also $G_{\mathbf{W}}$-equivariant, then $\mathbf{Ind}^{\mathbf{V}\oplus\mathbf{W}}_{\mathbf{V}',\mathbf{V}''\oplus \mathbf{W}}(A\boxtimes B)$ coincides with Lusztig's induction. That's why we can still denote our functors by $\mathbf{Ind}$. In particular, since $L_{\boldsymbol{\nu} \boldsymbol{d}}$ is $G_{\mathbf{W}}$-equivariant, our induction also satisfies the induction formula in Proposition \ref*{indres formula},
\begin{equation*}
	\begin{split}
		\mathbf{Ind}^{\mathbf{V}\oplus\mathbf{W}}_{\mathbf{V}',\mathbf{V}''\oplus \mathbf{W}}(L_{\boldsymbol{\nu}'}\boxtimes L_{\boldsymbol{\nu}''\boldsymbol{d}})=L_{\boldsymbol{\nu}'\boldsymbol{\nu}''\boldsymbol{d}},\\
		\mathbf{Ind}^{\mathbf{V}\oplus\mathbf{W}}_{\mathbf{V}'\oplus \mathbf{W},\mathbf{V}''}(L_{\boldsymbol{\nu}'\boldsymbol{d}}\boxtimes L_{\boldsymbol{\nu}''})=L_{\boldsymbol{\nu}'\boldsymbol{d}\boldsymbol{\nu}''}.
	\end{split}
\end{equation*} 
Moreover, by a similar argument as Remark \ref{remarkFD}, the induction functor also commutes with Fourier-Deligne transforms.

\subsection{Localizations}
For any $i \in I$, we choose an orientation $\Omega^{i}$ of $Q$ such that $i$ is a source in $\Omega^{i}$, then $i$ is also a source in the orientation $\hat{\Omega}^{i}$ of the framed quiver $\hat{Q}$. Regard $\hat{Q}$ as a new quiver, then by section 2.4, there is a partition $\mathbf{E}_{\mathbf{V},\mathbf{W},i}^{ \geqslant 1} \cup \mathbf{E}_{\mathbf{V},\mathbf{W},i}^{0}= \mathbf{E}_{\mathbf{V},\mathbf{W},\hat{\Omega}^{i}}$, where $\mathbf{E}_{\mathbf{V},\mathbf{W},i}^{0}$ is the open subset 
$$\mathbf{E}_{\mathbf{V},\mathbf{W},i}^{0}=\{x \in \mathbf{E}_{\mathbf{V},\mathbf{W},\hat{\Omega}^{i}}| \textrm{dim} \textrm{Ker} (\bigoplus\limits_{h \in \hat{\Omega}^{i},h'=i} x_{h}: \mathbf{V}_{i} \rightarrow   \mathbf{W}_{\hat{i}} \oplus \bigoplus\limits_{h \in \Omega^{i},h'=i} \mathbf{V}_{h''} )=0  \} ,$$
and  $\mathbf{E}_{\mathbf{V},\mathbf{W},i}^{ \geqslant 1}$ is its complement. Denote the open embedding of $\mathbf{E}_{\mathbf{V},\mathbf{W},i}^{0}$ by $j_{\mathbf{V},i}$.

Following \cite{MR3200442}, let $\mathcal{N}_{\mathbf{V},i}$ be the full subcategory of $\mathcal{D}^{b}_{G_{\mathbf{V}}}(\mathbf{E}_{\mathbf{V},\mathbf{W},\hat{\Omega}^{i}})$ consisting of objects whose supports are contained in $\mathbf{E}_{\mathbf{V},\mathbf{W},i}^{ \geqslant 1}$, then $\mathcal{N}_{\mathbf{V},i}$ is a thick subcategory. We can see that the Verdier quotient $\mathcal{D}^{b}_{G_{\mathbf{V}}}(\mathbf{E}_{\mathbf{V},\mathbf{W},\hat{\Omega}^{i}})/\mathcal{N}_{\mathbf{V},i}$ is a triangulated category,  with a natural perverse $t$-structure induced from the perverse $t$-structure of $\mathcal{D}^{b}_{G_{\mathbf{V}}}(\mathbf{E}_{\mathbf{V},\mathbf{W},\hat{\Omega}^{i}})$. See details in \cite{BBD}.

Similarly, given an orientation $\Omega$ of $Q$, let $\hat{\Omega}$ be the associated orientation of $\hat{Q}$. Let $\mathcal{N}_{\mathbf{V}}$ be the thick subcategory of $\mathcal{D}^{b}_{G_{\mathbf{V}}}(\mathbf{E}_{\mathbf{V},\mathbf{W},\hat{\Omega}})$ generated by objects in $\mathcal{F}_{\hat{\Omega}^{i},\hat{\Omega}}( \mathcal{N}_{\mathbf{V},i}),i\in I$, then we can also define the Verdier quotient $\mathcal{D}^{b}_{G_{\mathbf{V}}}(\mathbf{E}_{\mathbf{V},\mathbf{W},\hat{\Omega}})/\mathcal{N}_{\mathbf{V}}$. It is also a triangulated category, with a natural perverse $t$-structure. If there is no ambiguity, we omit $\mathcal{F}_{\hat{\Omega},\hat{\Omega}^{i}}$ and denote $\mathcal{F}_{\hat{\Omega}^{i},\hat{\Omega}}(\mathcal{N}_{\mathbf{V},i})$ by $\mathcal{N}_{\mathbf{V},i}$.

\begin{definition}
	(a) For any $i\in I$ and an orientation $\Omega^{i}$ such that $i$ is a source in $\Omega^{i}$, define the localizations  of $\mathcal{Q}_{\mathbf{V},\mathbf{W}}$ and $\mathcal{D}^{b,ss}_{G_{\mathbf{V}}}(\mathbf{E}_{\mathbf{V},\mathbf{W},\hat{\Omega}^{i}})$  at $i$  to be the full subcategories of $\mathcal{D}^{b}_{G_{\mathbf{V}}}(\mathbf{E}_{\mathbf{V},\mathbf{W},\hat{\Omega}^{i}})/\mathcal{N}_{\mathbf{V},i}$ consisting of objects which are isomorphic to objects of $\mathcal{Q}_{\mathbf{V},\mathbf{W}}$ and $\mathcal{D}^{b,ss}_{G_{\mathbf{V}}}(\mathbf{E}_{\mathbf{V},\mathbf{W},\hat{\Omega}^{i}})$ respectively. Denote them  by $\mathcal{Q}_{\mathbf{V},\mathbf{W}}/\mathcal{N}_{\mathbf{V},i}$ and $\mathcal{D}^{b,ss}_{G_{\mathbf{V}}}(\mathbf{E}_{\mathbf{V},\mathbf{W},\hat{\Omega}^{i}})/\mathcal{N}_{\mathbf{V},i}$ respectively.\\
	(b)For an orientation $\Omega$ of $Q$, define the global localizations of $\mathcal{Q}_{\mathbf{V},\mathbf{W}}$ and $\mathcal{D}^{b,ss}_{G_{\mathbf{V}}}(\mathbf{E}_{\mathbf{V},\mathbf{W},\hat{\Omega}})$    to be the full subcategories of $\mathcal{D}^{b}_{G_{\mathbf{V}}}(\mathbf{E}_{\mathbf{V},\mathbf{W},\hat{\Omega}})/\mathcal{N}_{\mathbf{V}}$ consisting of objects which are isomorphic to objects of $\mathcal{Q}_{\mathbf{V},\mathbf{W}}$ and $\mathcal{D}^{b,ss}_{G_{\mathbf{V}}}(\mathbf{E}_{\mathbf{V},\mathbf{W},\hat{\Omega}})$ respectively. Denote them  by $\mathcal{Q}_{\mathbf{V},\mathbf{W}}/\mathcal{N}_{\mathbf{V}}$ and $\mathcal{D}^{b,ss}_{G_{\mathbf{V}}}(\mathbf{E}_{\mathbf{V},\mathbf{W},\hat{\Omega}})/\mathcal{N}_{\mathbf{V}}$.
\end{definition}

\begin{remark}
	Notice that if $\tilde{\Omega}^{i}$ is another orientation such that $i$ is a source in $\tilde{\Omega}^{i}$, then by definition $\tilde{\Omega}^{i}$ and $\Omega^{i}$ determine the same $\mathcal{N}_{\mathbf{V},i}$ up to Fourier-Deligne transforms. In particular, the localizations defined above are independent of the choices of those $\hat{\Omega}^{i}$.
\end{remark}

For any open embedding $j: U\rightarrow X$, the middle extension functor
\begin{equation*}
	j_{!\ast}:Perv(U) \rightarrow Perv(X)
\end{equation*}
can be naturally extended to an action on objects of the semisimple category. (But it is not a functor, since it can not be defined on morphisms.)  More precisely, for any direct sum of simple perverse sheaves up to shifts $L=\bigoplus\limits K[n]$, we set
\begin{equation*}
	j_{!\ast}(L)=\bigoplus\limits j_{!\ast}(K)[n].
\end{equation*}

\begin{lemma}\label{local}
	For any semisimple complex $L$ on $\mathbf{E}^{0}_{\mathbf{V},\mathbf{W},i}$,  there is an isomorphism   in the localization $\mathcal{D}^{b}_{G_{\mathbf{V}}}(\mathbf{E}_{\mathbf{V},\mathbf{W},\hat{\Omega}^{i}})/\mathcal{N}_{\mathbf{V},i},$ 
	$$(j_{\mathbf{V},i})_{!\ast} (L) \cong (j_{\mathbf{V},i})_{!} (L). $$ 
	In particular,  the functor $(j_{\mathbf{V},i})_{!}$ restricts to  equivalences of categories 
	\[
	\xymatrix{
		(j_{\mathbf{V},i})^{\ast} (\mathcal{Q}_{\mathbf{V},\mathbf{W}}) \ar@<0.5ex>[r]^{(j_{\mathbf{V},i})_{!}} & \mathcal{Q}_{\mathbf{V},\mathbf{W}}/\mathcal{N}_{\mathbf{V},i} \ar@<0.5ex>[l]^{(j_{\mathbf{V},i})^{\ast}}
	};
	\]
	\[
	\xymatrix{
		\mathcal{D}^{b,ss}_{G_{\mathbf{V}}}(\mathbf{E}^{0}_{\mathbf{V},\mathbf{W},i}) \ar@<0.5ex>[r]^{(j_{\mathbf{V},i})_{!}} & \mathcal{D}^{b,ss}_{G_{\mathbf{V}}}(\mathbf{E}_{\mathbf{V},\mathbf{W},\hat{\Omega}^{i}})/\mathcal{N}_{\mathbf{V},i} \ar@<0.5ex>[l]^{(j_{\mathbf{V},i})^{\ast}}
	}.
	\]
	
\end{lemma}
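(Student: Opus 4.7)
The plan is to establish the isomorphism $(j_{\mathbf{V},i})_{!}(L) \cong (j_{\mathbf{V},i})_{!\ast}(L)$ in the localization by exhibiting their cofiber as an object supported on the closed complement $\mathbf{E}_{\mathbf{V},\mathbf{W},i}^{\geqslant 1}$, and then use the standard recollement triangle to realize $(j_{\mathbf{V},i})^{\ast}$ and $(j_{\mathbf{V},i})_{!}$ as quasi-inverse equivalences.

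First I would prove the isomorphism. Let $\iota$ denote the closed complementary embedding of $\mathbf{E}_{\mathbf{V},\mathbf{W},i}^{\geqslant 1}$. For any object $A$ there is the standard distinguished triangle
\begin{equation*}
(j_{\mathbf{V},i})_{!}(j_{\mathbf{V},i})^{\ast} A \longrightarrow A \longrightarrow \iota_{\ast}\iota^{\ast}A \xrightarrow{+1}.
\end{equation*}
Apply it with $A=(j_{\mathbf{V},i})_{!\ast}(L)$, which is defined summand-by-summand for a semisimple $L$. Since $(j_{\mathbf{V},i})^{\ast}(j_{\mathbf{V},i})_{!\ast}(L)\cong L$, the left term becomes $(j_{\mathbf{V},i})_{!}(L)$, while $\iota_{\ast}\iota^{\ast}(j_{\mathbf{V},i})_{!\ast}(L)$ is supported on $\mathbf{E}_{\mathbf{V},\mathbf{W},i}^{\geqslant 1}$ and therefore lies in $\mathcal{N}_{\mathbf{V},i}$ by definition. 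Hence the natural morphism $(j_{\mathbf{V},i})_{!}(L)\to(j_{\mathbf{V},i})_{!\ast}(L)$ becomes an isomorphism in the Verdier quotient.

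Next I would set up the equivalences. The functor $(j_{\mathbf{V},i})^{\ast}$ annihilates every object of $\mathcal{N}_{\mathbf{V},i}$ (those restrict to zero on the open stratum), so it descends to a well-defined functor on the localization. Applying the same recollement triangle to an arbitrary $\tilde{A}$ in $\mathcal{Q}_{\mathbf{V},\mathbf{W}}$ (respectively in $\mathcal{D}^{b,ss}_{G_{\mathbf{V}}}(\mathbf{E}_{\mathbf{V},\mathbf{W},\hat{\Omega}^{i}})$) yields $(j_{\mathbf{V},i})_{!}(j_{\mathbf{V},i})^{\ast}\tilde{A}\cong\tilde{A}$ in the localization, since $\iota_{\ast}\iota^{\ast}\tilde{A}\in\mathcal{N}_{\mathbf{V},i}$. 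Combined with the tautological adjunction isomorphism $(j_{\mathbf{V},i})^{\ast}(j_{\mathbf{V},i})_{!}\cong\mathrm{id}$ for the open immersion $j_{\mathbf{V},i}$, this shows the two composites are naturally isomorphic to the identity functors, giving the claimed quasi-inverse equivalences in both the Lusztig-sheaf and the semisimple version.

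The last bit of bookkeeping is to check well-definedness of the target: for $L\in(j_{\mathbf{V},i})^{\ast}(\mathcal{Q}_{\mathbf{V},\mathbf{W}})$, the first step shows that $(j_{\mathbf{V},i})_{!}(L)$ is isomorphic in the localization to the Lusztig-type semisimple object $(j_{\mathbf{V},i})_{!\ast}(L)$, so it does represent a genuine object of $\mathcal{Q}_{\mathbf{V},\mathbf{W}}/\mathcal{N}_{\mathbf{V},i}$, and similarly one stays inside $\mathcal{D}^{b,ss}/\mathcal{N}_{\mathbf{V},i}$. The main subtlety to keep in mind is that $(j_{\mathbf{V},i})_{!\ast}$ is defined only on objects rather than on morphisms, which is why the argument is phrased in terms of isomorphism classes in the localization rather than as an equality of functors; the recollement triangle above is exactly what absorbs this subtlety and makes the equivalence canonical.
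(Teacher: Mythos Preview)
Your proof is correct and follows essentially the same approach as the paper: apply the recollement triangle to $A=(j_{\mathbf{V},i})_{!\ast}(L)$ to see that the cone of $(j_{\mathbf{V},i})_{!}(L)\to(j_{\mathbf{V},i})_{!\ast}(L)$ lies in $\mathcal{N}_{\mathbf{V},i}$, then deduce the equivalences from the general fact that $(j_{\mathbf{V},i})^{\ast}$ and $(j_{\mathbf{V},i})_{!}$ are quasi-inverse between the open-stratum derived category and the Verdier quotient. Your write-up is in fact slightly more explicit than the paper's about why the functors land in the correct subcategories, but the logic is identical.
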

\begin{proof}
	
	We only need to consider simple perverse sheaf $L$. Let $K=(j_{\mathbf{V},i})_{!\ast} (L) $, then there is a canonical triangle 
	\begin{equation*}
		(j_{\mathbf{V},i})_{!} (j_{\mathbf{V},i})^{\ast} (K)  \rightarrow K \rightarrow  i_{\ast}i^{\ast}(K) \rightarrow (j_{\mathbf{V},i})_{!} (j_{\mathbf{V},i})^{\ast} (K)[1]
	\end{equation*}
	where $i:\mathbf{E}^{\geqslant 1}_{\mathbf{V},\mathbf{W},i}\rightarrow \mathbf{E}_{\mathbf{V},\mathbf{W},\hat{\Omega}^{i}}$ is the closed embedding. Notice that $i_{\ast}i^{\ast}(K)$ has support contained in $\mathbf{E}^{\geqslant 1}_{\mathbf{V},\mathbf{W},i}$. Hence  $(j_{\mathbf{V},i})_{!} (j_{\mathbf{V},i})^{\ast} (K)$ is isomorphic to $K$ in the localization $\mathcal{D}^{b}_{G_{\mathbf{V}}}(\mathbf{E}_{\mathbf{V},\mathbf{W},\hat{\Omega}^{i}})/\mathcal{N}_{\mathbf{V},i}$, and the first statement follows from $(j_{\mathbf{V},i})^{\ast} (K) \cong L$.
	
	Notice that   $(j_{\mathbf{V},i})^{\ast}$ and $(j_{\mathbf{V},i})_{!}$ are quasi-inverse equivalences between  $\mathcal{D}^{b}_{G_{\mathbf{V}}}(\mathbf{E}^{0}_{\mathbf{V},\mathbf{W},i})$ and the localization $\mathcal{D}^{b}_{G_{\mathbf{V}}}(\mathbf{E}_{\mathbf{V},\mathbf{W},\hat{\Omega}^{i}})/\mathcal{N}_{\mathbf{V},i}$, the above argument implies that these equivalences restrict to $\mathcal{Q}_{\mathbf{V},\mathbf{W}}$ and $\mathcal{D}^{b,ss}_{G_{\mathbf{V}}}(\mathbf{E}_{\mathbf{V},\mathbf{W},\hat{\Omega}^{i}})$ and we finish the proof.
\end{proof}

\subsection{The functors of localizations at $i$}
In this subsection, we fix an orientation $\Omega=\Omega^{i}$ of $Q$ such that $i$ is a source.

For any $n\in \mathbb{N}$, take graded spaces $\mathbf{V}, \mathbf{V}'$ of dimension vectors $\nu,\nu'$ respectively, such that $\nu'+ni=\nu$, we will define varieties and morphisms appearing in following diagram, and then define a functor $\mathcal{E}_{i}^{(n)}$.
\[
\xymatrix{
	\mathbf{E}_{\mathbf{V},\mathbf{W},\hat{\Omega}}
	&
	& \mathbf{E}_{\mathbf{V}',\mathbf{W},\hat{\Omega}}  \\
	\mathbf{E}^{0}_{\mathbf{V},\mathbf{W},i} \ar[d]_{\phi_{\mathbf{V},i}} \ar[u]^{j_{\mathbf{V},i}}
	&
	& \mathbf{E}^{0}_{\mathbf{V}',\mathbf{W},i} \ar[d]^{\phi_{\mathbf{V}',i}} \ar[u]_{j_{\mathbf{V}',i}} \\
	\dot{\mathbf{E}}_{\mathbf{V},\mathbf{W},i} \times \mathbf{Gr}(\nu_i, \tilde{\nu}_{i})
	& \dot{\mathbf{E}}_{\mathbf{V},\mathbf{W},i} \times \mathbf{Fl}(\nu'_{i},\nu_{i},\tilde{\nu}_{i}) \ar[r]^{q_{2}} \ar[l]_{q_{1}}
	& \dot{\mathbf{E}}_{\mathbf{V},\mathbf{W},i} \times \mathbf{Gr}(\nu'_{i}, \tilde{\nu}_{i})
}
\]

Define an affine subspace
\begin{equation*}
	\dot{\mathbf{E}}_{\mathbf{V},\mathbf{W},i} =\bigoplus\limits_{h \in \Omega, h'\neq i} \mathbf{Hom}(\mathbf{V}_{h'},\mathbf{V}_{h''}) \oplus \bigoplus \limits_{ i' \neq i } \mathbf{Hom}(\mathbf{V}_{i'},\mathbf{W}_{\hat{i}'})  .
\end{equation*}
For any $x \in \mathbf{E}_{\mathbf{V},\mathbf{W},\hat{\Omega}}$, we denote by $\dot{x}=(x_{h})_{h\in \hat{\Omega},h' \neq i}$. Then there is a morphism 
\begin{align*}
	\phi_{\mathbf{V},i}:\mathbf{E}^{0}_{\mathbf{V},\mathbf{W},i} &\rightarrow  \dot{\mathbf{E}}_{\mathbf{V},\mathbf{W},i} \times \mathbf{Gr}(\nu_i, \tilde{\nu}_{i})\\
	x &\mapsto (\dot{x}, {\rm{Im}}  (\bigoplus \limits_{h \in \hat{\Omega}, h'=i} x_{h}  ) ),
\end{align*}
where $\nu_{i}={\rm{dim}}\mathbf{V}_{i}$, $\tilde{\nu}_{i}=\sum \limits_{h\in \Omega,h'=i}{\rm{dim}}\mathbf{V}_{h''}+d_{i}$, and $\mathbf{Gr}(\nu_i, \tilde{\nu}_{i})$ is the Grassmannian consisting of $\nu_{i}$-dimensional subspaces of $\tilde{\nu}_{i}$-dimensional space $(\bigoplus\limits_{h'=i,h \in \Omega^{i}}\mathbf{V}_{h''})\oplus \mathbf{W}_{\hat{i}}$. We can check by definition that $\phi_{\mathbf{V},i}$ is a principal $\mathbf{GL}(\mathbf{V}_{i})$-bundle. Let  
\begin{equation*}
	\mathbf{Fl}(\nu_{i}-n,\nu_{i},\tilde{\nu}_{i})=\{ \mathbf{S}_{1}\subset \mathbf{S}_{2}  \subset (\bigoplus\limits_{h'=i}\mathbf{V}_{h''})\oplus \mathbf{W}_{\hat{i}} )|{\rm{dim}} \mathbf{S}_{1} = \nu_{i}-n, {\rm{dim}}\mathbf{S}_{2}=\nu_{i}  \}.
\end{equation*}
be the flag variety and $q_{1},q_{2}$ are natural projections
\begin{equation*}
	q_{1}(\dot{x}, \mathbf{S}_{1},\mathbf{S}_{2})=(\dot{x},\mathbf{S}_{2});
\end{equation*}
\begin{equation*}
	q_{2}(\dot{x},\mathbf{S}_{1},\mathbf{S}_{2})=(\dot{x},\mathbf{S}_{1}).
\end{equation*}

For any $I$ graded space $\mathbf{S}=\bigoplus\limits_{ j\in I }\mathbf{S}_{j}$, we always denote $\bigoplus\limits_{ j\neq i\in I }\mathbf{S}_{j}$ by $\dot{\mathbf{S}}$. In particular, $G_{\dot{\mathbf{V}}}$ acts on $\dot{\mathbf{E}}_{\mathbf{V},\mathbf{W},i}$ naturally. Let $G_{\mathbf{V}_{i}}$ acts trivially on $ \dot{\mathbf{E}}_{\mathbf{V},\mathbf{W},i} $ and those Grassmannians and flag varieties, then $q_{1},q_{2}$ are $G_{\dot{\mathbf{V}}}$-equivariant morphisms.

\begin{definition}\label{defineE}
	Define the functor $\mathcal{E}^{(n)}_{i}:\mathcal{D}^{b}_{G_{\mathbf{V}}}(\mathbf{E}_{\mathbf{V},\mathbf{W},\hat{\Omega}}) \rightarrow \mathcal{D}^{b}_{G_{\mathbf{V}'}}(\mathbf{E}_{\mathbf{V}',\mathbf{W},\hat{\Omega}})$ via
	\begin{equation*}
		\mathcal{E}^{(n)}_{i}= (j_{\mathbf{V}',i})_{!} (\phi_{\mathbf{V}',i})^{\ast} (q_{2})_{!}(q_{1})^{\ast} (\phi_{\mathbf{V},i})_{\flat}(j_{\mathbf{V},i})^{\ast}[-n\nu_{i}]
	\end{equation*}
	\begin{equation*}
		\begin{split}
			&\mathcal{D}^{b}_{G_{\mathbf{V}}}(\mathbf{E}_{\mathbf{V},\mathbf{W},\hat{\Omega}}) \xrightarrow{(j_{\mathbf{V},i})^{\ast}} \mathcal{D}^{b}_{G_{\mathbf{V}}}(\mathbf{E}^{0}_{\mathbf{V},\mathbf{W},i}) \xrightarrow{(\phi_{\mathbf{V},i})_\flat}  \mathcal{D}^{b}_{G_{\dot{\mathbf{V}}}}(\dot{\mathbf{E}}_{\mathbf{V},\mathbf{W},i} \times \mathbf{Gr}(\nu_{i}, \tilde{\nu}_{i})) \xrightarrow{(q_{2})_{!}(q_{1})^{\ast}} \\ &\mathcal{D}^{b}_{G_{\dot{\mathbf{V}}}}(\dot{\mathbf{E}}_{\mathbf{V},\mathbf{W},i} \times \mathbf{Gr}(\nu'_{i}, \tilde{\nu}_{i})) \xrightarrow{(\phi_{\mathbf{V}',i})^{\ast}} \mathcal{D}^{b}_{G_{\mathbf{V}'}}(\mathbf{E}^{0}_{\mathbf{V}',\mathbf{W},i}) \xrightarrow{(j_{\mathbf{V}',i})_{!}} \mathcal{D}^{b}_{G_{\mathbf{V}'}}(\mathbf{E}^{0}_{\mathbf{V}',\mathbf{W},\hat{\Omega}}).
		\end{split}
	\end{equation*}
	
	Notice that $\mathcal{E}^{(n)}_{i}(\mathcal{N}_{\mathbf{V},i} )=0$, thus $\mathcal{E}^{(n)}_{i}$ descends to a functor between localizations, still denoted by 
	\begin{equation*}
		\mathcal{E}^{(n)}_{i}:\mathcal{D}^{b}_{G_{\mathbf{V}}}(\mathbf{E}_{\mathbf{V},\mathbf{W},\hat{\Omega}})/\mathcal{N}_{\mathbf{V},i} \rightarrow \mathcal{D}^{b}_{G_{\mathbf{V}'}}(\mathbf{E}_{\mathbf{V}',\mathbf{W},\hat{\Omega}})/\mathcal{N}_{\mathbf{V'},i}.
	\end{equation*}
	In particular, we denote by $\mathcal{E}_{i}=\mathcal{E}_{i}^{(1)}. $
\end{definition} 

\begin{lemma}
	If $\tilde{\Omega}$ is another orientation such that the edges near $i$ have the same orientations in $\tilde{\Omega}$ and $\Omega$. In particular, $i$ is a source in $\tilde{\Omega}$. We define $\tilde{\mathcal{E}}^{(n)}_{i}$ in the same way as Definition \ref{defineE}, then  $\tilde{\mathcal{E}}^{(n)}_{i} \mathcal{F}_{\hat{\Omega},\hat{\tilde{\Omega}}} \cong  \mathcal{F}_{\hat{\Omega},\hat{\tilde{\Omega}}} \mathcal{E}^{(n)}_{i}$. In particular, $\mathcal{E}^{(n)}_{i}$ is independent of the choice of $\Omega$ such that $i$ is a source in $\Omega$.  
\end{lemma}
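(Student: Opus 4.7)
The plan is to exploit the fact that the two orientations $\Omega$ and $\tilde{\Omega}$ differ only on edges not incident to $i$, so the Fourier--Deligne transform $\mathcal{F}_{\hat{\Omega},\hat{\tilde{\Omega}}}$ acts only on the "away from $i$" factor of $\mathbf{E}_{\mathbf{V},\mathbf{W},\hat{\Omega}}$, while every piece of the definition of $\mathcal{E}^{(n)}_i$ either lives on that factor (as a pullback along the identity) or on the Grassmannian/flag factor (which is untouched by the transform). The claim then follows from repeated use of (smooth/proper) base change and the fact that Artin--Schreier sheaves behave well under pullback along products.

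First I would write down the factorisation $\mathbf{E}_{\mathbf{V},\mathbf{W},\hat{\Omega}}\cong \dot{\mathbf{E}}_{\mathbf{V},\mathbf{W},i}\times \mathbf{E}^{\circ}_i$, where $\mathbf{E}^{\circ}_i=\mathrm{Hom}(\mathbf{V}_i,(\bigoplus_{h'=i,h\in \Omega}\mathbf{V}_{h''})\oplus \mathbf{W}_{\hat{i}})$; since $i$ is a source in both $\Omega$ and $\tilde{\Omega}$ and the two orientations coincide on edges incident to $i$, the second factor $\mathbf{E}^{\circ}_i$ is canonically identified for both. The symmetric difference $\hat{\Omega}\setminus\hat{\tilde{\Omega}}=\Omega\setminus\tilde{\Omega}$ consists entirely of edges not incident to $i$, so the bilinear trace function $T=\sum_{h\in \hat{\Omega}\setminus\hat{\tilde{\Omega}}}\mathrm{tr}(x_hx_{\bar h})$ is pulled back from $\dot{\mathbf{E}}_{\mathbf{V},\mathbf{W},i,\hat{\Omega}\cup\hat{\tilde{\Omega}}}$, and the shift $D$ only depends on edges in that factor. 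In particular, writing $\dot{\mathcal{F}}$ for the Fourier--Deligne transform on $\dot{\mathbf{E}}_{\mathbf{V},\mathbf{W},i}$ (and its companion on $\dot{\mathbf{E}}_{\mathbf{V}',\mathbf{W},i}$), there is a natural isomorphism $\mathcal{F}_{\hat{\Omega},\hat{\tilde{\Omega}}}\cong \dot{\mathcal{F}}\,\widetilde{\boxtimes}\,\mathrm{id}$, where $\widetilde{\boxtimes}$ denotes external product along the factorisation.

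Next I would verify that every building block of $\mathcal{E}^{(n)}_i$ is compatible with this factorisation. The open subset $\mathbf{E}^{0}_{\mathbf{V},\mathbf{W},i}$ is cut out by a condition purely on $\mathbf{E}^{\circ}_i$ (injectivity of the map $\mathbf{V}_i\to(\bigoplus \mathbf{V}_{h''})\oplus \mathbf{W}_{\hat i}$), hence $j_{\mathbf{V},i}=\mathrm{id}\times j^{\mathrm{inj}}_i$; the bundle $\phi_{\mathbf{V},i}$ factors as $\mathrm{id}\times \phi^{\mathrm{inj}}_i$, where $\phi^{\mathrm{inj}}_i:\mathbf{E}^{\mathrm{inj}}_i\to \mathbf{Gr}(\nu_i,\tilde{\nu}_i)$; and the Grassmannian-to-flag maps $q_1,q_2$ act only on the Grassmannian factor. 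With these factorisations in hand, each of the six operations $(j_{\mathbf{V},i})^{*}$, $(\phi_{\mathbf{V},i})_{\flat}$, $(q_1)^{*}$, $(q_2)_!$, $(\phi_{\mathbf{V}',i})^{*}$, $(j_{\mathbf{V}',i})_!$ commutes with $\dot{\mathcal{F}}\,\widetilde{\boxtimes}\,\mathrm{id}$ by (smooth or proper) base change along the Cartesian squares obtained from the product structure; the shift $[-n\nu_i]$ commutes with everything. Splicing these isomorphisms together, and using that the shift $D$ appearing in $\mathcal{F}$ is unchanged when passing from $\mathbf{V}$ to $\mathbf{V}'$ (since $\mathbf{V}$ and $\mathbf{V}'$ agree on every vertex other than $i$), one obtains the desired natural isomorphism $\tilde{\mathcal{E}}^{(n)}_i\mathcal{F}_{\hat{\Omega},\hat{\tilde{\Omega}}}\cong \mathcal{F}_{\hat{\Omega},\hat{\tilde{\Omega}}}\mathcal{E}^{(n)}_i$.

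The main obstacle, and the only point requiring genuine care, is bookkeeping around the $\flat$ functor for the principal $\mathbf{GL}(\mathbf{V}_i)$-bundle $\phi_{\mathbf{V},i}$: since $\flat$ is defined only up to the canonical equivalence $\mathcal{D}^b(X)\cong \mathcal{D}^b_G(Y)$, one has to invoke the uniqueness of this equivalence to transport it through the Fourier--Deligne transform, and verify that the local system $\mathcal{L}_T$ (being pulled back along the equivariant projection to $\dot{\mathbf{E}}_{\mathbf{V},\mathbf{W},i}$) is genuinely $\mathbf{GL}(\mathbf{V}_i)$-equivariant in a compatible manner. This is already implicit in Remark~\ref{remarkFD}(1)--(2), so modulo referring to those statements the proof becomes a direct diagram chase.
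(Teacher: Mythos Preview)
Your proposal is correct and follows essentially the same strategy as the paper's proof: observe that the Fourier--Deligne transform $\mathcal{F}_{\hat{\Omega},\hat{\tilde{\Omega}}}$ only involves edges away from $i$ and hence acts on the $\dot{\mathbf{E}}_{\mathbf{V},\mathbf{W},i}$-factor, then use base change to see it commutes with $(q_2)_!(q_1)^*$ (and trivially with the remaining pieces of $\mathcal{E}^{(n)}_i$). The paper's proof is considerably terser---it simply notes that the transform is the Fourier--Deligne transform of the subquiver $\dot{Q}$ obtained by deleting $i$, and that $(q_2)_!(q_1)^*$ commutes with it by base change---whereas you spell out the product factorisation and check each of the six constituent functors explicitly; your extra care around $(\phi_{\mathbf{V},i})_\flat$ and the equivariance of $\mathcal{L}_T$ is not addressed in the paper but is a reasonable point to make precise.
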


\begin{proof}
	Since $i$ is a source in $\tilde{\Omega}$ and $\Omega$,  $\mathcal{F}_{\hat{\Omega},\hat{\tilde{\Omega}}} $ only involves the edges which do not connect to $i$. Let $\dot{Q}$ be the quiver obtained by removing $i$ from $Q$, then  $\mathcal{F}_{\hat{\Omega},\hat{\tilde{\Omega}}} $ indeed acts on $\dot{\mathbf{E}}_{\mathbf{V},\mathbf{W},i}$ as the Fourier-Deligne transform of $\dot{Q}$. By definition and base change, it is easy to check that $(q_{2})_{!}(q_{1})^{\ast}$ commutes with the Fourier-Deligne transforms of $\dot{Q}$ and we finish the proof.
\end{proof}

For $j \in I$ and graded spaces $\mathbf{V},\mathbf{V}'$  and $\mathbf{V}''$ such that  $|\mathbf{V}''|-nj=|\mathbf{V}|,|\mathbf{V}'|=nj$, define the functor $\mathcal{F}_{j}^{(n)}:\mathcal{D}^{b}_{G_{\mathbf{V}}}(\mathbf{E}_{\mathbf{V},\mathbf{W},\hat{\Omega}}) \rightarrow \mathcal{D}^{b}_{G_{\mathbf{V}''}}(\mathbf{E}_{\mathbf{V}'',\mathbf{W},\hat{\Omega}})$ and $\mathcal{F}_{j}^{(n)}:\mathcal{Q}_{\mathbf{V},\mathbf{W}} \rightarrow \mathcal{Q}_{\mathbf{V}'',\mathbf{W}} $   by
\begin{equation*}
	\mathcal{F}_{j}^{(n)}= \mathbf{Ind}^{\mathbf{V}''\oplus\mathbf{W}}_{\mathbf{V}',\mathbf{V}\oplus \mathbf{W}}(\overline{\mathbb{Q}}_{l} \boxtimes -)
\end{equation*}
where $\overline{\mathbb{Q}}_{l}$ is the constant sheaf of $\mathbf{E}_{\mathbf{V}',0,\hat{\Omega}} \cong \mathbf{E}_{\mathbf{V}',\Omega}$.

\begin{lemma}
	The functor $\mathcal{F}^{(n)}_{j}$ sends objects of $ \mathcal{N}_{\mathbf{V},k}$ to objects of $ \mathcal{N}_{\mathbf{V}'',k}$.
\end{lemma}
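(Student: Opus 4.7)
The plan is to reduce the statement to a direct support analysis on varieties, by working in the orientation $\hat{\Omega}^{k}$ in which $k$ is a source. Recall that an object $A$ lies in $\mathcal{N}_{\mathbf{V},k}$ precisely when $\mathrm{supp}(A)$ is contained in the closed subvariety
\[
\mathbf{E}^{\geqslant 1}_{\mathbf{V},\mathbf{W},k}=\Big\{x\in\mathbf{E}_{\mathbf{V},\mathbf{W},\hat{\Omega}^{k}}\,\Big|\,\mathrm{Ker}\bigoplus_{h\in\hat{\Omega}^{k},\,h'=k}x_{h}\neq 0\Big\},
\]
so it suffices to show that for any $A\in\mathcal{N}_{\mathbf{V},k}$, the support of $\mathcal{F}^{(n)}_{j}(A)=\mathbf{Ind}^{\mathbf{V}''\oplus\mathbf{W}}_{\mathbf{V}',\mathbf{V}\oplus\mathbf{W}}(\overline{\mathbb{Q}}_{l}\boxtimes A)$ is contained in the analogously defined subvariety $\mathbf{E}^{\geqslant 1}_{\mathbf{V}'',\mathbf{W},k}\subseteq\mathbf{E}_{\mathbf{V}'',\mathbf{W},\hat{\Omega}^{k}}$.

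Next I would read off from the induction diagram $p_{1},p_{2},p_{3}$ the geometric meaning of lying in $\mathrm{supp}(\mathcal{F}^{(n)}_{j}(A))$: a point $x\in\mathbf{E}_{\mathbf{V}'',\mathbf{W},\hat{\Omega}^{k}}$ is in this support only if there exists an $x$-stable graded subspace $\mathbf{S}\subseteq\mathbf{V}''\oplus\mathbf{W}$ containing $\mathbf{W}$, together with graded isomorphisms $\rho_{1}\colon(\mathbf{V}''\oplus\mathbf{W})/\mathbf{S}\simeq\mathbf{V}'$ and $\rho_{2}\colon\mathbf{S}\simeq\mathbf{V}\oplus\mathbf{W}$ (the latter being the identity on $\mathbf{W}$), such that the transported restriction $\rho_{2,\ast}(x|_{\mathbf{S}})$ lies in $\mathrm{supp}(A)\subseteq\mathbf{E}^{\geqslant 1}_{\mathbf{V},\mathbf{W},k}$. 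The $\mathbf{V}'$-factor imposes no constraint because $\overline{\mathbb{Q}}_{l}$ is supported on all of $\mathbf{E}_{\mathbf{V}',\Omega}$.

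The core of the argument is then a short linear-algebra comparison. Since $\mathbf{S}$ is $x$-stable and contains $\mathbf{W}$, any vector $v\in\mathbf{S}_{k}\subseteq\mathbf{V}''_{k}$ satisfies $x_{h}(v)\in\mathbf{S}_{h''}$ for each arrow $h\in\Omega^{k}$ with $h'=k$, and $x_{h}(v)\in\mathbf{W}_{\hat{k}}\subseteq\mathbf{S}$ for the framing arrow $k\to\hat{k}$. Transporting along $\rho_{2}$, the restriction of $\bigoplus_{h\in\hat{\Omega}^{k},\,h'=k}x_{h}$ to $\mathbf{S}_{k}$ coincides with the corresponding map for $\rho_{2,\ast}(x|_{\mathbf{S}})$. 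The hypothesis $\rho_{2,\ast}(x|_{\mathbf{S}})\in\mathbf{E}^{\geqslant 1}_{\mathbf{V},\mathbf{W},k}$ then supplies a nonzero vector in the kernel, which lifts to a nonzero vector in $\mathrm{Ker}\bigoplus_{h\in\hat{\Omega}^{k},\,h'=k}x_{h}$. Thus $x\in\mathbf{E}^{\geqslant 1}_{\mathbf{V}'',\mathbf{W},k}$ and $\mathcal{F}^{(n)}_{j}(A)\in\mathcal{N}_{\mathbf{V}'',k}$. The cases $j\neq k$ (where $\mathbf{S}_{k}=\mathbf{V}''_{k}$) and $j=k$ (where $\mathbf{S}_{k}$ has codimension $n$ in $\mathbf{V}''_{k}$) are handled uniformly by this argument.

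I do not expect any serious obstacle here; the only slightly delicate bookkeeping is matching the identifications $\rho_{2}$ and $\rho_{2}|_{\mathbf{W}}=\mathrm{id}$ with the components of the quiver maps, so that the restriction of the map for $x$ and the map for $\rho_{2,\ast}(x|_{\mathbf{S}})$ are genuinely identified. Once this compatibility is made precise, the kernel comparison above completes the proof.
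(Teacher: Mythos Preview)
Your proposal is correct and follows essentially the same strategy as the paper: reduce to the orientation $\hat{\Omega}^{k}$ in which $k$ is a source, then do a support computation for the induction functor. The paper makes the reduction step explicit by invoking that the induction functor commutes with Fourier--Deligne transforms; you should mention this as well, since $\mathcal{N}_{\mathbf{V},k}$ in a general orientation is \emph{defined} as the Fourier--Deligne transport of the support-condition subcategory in $\hat{\Omega}^{k}$.

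On the core support argument, your direct kernel comparison is in fact cleaner than the paper's phrasing. The paper observes that $S_{k}$ is injective (since $k$ is a source) so that any short exact sequence $0\to S_{k}^{\oplus m}\to Y\to Z\to 0$ splits, and then asserts the support containment. But the splitting is not really needed: once $\rho_{2,\ast}(x|_{\mathbf{S}})$ has $S_{k}$ as a subrepresentation, the preimage $\rho_{2}^{-1}$ of that one-dimensional subspace already sits inside $\mathbf{S}_{k}\subseteq\mathbf{V}''_{k}$ and is killed by every $x_{h}$ with $h'=k$, exactly as you wrote. So your linear-algebra step is the same observation stripped of the representation-theoretic detour.
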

\begin{proof}
	Since the induction functor for $\mathcal{D}^{b}_{G_{\mathbf{V}}}(\mathbf{E}_{\mathbf{V},\mathbf{W},\hat{\Omega}})$ commutes with Fourier-Deligne tarnsformation, we can assume $k$ is a source. In this case, the simple representation $S_{k}$ of $Q$ (and $\hat{Q}$) at $k$ is injective. In particular,  any exact sequence of representations of $\hat{Q}$ of the form
	$$ 0 \rightarrow S_{k}^{\oplus m} \rightarrow Y \rightarrow Z\rightarrow 0$$
	must be split, then by definition of $\mathcal{F}^{(n)}_{j}$, we can see that $\text{supp} (\mathcal{F}^{(n)}_{j}(L)) \subseteq \mathbf{E}^{\geqslant 1}_{\mathbf{V}'',\mathbf{W},k}$ if  $\text{supp} (L) \subseteq \mathbf{E}^{\geqslant 1}_{\mathbf{V},\mathbf{W},k}$. we finish the proof.
\end{proof}

As a corollary, we can well define functors $\mathcal{F}_{j}^{(n)}$ between localizations at $i$.
\begin{definition}
	For $j \in I$ and graded spaces $\mathbf{V},\mathbf{V}'$  and $\mathbf{V}''$ such that  $|\mathbf{V}''|-nj=|\mathbf{V}|$ and $|\mathbf{V}'|=nj$, define the functor $\mathcal{F}_{j}^{(n)}:\mathcal{D}^{b}_{G_{\mathbf{V}}}(\mathbf{E}_{\mathbf{V},\mathbf{W},\hat{\Omega}}) /\mathcal{N}_{\mathbf{V},i}  \rightarrow \mathcal{D}^{b}_{G_{\mathbf{V}''}}(\mathbf{E}_{\mathbf{V}'',\mathbf{W},\hat{\Omega}})/\mathcal{N}_{\mathbf{V}'',i} $ and $\mathcal{F}_{j}^{(n)}:\mathcal{Q}_{\mathbf{V},\mathbf{W}}/\mathcal{N}_{\mathbf{V},i} \rightarrow \mathcal{Q}_{\mathbf{V}'',\mathbf{W}}/\mathcal{N}_{\mathbf{V}'',i}$ by
	\begin{equation*}
		\mathcal{F}_{j}^{(n)}= \mathbf{Ind}^{\mathbf{V}''\oplus\mathbf{W}}_{\mathbf{V}',\mathbf{V}\oplus \mathbf{W}}(\overline{\mathbb{Q}}_{l} \boxtimes -)
	\end{equation*}
	where $\overline{\mathbb{Q}}_{l}$ is the constant sheaf of $\mathbf{E}_{\mathbf{V}',0,\hat{\Omega}}$. In particular, we set $\mathcal{F}_{j}=\mathcal{F}_{j}^{(1)}. $
\end{definition}

The functor $\mathcal{F}_{j}$ is defined by Lusztig's induction functor. It can be described by functors induced by morphisms appearing in the definition of $\mathcal{E}^{(n)}_i$ as follows.

For graded spaces $\mathbf{V}$ and $\mathbf{V}''$ such that  $|\mathbf{V}|+j=|\mathbf{V}''|$, let
$\mathbf{E}^{'',0}_{\mathbf{V}'',\mathbf{W},i}=(p_{3})^{-1}(\mathbf{E}^{0}_{\mathbf{V}'',\mathbf{W},i})$ and $\mathbf{E}^{',0}_{\mathbf{V}'',\mathbf{W},i}=(p_{2})^{-1}(\mathbf{E}^{'',0}_{\mathbf{V}'',\mathbf{W},i})$, then we have the following commutative diagram
\[
\xymatrix{
	\mathbf{E}_{\mathbf{V},\mathbf{W},\hat{\Omega}} & \mathbf{E}^{'}_{\mathbf{V}'',\mathbf{W},\hat{\Omega}}\ar[l]_{p_{1}} \ar[r]^{p_{2}}& \mathbf{E}^{''}_{\mathbf{V}'',\mathbf{W},\hat{\Omega}} \ar[r]^{p_{3}}& \mathbf{E}_{\mathbf{V}'',\mathbf{W},\hat{\Omega}} \\
	\mathbf{E}^{0}_{\mathbf{V},\mathbf{W},i} \ar[u]_{j_{1}} & \mathbf{E}^{',0}_{\mathbf{V}'',\mathbf{W},i} \ar[l]_{\tilde{p}_{1}} \ar[r]^{\tilde{p}_{2}} \ar[u]_{j_{2}} & \mathbf{E}^{'',0}_{\mathbf{V}'',\mathbf{W},i}  \ar[r]^{\tilde{p}_{3}} \ar[u]_{j_{3}} & \mathbf{E}^{0}_{\mathbf{V}'',\mathbf{W},i} \ar[u]_{j_{4}} 
}
\]
where $j_{i},i=1,2,3,4$ are open embeddings  $\tilde{p}_{1},\tilde{p}_{2},\tilde{p}_{3}$ are obvious morphisms induced from $p_{1},p_{2},p_{3}$ respectively. Notice that the right square is Cartesian, we have 
\begin{equation}\label{Eq1}
	(j_{4})^{\ast}(p_{3})_{!}(p_{2})_{\flat}(p_{1})^{\ast} =(\tilde{p}_{3})_{!}(\tilde{p}_{2})_{\flat}(\tilde{p}_{1})^{\ast} (j_{1})^{\ast}.
\end{equation}

\textbf{Case (I)} $i=j$. Consider the following commutative diagram
\[
\xymatrix{
	\mathbf{E}^{0}_{\mathbf{V},\mathbf{W},i} \ar[d]_{\phi_{1}} & \mathbf{E}^{',0}_{\mathbf{V}'',\mathbf{W},i} \ar[l]_{\tilde{p}_{1}} \ar[r]^{\tilde{p}_{2}} \ar[d]_{\phi_{2}}& \mathbf{E}^{'',0}_{\mathbf{V}'',\mathbf{W},i}  \ar[r]^{\tilde{p}_{3}}  \ar[d]_{\phi_{3}}& \mathbf{E}^{0}_{\mathbf{V}'',\mathbf{W},i} \ar[d]_{\phi_{4}} \\
	\txt{$\dot{\mathbf{E}}_{\mathbf{V},\mathbf{W},i}$\\ $\times$\\ $\mathbf{Gr}(\nu_{i}, \tilde{\nu}_{i})$}  & \txt{$\dot{\mathbf{E}}_{\mathbf{V},\mathbf{W},i}$\\ $\times$\\ $\mathbf{Fl}(\nu_{i},\nu''_{i},\tilde{\nu}_{i})$} \ar[l]_{q_{1}} \ar@2{-}[r]  & \txt{$\dot{\mathbf{E}}_{\mathbf{V},\mathbf{W},i}$\\  $\times$\\  $\mathbf{Fl}(\nu_{i},\nu''_{i},\tilde{\nu}_{i})$}  \ar[r]^{q_{2}}  & \txt{$\dot{\mathbf{E}}_{\mathbf{V}'',\mathbf{W},i}$\\ $\times$\\$\mathbf{Gr}(\nu''_{i}, \tilde{\nu}_{i}$)}
}
\]
where
\begin{equation*}
	q_{1}((\dot{x},\mathbf{S}_{1}\subset \mathbf{S}_{2} ))=(\dot{x},\mathbf{S}_{1}), 
\end{equation*}
\begin{equation*}
	q_{2}((\dot{x},\mathbf{S}_{1}\subset \mathbf{S}_{2} ))=(\dot{x},\mathbf{S}_{2}),
\end{equation*}
\begin{equation*}
	\phi_{1}=\phi_{\mathbf{V},i},\  \phi_{2}=\tilde{\phi}\circ \tilde{p}_{2},\ \phi_{3}=\tilde{\phi},\ \phi_{4}=\phi_{\mathbf{V}'',i}
\end{equation*}
and $\tilde{\phi}: \mathbf{E}^{'',0}_{\mathbf{V}'',\mathbf{W},i} \rightarrow \dot{\mathbf{E}}_{\mathbf{V},\mathbf{W},i} \times \mathbf{Fl}(\nu_{i},\nu''_{i},\tilde{\nu}_{i})$ is defined by
\begin{equation*}
	\tilde{\phi}((x,\tilde{\mathbf{W}}))=(\dot{x},{\rm{Im}}  ( \bigoplus \limits_{h \in \hat{\Omega}, h'=i}  x_{h})|_{\tilde{\mathbf{W}}}) \subset {\rm{Im}}  ( \bigoplus \limits_{h \in \hat{\Omega}, h'=i} x_{h} )  \subset (\bigoplus\limits_{h\in\Omega,h'=i}\mathbf{V}_{h''})\oplus \mathbf{W}_{\hat{i}} ).
\end{equation*}
Notice that the right square is Cartesian, and we have 
\begin{equation}\label{equation 2}
	(q_{2})_{!}(q_{1})^{\ast}(\phi_{1})_{\flat} =(\phi_{4})_{\flat}(\tilde{p}_{3})_{!}(\tilde{p}_{2})_{\flat}(\tilde{p}_{1})^{\ast}.
\end{equation}

\textbf{Case (II)} $i \neq j$ and $i,j$ are connected by some edge. 
Let
$Z_{1}$ be the variety consisting of quadruples $(\dot{x},\dot{\mathbf{S}} \subset \dot{\mathbf{V}}'',\dot{\rho},\tilde{\mathbf{V}}''\subset  \bigoplus\limits_{h\in \Omega,h'=i} \mathbf{S}_{h''} \oplus \mathbf{W}_{\hat{i}})$ such that $\dot{\mathbf{S}}$ is a $\dot{x}$-stable subspace of $\dot{\mathbf{V}}''$ with $|\dot{\mathbf{S}}|=|\dot{\mathbf{V}}|$, $\dot{\rho}: \dot{\mathbf{S}} \rightarrow \dot{\mathbf{V}}$ is a linear isomorphism and $\tilde{\mathbf{V}}''$ is a subspace of dimension $\nu_{i}$. And let $Z_{2}$ be the variety consisting of triples $(\dot{x},\dot{\mathbf{S}} \subset \dot{\mathbf{V}}'',\tilde{\mathbf{V}}''\subset  \bigoplus\limits_{h\in \Omega,h'=i} \mathbf{S}_{h''} \oplus \mathbf{W}_{\hat{i}} )$ such that $\dot{\mathbf{S}}$ is a $\dot{x}$-stable subspace of $\dot{\mathbf{V}}''$ with $|\dot{\mathbf{S}}|=|\dot{\mathbf{V}}|$ and $\tilde{\mathbf{V}}''$ is subspace of dimension $\nu_{i}$. Consider the following commutative diagram
\[
\xymatrix{
	\mathbf{E}^{0}_{\mathbf{V},\mathbf{W},i}\ar[d]_{\phi_{1}} & \mathbf{E}^{',0}_{\mathbf{V}'',\mathbf{W},i} \ar[l]_{\tilde{p}_{1}} \ar[r]^{\tilde{p}_{2}} \ar[d]_{\phi_{2}}& \mathbf{E}^{'',0}_{\mathbf{V}'',\mathbf{W},i}  \ar[r]^{\tilde{p}_{3}} \ar[d]_{\phi_{3}} & \mathbf{E}^{0}_{\mathbf{V}'',\mathbf{W},i}  \ar[d]_{\phi_{4}} \\
	\txt{$\dot{\mathbf{E}}_{\mathbf{V},\mathbf{W},i}$\\ $\times$\\$\mathbf{Gr}(\nu_{i}, \tilde{\nu}_{i})$}  & Z_{1} \ar[l]_-{q_{1}} \ar[r]^{q_{2}}  & Z_{2} \ar[r]^-{q_{3}}  & \txt{$\dot{\mathbf{E}}_{\mathbf{V}'',\mathbf{W},i}$\\ $\times$\\$\mathbf{Gr}(\nu_{i}, \tilde{\nu}_{i}+a_{i,j})$} 
}
\]
where $a_{i,j}$ is the number of edges connecting $i$ and $j$ and
\begin{align*}
	&q_{1}(\dot{x},\dot{\mathbf{S}},\dot{\rho},\tilde{\mathbf{V}}'')=((\dot{\rho})_{\ast}(\dot{x}|_{\dot{\mathbf{S}}\oplus \dot{\mathbf{W}}}), \dot{\rho}(\tilde{\mathbf{V}}'')),\\
	&q_{2}(\dot{x},\dot{\mathbf{S}},\dot{\rho},\tilde{\mathbf{V}}'')=(\dot{x},\dot{\mathbf{S}},\tilde{\mathbf{V}}''),\\
	&q_{3}((\dot{x},\dot{\mathbf{S}},\tilde{\mathbf{V}}''))=(\dot{x},\tilde{\mathbf{V}}'')\\
	&\phi_{1}=\phi_{\mathbf{V},i},\\
	&\phi_{2}((x,\mathbf{S},\rho))=(\dot{x},\dot{\mathbf{S}},\dot{\rho},{\rm{Im}}  ( \bigoplus \limits_{h \in \hat{\Omega}, h'=i} x_{h}) ),\\
	&\phi_{3}((x,\mathbf{S}))=(\dot{x},\dot{\mathbf{S}},{\rm{Im}}  ( \bigoplus \limits_{h \in \hat{\Omega}, h'=i} x_{h})),\\
	&\phi_{4}=\phi_{\mathbf{V}'',i}.
\end{align*}
Notice that the right square is Cartesian, we have 
\begin{equation}
	(q_{3})_{!}(q_{2})_{\flat}(q_{1})^{\ast}(\phi_{1})_{\flat} =(\phi_{4})_{\flat}(\tilde{p}_{3})_{!}(\tilde{p}_{2})_{\flat}(\tilde{p}_{1})^{\ast}.
\end{equation}

\textbf{Case (III)} $i \neq j$ and there is no edge connecting $i,j$. Let
$Z_{1}$ be the variety consisting of quadruples $(\dot{x},\dot{\mathbf{S}} \subset \dot{\mathbf{V}}'',\dot{\rho},\tilde{\mathbf{V}}''\subset  \bigoplus\limits_{h\in \Omega,h'=i} \mathbf{S}_{h''} \oplus \mathbf{W}_{\hat{i}})$ such that $\dot{\mathbf{S}}$ is a $\dot{x}$-stable subspace of $\dot{\mathbf{V}}''$,  $\dot{\rho}: \dot{\mathbf{S}} \rightarrow \dot{\mathbf{V}}$ is a linear isomorphism and $\tilde{\mathbf{V}}''$ is a subspace of dimension $\nu_{i}$. And let $Z_{2}$ be the variety which consists of $(\dot{x},\dot{\mathbf{S}} \subset \dot{\mathbf{V}}'',\tilde{\mathbf{V}}''\subset  \bigoplus\limits_{h\in \Omega,h'=i} \mathbf{S}_{h''} \oplus \mathbf{W}_{\hat{i}} )$ such that $\dot{\mathbf{S}}$ is a $\dot{x}$-stable subspace of $\dot{\mathbf{V}}''$ with $|\dot{\mathbf{S}}|=|\dot{\mathbf{V}}|$ and $\tilde{\mathbf{V}}''$ is a subspace of dimension $\nu_{i}$. Consider the following commutative diagram
\[
\xymatrix{
	\mathbf{E}^{0}_{\mathbf{V},\mathbf{W},i}\ar[d]_{\phi_{1}} & \mathbf{E}^{',0}_{\mathbf{V}'',\mathbf{W},i} \ar[l]_{\tilde{p}_{1}} \ar[r]^{\tilde{p}_{2}} \ar[d]_{\phi_{2}}& \mathbf{E}^{'',0}_{\mathbf{V}'',\mathbf{W},i}  \ar[r]^{\tilde{p}_{3}} \ar[d]_{\phi_{3}} & \mathbf{E}^{0}_{\mathbf{V}'',\mathbf{W},i}  \ar[d]_{\phi_{4}} \\
	\dot{\mathbf{E}}_{\mathbf{V},\mathbf{W},i} \times \mathbf{Gr}(\nu_{i}, \tilde{\nu}_{i})  & Z_{1} \ar[l]_-{q_{1}} \ar[r]^{q_{2}}  & Z_{2} \ar[r]^-{q_{3}}  & \dot{\mathbf{E}}_{\mathbf{V}'',\mathbf{W},i} \times \mathbf{Gr}(\nu_{i}, \tilde{\nu}_{i})
}
\]
where $q_{1},q_{2},q_{3},\phi_1,\phi_2,\phi_3,\phi_4$ are given by the same formulas in \textbf{Case (II)} above.
Notice that the right square is Cartesian, we have 
\begin{equation}\label{Eq4}
	(q_{3})_{!}(q_{2})_{\flat}(q_{1})^{\ast}(\phi_{1})_{\flat} =(\phi_{4})_{\flat}(\tilde{p}_{3})_{!}(\tilde{p}_{2})_{\flat}(\tilde{p}_{1})^{\ast}.
\end{equation}

In conclusion,  the functor $\mathcal{F}_{j}:\mathcal{D}^{b}_{G_{\mathbf{V}}}(\mathbf{E}_{\mathbf{V},\mathbf{W},\hat{\Omega}})/\mathcal{N}_{\mathbf{V},i} \rightarrow \mathcal{D}^{b}_{G_{\mathbf{V}''}}(\mathbf{E}_{\mathbf{V}'',\mathbf{W},\hat{\Omega}})/\mathcal{N}_{\mathbf{V}'',i}$ can be described by
\begin{equation} \label{functorF}
	\mathcal{F}_{j}\cong \begin{cases}(j_{\mathbf{V}'',i})_{!} (\phi_{\mathbf{V}'',i})^{\ast} (q_{2})_{!}(q_{1})^{\ast} (\phi_{\mathbf{V},i})_{\flat}(j_{\mathbf{V},i})^{\ast}[\tilde{\nu}_{i}+\nu_{i}], i=j,\\(j_{\mathbf{V}'',i})_{!} (\phi_{\mathbf{V}'',i})^{\ast} (q_{3})_{!} (q_{2})_{\flat}(q_{1})^{\ast} (\phi_{\mathbf{V},i})_{\flat}(j_{\mathbf{V},i})^{\ast}[\tilde{\nu}_{i}+\nu_{i}], i\neq j.\end{cases}
\end{equation}

We also introduce  functors $\mathcal{F}^{(n),\vee}_{j}$ and $\mathcal{E}_{i,r,a}$ which will be used in the next section.

\begin{definition}
	For $j \in I$ and $j \neq i$, we take graded spaces $\mathbf{V},\mathbf{V}'$  and $\mathbf{V}''$ such that  $|\mathbf{V}'|-nj=|\mathbf{V}|,|\mathbf{V}''|=nj$ and define the functor $\mathcal{F}_{j}^{(n),\vee}:\mathcal{Q}_{\mathbf{V},\mathbf{W}}\rightarrow \mathcal{Q}_{\mathbf{V}',\mathbf{W}}/\mathcal{N}_{\mathbf{V}',i}$ (or $\mathcal{F}_{j}^{(n),\vee}:\mathcal{D}^{b}_{G_{\mathbf{V}}}(\mathbf{E}_{\mathbf{V},\mathbf{W},\hat{\Omega} }) \rightarrow \mathcal{D}^{b}_{G_{\mathbf{V}'}}(\mathbf{E}_{\mathbf{V}',\mathbf{W},\hat{\Omega}})/\mathcal{N}_{\mathbf{V}',i} $ ) via
	\begin{equation*}
		\mathcal{F}_{j}^{(n),\vee}= \mathbf{Ind}^{\mathbf{V}'\oplus\mathbf{W}}_{\mathbf{V}\oplus \mathbf{W},\mathbf{V}''}(- \boxtimes\overline{ \mathbb{Q}}_{l} ),
	\end{equation*}
	where $\overline{\mathbb{Q}}_{l}$ is the constant sheaf of $\mathbf{E}_{\mathbf{V}'',0,\hat{\Omega}}$. 
\end{definition}

For  $a=0,1$ and $ r \leqslant \nu_{i}$, we take graded spaces $\mathbf{V},\mathbf{V}'$   such that  $|\mathbf{V}'|+i=|\mathbf{V}|$. Fix a decomposition $\mathbf{V}=\mathbf{V}_{r}\oplus\mathbf{S}_{r}$ of $\mathbf{V}$   such that $|\mathbf{S}_{r}|=ri$ and a decomposition $\mathbf{V}'=\mathbf{V}'_{r-a}\oplus\mathbf{S}'_{r-a}$ of $\mathbf{V}'$   such that $|\mathbf{S}'_{r-a}|=(r-a)i$. Since the subset $'F_{r}$ of $\mathbf{E}^{r}_{\mathbf{V},\mathbf{W},i}$ consisting of $x$ such that $\mathbf{S}_{r}$ is $x$-stable is naturally isomorphic to $\mathbf{E}^{0}_{\mathbf{V}_{r},\mathbf{W},i}$, there is a closed embedding  $\iota_{\mathbf{V},r}:\mathbf{E}^{0}_{\mathbf{V}_{r},\mathbf{W},i} \rightarrow \mathbf{E}^{r}_{\mathbf{V},\mathbf{W},i}$.  Let $Q_{r}$ be the stablizer of $\mathbf{S}_{r}$ in $G_{\mathbf{V}}$ and $U_{r}$ be its unipotent radical, then there is an isomorphism
$\mathbf{G}_{\mathbf{V}} \times_{Q_{r}} \mathbf{E}^{0}_{\mathbf{V}_{r},\mathbf{W},i} \cong \mathbf{E}^{r}_{\mathbf{V},\mathbf{W},i}$. By \cite[Theorem 6.5.10]{Achar}, there is an induction equivalence $$(\iota_{\mathbf{V},r})^{\ast}[-{\rm{dim}}(G_{\mathbf{V}}/ Q_{r}) ] \circ \mathbf{For}^{G_{\mathbf{V}}}_{Q_{r}}:\mathcal{D}^{b}_{G_{\mathbf{V}}}(\mathbf{E}^{r}_{\mathbf{V},\mathbf{W},i} )\rightarrow \mathcal{D}^{b}_{Q_{r}}(\mathbf{E}^{0}_{\mathbf{V}_{r},\mathbf{W},i} ) \cong \mathcal{D}^{b}_{G_{\mathbf{V}_{r}}}(\mathbf{E}^{0}_{\mathbf{V}_{r},\mathbf{W},i} ),$$
where $\mathbf{For}^{G_{\mathbf{V}}}_{Q_{r}}$ is the forgetful functor, and the last equivalence holds by\cite[Theorem 6.6.16]{Achar}, since $Q_{r}/U_{r} \cong G_{\mathbf{V}_{r}} \times G_{\mathbf{S}_{r}}$ and  $U_{r},G_{\mathbf{S}_{r}}$ act trivially on $'F_{r}\cong \mathbf{E}^{0}_{\mathbf{V}_{r},\mathbf{W},i} $. We denote this equivalence by $(\iota_{\mathbf{V},r})^{_\clubsuit}$ and its quasi-inverse by $(\iota_{\mathbf{V},r})_{_\clubsuit}$. Similarly, there is also an induction equivalence $$(\iota_{\mathbf{V}',r-a})^{\ast}[-{\rm{dim}}(G_{\mathbf{V}'}/ Q'_{r-a}) ] \circ \mathbf{For}^{G_{\mathbf{V}'}}_{Q'_{r-a}}:\mathcal{D}^{b}_{G_{\mathbf{V}'}}(\mathbf{E}^{r-a}_{\mathbf{V}',\mathbf{W},i} )\rightarrow  \mathcal{D}^{b}_{G_{\mathbf{V}'_{r-a}}}(\mathbf{E}^{0}_{\mathbf{V}'_{r-a},\mathbf{W},i} ).$$

Now we consider the following diagram
\[
\xymatrix{
	\mathbf{E}_{\mathbf{V},\mathbf{W},\hat{\Omega}}
	&
	& \mathbf{E}_{\mathbf{V}',\mathbf{W},\hat{\Omega}}  \\
	\mathbf{E}^{r}_{\mathbf{V},\mathbf{W},i}  \ar[u]^{j_{\mathbf{V},r}}
	&
	& \mathbf{E}^{r-a}_{\mathbf{V}',\mathbf{W},i}  \ar[u]_{j_{\mathbf{V}',r-a}} \\
	\mathbf{E}^{0}_{\mathbf{V}_{r},\mathbf{W},i} \ar[d]_{\phi_{\mathbf{V}_{r},i}} \ar[u]^{\iota_{\mathbf{V},r}}
	&
	& \mathbf{E}^{0}_{\mathbf{V}'_{r-a},\mathbf{W},i} \ar[d]^{\phi_{\mathbf{V}'_{r-a},i}} \ar[u]_{\iota_{\mathbf{V}',r-a}}\\
	\txt{$\dot{\mathbf{E}}_{\mathbf{V},\mathbf{W},i}$\\ $\times$\\ $\mathbf{Gr}(\nu_{i}-r, \tilde{\nu}_{i})$}
	&     \txt{$\dot{\mathbf{E}}_{\mathbf{V},\mathbf{W},i}$\\ $\times$\\ $\mathbf{Fl}(\nu_{i}-1-r+a,\nu_{i}-r,\tilde{\nu}_{i})$} \ar[r]^-{q_{2,r,a}} \ar[l]_-{q_{1,r,a}}
	&\txt{$\dot{\mathbf{E}}_{\mathbf{V},\mathbf{W},i}$\\ $\times$\\ $\mathbf{Gr}(\nu_{i}-1-r+a, \tilde{\nu}_{i})$},
}
\]
where $j_{\mathbf{V},r}$ is the natural locally closed embedding and $q_{1,r,a}$, $q_{2,r,a}$ are natural projections. In particular, $j_{\mathbf{V},r}$ with $r=0$ is equal to  $j_{\mathbf{V},i}$
\begin{definition}
	For $r \leqslant \nu_{i}$ and $a=0,1$,  take graded spaces $\mathbf{V},\mathbf{V}'$   such that  $|\mathbf{V}'|+i=|\mathbf{V}|$. Define the functor $\mathcal{E}_{i,r,a}: \mathcal{D}^{b}_{G_{\mathbf{V}}}(	\mathbf{E}_{\mathbf{V},\mathbf{W},\hat{\Omega}} ) \rightarrow \mathcal{D}^{b}_{G_{\mathbf{V}'}}(	\mathbf{E}_{\mathbf{V}',\mathbf{W},\hat{\Omega}} )$ by
	$$ \mathcal{E}_{i,r,a}= (j_{\mathbf{V}',r-a})_{!} (\iota_{\mathbf{V}',r-a})_{_\clubsuit} (\phi_{\mathbf{V}'_{r-a},i})^{\ast}(q_{2,r,a})_{!}(q_{1,r,a})^{\ast}(\phi_{\mathbf{V}_{r},i})_{\flat} (\iota_{\mathbf{V},r})^{_\clubsuit}(j_{\mathbf{V},r})^{\ast} ,$$
	\begin{equation*}
		\begin{split}
			&\mathcal{D}^{b}_{G_{\mathbf{V}}}(\mathbf{E}_{\mathbf{V},\mathbf{W},\hat{\Omega}}) \xrightarrow{(j_{\mathbf{V},r})^{\ast}} \mathcal{D}^{b}_{G_{\mathbf{V}}}(\mathbf{E}^{r}_{\mathbf{V},\mathbf{W},i})\xrightarrow{(\iota_{\mathbf{V},r})^{\clubsuit}}  \mathcal{D}^{b}_{G_{\mathbf{V}_{r}}}(\mathbf{E}^{0}_{\mathbf{V}_{r},\mathbf{W},i}) \xrightarrow{(\phi_{\mathbf{V}_{r},i})_\flat} \\ &\mathcal{D}^{b}_{G_{\dot{\mathbf{V}}}}(\dot{\mathbf{E}}_{\mathbf{V},\mathbf{W},i} \times \mathbf{Gr}(\nu_{i}-r, \tilde{\nu}_{i})) \xrightarrow{(q_{2,r,a})_{!}(q_{1,r,a})^{\ast}}  \mathcal{D}^{b}_{G_{\dot{\mathbf{V}}}}(\dot{\mathbf{E}}_{\mathbf{V},\mathbf{W},i} \times \mathbf{Gr}(\nu_{i}-1-r+a, \tilde{\nu}_{i})) \\& \xrightarrow{(\phi_{\mathbf{V}'_{r-a},i})^{\ast}} 
			\mathcal{D}^{b}_{G_{\mathbf{V}'_{r-a}}}(\mathbf{E}^{0}_{\mathbf{V}'_{r-a},\mathbf{W},i}) \xrightarrow{(\iota_{\mathbf{V}',r-a})_{\clubsuit}} \mathcal{D}^{b}_{G_{\mathbf{V}'}}(\mathbf{E}^{r-a}_{\mathbf{V}',\mathbf{W},i}) \xrightarrow{(j_{\mathbf{V}',r-a})_{!}} \mathcal{D}^{b}_{G_{\mathbf{V}'}}(\mathbf{E}_{\mathbf{V}',\mathbf{W},\hat{\Omega}}).
		\end{split}
	\end{equation*}
	
\end{definition}

\subsection{Commutative relations in localizations at $i$}
At the beginning of this section, we recall two lemmas in algebraic geometry.

\begin{lemma}\label{projection formula}
	For any morphism $f:X\rightarrow Y$ and any complex $A$ on $Y$, we have $f_{!}f^{\ast}A \cong f_{!}\overline{\mathbb{Q}}_{l} \otimes A$.
\end{lemma}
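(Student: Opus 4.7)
The plan is to recognize this statement as a specialization of the standard projection formula for the derived functor $f_{!}$. Recall that for any morphism $f:X\rightarrow Y$ between sufficiently nice spaces (e.g.\ separated schemes of finite type over a field, or the equivariant analogue), and for complexes $B$ on $X$ and $A$ on $Y$, one has a canonical isomorphism
\begin{equation*}
f_{!}(B \otimes f^{\ast}A) \cong f_{!}B \otimes A.
\end{equation*}
This general projection formula is a basic property of Grothendieck's six operations and can be found, for instance, in Achar's book on perverse sheaves (Proposition 1.4.17 / Section 7.3) or in SGA 4.

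To deduce the stated lemma, I would simply specialize the general formula to the case $B = \overline{\mathbb{Q}}_{l}$, the constant sheaf on $X$. Since tensoring with the constant sheaf acts as the identity, $\overline{\mathbb{Q}}_{l} \otimes f^{\ast}A \cong f^{\ast}A$, and the projection formula then yields
\begin{equation*}
f_{!}f^{\ast}A \;\cong\; f_{!}(\overline{\mathbb{Q}}_{l} \otimes f^{\ast}A) \;\cong\; f_{!}\overline{\mathbb{Q}}_{l} \otimes A,
\end{equation*}
which is precisely the claim.

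There is no real obstacle here, since the lemma is a well-known consequence of the six-functor formalism. The only point to verify is that the formula is available in the equivariant derived category framework used in this paper, but this is immediate: the equivariant version is inherited from the underlying non-equivariant statement via the simplicial or resolution description of $\mathcal{D}^{b}_{G}(-)$, and the forgetful functor commutes with $f_{!}$, $f^{\ast}$, and $\otimes$. Accordingly, in the write-up I would simply invoke the general projection formula and specialize $B = \overline{\mathbb{Q}}_{l}$, rather than reprove it from scratch.
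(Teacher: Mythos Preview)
Your proposal is correct and matches the paper's own proof essentially verbatim: the paper simply cites the general projection formula (referring to \cite[Theorem 1.4.9]{Achar}) and specializes $B=\overline{\mathbb{Q}}_{l}$ to obtain $f_{!}\overline{\mathbb{Q}}_{l}\otimes A\cong f_{!}(\overline{\mathbb{Q}}_{l}\otimes f^{*}A)\cong f_{!}f^{*}A$.
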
	
\begin{proof}
	By the projection formula, see \cite[Theorem 1.4.9]{Achar}, we have 
	$$f_{!}\overline{\mathbb{Q}}_{l} \otimes A\cong f_{!}(\overline{\mathbb{Q}}_{l}\otimes f^*A)\cong f_{!}f^{\ast}(A),$$ 
	as desired.
\end{proof}

\begin{lemma}\cite[Section 8.1.6]{MR1227098}\label{Lusztig BBD}
	If $X =\coprod\limits_{n} X_{n}$ is a partition such that for any $n$, $X^{\leqslant n}=\coprod\limits_{m \leqslant n }X_{m} $ is closed and the restriction $f_{n}$ of a morphism $f:X \rightarrow Y$ on $X_{n}$ can be decomposed as
	\begin{equation*}
		X_{n} \xrightarrow{g_{n}} Z_{n} \xrightarrow{h_{n}} Y
	\end{equation*}	
	where $Z_{n}$ is smooth, $g_{n}$ is a vector bundle of rank $d_{n}$ and $h_{n}$ is proper, then
	\begin{equation*}
		f_{!}(\overline{\mathbb{Q}}_{l}|_{X}) \cong \bigoplus\limits_{n} (f_{n})_{!} (\overline{\mathbb{Q}}_{l}|_{X_n})\cong \bigoplus\limits_{n} (h_{n})_{!} (\overline{\mathbb{Q}}_{l}|_{Z_n})[-2d_{n}].
	\end{equation*}	
\end{lemma}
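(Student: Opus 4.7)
The plan is to prove the two displayed isomorphisms in turn, as the second is essentially formal while the first requires a non-trivial splitting argument.

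For the second isomorphism, I would start from the factorization $f_{n}=h_{n}\circ g_{n}$ and compute $(g_{n})_{!}\overline{\mathbb{Q}}_{l}$. Since $g_{n}$ is a vector bundle of rank $d_{n}$, compactly supported cohomology along the fibers $\mathbb{A}^{d_{n}}$ is concentrated in a single degree, giving $(g_{n})_{!}\overline{\mathbb{Q}}_{l}\cong\overline{\mathbb{Q}}_{l}[-2d_{n}]$ (up to a Tate twist, which is irrelevant at the level of complexes). Applying $(h_{n})_{!}$ and using the projection formula of Lemma \ref{projection formula} then yields $(f_{n})_{!}(\overline{\mathbb{Q}}_{l}|_{X_{n}})\cong (h_{n})_{!}(\overline{\mathbb{Q}}_{l}|_{Z_{n}})[-2d_{n}]$.

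For the first isomorphism, I would induct on the number of strata. Let $U_{n}=X^{\leqslant n}$, with open stratum $X_{n}\subset U_{n}$ and closed complement $U_{n-1}$. Proper base change applied to the recollement triangle yields a distinguished triangle in $\mathcal{D}^{b}(Y)$
\begin{equation*}
(f|_{X_{n}})_{!}(\overline{\mathbb{Q}}_{l}|_{X_{n}}) \longrightarrow (f|_{U_{n}})_{!}(\overline{\mathbb{Q}}_{l}|_{U_{n}}) \longrightarrow (f|_{U_{n-1}})_{!}(\overline{\mathbb{Q}}_{l}|_{U_{n-1}}) \stackrel{+1}{\longrightarrow}.
\end{equation*}
By the inductive hypothesis the right-hand term is already a direct sum $\bigoplus_{m<n}(h_{m})_{!}(\overline{\mathbb{Q}}_{l}|_{Z_{m}})[-2d_{m}]$, while by the second isomorphism the left-hand term is $(h_{n})_{!}(\overline{\mathbb{Q}}_{l}|_{Z_{n}})[-2d_{n}]$. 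If the triangle splits, the induction closes and we obtain both claimed isomorphisms simultaneously.

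The main obstacle is establishing this splitting. The idea is to invoke purity: each $(h_{m})_{!}\overline{\mathbb{Q}}_{l}$ is the proper pushforward of the constant sheaf on a smooth variety $Z_{m}$, hence by Deligne's theorem it is pure, and by the Beilinson--Bernstein--Deligne decomposition theorem it is a semisimple complex in the sense that it is a direct sum of shifts of simple perverse sheaves of geometric origin. The connecting map of the triangle is then a morphism in $\mathrm{Hom}^{1}$ between semisimple pure complexes; for such proper-smooth pushforwards the BBD weight argument forces this $\mathrm{Ext}^{1}$ to vanish, so the triangle splits. This is precisely the content of the argument in \cite[Section 8.1.6]{MR1227098}, which I would follow closely.
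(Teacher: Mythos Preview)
The paper does not supply its own proof of this lemma; it is quoted directly from \cite[Section 8.1.6]{MR1227098}. Your proposal is correct and is exactly Lusztig's argument there: the second isomorphism from $(g_n)_!\overline{\mathbb{Q}}_l\cong\overline{\mathbb{Q}}_l[-2d_n]$, the first by induction on strata via the open--closed triangle, with the splitting forced by the BBD weight argument (both ends are pure of weight $0$, so the connecting map into a weight-$1$ complex vanishes in the mixed category).
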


\begin{lemma} \label{lemma c1}
	For any graded space $\mathbf{V},\mathbf{V}'$ such that $|\mathbf{V}'|+(n-1)i=|\mathbf{V}| $, there is an isomorphism as functors  $\mathcal{D}^{b}_{G_{\mathbf{V}}}(\mathbf{E}_{\mathbf{V},\mathbf{W},\hat{\Omega}})/\mathcal{N}_{\mathbf{V},i} \rightarrow \mathcal{D}^{b}_{G_{\mathbf{V}'}}(\mathbf{E}_{\mathbf{V}',\mathbf{W},\hat{\Omega}})/\mathcal{N}_{\mathbf{V}',i}$,
	\begin{equation*}
		\mathcal{E}^{(n)}_{i}\mathcal{F}_{i} \oplus \bigoplus\limits_{0\leqslant m \leqslant N-1} \mathcal{E}^{(n-1)}_{i}[N-1-2m] \cong \mathcal{F}_{i}\mathcal{E}^{(n)}_{i} \oplus \bigoplus\limits_{0\leqslant m \leqslant -N-1} \mathcal{E}^{(n-1)}_{i}[-2m-N-1],
	\end{equation*}
	where $N=2\nu_{i}-\tilde{\nu}_{i}-n+1$. More precisely,
	\begin{align*}
		&\mathcal{E}^{(n)}_{i}\mathcal{F}_{i}\cong \mathcal{F}_{i}\mathcal{E}^{(n)}_{i}, &\textrm{if}\ N=0;\\
		&\mathcal{E}^{(n)}_{i}\mathcal{F}_{i}\oplus\bigoplus_{0\leqslant m\leqslant N-1}\mathcal{E}^{(n-1)}_{i}[N-1-2m]\cong \mathcal{F}_{i}\mathcal{E}^{(n)}_{i},&\textrm{if}\  N\geqslant 1;\\
		&\mathcal{E}^{(n)}_{i}\mathcal{F}_{i}\cong \mathcal{F}_{i}\mathcal{E}^{(n)}_{i} \oplus \bigoplus\limits_{0\leqslant m \leqslant -N-1} \mathcal{E}^{(n-1)}_{i}[-2m-N-1], &\textrm{if}\ N\leqslant -1.
	\end{align*}
\end{lemma}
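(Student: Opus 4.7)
The plan is to express both $\mathcal{E}^{(n)}_{i}\mathcal{F}_{i}$ and $\mathcal{F}_{i}\mathcal{E}^{(n)}_{i}$ as pushforward--pullback along correspondences over a common Grassmannian product, and then compare the two resulting kernels via a stratification of that product.

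First I expand $\mathcal{F}_{i}$ using the $i=j$ case of equation (\ref{functorF}) and $\mathcal{E}^{(n)}_{i}$ using Definition \ref{defineE}. In both compositions the middle layer $(\phi_{\mathbf{V}'',i})_{\flat}(j_{\mathbf{V}'',i})^{\ast}(j_{\mathbf{V}'',i})_{!}(\phi_{\mathbf{V}'',i})^{\ast}$ collapses to the identity (open embedding plus quasi-inverse equivalence), so what remains is a single chain of two consecutive flag-variety correspondences sandwiched between $(\phi_{\mathbf{V},i})_{\flat}(j_{\mathbf{V},i})^{\ast}$ and $(j_{\mathbf{V}',i})_{!}(\phi_{\mathbf{V}',i})^{\ast}$. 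By proper base change, the two correspondences fuse into a single one over a smooth fiber product. For $\mathcal{E}^{(n)}_{i}\mathcal{F}_{i}$ this is
\begin{equation*}
\tilde{Z}_{1}=\dot{\mathbf{E}}_{\mathbf{V},\mathbf{W},i}\times\{(S\subset T\supset T'):\dim S=\nu_{i},\dim T=\nu_{i}+1,\dim T'=\nu_{i}+1-n\},
\end{equation*}
and for $\mathcal{F}_{i}\mathcal{E}^{(n)}_{i}$ it is
\begin{equation*}
\tilde{Z}_{2}=\dot{\mathbf{E}}_{\mathbf{V},\mathbf{W},i}\times\{(S\supset S'\subset T'):\dim S'=\nu_{i}-n\},
\end{equation*}
both projecting by ``forget the middle subspace'' to $W=\dot{\mathbf{E}}_{\mathbf{V},\mathbf{W},i}\times\mathbf{Gr}(\nu_{i},\tilde{\nu}_{i})\times\mathbf{Gr}(\nu_{i}+1-n,\tilde{\nu}_{i})$ via proper maps $f_{1},f_{2}$.

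Next I stratify $W=W^{\circ}\sqcup W^{c}$ with $W^{c}=\{T'\subset S\}$ closed. A dimension count yields: over $W^{\circ}$ both $f_{j}$ are isomorphisms (the middle subspace is forced to be $S+T'$ in $\tilde{Z}_{1}$, and $S\cap T'$ in $\tilde{Z}_{2}$); over $W^{c}$, $f_{1}$ is a $\mathbb{P}^{a}$-bundle with $a=\tilde{\nu}_{i}-\nu_{i}-1$ (hyperplanes $T\supset S$) and $f_{2}$ is a $\mathbb{P}^{b}$-bundle with $b=\nu_{i}-n$ (hyperplanes $S'\subset T'$), so $b-a=2\nu_{i}-\tilde{\nu}_{i}-n+1=N$. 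Crucially, $W^{c}\cong\dot{\mathbf{E}}_{\mathbf{V},\mathbf{W},i}\times\mathbf{Fl}(\nu_{i}+1-n,\nu_{i},\tilde{\nu}_{i})$ together with its two projections is \emph{precisely} the correspondence used to define $\mathcal{E}^{(n-1)}_{i}$ (since $\nu_{i}-(n-1)=\nu_{i}+1-n$).

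Now I compute $(f_{j})_{!}\overline{\mathbb{Q}}_{l}$. Since $\tilde{Z}_{j}$ is smooth and $f_{j}$ proper, the BBD decomposition theorem makes $(f_{j})_{!}\overline{\mathbb{Q}}_{l}$ semisimple; combining this with Lemma \ref{Lusztig BBD} applied to the affine cell decomposition of $\mathbb{P}^{a}$ and $\mathbb{P}^{b}$, I obtain
\begin{equation*}
(f_{j})_{!}\overline{\mathbb{Q}}_{l}\cong \mathcal{J}\oplus\bigoplus_{k=0}^{d_{j}}(W^{c}\hookrightarrow W)_{!}\overline{\mathbb{Q}}_{l}[-2k],\qquad (d_{1},d_{2})=(a,b),
\end{equation*}
where the open summand $\mathcal{J}$ (intermediate extension of the constant sheaf on $W^{\circ}$) is common to both sides. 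Passing the kernels back to functors via Lemma \ref{projection formula} and identifying the $W^{c}$-contributions with $\mathcal{E}^{(n-1)}_{i}$, cancelling the $\min(a+1,b+1)$ common copies leaves $|N|$ excess summands of $\mathcal{E}^{(n-1)}_{i}$ on the appropriate side, with shifts matching $[N-1-2m]$ for $N\geq 1$ and $[-2m-N-1]$ for $N\leq -1$; when $N=0$ the excess is empty.

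The main obstacle will be two-fold. First, extracting $\mathcal{J}$ as a genuine direct summand (rather than merely an extension) requires the BBD decomposition theorem plus a careful identification of simple perverse summands via the restrictions to $W^{\circ}$ and $W^{c}$; here the key point is that both $W$ and $W^{c}$ are smooth, so the only simple perverse sheaves that can appear in $(f_{j})_{!}\overline{\mathbb{Q}}_{l}$ are the constant IC of $W$ and shifted constants on $W^{c}$, giving the clean split. Second, the shift bookkeeping is extensive: the shifts $[-n\nu_{i}]$ in $\mathcal{E}^{(n)}_{i}$, $[\tilde{\nu}_{i}+\nu_{i}]$ in $\mathcal{F}_{i}$, the relative dimensions $d_{1}-d_{2}$ of the base-change projections, and the cell codimensions $2k$ must all be combined to verify that the final shifts on the excess summands land on $[N-1-2m]$ and $[-2m-N-1]$ exactly as stated.
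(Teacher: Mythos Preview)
Your strategy coincides with the paper's: reduce both compositions to pull--push along correspondences $r_{1},r_{2}$ landing in the common space $Y_{0}=\dot{\mathbf{E}}_{\mathbf{V},\mathbf{W},i}\times\mathbf{Gr}(\nu_{i},\tilde{\nu}_{i})\times\mathbf{Gr}(\nu_{i}+1-n,\tilde{\nu}_{i})$, stratify by the closed locus $Y_{0}^{0}=\{T'\subset S\}$, identify $Y_{0}^{0}$ with the $\mathcal{E}^{(n-1)}_{i}$--correspondence, and exploit semisimplicity of $(r_{j})_{!}\overline{\mathbb{Q}}_{l}$ together with the $\mathbb{P}^{a}$-- and $\mathbb{P}^{b}$--bundle structures over $Y_{0}^{0}$.

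One genuine imprecision: your assertion that ``over $W^{\circ}$ both $f_{j}$ are isomorphisms'' is false in general. When $\dim(S\cap T')<\nu_{i}-n$ (which occurs on a nonempty locus of $W^{\circ}$ as soon as $\tilde{\nu}_{i}>\nu_{i}+1$), there is no admissible $T$ containing $S+T'$ and no admissible $S'$ contained in $S\cap T'$, so the fiber of $f_{j}$ is \emph{empty}. Thus $f_{j}|_{f_{j}^{-1}(W^{\circ})}$ is an isomorphism only onto the open subset $Y'=\{\dim(S\cap T')=\nu_{i}-n\}\subsetneq W^{\circ}$, and $(f_{j})_{!}\overline{\mathbb{Q}}_{l}|_{W^{\circ}}$ is the extension by zero from $Y'$, not the constant sheaf. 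Consequently $\mathcal{J}$ is not the IC of $W$, and your justification (``$W$ and $W^{c}$ smooth $\Rightarrow$ only IC$_{W}$ and constants on $W^{c}$ appear'') is invalid. The fix is easy: define $\mathcal{J}$ as the sum of all simple summands whose support meets $W^{\circ}$; this is determined by $(f_{j})_{!}\overline{\mathbb{Q}}_{l}|_{W^{\circ}}$ and hence common to $j=1,2$, which is all you need.

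The paper bypasses this issue entirely. Rather than isolating $\mathcal{J}$, it proves directly that
\[
(r_{1})_{!}\overline{\mathbb{Q}}_{l}\oplus(\iota^{0})_{!}(\tilde{r}^{0}_{2})_{!}\overline{\mathbb{Q}}_{l}\ \cong\ (r_{2})_{!}\overline{\mathbb{Q}}_{l}\oplus(\iota^{0})_{!}(\tilde{r}^{0}_{1})_{!}\overline{\mathbb{Q}}_{l}
\]
by observing that both sides are semisimple with equal $(\iota^{0})^{\ast}$-- and $(\iota^{1})^{\ast}$--restrictions (the latter uses that intermediate extension from $W^{\circ}$ is determined by the restriction there), and then cancels the common $\min(a,b)+1$ terms by Krull--Schmidt. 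This is cleaner because it never needs to know what the summands actually are.
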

\begin{proof}
	On the one hand, take graded space $\mathbf{V}''$  such that $|\mathbf{V}|=|\mathbf{V}''|-i $ and consider the diagrams
	\[
	\xymatrix{
		\mathbf{E}_{\mathbf{V},\mathbf{W},\hat{\Omega}} & \mathbf{E}^{'}_{\mathbf{V}'',\mathbf{W},\hat{\Omega}}\ar[l]_{p_{1}} \ar[r]^{p_{2}}& \mathbf{E}^{''}_{\mathbf{V}'',\mathbf{W},\hat{\Omega}} \ar[r]^{p_{3}}& \mathbf{E}_{\mathbf{V}'',\mathbf{W},\hat{\Omega}}\\
		\mathbf{E}^{0}_{\mathbf{V},\mathbf{W},i} \ar[d]_{\phi_{1}}  \ar[u]^{j_{1}}& \mathbf{E}^{',0}_{\mathbf{V}'',\mathbf{W},i} \ar[l]_{\tilde{p}_{1}} \ar[r]^{\tilde{p}_{2}} \ar[d]_{\phi_{2}}\ar[u]^{j_{2}}& \mathbf{E}^{'',0}_{\mathbf{V}'',\mathbf{W},i}  \ar[r]^{\tilde{p}_{3}}  \ar[d]_{\phi_{3}}\ar[u]^{j_{3}}& \mathbf{E}^{0}_{\mathbf{V}'',\mathbf{W},i} \ar[d]_{\phi_{4}} \ar[u]^{j_{4}}\\
		\txt{$\dot{\mathbf{E}}_{\mathbf{V},\mathbf{W},i}$\\ $\times$\\ $\mathbf{Gr}(\nu_{i}, \tilde{\nu}_{i})$}  & \txt{$\dot{\mathbf{E}}_{\mathbf{V},\mathbf{W},i}$\\ $\times$\\ $\mathbf{Fl}(\nu_{i},\nu''_{i},\tilde{\nu}_{i})$} \ar[l]_{q'_{1}} \ar@2{-}[r]  & \txt{$\dot{\mathbf{E}}_{\mathbf{V},\mathbf{W},i}$\\ $\times$\\ $\mathbf{Fl}(\nu_{i},\nu''_{i},\tilde{\nu}_{i})$}  \ar[r]^{q_{2}'}  & \txt{$\dot{\mathbf{E}}_{\mathbf{V}'',\mathbf{W},i}$\\ $\times$\\ $\mathbf{Gr}(\nu''_{i}, \tilde{\nu}_{i}),$}
	}
	\]
	
	\[
	\xymatrix{
		\mathbf{E}_{\mathbf{V}'',\mathbf{W},\hat{\Omega}}
		&
		& \mathbf{E}_{\mathbf{V}',\mathbf{W},\hat{\Omega}} \\	
		\mathbf{E}^{0}_{\mathbf{V}'',\mathbf{W},i}  \ar[d]_{\phi_{\mathbf{V}'',i}} \ar[u]^{j_{\mathbf{V}'',i}}
		&
		& \mathbf{E}^{0}_{\mathbf{V}',\mathbf{W},i}\ar[d]^{\phi_{\mathbf{V}',i}} \ar[u]_{j_{\mathbf{V}',i}} \\
		\txt{$\dot{\mathbf{E}}_{\mathbf{V}'',\mathbf{W},i}$\\ $\times$\\$\mathbf{Gr}(\nu''_{i}, \tilde{\nu}_{i})$}
		& \txt{$\dot{\mathbf{E}}_{\mathbf{V}'',\mathbf{W},i}$\\ $\times$\\$\mathbf{Fl}(\nu'_{i},\nu''_{i},\tilde{\nu}_{i})$}
		\ar[r]^{q_{2}} \ar[l]_{q_{1}}
		& \txt{$\dot{\mathbf{E}}_{\mathbf{V}',\mathbf{W},i}$\\ $\times$\\$\mathbf{Gr}(\nu'_{i}, \tilde{\nu}_{i})$}.
	}
	\]
	By base change, we have
	\begin{equation*}
		\begin{split}
			\mathcal{E}^{(n)}_{i}\mathcal{F}_{i}
			\cong &(j_{\mathbf{V}',i})_{!} (\phi_{\mathbf{V}',i})^{\ast} (q_{2})_{!}(q_{1})^{\ast} (\phi_{\mathbf{V}'',i})_{\flat}(j_{\mathbf{V}'',i})^{\ast}\\
			& (j_{4})_{!}(\phi_{4})^{\ast}(q'_{2})_{!}(q'_{1})^{\ast}(\phi_{1})_{\flat}(j_{1})^{\ast}[\tilde{\nu}_{i}+\nu_i-n\nu_{i}-n]\\
			\cong &(j_{\mathbf{V}',i})_{!} (\phi_{\mathbf{V}',i})^{\ast} (q_{2})_{!}(q_{1})^{\ast} (q'_{2})_{!}(q'_{1})^{\ast}(\phi_{1})_{\flat}(j_{1})^{\ast}[\tilde{\nu}_{i}+\nu_i-n\nu_{i}-n]\\
			\cong &(j_{\mathbf{V}',i})_{!} (\phi_{\mathbf{V}',i})^{\ast} (q_{2})_{!}(q_{1})^{\ast} (q'_{2})_{!}(q'_{1})^{\ast}(\phi_{\mathbf{V},i})_{\flat}(j_{\mathbf{V},i})^{\ast}[\tilde{\nu}_{i}+\nu_i-n\nu_{i}-n],
		\end{split}
	\end{equation*}
	where the second isomorphism follows from $j_{4}= j_{\mathbf{V}'',i}, \phi_{4}=\phi_{\mathbf{V}'',i}$.
	
	On the other hand, take a graded space $\mathbf{V}'''$ such that $|\mathbf{V}|=|\mathbf{V}'''|+ni$ and consider the diagrams

	\[
	\xymatrix{
		\mathbf{E}_{\mathbf{V},\mathbf{W},\hat{\Omega}} 
		&
		& \mathbf{E}_{\mathbf{V}''',\mathbf{W},\hat{\Omega}} \\	
		\mathbf{E}^{0}_{\mathbf{V},\mathbf{W},i} \ar[d]_{\phi_{\mathbf{V},i}} \ar[u]^{j_{\mathbf{V},i}}
		&
		& \mathbf{E}^{0}_{\mathbf{V}''',\mathbf{W},i} \ar[d]^{\phi_{\mathbf{V}''',i}} \ar[u]_{j_{\mathbf{V}''',i}} \\
		\txt{$\dot{\mathbf{E}}_{\mathbf{V},\mathbf{W},i}$\\ $\times$\\$\mathbf{Gr}(\nu_{i}, \tilde{\nu}_{i})$}
		&\txt{$\dot{\mathbf{E}}_{\mathbf{V},\mathbf{W},i}$\\ $\times$\\$\mathbf{Fl}(\nu'''_{i},\nu_{i},\tilde{\nu}_{i})$}
		\ar[r]^{\tilde{q}_{2}} \ar[l]_{\tilde{q}_{1}}
		& \txt{$\dot{\mathbf{E}}_{\mathbf{V}''',\mathbf{W},i}$\\ $\times$\\$\mathbf{Gr}(\nu'''_{i}, \tilde{\nu}_{i})$},
	}
	\]
	
	\[
	\xymatrix{
		\mathbf{E}_{\mathbf{V}''',\mathbf{W},\hat{\Omega}} & \mathbf{E}^{'}_{\mathbf{V}',\mathbf{W},\hat{\Omega}}\ar[l]_{p_{1}} \ar[r]^{p_{2}}& \mathbf{E}^{''}_{\mathbf{V}',\mathbf{W},\hat{\Omega}} \ar[r]^{p_{3}}& \mathbf{E}_{\mathbf{V}',\mathbf{W},\hat{\Omega}} \\
		\mathbf{E}^{0}_{\mathbf{V}''',\mathbf{W},i} \ar[d]_{\phi_{1}}  \ar[u]^{j_{1}}& \mathbf{E}^{',0}_{\mathbf{V}',\mathbf{W},i} \ar[l]_{\tilde{p}_{1}} \ar[r]^{\tilde{p}_{2}} \ar[d]_{\phi_{2}}\ar[u]^{j_{2}}& \mathbf{E}^{'',0}_{\mathbf{V}',\mathbf{W},i}  \ar[r]^{\tilde{p}_{3}}  \ar[d]_{\phi_{3}}\ar[u]^{j_{3}}& \mathbf{E}^{0}_{\mathbf{V}',\mathbf{W},i}\ar[d]_{\phi_{4}} \ar[u]^{j_{4}}\\
		\txt{$\dot{\mathbf{E}}_{\mathbf{V}''',\mathbf{W},i}$\\$\times$\\ $\mathbf{Gr}(\nu'''_{i}, \tilde{\nu}_{i})$}  & \txt{$\dot{\mathbf{E}}_{\mathbf{V}''',\mathbf{W},i}$\\ $\times$\\ $\mathbf{Fl}(\nu'''_{i},\nu'_{i},\tilde{\nu}_{i})$} \ar[l]_{\tilde{q}'_{1}} \ar@2{-}[r]  & \txt{$\dot{\mathbf{E}}_{\mathbf{V}',\mathbf{W},i}$\\ $\times$\\  $\mathbf{Fl}(\nu'''_{i},\nu'_{i},\tilde{\nu}_{i})$}  \ar[r]^{\tilde{q}_{2}'}  & \txt{$\dot{\mathbf{E}}_{\mathbf{V}',\mathbf{W},i}$\\ $\times$\\ $\mathbf{Gr}(\nu'_{i}, \tilde{\nu}_{i})$.}
	}
	\]

	Similarly, we have
	\begin{equation*}
		\begin{split}
			\mathcal{F}_{i}\mathcal{E}^{(n)}_{i}=& (j_{4})_{!}(\phi_{4})^{\ast}(\tilde{q}'_{2})_{!}(\tilde{q}'_{1})^{\ast}(\phi_{1})_{\flat}(j_{1})^{\ast}\\
			& (j_{\mathbf{V}''',i})_{!} (\phi_{\mathbf{V}''',i})^{\ast} (\tilde{q}_{2})_{!}(\tilde{q}_{1})^{\ast} (\phi_{\mathbf{V},i})_{\flat}(j_{\mathbf{V},i})^{\ast}[\tilde{\nu}_{i}+\nu_i-n\nu_{i}-n]\\
			\cong&(j_{4})_{!}(\phi_{4})^{\ast}(\tilde{q}'_{2})_{!}(\tilde{q}'_{1})^{\ast} (\tilde{q}_{2})_{!}(\tilde{q}_{1})^{\ast} (\phi_{\mathbf{V},i})_{\flat}(j_{\mathbf{V},i})^{\ast}[\tilde{\nu}_{i}+\nu_i-n\nu_{i}-n]\\
			\cong&(j_{\mathbf{V}',i})_{!}(\phi_{\mathbf{V}',i})^{\ast}(\tilde{q}'_{2})_{!}(\tilde{q}'_{1})^{\ast} (\tilde{q}_{2})_{!}(\tilde{q}_{1})^{\ast} (\phi_{\mathbf{V},i})_{\flat}(j_{\mathbf{V},i})^{\ast}[\tilde{\nu}_{i}+\nu_i-n\nu_{i}-n].
		\end{split}
	\end{equation*}
	
	Since $\phi_{\flat}$ and $\phi^{\ast}$ are quasi inverse to each other, we only need to calculate the difference between $(q_{2})_{!}(q_{1})^{\ast} (q'_{2})_{!}(q'_{1})^{\ast}$ and $(\tilde{q}'_{2})_{!}(\tilde{q}'_{1})^{\ast} (\tilde{q}_{2})_{!}(\tilde{q}_{1})^{\ast}$. 
	Notice that  $$\nu_{i}=\nu'_{i}+n-1=\nu_{i}''-1=\nu_{i}'''+n,$$ $$\tilde{\nu}_{i}=\tilde{\nu}'_{i}=\tilde{\nu}''_{i}=\tilde{\nu}'''_{i},$$  $$\dot{\mathbf{E}}_{\mathbf{V},\mathbf{W},i}=\dot{\mathbf{E}}_{\mathbf{V}',\mathbf{W},i}=\dot{\mathbf{E}}_{\mathbf{V}'',\mathbf{W},i}=\dot{\mathbf{E}}_{\mathbf{V}''',\mathbf{W},i}.$$ Consider the following commutative diagram
	\[
	\xymatrix{
		Y_{0} \ar[rr]^{\tilde{\pi}} \ar[dd]_{\tilde{\pi}'} &  & \txt{$\dot{\mathbf{E}}_{\mathbf{V},\mathbf{W},i}$\\$\times$\\ $\mathbf{Gr}(\nu_{i}+1-n, \tilde{\nu}_{i})$} \\
		&  Y_{1} \ar[ul]^{r_{1}} \ar[r]^-{\pi_1} \ar[d]^{\pi'_1} &
		\txt{$\dot{\mathbf{E}}_{\mathbf{V},\mathbf{W},i}$\\$\times$\\ $\mathbf{Fl}(\nu_{i}+1-n,\nu_{i}+1,\tilde{\nu}_{i})$}
		\ar[u]_{q_{2}} \ar[d]^{q_{1}}\\
		\txt{$\dot{\mathbf{E}}_{\mathbf{V},\mathbf{W},i}$\\$\times$\\ $\mathbf{Gr}(\nu_{i}, \tilde{\nu}_{i})$}  & \txt{$\dot{\mathbf{E}}_{\mathbf{V},\mathbf{W},i}$\\$\times$\\ $\mathbf{Fl}(\nu_{i},\nu_{i}+1,\tilde{\nu}_{i})$}  \ar[l]_{q'_{1}} \ar[r]^-{q'_{2}} &  \txt{$\dot{\mathbf{E}}_{\mathbf{V},\mathbf{W},i}$\\$\times$\\ $\mathbf{Gr}(\nu_{i}+1, \tilde{\nu}_{i})$} ,
	}
	\]
	where the varieties
	\begin{equation*}
		Y_{0}=\dot{\mathbf{E}}_{\mathbf{V},\mathbf{W},i}\times \mathbf{Gr}(\nu_{i}, \tilde{\nu}_{i}) \times \mathbf{Gr}(\nu_{i}+1-n, \tilde{\nu}_{i}), 
	\end{equation*}
	and \begin{align*}
		Y_{1}=&\{(\dot{x}, \mathbf{V}^{1} \subseteq \mathbf{V}^{3} \subseteq  (\bigoplus\limits_{h'=i}\mathbf{V}_{h''})\oplus \mathbf{W}_{\hat{i}}), \mathbf{V}^{2} \subseteq \mathbf{V}^{3} \subseteq  (\bigoplus\limits_{h'=i}\mathbf{V}_{h''})\oplus \mathbf{W}_{\hat{i}}) \\
		&|~ \dot{x} \in   \dot{\mathbf{E}}_{\mathbf{V},\mathbf{W},i},~{\rm{dim}} \mathbf{V}^{1}=\nu_{i},~{\rm{dim}} \mathbf{V}^{2}=\nu_{i}-n+1,~{\rm{dim}} \mathbf{V}^{3}=\nu_{i}+1.\}
	\end{align*}
	is the fiber product of $q'_{2},q_{1}$, the morphisms $\pi_1,\pi'_1,\tilde{\pi},\tilde{\pi}'$ are natural projections and
	\begin{equation*}
		r_{1}((\dot{x},\mathbf{V}^{1},\mathbf{V}^{2},\mathbf{V}^{3}))=(\dot{x},\mathbf{V}^{1},\mathbf{V}^{2}). 
	\end{equation*}
	By base change, we have
	\begin{equation*}
		\begin{split}
			(q_{2})_{!}(q_{1})^{\ast} (q'_{2})_{!}(q'_{1})^{\ast}\cong& (q_{2})_{!}(\pi_1)_{!} (\pi'_1)^{\ast}(q'_{1})^{\ast}\\
			=&  (\tilde{\pi})_{!} (r_{1})_{!}(r_{1})^{\ast}(\tilde{\pi}')^{\ast}.
		\end{split}
	\end{equation*}
	
	Similarly, consider the commutative diagram
	\[
	\xymatrix{
		Y_{0} \ar[rr]^{\tilde{\pi}} \ar[dd]_{\tilde{\pi}'} &  &
		\txt{$\dot{\mathbf{E}}_{\mathbf{V},\mathbf{W},i}$\\$\times$\\ $\mathbf{Gr}(\nu_{i}-n+1, \tilde{\nu}_{i})$}
		\\
		&  Y_{2} \ar[ul]^{r_{2}} \ar[r]^-{\pi_2} \ar[d]^{\pi'_2} &
		\txt{$\dot{\mathbf{E}}_{\mathbf{V},\mathbf{W},i}$\\$\times$\\ $\mathbf{Fl}(\nu_{i}-n,\nu_{i}-n+1,\tilde{\nu}_{i})$}
		\ar[u]_{\tilde{q}_{2}'} \ar[d]^{\tilde{q}_{1}'}\\
		\txt{$\dot{\mathbf{E}}_{\mathbf{V},\mathbf{W},i}$\\$\times$\\ $\mathbf{Gr}(\nu_{i}, \tilde{\nu}_{i})$}
		&\txt{$\dot{\mathbf{E}}_{\mathbf{V},\mathbf{W},i}$\\$\times$\\ $\mathbf{Fl}(\nu_{i}-n,\nu_{i},\tilde{\nu}_{i})$} 
		\ar[l]_{\tilde{q}_{1}} \ar[r]^-{\tilde{q}_{2}} & 
		\txt{$\dot{\mathbf{E}}_{\mathbf{V},\mathbf{W},i}$\\$\times$\\ $\mathbf{Gr}(\nu_{i}-n, \tilde{\nu}_{i})$},
	}
	\]
	where the variety
	\begin{align*}
		Y_{2}=&\{(\dot{x},\mathbf{V}^{0}\subseteq \mathbf{V}^{1} \subseteq  (\bigoplus\limits_{h'=i}\mathbf{V}_{h''})\oplus \mathbf{W}_{\hat{i}},\mathbf{V}^{0}\subseteq \mathbf{V}^{2} \subseteq  (\bigoplus\limits_{h'=i}\mathbf{V}_{h''})\oplus \mathbf{W}_{\hat{i}})\\
		&|~ \dot{x} \in   \dot{\mathbf{E}}_{\mathbf{V},\mathbf{W},i}, {\rm{dim}} \mathbf{V}^{1}= \nu_{i}, {\rm{dim}} \mathbf{V}^{2} =\nu_{i}-n+1 ,{\rm{dim}} \mathbf{V}^{0}=\nu_{i}-n.\}
	\end{align*}
	is the fiber product of $\tilde{q}'_{1},\tilde{q}_{2}$, the morphisms $\pi_2,\pi'_2,\tilde{\pi},\tilde{\pi}'$ are natural projections and 
	\begin{equation*}
		r_{2}((\dot{x},\mathbf{V}^{0},\mathbf{V}^{1},\mathbf{V}^{2}))=(\dot{x},\mathbf{V}^{1},\mathbf{V}^{2}). 
	\end{equation*}
	By base change, we have
	\begin{equation*}
		(\tilde{q}'_{2})_{!}(\tilde{q}'_{1})^{\ast} (\tilde{q}_{2})_{!}(\tilde{q}_{1})^{\ast}\cong(\tilde{\pi})_{!} (r_{2})_{!}(r_{2})^{\ast}(\tilde{\pi}')^{\ast}.
	\end{equation*}
	
	It remains to calculate  $(\tilde{\pi})_{!} (r_{1})_{!}(r_{1})^{\ast}(\tilde{\pi}')^{\ast}$ and $(\tilde{\pi})_{!} (r_{2})_{!}(r_{2})^{\ast}(\tilde{\pi}')^{\ast}$. We divide $Y_0$ into the disjoint union $Y_{0}=Y_{0}^{0} \cup Y_{0}^{1}$, where
	\begin{equation*}
		Y^{0}_{0}=\{(\dot{x},\mathbf{V}^{1},\mathbf{V}^{2})\in Y_{0}|\mathbf{V}^{2} \subseteq \mathbf{V}^{1} \},
	\end{equation*}
	\begin{equation*}
		Y^{1}_{0}=\{(\dot{x},\mathbf{V}^{1},\mathbf{V}^{2})\in Y_{0}|\mathbf{V}^{2} \nsubseteq \mathbf{V}^{1} \}.
	\end{equation*}
	For $a=1,2$ and $b=0,1$, let $Y_{a}^{0}=(r_{a})^{-1}(Y_{0}^{0}), Y_{a}^{1}=(r_{a})^{-1}(Y_{0}^{1})$ and $\iota^{b}:Y_{0}^{b} \rightarrow Y_{0}$ be the natural embedding. Let $\tilde{r}_{a}^{b}:Y_a^b\rightarrow Y_0^b$ be the restriction of $r_{a}$ on $Y_{a}^{b}$, and $r_a^b=\iota^b\tilde{r}_{a}^{b}:Y_a^b\rightarrow Y_0$.
	
	Notice that when $\mathbf{V}^{2} \nsubseteq \mathbf{V}^{1}$, we have $\mathbf{V}^{0} = \mathbf{V}^{1}\cap\mathbf{V}^{2} , \mathbf{V}^{3}= \mathbf{V}^{1}\cup\mathbf{V}^{2}$, hence we can see that $\tilde{r}^{1}_{a}: Y^1_a\rightarrow Y_0^1$ restricts to an isomorphism onto a subvariety $Y'$ of $Y_0^1$. Hence the restriction of $(r_{1})_{!}\overline{\mathbb{Q}}_{l} $  and $(r_{2})_{!}\overline{\mathbb{Q}}_{l} $ on $Y_0^1$ are isomorphic.
	
	Since $\tilde{r}^{0}_{1}$ is a fiber bundle with each fiber isomorphic to $\mathbf{Gr}(1,\tilde{\nu}_{i}-\nu_{i})$ and $\tilde{r}^{0}_{2}$ is a fiber bundle with each fiber isomorphic to $\mathbf{Gr}(\nu_{i}-n,\nu_{i}-n+1)$, we have
	\begin{equation*}
		(\tilde{r}^{0}_{1})_{!}(\overline{\mathbb{Q}}_{l}) \cong \bigoplus\limits_{0\leqslant m \leqslant \tilde{\nu}_{i}-\nu_i-1} \overline{\mathbb{Q}}_{l}[-2m],
	\end{equation*}
	\begin{equation*}
		(\tilde{r}^{0}_{2})_{!}(\overline{\mathbb{Q}}_{l}) \cong \bigoplus\limits_{0\leqslant m \leqslant \nu_{i}-n} \overline{\mathbb{Q}}_{l}[-2m].
	\end{equation*}
	Since $(r_{1})_{!}\overline{\mathbb{Q}}_{l} $  and $(r_{2})_{!}\overline{\mathbb{Q}}_{l} $ are semisimple complexes, we must have $$(r_{1})_{!}\overline{\mathbb{Q}}_{l} \oplus  (\iota^{0})_{!}(\tilde{r}^{0}_{2})_{!}(\overline{\mathbb{Q}}_{l}) \cong (r_{2})_{!}\overline{\mathbb{Q}}_{l} \oplus (\iota^{0})_{!}(\tilde{r}^{0}_{1})_{!}(\overline{\mathbb{Q}}_{l}).$$ (Since both left hand side and right hand side are semisimple complexes and their restrictions on $Y^{1}_{0}$ or $Y^{0}_{0}$ are same.)  
	
	Note that $Y^{0}_{0}$ is isomorphic to $\dot{\mathbf{E}}_{\mathbf{V},\mathbf{W},i} \times \mathbf{Fl}(\nu_{i}-n+1,\nu_{i},\tilde{\nu}_{i})$ by definition, and morphisms $\tilde{\pi}'\iota^{0}:Y^{0}_{0} \rightarrow \dot{\mathbf{E}}_{\mathbf{V},\mathbf{W},i} \times \mathbf{Gr}(\nu_{i}, \tilde{\nu}_{i})$ and $\tilde{\pi}\iota^{0}:Y^{0}_{0} \rightarrow \dot{\mathbf{E}}_{\mathbf{V},\mathbf{W},i} \times \mathbf{Gr}(\nu_{i}-n+1, \tilde{\nu}_{i})$ can be respectively identified with the morphisms $\check{q}_{1}$ and $\check{q}_{2}$ in the definition of  $\mathcal{E}^{(n-1)}_{i}$ as follows
	\[
	\xymatrix{
		\mathbf{E}_{\mathbf{V},\mathbf{W},\hat{\Omega}} 
		&
		& \mathbf{E}_{\mathbf{V}',\mathbf{W},\hat{\Omega}} \\	
		\mathbf{E}^{0}_{\mathbf{V},\mathbf{W},i} \ar[d]_{\phi_{\mathbf{V},i}} \ar[u]^{j_{\mathbf{V},i}}
		&
		& \mathbf{E}^{0}_{\mathbf{V}',\mathbf{W},i} \ar[d]^{\phi_{\mathbf{V}',i}} \ar[u]_{j_{\mathbf{V}',i}} \\
		\dot{\mathbf{E}}_{\mathbf{V},\mathbf{W},i} \times \mathbf{Gr}(\nu_{i}, \tilde{\nu}_{i})
		& \dot{\mathbf{E}}_{\mathbf{V},\mathbf{W},i} \times \mathbf{Fl}(\nu'_{i},\nu_{i},\tilde{\nu}_{i}) \ar[r]^{\check{q}_{2}} \ar[l]_{\check{q}_{1}}
		& \dot{\mathbf{E}}_{\mathbf{V}',\mathbf{W},i}\times \mathbf{Gr}(\nu'_{i}, \tilde{\nu}_{i}).
	}
	\]
	Applying Lemma \ref{projection formula} for morphisms $\iota^0$ and $\tilde{\pi}'\iota^{0},\tilde{\pi}\iota^{0}$ respectively, we obtain
	\begin{equation*}
		(\tilde{\pi})_{!} ( (\iota^{0})_{!}\overline{\mathbb{Q}}_{l}\otimes (\tilde{\pi}')^{\ast} (-)) 
		\cong (\tilde{\pi})_{!} (\iota^{0})_{!} (\iota^{0})^{\ast} (\tilde{\pi}')^{\ast}(-)\cong (\check{q}_{2})_{!}(\check{q}_{1})^{\ast}(-).
	\end{equation*}
	Therefore, the difference between $(\tilde{\pi})_{!} (r_{1})_{!}(r_{1})^{\ast}(\tilde{\pi}')^{\ast}$ and $(\tilde{\pi})_{!} (r_{2})_{!}(r_{2})^{\ast}(\tilde{\pi}')^{\ast}$ only involves direct sums of shifts of $(\check{q}_{2})_{!}(\check{q}_{1})^{\ast}$, and so the difference between $\mathcal{E}^{(n)}_{i}\mathcal{F}_{i}$ and $\mathcal{F}_{i}\mathcal{E}^{(n)}_{i}$ only involves direct sums of shifts of $\mathcal{E}^{(n-1)}_{i}$. By direct calculation, we have
	\begin{align*}
		&\mathcal{E}^{(n)}_{i}\mathcal{F}_{i}\cong \mathcal{F}_{i}\mathcal{E}^{(n)}_{i}, &\textrm{if}\ N=0;\\
		&\mathcal{E}^{(n)}_{i}\mathcal{F}_{i}\oplus\bigoplus_{0\leqslant m\leqslant N-1}\mathcal{E}^{(n-1)}_{i}[N-1-2m]\cong \mathcal{F}_{i}\mathcal{E}^{(n)}_{i},&\textrm{if}\  N\geqslant 1;\\
		&\mathcal{E}^{(n)}_{i}\mathcal{F}_{i}\cong \mathcal{F}_{i}\mathcal{E}^{(n)}_{i} \oplus \bigoplus\limits_{0\leqslant m \leqslant -N-1} \mathcal{E}^{(n-1)}_{i}[-2m-N-1], &\textrm{if}\ N\leqslant -1.
	\end{align*}
	as desired.	
\end{proof}

\begin{lemma} \label{lemma c2}
	For any graded space $\mathbf{V},\mathbf{V}'$ such that $|\mathbf{V}'|+ni=|\mathbf{V}|+mj $ and $i \neq j$, there is an isomorphism of funtors $\mathcal{D}^{b}_{G_{\mathbf{V}}}(\mathbf{E}_{\mathbf{V},\mathbf{W},\hat{\Omega}})/\mathcal{N}_{\mathbf{V},i} \rightarrow \mathcal{D}^{b}_{G_{\mathbf{V}'}}(\mathbf{E}_{\mathbf{V}',\mathbf{W},\hat{\Omega}})/\mathcal{N}_{\mathbf{V}',i}$,
	\begin{equation*}
		\mathcal{E}^{(n)}_{i}\mathcal{F}^{(m)}_{j} \cong \mathcal{F}^{(m)}_{j}\mathcal{E}^{(n)}_{i}. 
	\end{equation*}
\end{lemma}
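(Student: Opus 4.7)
The strategy is to follow the template of Lemma \ref{lemma c1}, using base change to unfold both $\mathcal{E}^{(n)}_i \mathcal{F}^{(m)}_j$ and $\mathcal{F}^{(m)}_j \mathcal{E}^{(n)}_i$ into correspondences on a common fiber product. The crucial difference from the $i = j$ case is that, because $i \neq j$, the Grassmannian data defining $\mathcal{E}^{(n)}_i$ (a filtration $\mathbf{S}_1 \subset \mathbf{S}_2$ inside the $i$-ambient space $(\bigoplus_{h'=i}\mathbf{V}_{h''}) \oplus \mathbf{W}_{\hat{i}}$) and the subspace data defining $\mathcal{F}^{(m)}_j$ (a subspace $\dot{\mathbf{S}} \subset \dot{\mathbf{V}}$ concentrated at vertex $j$) are imposed on disjoint parts of the linear data. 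Consequently, no ``collision'' of flags occurs and no exchange term appears.

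I would first invoke Remark \ref{remarkFD} to reduce to any convenient orientation $\Omega$ with $i$ a source, and then express $\mathcal{F}^{(m)}_j$ via the Grassmannian-at-$i$ description furnished by equation (\ref{functorF}) (Case (II) or Case (III), according as $i,j$ are connected by edges or not). Next, for each composition I would unfold the definitions and propagate the open restrictions $j_{?, i}^{*}$ and the $\mathrm{GL}(\mathbf{V}_i)$-torsor functors $(\phi_{?,i})^{*}, (\phi_{?,i})_{\flat}$ past the other correspondence using Cartesian squares; this reduces the identity to an assertion about the Grassmannian-and-subspace correspondences on the $\dot{\mathbf{E}}_{?,\mathbf{W},i}$ factor.

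The heart of the argument is then the construction of a single fiber product variety $Y$ parametrizing tuples
\[
\bigl(\dot{x},\ \dot{\mathbf{S}} \subset \dot{\mathbf{V}}''',\ \dot{\rho}: \dot{\mathbf{S}} \xrightarrow{\sim} \dot{\mathbf{V}},\ \mathbf{V}^1 \subset \mathbf{V}^2 \subset (\textstyle\bigoplus_{h'=i}\mathbf{V}'''_{h''}) \oplus \mathbf{W}_{\hat{i}} \bigr)
\]
with the appropriate dimensions and stability conditions, together with the two obvious projections to the ``$\mathcal{F}^{(m)}_j$ then $\mathcal{E}^{(n)}_i$'' and ``$\mathcal{E}^{(n)}_i$ then $\mathcal{F}^{(m)}_j$'' intermediate spaces. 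Since the subspace condition at vertex $j$ and the flag condition at vertex $i$ cut out $Y$ by independent equations, each of the four relevant squares is Cartesian by direct inspection, and proper/smooth base change (combined with Lemma \ref{projection formula}) shows that both compositions agree with $(\mathrm{pr})_! (\mathrm{pr}')^{*}$ for $Y$, up to the same dimension shift. Reducing to $n = m = 1$ first and then bootstrapping to divided powers via the standard induction argument (as in the passage from $\mathcal{E}_i$ to $\mathcal{E}^{(n)}_i$ in Lemma \ref{lemma c1}) is a mild simplification, though the argument runs equally well for general $n, m$ from the outset.

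The main technical obstacle is purely bookkeeping: when $i$ and $j$ are connected, the dimension $\tilde{\nu}_i$ of the $i$-ambient space changes as one passes through the intermediate vertices ($\tilde{\nu}_i$ for $\mathbf{V}$ differs from $\tilde{\nu}_i$ for $\mathbf{V}'''$ by $m a_{i,j}$), so the Grassmannians at $i$ appearing in the two orderings live over different ambient spaces, and one must identify them via the canonical isomorphism induced by $\dot{\rho}$ on the factors $\mathbf{V}_{h''}$ with $h' = i, h'' = j$. Once this identification is installed, the cohomology-shift bookkeeping $\tilde{\nu}_i + \nu_i + n$ versus $\tilde{\nu}'''_i + \nu'''_i + n$ balances (since $\nu'''_i = \nu_i$ and any increment in $\tilde{\nu}_i$ is absorbed on both sides), and the desired isomorphism $\mathcal{E}^{(n)}_i \mathcal{F}^{(m)}_j \cong \mathcal{F}^{(m)}_j \mathcal{E}^{(n)}_i$ follows.
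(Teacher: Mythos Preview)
Your proposal is correct and follows essentially the same approach as the paper: unfold both compositions via the Case (II)/(III) description of $\mathcal{F}_j$ from equation (\ref{functorF}), reduce to a comparison of correspondences over $\dot{\mathbf{E}}_{\mathbf{V},\mathbf{W},i}$, and identify them over a common fiber product parametrizing tuples $(\dot{x},\dot{\mathbf{S}},\dot{\rho},\mathbf{V}^1\subset\mathbf{V}^2)$ exactly as you describe. The paper carries this out explicitly for $m=1$ with general $n$ and the connected-edge case, declaring the rest similar; your side remark about bootstrapping from $n=m=1$ is an unnecessary detour (the paper works with general $n$ throughout, and Lemma~\ref{lemma c1} does not bootstrap either), but harmless.
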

\begin{proof}
	We assume that $i,j$ are connected by some edges and $m=1$, the other cases can be proved by a similar argument.  
	
	On the one hand, take graded spaces $\mathbf{V}',\mathbf{V}''$ such that $|\mathbf{V}|+j=|\mathbf{V}''|=|\mathbf{V}'|+ni$ and consider the diagrams
	\[
	\xymatrix{
		\mathbf{E}_{\mathbf{V},\mathbf{W},\hat{\Omega}} & \mathbf{E}^{'}_{\mathbf{V}'',\mathbf{W},\hat{\Omega}}\ar[l]_{p_{1}} \ar[r]^{p_{2}}& \mathbf{E}^{''}_{\mathbf{V}'',\mathbf{W},\hat{\Omega}} \ar[r]^{p_{3}}& \mathbf{E}_{\mathbf{V}'',\mathbf{W},\hat{\Omega}}\\
		\mathbf{E}^{0}_{\mathbf{V},\mathbf{W},i} \ar[d]_{\phi_{1}}  \ar[u]^{j_{1}}& \mathbf{E}^{',0}_{\mathbf{V}'',\mathbf{W},i} \ar[l]_{\tilde{p}_{1}} \ar[r]^{\tilde{p}_{2}} \ar[d]_{\phi_{2}}\ar[u]^{j_{2}}& \mathbf{E}^{'',0}_{\mathbf{V}'',\mathbf{W},i}  \ar[r]^{\tilde{p}_{3}}  \ar[d]_{\phi_{3}}\ar[u]^{j_{3}}& \mathbf{E}^{0}_{\mathbf{V}'',\mathbf{W},i} \ar[d]_{\phi_{4}} \ar[u]^{j_{4}}\\
		\txt{$\dot{\mathbf{E}}_{\mathbf{V},\mathbf{W},i}$\\$\times$\\ $\mathbf{Gr}(\nu_{i}, \tilde{\nu}_{i})$}
		& Z_{1} \ar[l]_-{q_{1}} \ar[r]^{q_{2}}  & Z_{2} \ar[r]^-{q_{3}}&
		\txt{$\dot{\mathbf{E}}_{\mathbf{V}'',\mathbf{W},i}$\\$\times$\\ $\mathbf{Gr}(\nu''_{i}, \tilde{\nu}_{i}+a_{i,j})$} 
	}
	\]
	\[
	\xymatrix{
		\mathbf{E}_{\mathbf{V}'',\mathbf{W},\hat{\Omega}}
		&
		& \mathbf{E}_{\mathbf{V}',\mathbf{W},\hat{\Omega}} \\	
		\mathbf{E}^{0}_{\mathbf{V}'',\mathbf{W},i} \ar[d]_{\phi_{\mathbf{V}'',i}} \ar[u]^{j_{\mathbf{V}'',i}}
		&
		& \mathbf{E}^{0}_{\mathbf{V}',\mathbf{W},i} \ar[d]^{\phi_{\mathbf{V}',i}} \ar[u]_{j_{\mathbf{V}',i}} \\
		\txt{$\dot{\mathbf{E}}_{\mathbf{V}'',\mathbf{W},i}$\\$\times$\\ $\mathbf{Gr}(\nu''_{i}, \tilde{\nu}_{i}+a_{i,j})$}
		& \txt{$\dot{\mathbf{E}}_{\mathbf{V}'',\mathbf{W},i}$\\$\times$\\ $\mathbf{Fl}(\nu'_{i},\nu''_{i},\tilde{\nu}_{i}+a_{i,j})$}
		\ar[r]^{q'_{2}} \ar[l]_{q'_{1}}
		&  \txt{$\dot{\mathbf{E}}_{\mathbf{V}',\mathbf{W},i}$\\$\times$\\ $\mathbf{Gr}(\nu'_{i}, \tilde{\nu}_{i}+a_{i,j})$}.
	}
	\]
	By base change, up to shifts we have
	\begin{equation*}
		\begin{split}
			\mathcal{E}^{(n)}_{i}\mathcal{F}_{j }\cong&(j_{\mathbf{V}',i})_{!} (\phi_{\mathbf{V}',i})^{\ast} (q'_{2})_{!}(q'_{1})^{\ast} (\phi_{\mathbf{V}'',i})_{\flat}(j_{\mathbf{V}'',i})^{\ast}(j_{4})_{!}(\phi_{4})^{\ast}(q_{3})_{!}(q_{2})_{\flat}(q_{1})^{\ast}(\phi_{1})_{\flat}(j_{1})^{\ast}\\
			\cong&(j_{\mathbf{V}',i})_{!} (\phi_{\mathbf{V}',i})^{\ast} (q'_{2})_{!}(q'_{1})^{\ast} (q_{3})_{!}(q_{2})_{\flat}(q_{1})^{\ast}(\phi_{1})_{\flat}(j_{1})^{\ast}\\
			\cong&(j_{\mathbf{V}',i})_{!} (\phi_{\mathbf{V}',i})^{\ast} (q'_{2})_{!}(q'_{1})^{\ast} (q_{3})_{!}(q_{2})_{\flat}(q_{1})^{\ast}(\phi_{\mathbf{V},i})_{\flat}(j_{\mathbf{V},i})^{\ast}.
		\end{split}
	\end{equation*}

	On the other hand, take a graded space $\mathbf{V}'''$ such that $|\mathbf{V}|=|\mathbf{V}'''|+ni$ and consider the diagrams
	\[
	\xymatrix{
		\mathbf{E}_{\mathbf{V},\mathbf{W},\hat{\Omega}} 
		&
		& \mathbf{E}_{\mathbf{V}''',\mathbf{W},\hat{\Omega}} \\	
		\mathbf{E}^{0}_{\mathbf{V},\mathbf{W},i} \ar[d]_{\phi_{\mathbf{V},i}} \ar[u]^{j_{\mathbf{V},i}}
		&
		& \mathbf{E}^{0}_{\mathbf{V}''',\mathbf{W},i} \ar[d]^{\phi_{\mathbf{V}''',i}} \ar[u]_{j_{\mathbf{V}''',i}} \\
		\txt{$\dot{\mathbf{E}}_{\mathbf{V},\mathbf{W},i}$\\$\times$\\ $\mathbf{Gr}(\nu_{i}, \tilde{\nu}_{i})$}
		& \txt{$\dot{\mathbf{E}}_{\mathbf{V},\mathbf{W},i}$\\$\times$\\ $\mathbf{Fl}(\nu_{i}-n,\nu_{i},\tilde{\nu}_{i})$}
		\ar[r]^{\tilde{q}_{2}} \ar[l]_{\tilde{q}_{1}}
		& \txt{$\dot{\mathbf{E}}_{\mathbf{V}''',\mathbf{W},i}$\\$\times$\\ $\mathbf{Gr}(\nu_{i}-n, \tilde{\nu}_{i})$},
	}
	\]
	\[
	\xymatrix{
		\mathbf{E}_{\mathbf{V}''',\mathbf{W},\hat{\Omega}} & \mathbf{E}^{'}_{\mathbf{V}',\mathbf{W},\hat{\Omega}}\ar[l]_{p_{1}} \ar[r]^{p_{2}}& \mathbf{E}^{''}_{\mathbf{V}',\mathbf{W},\hat{\Omega}} \ar[r]^{p_{3}}& \mathbf{E}_{\mathbf{V}',\mathbf{W},\hat{\Omega}} \\
		\mathbf{E}^{0}_{\mathbf{V}''',\mathbf{W},i} \ar[d]_{\phi_{1}}  \ar[u]^{j_{1}}& \mathbf{E}^{',0}_{\mathbf{V}',\mathbf{W},i} \ar[l]_{\tilde{p}_{1}} \ar[r]^{\tilde{p}_{2}} \ar[d]_{\phi_{2}}\ar[u]^{j_{2}}& \mathbf{E}^{'',0}_{\mathbf{V}',\mathbf{W},i}  \ar[r]^{\tilde{p}_{3}}  \ar[d]_{\phi_{3}}\ar[u]^{j_{3}}& \mathbf{E}^{0}_{\mathbf{V}',\mathbf{W},i} \ar[d]_{\phi_{4}} \ar[u]^{j_{4}}\\
		\txt{$\dot{\mathbf{E}}_{\mathbf{V}''',\mathbf{W},i}$\\$\times$\\ $\mathbf{Gr}(\nu_{i}-n, \tilde{\nu}_{i})$}  & \tilde{Z}_{1} \ar[l]_-{\tilde{q}_{1}'} \ar[r]^{\tilde{q}'_{2}}  & \tilde{Z}_{2} \ar[r]^-{\tilde{q}'_{3}}& 
		\txt{$\dot{\mathbf{E}}_{\mathbf{V}',\mathbf{W},i}$\\$\times$\\ $\mathbf{Gr}(\nu_{i}-n, \tilde{\nu}_{i}+a_{i,j})$}.
	}
	\]
	Similarly, up to shifts we have
	\begin{equation*}
		\begin{split}
			\mathcal{F}_{j}\mathcal{E}^{(n)}_{i}\cong& (j_{4})_{!}(\phi_{4})^{\ast}(\tilde{q}_{3}')_{!}(\tilde{q}'_{2})_{\flat}(\tilde{q}'_{1})^{\ast}(\phi_{1})_{\flat}(j_{1})^{\ast} (j_{\mathbf{V}''',i})_{!} (\phi_{\mathbf{V}''',i})^{\ast} (\tilde{q}_{2})_{!}(\tilde{q}_{1})^{\ast} (\phi_{\mathbf{V},i})_{\flat}(j_{\mathbf{V},i})^{\ast}\\
			\cong&(j_{4})_{!}(\phi_{4})^{\ast}(\tilde{q}_{3}')_{!}(\tilde{q}'_{2})_{\flat}(\tilde{q}'_{1})^{\ast} (\tilde{q}_{2})_{!}(\tilde{q}_{1})^{\ast} (\phi_{\mathbf{V},i})_{\flat}(j_{\mathbf{V},i})^{\ast}\\
			\cong&(j_{\mathbf{V}',i})_{!}(\phi_{\mathbf{V}',i})^{\ast}(\tilde{q}_{3}')_{!}(\tilde{q}'_{2})_{\flat}(\tilde{q}'_{1})^{\ast} (\tilde{q}_{2})_{!}(\tilde{q}_{1})^{\ast} (\phi_{\mathbf{V},i})_{\flat}(j_{\mathbf{V},i})^{\ast}.
		\end{split}
	\end{equation*}

	We only need to calculate  $(q'_{2})_{!}(q'_{1})^{\ast} (q_{3})_{!}(q_{2})_{\flat}(q_{1})^{\ast}$ and $(\tilde{q}_{3}')_{!}(\tilde{q}'_{2})_{\flat}(\tilde{q}'_{1})^{\ast} (\tilde{q}_{2})_{!}(\tilde{q}_{1})^{\ast}$. Notice that  $$\nu_{i}'+n=\nu_{i}=\nu_{i}''=\nu'''_{i}+n,~\tilde{\nu}''_{i}=\tilde{\nu}_{i}+a_{i,j}=\tilde{\nu}'_{i},$$  $$\dot{\mathbf{E}}_{\mathbf{V}',\mathbf{W},i}=\dot{\mathbf{E}}_{\mathbf{V}'',\mathbf{W},i},~\dot{\mathbf{E}}_{\mathbf{V},\mathbf{W},i}=\dot{\mathbf{E}}_{\mathbf{V}''',\mathbf{W},i}.$$ Consider the following commutative diagram
	\[
	\xymatrix{
		&  & & \txt{$\dot{\mathbf{E}}_{\mathbf{V}',\mathbf{W},i}$\\$\times$\\ $\mathbf{Gr}(\nu_{i}-n, \tilde{\nu}_{i}+a_{i,j})$}  \\
		&  Y_{1}' \ar[dl]_{\pi_{1}} \ar[r]^-{r_{2}} \ar[d]^{\pi'} & Y_{1}'' \ar[ur]^{\pi_{2}} \ar[d]_{r_{1}} \ar[r]_-{r_{3}} &
		\txt{$\dot{\mathbf{E}}_{\mathbf{V}',\mathbf{W},i}$\\$\times$\\ $\mathbf{Fl}(\nu_{i}-n,\nu_{i},\tilde{\nu}_{i}+a_{i,j})$}  \ar[u]_{q'_{2}} \ar[d]^{q'_{1}}\\ 
		\txt{$\dot{\mathbf{E}}_{\mathbf{V},\mathbf{W},i}$\\$\times$\\ $\mathbf{Gr}(\nu_{i}, \tilde{\nu}_{i})$}
		& Z_{1} \ar[l]_-{q_{1}} \ar[r]^{q_{2}}  & Z_{2} \ar[r]^-{q_{3}}  &
		\txt{$\dot{\mathbf{E}}_{\mathbf{V}'',\mathbf{W},i}$\\$\times$\\ $\mathbf{Gr}(\nu_{i}, \tilde{\nu}_{i}+a_{i,j})$},
	}
	\]
	where $Y_{1}''$ is the fiber product of $q_{3}$ and $q'_{1}$, and $Y_{1}'$ is the fiber product of $q_{2}$ and $r_{1}$. 
	
	More precisely, the variety $Y_{1}''$ consists of quadruples $(\dot{x}, \dot{\mathbf{S}},\mathbf{V}^{1},\mathbf{V}^{2})$, where $\dot{x}$ belongs to $   \dot{\mathbf{E}}_{\mathbf{V}'',\mathbf{W},i}$, $\dot{\mathbf{S}}\subseteq \dot{\mathbf{V}}''$ is a $\dot{x}$-stable graded subspace and $\mathbf{V}^{1}\subseteq \mathbf{V}^{2} \subseteq  (\bigoplus\limits_{h'=i}\mathbf{S}_{h''})\oplus \mathbf{W}_{\hat{i}})$ is a flag such that $|\dot{\mathbf{S}}|=|\dot{\mathbf{V}}|$ and $ {\rm{dim}} \mathbf{V}^{1} =\nu_{i}-n,{\rm{dim}} \mathbf{V}^{2}=\nu_{i}$. 
	
	The variety $Y_{1}'$ consists of quadruples $(\dot{x}, \dot{\mathbf{S}},\mathbf{V}^{1},\mathbf{V}^{2},\dot{\rho})$ such that $\dot{x} \in   \dot{\mathbf{E}}_{\mathbf{V}'',\mathbf{W},i}$, $\dot{\mathbf{S}}\subseteq \dot{\mathbf{V}}''$ is $\dot{x}$-stable graded subspace, $\mathbf{V}^{1}\subseteq \mathbf{V}^{2} \subseteq  (\bigoplus\limits_{h'=i}\mathbf{S}_{h''})\oplus \mathbf{W}_{\hat{i}})$ is a flag and $\dot{\rho}: \dot{\mathbf{S}}\oplus\dot{\mathbf{W}} \rightarrow \dot{\mathbf{V}}\oplus\dot{\mathbf{W}}$ is a linear isomorphism which restricts to $id$ on $\dot{\mathbf{W}}$ and $ {\rm{dim}} \mathbf{V}^{1} =\nu_{i}-n,{\rm{dim}} \mathbf{V}^{2}=\nu_{i}$. 
	
	The morphisms $\pi',r_{1},r_{2},r_{3}$ are natural projections and 
	\begin{equation*}
		\pi_{1}( (\dot{x}, \dot{\mathbf{S}},\mathbf{V}^{1},\mathbf{V}^{2},\dot{\rho}) )=(\dot{\rho}_{\ast}(\dot{x}|_{\dot{\mathbf{S}}\oplus\dot{\mathbf{W}}}), \dot{\rho}(\mathbf{V}^{2})  ),
	\end{equation*}
	\begin{equation*}
		\pi_{2}( (\dot{x}, \dot{\mathbf{S}},\mathbf{V}^{1},\mathbf{V}^{2}) )=(\dot{x}, \mathbf{V}^{1}).
	\end{equation*}
	By base change, we have
	\begin{equation*}
		\begin{split}
			(q'_{2})_{!}(q'_{1})^{\ast} (q_{3})_{!}(q_{2})_{\flat}(q_{1})^{\ast}=& (q'_{2})_{!}(r_{3})_{!}(r_{1})^{\ast}(q_{2})_{\flat}(q_{1})^{\ast}\\
			=&  (q'_{2})_{!}(r_{3})_{!}(r_{2})_{\flat}(\pi')^{\ast}(q_{1})^{\ast} \\
			=&   (\pi_{2})_{!} (r_{2})_{\flat}(\pi_{1}^{\ast}).
		\end{split}
	\end{equation*}
	
	Similarly, consider the commutative diagram
	\[
	\xymatrix{
		&  & \txt{$\dot{\mathbf{E}}_{\mathbf{V}',\mathbf{W},i}$\\$\times$\\ $\mathbf{Gr}(\nu_{i}-n, \tilde{\nu}_{i}+a_{i,j})$}
		\\
		&  Y_{2}'' \ar[ur]^{\tilde{\pi}_{2}} \ar[r]^{r_{4}} & \tilde{Z}_{2} \ar[u]_{\tilde{q}_{3}'}   \\
		&  Y_{2}' \ar[dl]_{\tilde{\pi}_{1}} \ar[u]^{r_{2}} \ar[r]^-{r_{3}} \ar[d]^{r_{1}} & 
		\tilde{Z}_{1}
		\ar[u]_{\tilde{q}_{2}'} \ar[d]^{\tilde{q}_{1}'}\\
		\txt{$\dot{\mathbf{E}}_{\mathbf{V},\mathbf{W},i}$\\$\times$\\ $\mathbf{Gr}(\nu_{i}, \tilde{\nu}_{i})$}
		& \txt{$\dot{\mathbf{E}}_{\mathbf{V},\mathbf{W},i}$\\$\times$\\ $\mathbf{Fl}(\nu_{i}-n,\nu_{i},\tilde{\nu}_{i})$}
		\ar[l]_{\tilde{q}_{1}} \ar[r]^-{\tilde{q}_{2}} & 
		\txt{$\dot{\mathbf{E}}_{\mathbf{V}''',\mathbf{W},i}$\\$\times$\\ $\mathbf{Gr}(\nu_{i}-n, \tilde{\nu}_{i})$},
	}
	\]
	where the variety $Y_{2}'$ is the fiber product of $\tilde{q}'_{1}$ and $\tilde{q}_{2}$. 
	
	More precisely, recall that  $\tilde{Z}_{1}$ consists of  $(\dot{x}',\dot{\mathbf{S}} \subseteq \dot{\mathbf{V}}',\dot{\rho},\tilde{\mathbf{V}}'\subseteq  \bigoplus\limits_{h'=i} \mathbf{S}_{h''} \oplus \mathbf{W}_{\hat{i}} )$, where $\dot{x}' \in   \dot{\mathbf{E}}_{\mathbf{V}',\mathbf{W},i}$, $\dot{\mathbf{S}}$ is a $\dot{x}'$-stable subspace of  $\dot{\mathbf{V}'}$, $\dot{\rho}: \dot{\mathbf{S}}\oplus\dot{\mathbf{W}} \rightarrow \dot{\mathbf{V}}'''\oplus\dot{\mathbf{W}}$ is a linear isomorphism which restricts to $id$ on $\dot{\mathbf{W}}$ and $\tilde{\mathbf{V}}'$ is a  subspace of dimension $\nu_{i}-n$, then $Y_{2}'$ consists of quadruples $(\dot{x}', \dot{\mathbf{S}},\mathbf{V}^{1},\mathbf{V}^{2},\dot{\rho})$ such that $\dot{x}' \in   \dot{\mathbf{E}}_{\mathbf{V}',\mathbf{W},i}$, $\dot{\mathbf{S}}$ is a $\dot{x}'$-stable subspace of  $\dot{\mathbf{V}'}$, $\dot{\rho}: \dot{\mathbf{S}}\oplus\dot{\mathbf{W}}\rightarrow \dot{\mathbf{V}}'''\oplus\dot{\mathbf{W}}$ is a linear isomorphism which restricts to $id$ on $\dot{\mathbf{W}}$ and $\mathbf{V}^{1}\subseteq \mathbf{V}^{2} \subseteq  (\bigoplus\limits_{h'=i}\mathbf{S}_{h''})\oplus \mathbf{W}_{\hat{i}}$ is a flag such that $ {\rm{dim}} \mathbf{V}^{1} =\nu_{i}-n,{\rm{dim}} \mathbf{V}^{2}=\nu_{i}$. 
	
	Let $Y_{2}''$ be the variety consists of quadruples $(\dot{x}', \dot{\mathbf{S}},\mathbf{V}^{1},\mathbf{V}^{2})$ such that $(\dot{x}', \dot{\mathbf{S}},\mathbf{V}^{1},\mathbf{V}^{2})$ satisfies the same conditions as in $Y_{2}'$. 
	
	The morphisms $r_{2},r_{3},r_{4}$ are projections and 
	\begin{equation*}
		r_{1}((\dot{x}', \dot{\mathbf{S}},\mathbf{V}^{1},\mathbf{V}^{2},\dot{\rho}))=((\dot{\rho})_{\ast}(\dot{x}'|_{\dot{\mathbf{S}}\oplus \dot{\mathbf{W}}}),\dot{\rho}(\mathbf{V}^{1}),\dot{\rho}(\mathbf{V}^{2})),
	\end{equation*}
	\begin{equation*}
		\tilde{\pi}_{1} ((\dot{x}', \dot{\mathbf{S}},\mathbf{V}^{1},\mathbf{V}^{2},\dot{\rho}))= ((\dot{\rho})_{\ast}(\dot{x}'|_{\dot{\mathbf{S}}\oplus\dot{\mathbf{W}}}),\dot{\rho}(\mathbf{V}^{2})),
	\end{equation*}
	\begin{equation*}
		\tilde{\pi}_{2}((\dot{x}', \dot{\mathbf{S}},\mathbf{V}^{1},\mathbf{V}^{2}))= (\dot{x}',\mathbf{V}^{1}),
	\end{equation*}
	then the middle square is Cartesian. By base change, we have
	\begin{equation*}
		(\tilde{q}_{3}')_{!}(\tilde{q}'_{2})_{\flat}(\tilde{q}'_{1})^{\ast} (\tilde{q}_{2})_{!}(\tilde{q}_{1})^{\ast} \cong (\tilde{\pi}_{2})_{!} (r_{2})_{\flat}(\tilde{\pi}_{1}^{\ast}).
	\end{equation*}
	
	Notice that $\dot{\mathbf{E}}_{\mathbf{V}',\mathbf{W},i}=\dot{\mathbf{E}}_{\mathbf{V}'',\mathbf{W},i},\dot{\mathbf{E}}_{\mathbf{V},\mathbf{W},i}=\dot{\mathbf{E}}_{\mathbf{V}''',\mathbf{W},i}$, there are  natural isomorphisms $Y_{1}' \cong Y_{2}'$, $Y_{1}''\cong Y_{2}''$. Under the isomorphisms, we have $\tilde{\pi}_{1}=\tilde{\pi}_{2}$ and $\pi_{1}=\pi_{2}$, and so
	\begin{equation*}
		(q'_{2})_{!}(q'_{1})^{\ast} (q_{3})_{!}(q_{2})_{\flat}(q_{1})^{\ast}\cong(\tilde{q}_{3}')_{!}(\tilde{q}'_{2})_{\flat}(\tilde{q}'_{1})^{\ast} (\tilde{q}_{2})_{!}(\tilde{q}_{1})^{\ast},
	\end{equation*}
	as desired.
\end{proof}

\begin{lemma}\label{Lemma 16}
	The functors $\mathcal{E}^{(n)}_{i}$ for $n\geqslant 1$ satisfy the following relation
	\begin{equation*}
		\bigoplus \limits_{0 \leqslant m < n } \mathcal{E}^{(n)}_{i}[n-1-2m] \cong \mathcal{E}^{(n-1)}_{i}\mathcal{E}_{i}\cong \mathcal{E}_{i}\mathcal{E}^{(n-1)}_{i},\  n \geqslant 2,
	\end{equation*}
	as endofunctors of the localization $\bigoplus\limits_{\mathbf{V}}\mathcal{D}^{b}_{G_{\mathbf{V}}}(\mathbf{E}_{\mathbf{V},\mathbf{W},\hat{\Omega}})/\mathcal{N}_{\mathbf{V},i}$.
\end{lemma}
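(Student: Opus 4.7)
The plan is to adapt the flag-variety technique used in the proof of Lemma \ref{lemma c1}: since only $\mathcal{E}$-type functors appear, the argument becomes a clean computation comparing a three-step flag variety with the two-step one attached to $\mathcal{E}^{(n)}_{i}$. Fix graded spaces $\mathbf{V},\mathbf{V}',\mathbf{V}''$ with $|\mathbf{V}'|=|\mathbf{V}|-i$ and $|\mathbf{V}''|=|\mathbf{V}|-ni$. Since $i$ is a source and $Q$ has no loops, $\dot{\mathbf{E}}_{\mathbf{V},\mathbf{W},i}=\dot{\mathbf{E}}_{\mathbf{V}',\mathbf{W},i}=\dot{\mathbf{E}}_{\mathbf{V}'',\mathbf{W},i}$ and the ambient dimension $\tilde{\nu}_{i}$ is the same for $\mathbf{V},\mathbf{V}',\mathbf{V}''$.

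First I would unfold $\mathcal{E}^{(n-1)}_{i}\mathcal{E}_{i}$ and apply the cancellations $(j_{\mathbf{V}',i})^{\ast}(j_{\mathbf{V}',i})_{!}\cong\mathrm{id}$ and $(\phi_{\mathbf{V}',i})_{\flat}(\phi_{\mathbf{V}',i})^{\ast}\cong\mathrm{id}$ on the intermediate $\mathbf{V}'$-layer, exactly as in the proof of Lemma \ref{lemma c1}. The composition then simplifies to $(j_{\mathbf{V}'',i})_{!}(\phi_{\mathbf{V}'',i})^{\ast}\circ(q^{n-1}_{2})_{!}(q^{n-1}_{1})^{\ast}(q^{1}_{2})_{!}(q^{1}_{1})^{\ast}\circ(\phi_{\mathbf{V},i})_{\flat}(j_{\mathbf{V},i})^{\ast}$ with total shift $[-(n-1)(\nu_{i}-1)-\nu_{i}]=[-n\nu_{i}+(n-1)]$, where $(q^{1}_{1},q^{1}_{2})$ are the projections from $\dot{\mathbf{E}}_{\mathbf{V},\mathbf{W},i}\times\mathbf{Fl}(\nu_{i}-1,\nu_{i},\tilde{\nu}_{i})$ attached to $\mathcal{E}_{i}$, and $(q^{n-1}_{1},q^{n-1}_{2})$ are those from $\dot{\mathbf{E}}_{\mathbf{V},\mathbf{W},i}\times\mathbf{Fl}(\nu_{i}-n,\nu_{i}-1,\tilde{\nu}_{i})$ attached to $\mathcal{E}^{(n-1)}_{i}$. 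Taking the fiber product of $q^{1}_{2}$ and $q^{n-1}_{1}$ produces $\dot{\mathbf{E}}_{\mathbf{V},\mathbf{W},i}\times\mathbf{Fl}(\nu_{i}-n,\nu_{i}-1,\nu_{i},\tilde{\nu}_{i})$, and base change yields $(q^{n-1}_{2})_{!}(q^{n-1}_{1})^{\ast}(q^{1}_{2})_{!}(q^{1}_{1})^{\ast}\cong(\tilde{q}_{2})_{!}(\tilde{q}_{1})^{\ast}$ for the outermost projections $\tilde{q}_{1},\tilde{q}_{2}$ of this three-step flag variety.

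The key geometric input is that the ``forget the middle subspace'' map $\pi:\mathbf{Fl}(\nu_{i}-n,\nu_{i}-1,\nu_{i},\tilde{\nu}_{i})\to\mathbf{Fl}(\nu_{i}-n,\nu_{i},\tilde{\nu}_{i})$ is a smooth proper $\mathbb{P}^{n-1}$-bundle, with fiber $\mathbf{Gr}(n-1,\mathbf{S}_{3}/\mathbf{S}_{1})\cong\mathbb{P}^{n-1}$ over $(\mathbf{S}_{1}\subset\mathbf{S}_{3})$. Hence $\pi_{!}\bar{\mathbb{Q}}_{l}\cong\bigoplus_{m=0}^{n-1}\bar{\mathbb{Q}}_{l}[-2m]$, and noting that $\tilde{q}_{a}=q_{a}\circ(\mathrm{id}\times\pi)$ where $(q_{1},q_{2})$ are the projections attached to $\mathcal{E}^{(n)}_{i}$, Lemma \ref{projection formula} gives $(\tilde{q}_{2})_{!}(\tilde{q}_{1})^{\ast}\cong\bigoplus_{m=0}^{n-1}(q_{2})_{!}(q_{1})^{\ast}[-2m]$. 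Absorbing the extra shift $[n-1]$ produces $\mathcal{E}^{(n-1)}_{i}\mathcal{E}_{i}\cong\bigoplus_{m=0}^{n-1}\mathcal{E}^{(n)}_{i}[n-1-2m]$. The composition $\mathcal{E}_{i}\mathcal{E}^{(n-1)}_{i}$ is treated by the same recipe using the three-step flag variety $\mathbf{Fl}(\nu_{i}-n,\nu_{i}-n+1,\nu_{i},\tilde{\nu}_{i})$; its forget-the-middle projection is again a $\mathbb{P}^{n-1}$-bundle (now with fiber $\mathbf{Gr}(1,\mathbf{S}_{3}/\mathbf{S}_{1})\cong\mathbb{P}^{n-1}$), and the shift calculation yields the same total $[-n\nu_{i}+(n-1)]$, hence the same decomposition.

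The main obstacle is the bookkeeping in the initial reduction: one has to confirm that the $(j_{\mathbf{V}',i})^{\ast}(j_{\mathbf{V}',i})_{!}$ and $(\phi_{\mathbf{V}',i})_{\flat}(\phi_{\mathbf{V}',i})^{\ast}$ cancellations slot correctly between the flag-variety projections, and that the resulting identification persists in the localized category. Once this reduction is in place, the $\mathbb{P}^{n-1}$-bundle computation via Lemma \ref{projection formula} is essentially automatic.
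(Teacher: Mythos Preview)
Your proposal is correct and follows essentially the same route as the paper: reduce to the flag-variety kernel by cancelling the intermediate $(j_{\mathbf{V}',i})^{\ast}(j_{\mathbf{V}',i})_{!}$ and $(\phi_{\mathbf{V}',i})_{\flat}(\phi_{\mathbf{V}',i})^{\ast}$, take the fiber product to land on the three-step flag $\dot{\mathbf{E}}_{\mathbf{V},\mathbf{W},i}\times\mathbf{Fl}(\nu_{i}-n,\nu_{i}-1,\nu_{i},\tilde{\nu}_{i})$, and use that the forget-the-middle map is a $\mathbb{P}^{n-1}$-bundle together with Lemma~\ref{projection formula}. The paper's proof is organized via a single commutative diagram and phrases the bundle as ``trivial'', but the argument and shift bookkeeping are the same as yours.
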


\begin{proof}
	We only prove the first isomorphism, the other one can be proved similarly. Take graded spaces $\mathbf{V},\mathbf{V}'$ and $\mathbf{V}''$ such that $|\mathbf{V}|-|\mathbf{V}'|=i$ and $|\mathbf{V}|-|\mathbf{V}''|=ni$, consider the following commutative diagram
	\[
	\xymatrix{
		\txt{$\dot{\mathbf{E}}_{\mathbf{V},\mathbf{W},i}$\\$\times$\\ $\mathbf{Fl}(\nu_{i}-n,\nu_{i},\tilde{\nu}_{i})$} \ar[rr]^{q''_{2}} \ar[dd]_{q_{1}''} &  & 	\txt{$\dot{\mathbf{E}}_{\mathbf{V},\mathbf{W},i}$\\$\times$\\ $\mathbf{Gr}(\nu_{i}-n, \tilde{\nu}_{i})$}  \\
		&	\txt{$\dot{\mathbf{E}}_{\mathbf{V},\mathbf{W},i}$\\$\times$\\ $\mathbf{Fl}(\nu_{i}-n,\nu_{i}-1,\nu_{i},\tilde{\nu}_{i})$}
		\ar[ul]^{r} \ar[r]^-{\pi} \ar[d]^{\pi'} & 	\txt{$\dot{\mathbf{E}}_{\mathbf{V},\mathbf{W},i}$\\$\times$\\ $\mathbf{Fl}(\nu_{i}-n,\nu_{i}-1,\tilde{\nu}_{i})$}
		\ar[u]_{q_{2}} \ar[d]^{q_{1}}\\
		\txt{$\dot{\mathbf{E}}_{\mathbf{V},\mathbf{W},i}$\\$\times$\\ $\mathbf{Gr}(\nu_{i},\tilde{\nu}_{i})$}
		& 	\txt{$\dot{\mathbf{E}}_{\mathbf{V},\mathbf{W},i}$\\$\times$\\ $\mathbf{Fl}(\nu_{i}-1,\nu_{i},\tilde{\nu}_{i})$}
		\ar[l]_{q'_{1}} \ar[r]^-{q'_{2}} & 	\txt{$\dot{\mathbf{E}}_{\mathbf{V},\mathbf{W},i}$\\$\times$\\ $\mathbf{Gr}(\nu_{i}-1, \tilde{\nu}_{i})$},
	}
	\]
	where the morphisms are obvious forgetting maps. By base change, we have 
	\begin{equation*}
		\begin{split}
			(q_{2})_{!}(q_{1})^{\ast}(q'_{2})_{!}(q'_{1})^{\ast} \cong & (q_{2} \pi)_{!}( q'_{1} \pi' )^{\ast} \\
			\cong &  (q''_{2})_{!} r_{!}r^{\ast} (q''_{1})^{\ast} \\
			\cong & \bigoplus \limits_{0 \leqslant m < n } (q''_{2})_{!}(q''_{1})^{\ast}[-2m ],
		\end{split}
	\end{equation*}
	where the last isomorphism holds by the projection formula, since $r$ is a trivial fiber bundle with fiber isomorphic to $\mathbb{P}^{(n-1)}$. Compose them with quasi-inverse equivalences $(\phi_{\mathbf{V},i})^{\ast},(\phi_{\mathbf{V},i})_{\flat}$ and $(j_{\mathbf{V},i})^{\ast},(j_{\mathbf{V},i})_{!}$, we get a proof.
\end{proof}

For a triangulated category $\mathcal{D}$ and a finite set of objects $\mathcal{B}=\{B_{\alpha} \}_{\alpha \in S}$, we say an object $A \in \mathcal{D}$ is generated by objects in $\mathcal{B}$ if $A$ belongs to the thick subcategory generated by objects in $\mathcal{B}$.

\begin{lemma}\label{commute4}
	Assume $i \neq j$ in $I$, take graded spaces $\mathbf{V},\mathbf{V}',\mathbf{V}''$ and $\mathbf{V}'''$ such that $|\mathbf{V}|+nj=|\mathbf{V}''|=|\mathbf{V}'|+i=|\mathbf{V}'''|+i+nj$, then  for any $A$ in  $\mathcal{D}^{b}_{G_{\mathbf{V}}}(\mathbf{E}_{\mathbf{V},\mathbf{W},\hat{\Omega}})$, the object $\mathcal{E}_{i}\mathcal{F}_{j}^{(n),\vee}(A)$  in $\mathcal{D}^{b}_{G_{\mathbf{V}'}}(\mathbf{E}_{\mathbf{V}',\mathbf{W},\hat{\Omega}})/\mathcal{N}_{\mathbf{V}',i}$ is generated by those $\mathcal{F}_{j}^{(n),\vee}\mathcal{E}_{i,r,a}(A)$ with $ r \leqslant \nu_{i}$ and $a=0,1$. 
\end{lemma}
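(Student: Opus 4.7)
The plan is to use base change to merge the two diagrams defining $\mathcal{F}_{j}^{(n),\vee}$ and $\mathcal{E}_{i}$, then stratify the input by the $i$-source rank $r=t_{i}^{\ast}(x)$, and match each stratum (further refined by a binary datum $a\in\{0,1\}$) with a shift of $\mathcal{F}_{j}^{(n),\vee}\mathcal{E}_{i,r,a}(A)$ inside the quotient by $\mathcal{N}_{\mathbf{V}',i}$.

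First, since $i\neq j$, the $j$-induction $\mathcal{F}_{j}^{(n),\vee}$ leaves the $i$-data untouched: after identifying $\mathbf{V}_{i}=\mathbf{S}_{i}=\mathbf{V}''_{i}$, the restriction $\rho_{2,\ast}(x|_{\mathbf{S}})$ has the same $i$-map as $x$. Base change along the Cartesian squares that cut out $\mathbf{E}^{'',0}_{\mathbf{V}'',\mathbf{W},i}$ in $\mathbf{E}^{''}_{\mathbf{V}'',\mathbf{W},\hat{\Omega}}$ and then pass to the Grassmannian, exactly in the spirit of equation (\ref{Eq1}) and Cases (II)--(III) of the preceding subsection, rewrites $\mathcal{E}_{i}\mathcal{F}_{j}^{(n),\vee}(A)$ as a single geometric composite built from the fiber product of the $j$-induction incidence variety and the Grassmannian incidence $\mathbf{Fl}(\nu_{i}-1,\nu_{i},\tilde{\nu}_{i})$, applied to the input $A\boxtimes\overline{\mathbb{Q}}_{l}$.

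Second, I stratify $\mathbf{E}_{\mathbf{V},\mathbf{W},\hat{\Omega}}=\bigsqcup_{r\leqslant\nu_{i}}\mathbf{E}^{r}_{\mathbf{V},\mathbf{W},i}$ and pull this back to the fiber product above. Using the induction equivalence $(\iota_{\mathbf{V},r})^{\clubsuit}$ on each stratum together with $\phi_{\mathbf{V}_{r},i}$, the contribution of the stratum $r$ is computed as a pullback/pushforward from $\dot{\mathbf{E}}_{\mathbf{V},\mathbf{W},i}\times\mathbf{Gr}(\nu_{i}-r,\tilde{\nu}_{i})$ of the restriction of $A$. The chosen $(\nu_{i}-1)$-dimensional subspace in the Grassmannian incidence then either meets the image of $x$ at $i$ in dimension $\nu_{i}-r-1$ (the case $a=0$) or contains that image entirely (the case $a=1$); on each of these two loci the fiber over the stratum coincides, up to a calculable shift, with the Grassmannian incidence variety $\dot{\mathbf{E}}\times\mathbf{Fl}(\nu_{i}-1-r+a,\nu_{i}-r,\tilde{\nu}_{i})$ appearing in the definition of $\mathcal{E}_{i,r,a}$, post-composed with the $j$-induction diagram of $\mathcal{F}_{j}^{(n),\vee}$.

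Third, the finite stratification of the fiber product produces a finite filtration of $\mathcal{E}_{i}\mathcal{F}_{j}^{(n),\vee}(A)$ by distinguished triangles whose associated graded pieces are shifts of $\mathcal{F}_{j}^{(n),\vee}\mathcal{E}_{i,r,a}(A)$, yielding the claimed generation statement inside the thick subcategory. The main obstacle will be the second step: tracking how the Cartesian square relating $p_{1},p_{2}$ to the morphisms $q_{1},q_{2}$ of the Grassmannian incidence interacts with the further splitting by $a\in\{0,1\}$ at a given stratum, and verifying that the terminal morphisms $(j_{\mathbf{V}',i})_{!}(\phi_{\mathbf{V}',i})^{\ast}$ of $\mathcal{E}_{i}$ agree, in the localization $\mathcal{D}^{b}_{G_{\mathbf{V}'}}(\mathbf{E}_{\mathbf{V}',\mathbf{W},\hat{\Omega}})/\mathcal{N}_{\mathbf{V}',i}$, with the corresponding terminal morphisms $(j_{\mathbf{V}',r-a})_{!}(\iota_{\mathbf{V}',r-a})_{\clubsuit}(\phi_{\mathbf{V}'_{r-a},i})^{\ast}$ used to define $\mathcal{E}_{i,r,a}$. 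Once this matching is established, Lemma \ref{Lusztig BBD} collects the pieces from the trivial Grassmannian/flag bundle factors and the triangulated structure of the quotient completes the proof.
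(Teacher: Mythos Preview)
Your strategy coincides with the paper's: merge the diagrams for $\mathcal{E}_i$ and $\mathcal{F}_j^{(n),\vee}$ via base change, stratify the resulting incidence variety by the $i$-corank $r$ on the $\mathbf{V}$-side, refine each stratum by a binary parameter $a$, and identify the graded pieces with shifts of $\mathcal{F}_j^{(n),\vee}\mathcal{E}_{i,r,a}(A)$ modulo $\mathcal{N}_{\mathbf{V}',i}$. The terminal-functor discrepancy you flag is resolved exactly as you anticipate: the cone of $(j'_2)_!(j'_2)^*\to\mathrm{Id}$ is supported over $(p'_3p'_2)^{-1}(\mathbf{E}^{\geq 1}_{\mathbf{V}',\mathbf{W},i})$ and hence vanishes after pushforward in the localization.

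Two points need correction. First, the paper opens with a Fourier-Deligne transform on arrows away from $i$ so as to make $j$ a sink; then every subspace $\mathbf{S}\subset\mathbf{V}''_j$ is automatically $x$-stable and the parameter $r$ becomes visibly $\dim\bigl(\mathrm{Im}(\bigoplus_{h'=i}x_h)\cap\mathbf{S}^{\oplus a_{i,j}}\bigr)$ on the $\mathbf{V}''$-side. Your assertion that the $j$-induction ``leaves the $i$-data untouched'' is false precisely when $i$ and $j$ are adjacent: the ambient target for the $i$-map shrinks from $\tilde{\nu}''_i$ to $\tilde{\nu}_i=\tilde{\nu}''_i-na_{i,j}$ on passing to the quotient $\mathbf{V}$, and this drop is exactly what the $(r,a)$-bookkeeping tracks (your own description of $a$ in the second paragraph already implicitly uses this). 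Second, Lemma~\ref{Lusztig BBD} plays no role and does not apply here: that lemma yields a \emph{direct-sum} decomposition for the pushforward of a \emph{constant sheaf}, whereas $A$ is an arbitrary complex. The stratifications by $r$ and by $a$ furnish only the standard open--closed distinguished triangles, which is exactly what ``generated by'' means in the statement; no splitting is available or needed.
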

\begin{proof}
	Since $i\neq j$, we can use Fourier-Deligne transforms changing the orientation of $j \rightarrow \hat{j}$. (Notice that this Fourier-Deligne transform commutes with $\mathcal{F}_{j}^{(n),\vee}$ and $\mathcal{E}^{(n)}_{i}$). We can assume that  $i$ is a source and $j$ is a sink, then any subspace $W$ of $\mathbf{V}''_{j}$ is automatically $x$-stable for all $x \in \mathbf{E}_{\mathbf{V}'',\mathbf{W},\hat{\Omega}}$.
	
	Let $X_{1,r}$ be the variety $(p_{1})^{-1}(\mathbf{E}^{r}_{\mathbf{V},\mathbf{W},i}) \cap  \mathbf{E}^{',0}_{\mathbf{V}'',\mathbf{W},i}$ , $X_{2,r}=p_{2}(X_{1,r})$, $X_{1, \geqslant r}= \bigcup\limits_{r' \geqslant r } X_{1,r'}$ and $X_{2, \geqslant r}= \bigcup\limits_{r' \geqslant r } X_{2,r'}$. More precisely, the variety $X_{2,r}$ consists of $(x,\mathbf{S})$, where $x \in \mathbf{E}^{0}_{\mathbf{V}'',\mathbf{W},i}$ and $\mathbf{S}$ is an $n$-dimensional subspace of $\mathbf{V}''_{j}$ such that ${\rm{Im}}(\bigoplus\limits_{h \in \hat{\Omega}, h'=i}(x_{h})) \cap \mathbf{S}^{\oplus a_{i,j}} $ is an $r$-dimensional subspace of $\bigoplus\limits_{h \in \Omega,h'=i}\mathbf{V}''_{h''} \oplus \mathbf{W}_{\hat{i}}$, here $a_{i,j}$ is the number of arrows between $i$ and $j$. The variety $X_{1,r}$ consists of $(x,\mathbf{S},\rho)$ such that $(x,\mathbf{S}) \in X_{2,r}$ and $\rho: \mathbf{V}''/\mathbf{S} \cong \mathbf{V}$ is a linear isomorphism of graded spaces. Then we have the following commutative diagrams
	\[
	\xymatrix{
		\mathbf{E}_{\mathbf{V},\mathbf{W},\hat{\Omega}} & \mathbf{E}^{'}_{\mathbf{V}'',\mathbf{W},\hat{\Omega}}\ar[l]_{p_{1}} \ar[r]^{p_{2}}& \mathbf{E}^{''}_{\mathbf{V}'',\mathbf{W},\hat{\Omega}} \ar[r]^{p_{3}}& \mathbf{E}_{\mathbf{V}'',\mathbf{W},\hat{\Omega}} \\
		\mathbf{E}_{\mathbf{V},\mathbf{W},\hat{\Omega}} \ar@2{-}[u] & \mathbf{E}^{',0}_{\mathbf{V}'',\mathbf{W},i} \ar[l]_{\tilde{p}_{1}} \ar[r]^{\tilde{p}_{2}} \ar[u]_{j_{2}} & \mathbf{E}^{'',0}_{\mathbf{V}'',\mathbf{W},i}  \ar[r]^{\tilde{p}_{3}} \ar[u]_{j_{3}} & \mathbf{E}^{0}_{\mathbf{V}'',\mathbf{W},i} \ar[u]_{j_{4}} 
		\\
		\mathbf{E}^{r}_{\mathbf{V},\mathbf{W},i} \ar[u]^{j_{\mathbf{V},r}} & X_{1,r} \ar[l]_{x_{1,r}} \ar[r]^{x_{2,r}} \ar[u]_{u_{1,r}} & X_{2,r} \ar[r]^{x_{3,r}} \ar[u]_{u_{2,r}} & \mathbf{E}^{0}_{\mathbf{V}'',\mathbf{W},i} \ar@2{-}[u]
	}
	\]
	\[
	\xymatrix{
		\mathbf{E}_{\mathbf{V},\mathbf{W},\hat{\Omega}} & \mathbf{E}^{'}_{\mathbf{V}'',\mathbf{W},\hat{\Omega}}\ar[l]_{p_{1}} \ar[r]^{p_{2}}& \mathbf{E}^{''}_{\mathbf{V}'',\mathbf{W},\hat{\Omega}} \ar[r]^{p_{3}}& \mathbf{E}_{\mathbf{V}'',\mathbf{W},\hat{\Omega}} \\
		\mathbf{E}_{\mathbf{V},\mathbf{W},\hat{\Omega}} \ar@2{-}[u] & \mathbf{E}^{',0}_{\mathbf{V}'',\mathbf{W},i} \ar[l]_{\tilde{p}_{1}} \ar[r]^{\tilde{p}_{2}} \ar[u]_{j_{2}} & \mathbf{E}^{'',0}_{\mathbf{V}'',\mathbf{W},i}  \ar[r]^{\tilde{p}_{3}} \ar[u]_{j_{3}} & \mathbf{E}^{0}_{\mathbf{V}'',\mathbf{W},i} \ar[u]_{j_{4}} 
		\\
		\mathbf{E}^{\geqslant r}_{\mathbf{V},\mathbf{W},i} \ar[u]^{j_{\mathbf{V},\geqslant r}} & X_{1,\geqslant r} \ar[l]_{x_{1,\geqslant r}} \ar[r]^{x_{2,\geqslant r}} \ar[u]_{u_{1,\geqslant r}} & X_{2,\geqslant r} \ar[r]^{x_{3,\geqslant r}} \ar[u]_{u_{2,\geqslant r}} & \mathbf{E}^{0}_{\mathbf{V}'',\mathbf{W},i} \ar@2{-}[u]
	}
	\]
	where those $x_{i,r}$ and $x_{i,\geqslant r},i=1,2,3$ are restrictions of $p_{i},i=1,2,3$ respectively and those $j_{2},j_{3},j_{4}$ and $u_{i,r},u_{i,\geqslant r},i=1,2$ are obvious embeddings. Then we have the following isomorphism for any object $A_{1}$ in $\mathcal{D}^{b}_{G_{\mathbf{V}}}(\mathbf{E}_{\mathbf{V},\mathbf{W},\hat{\Omega}} )$
	\begin{equation*}
		(j_{4})^{\ast}(p_{3})_{!}(p_{2})_{\flat}(p_{1})^{\ast}(A_{1})
		\cong (\tilde{p}_{3})_{!}(\tilde{p}_{2})_{\flat}(\tilde{p}_{1})^{\ast}(A_{1}).
	\end{equation*} 
	Since the square of $u_{1,r},u_{2,r},x_{2,r}$ and $\tilde{p}_{2}$ is Cartisian, we have
	\begin{equation*}
		\begin{split}
			& (\tilde{p}_{3})_{!}(\tilde{p}_{2})_{\flat}(u_{1,r})_{!}(u_{1,r})^{\ast}(\tilde{p}_{1})^{\ast}(A_{1}) \\
			\cong & (\tilde{p}_{3})_{!}(\tilde{p}_{2})_{\flat}(u_{1,r})_{!}(x_{1,r})^{\ast}(j_{\mathbf{V},r})^{\ast}(A_{1})\\
			\cong & 	(\tilde{p}_{3})_{!}(u_{2,r})_{!}(x_{2,r})_{\flat}(x_{1,r})^{\ast}(j_{\mathbf{V},r})^{\ast}(A_{1}) \\
			\cong & 	(x_{3,r})_{!}(x_{2,r})_{\flat}(x_{1,r})^{\ast}(j_{\mathbf{V},r})^{\ast}(A_{1}). 
		\end{split}
	\end{equation*} 
	Similarly, $$(\tilde{p}_{3})_{!}(\tilde{p}_{2})_{\flat}(u_{1,\geqslant r})_{!}(u_{1,\geqslant r})^{\ast}(\tilde{p}_{1})^{\ast}(A_{1}) \cong (x_{3,\geqslant r})_{!}(x_{2,\geqslant r})_{\flat}(x_{1,\geqslant r})^{\ast}(j_{\mathbf{V},\geqslant r})^{\ast}(A_{1}).$$
	Recall that $X_{1,0} \cup X_{1,\geqslant 1} =\mathbf{E}^{',0}_{\mathbf{V}'',\mathbf{W},i}$ induces distinguished triangles $$(u_{1,0})_{!}(u_{1,0})^{\ast} \rightarrow \mathbf{Id} \rightarrow (u_{1,\geqslant 1})_{!}(u_{1,\geqslant 1})^{\ast} \xrightarrow{+1},$$
	then the complex $(\tilde{p}_{3})_{!}(\tilde{p}_{2})_{\flat}(\tilde{p}_{1})^{\ast}(A)$ admits a triangle
	\begin{equation*}
		\begin{split}
			(x_{3,0})_{!}(x_{2,0})_{\flat}(x_{1,0})^{\ast}(j_{\mathbf{V},0})^{\ast}(A)&\rightarrow (\tilde{p}_{3})_{!}(\tilde{p}_{2})_{\flat}(\tilde{p}_{1})^{\ast}(A)\\ & \rightarrow 
			(x_{3,\geqslant 1})_{!}(x_{2,\geqslant 1})_{\flat}(x_{1,\geqslant 1})^{\ast}(j_{\mathbf{V},\geqslant 1})^{\ast}(A) \xrightarrow{+1}.
		\end{split}
	\end{equation*} 
	Similarly, we have distinguished triangles 
	\begin{equation*}
		\begin{split}
			(x_{3,r})_{!}(x_{2,r})_{\flat}(x_{1,r})^{\ast}(j_{\mathbf{V},r})^{\ast}(A_{1})&\rightarrow (x_{3,\geqslant r})_{!}(x_{2,\geqslant r})_{\flat}(x_{1,\geqslant r})^{\ast}(j_{\mathbf{V},\geqslant r})^{\ast}(A_{1}) \\
			&\rightarrow (x_{3,\geqslant r+1})_{!}(x_{2,\geqslant r+1})_{\flat}(x_{1,\geqslant r+1})^{\ast}(j_{\mathbf{V},\geqslant r+1})^{\ast}(A_{1}) \xrightarrow{+1},
		\end{split}
	\end{equation*}
	so $(\tilde{p}_{3})_{!}(\tilde{p}_{2})_{\flat}(\tilde{p}_{1})^{\ast}(A_{1})$ is generated by  those $(x_{3,r})_{!}(x_{2,r})_{\flat}(x_{1,r})^{\ast}(j_{\mathbf{V},r})^{\ast}(A_{1})$.
	
	Let $Y_{2,r}\subseteq \dot{\mathbf{E}}_{\mathbf{V}'',\mathbf{W},i}\times \mathbf{Gr}(n,\nu''_{j})\times \mathbf{Gr}(\nu_{i}'',\tilde{\nu}''_{i})$ be the variety consists of $(\dot{x},\mathbf{S},\mathbf{S}_{1})$ where $\dot{x} \in \dot{\mathbf{E}}_{\mathbf{V}'',\mathbf{W},i}$, $\mathbf{S}_{1}$ is an $\nu''_{i}$-dimensional subspace of $\tilde{\mathbf{V}}''_{i}=\bigoplus\limits_{h \in \Omega,h'=i}\mathbf{V}''_{h''} \oplus \mathbf{W}_{\hat{i}}$, and  $\mathbf{S}$ is an $n$-dimensional subspace of $\mathbf{V}''_{j}$ such that $\mathbf{S}_{1} \cap \mathbf{S}^{\oplus a_{i,j}} $ is an $r$-dimensional subspace of $\tilde{\mathbf{V}}''_{i}$. Let $Y_{1,r}$ be the variety consists of $(\dot{x},\mathbf{S},\mathbf{S}_{1},\dot{\rho})$ such that $(\dot{x},\mathbf{S},\mathbf{S}_{1}) \in Y_{2,r}$ and  $\dot{\rho}:\dot{\mathbf{V}}''/\mathbf{S} \cong \dot{\mathbf{V}} $ is a linear isomorphism. Denote $(x_{1,r})^{-1}(	\mathbf{E}^{0}_{\mathbf{V}_{r},\mathbf{W},i})$ by $X_{0,r}$, here we identify $	\mathbf{E}^{0}_{\mathbf{V}_{r},\mathbf{W},i}$ with $'F_{r}$. Then we have the following commutative diagram
	\[
	\xymatrix{
		& \mathbf{E}^{r}_{\mathbf{V},\mathbf{W},i} & X_{1,r}  \ar[l]_{x_{1,r}} \ar[r]^{x_{2,r}} \ar[dd]_{v_{1,r}} &  X_{2,r} \ar[r]^{x_{3,r}} \ar[dd]_{v_{2,r}} & \mathbf{E}^{0}_{\mathbf{V}'',\mathbf{W},i} \ar[dd]^{\phi_{\mathbf{V}'',i}} \\
		\mathbf{E}^{0}_{\mathbf{V}_{r},\mathbf{W},i} \ar[ur]^{\iota_{\mathbf{V},r}} \ar[dr]_{\phi_{\mathbf{V}_{r},i} } & X_{0,r} \ar[l]^{x_{0,r}} \ar[ur]_{\iota'_{\mathbf{V},r}} \ar[dr]_{v_{0,r}} &  &  & 
		\\
		&\txt{$\dot{\mathbf{E}}_{\mathbf{V}_{r},\mathbf{W},i}$\\$\times$\\ $\mathbf{Gr}(\nu_{i}-r,\tilde{\nu}_{i})$}
		& Y_{1,r} \ar[l]^-{y_{1,r}} \ar[r]_{y_{2,r}}  & Y_{2,r} \ar[r]_-{y_{3,r}}  &\txt{$\dot{\mathbf{E}}_{\mathbf{V}'',\mathbf{W},i}$\\$\times$\\ $\mathbf{Gr}(\nu_{i}'',\tilde{\nu}''_{i})$},
	}
	\]
	where $\iota'_{\mathbf{V},r}$ is the natural embedding, $x_{0,r}$ is the restriction of $x_{1,r}$, $v_{0,r}$ is the restriction of $v_{1,r}$. Recall that $\dot{\rho}$ also induces $ \dot{\rho}(\tilde{\mathbf{V}}''_{i}/\mathbf{S}^{\oplus a_{i,j}})= \tilde{\mathbf{V}}_{i}= \bigoplus\limits_{h \in \Omega,h'=i}\mathbf{V}_{h''} \oplus \mathbf{W}_{\hat{i}} $, the morphisms $v_{1,r},v_{2,r},y_{1,r},y_{2,r}$ and $y_{3,r}$ are defined by
	$$ y_{1,r}( (\dot{x},\mathbf{S},\mathbf{S}_{1},\dot{\rho} ) ) =((\dot{\rho})_{\ast}(\dot{x}), \dot{\rho}(\mathbf{S}_{1}/\mathbf{S}^{\oplus a_{i,j}}) ), $$ 
	$$ y_{2,r}( (\dot{x},\mathbf{S},\mathbf{S}_{1},\dot{\rho} ) ) =(\dot{x},\mathbf{S},\mathbf{S}_{1}), $$ 
	$$ y_{3,r}( (\dot{x},\mathbf{S},\mathbf{S}_{1} ) ) =(\dot{x},\mathbf{S}_{1}), $$ 
	$$ v_{1,r}( (x,\mathbf{S},\rho ) ) =(\dot{x},{\rm{Im}}(\bigoplus\limits_{h \in \hat{\Omega}, h'=i}(x_{h}))  ,\mathbf{S},\dot{\rho}), $$
	$$ v_{2,r}( (x,\mathbf{S}) ) =(\dot{x},{\rm{Im}}(\bigoplus\limits_{h \in \hat{\Omega}, h'=i}(x_{h}))  ,\mathbf{S}).$$
	Then we have the following equation 
	\begin{equation}\label{equationA2}
		(\phi_{\mathbf{V}'',i})_{\flat}(x_{3,r})_{!}(x_{2,r})_{\flat}(x_{1,r})^{\ast}(A_{2}) \cong  (y_{3,r})_{!}(y_{2,r})_{\flat}(v_{1,r})_{\flat}(x_{1,r})^{\ast}(A_{2}).
	\end{equation}
	Note that 
	\begin{equation}\label{sharp}
		\begin{split}
			& (y_{1,r})^{\ast}(\phi_{\mathbf{V}_{r},i})_{\flat}(\iota_{\mathbf{V},r})^{_\clubsuit} 
			\cong  (v_{0,r})_{\flat}(x_{0,r})^{\ast}(\iota_{\mathbf{V},r})^{_\clubsuit}\\
			\cong & (v_{0,r})_{\flat}(\iota'_{\mathbf{V},r})^{_\clubsuit}(x_{1,r})^{\ast} 
			\cong  (v_{1,r})_{\flat}(x_{1,r})^{\ast},
		\end{split}
	\end{equation}
	it follows that
	\begin{equation}\label{equationA3}
		(\phi_{\mathbf{V}'',i})_{\flat}(x_{3,r})_{!}(x_{2,r})_{\flat}(x_{1,r})^{\ast}(A_{2}) \cong  (y_{3,r})_{!}(y_{2,r})_{\flat}(y_{1,r})^{\ast}(\phi_{\mathbf{V}_{r},i})_{\flat}(\iota_{\mathbf{V},r})^{_\clubsuit}(A_{2})
	\end{equation} 
	for any $A_{2}$ in $\mathcal{D}^{b}_{G_{\mathbf{V}}}(\mathbf{E}^{r}_{\mathbf{V},\mathbf{W},i} )$.
	
	Let $Z_{2,r}$ be the variety  which  consists of $(\dot{x},\mathbf{S},\mathbf{S}_{2} \subseteq \mathbf{S}_{1})$, where $\dot{x} \in \dot{\mathbf{E}}_{\mathbf{V}'',\mathbf{W},i}$, $\mathbf{S}_{1}$ is an $\nu''_{i}$-dimensional subspace of $\tilde{\mathbf{V}}''_{i}$, $\mathbf{S}_{2}$ is an $(\nu''_{i}-1)$-dimensional subspace of $\tilde{\mathbf{V}}''_{i}$ , and  $\mathbf{S}$ is an $n$-dimensional subspace of $\mathbf{V}''_{j}$ such that $\mathbf{S}_{1} \cap \mathbf{S}^{\oplus a_{i,j}} $ is an $r$-dimensional subspace of $\tilde{\mathbf{V}}''_{i}$. Let $Z_{1,r}$ be the variety which consists of $(\dot{x},\mathbf{S},\mathbf{S}_{2} \subseteq \mathbf{S}_{1},\dot{\rho})$ such that $(\dot{x},\mathbf{S},\mathbf{S}_{2} \subseteq \mathbf{S}_{1}) \in Z_{2,r}$ and  $\dot{\rho}:\dot{\mathbf{V}}''/\mathbf{S} \cong \dot{\mathbf{V}} $ is a linear isomorphism.  For $l=1,2$ and $a=0,1$, let $Z_{l,r,a}$ be the subset of $Z_{l,r}$ consisting of points such that $\mathbf{S}_{2} \cap \mathbf{S}^{\oplus a_{i,j}} $ is an $(r-a)$-dimensional subspace of $\tilde{\mathbf{V}}''_{i}$. Then we have the following commutative diagrams
	
	\[
	\xymatrix{
		\txt{$\dot{\mathbf{E}}_{\mathbf{V}_{r},\mathbf{W},i}$\\$\times$\\ $\mathbf{Gr}(\nu_{i}-r,\tilde{\nu}_{i})$} & Y_{1,r} \ar[l]_-{y_{1,r}} \ar[r]^{y_{2,r}}& Y_{2,r} \ar[r]^-{y_{3,r}}& \txt{$\dot{\mathbf{E}}_{\mathbf{V}'',\mathbf{W},i}$\\$\times$\\ $\mathbf{Gr}(\nu_{i}'',\tilde{\nu}''_{i})$} \\
		& Z_{1,r} \ar[r]^{z_{2,r}} \ar[u]_{w_{1}} & Z_{2,r} \ar[r]^-{z_{3,r}} \ar[u]_{w_{2}} & \txt{$\dot{\mathbf{E}}_{\mathbf{V}'',\mathbf{W},i}$\\$\times$\\ $\mathbf{Fl}(\nu_{i}''-1,\nu_{i}'',\tilde{\nu}''_{i})$} \ar[u]_{q_{1}} 
		\\
		\txt{$\dot{\mathbf{E}}_{\mathbf{V}_{r},\mathbf{W},i}$\\$\times$\\ $\mathbf{Fl}(\nu_{i}-1-r+a,\nu_{i}-r,\tilde{\nu}_{i})$} \ar[uu]^{q_{1,r,a}} & Z_{1,r,a} \ar[l]_-{z_{1,r,a}} \ar[r]^{z_{2,r,a}} \ar[u]_{i_{1,r,a}} & Z_{2,r,a} \ar[r]^-{z_{3,r,a}} \ar[u]_{i_{2,r,a}} & \txt{$\dot{\mathbf{E}}_{\mathbf{V}'',\mathbf{W},i}$\\$\times$\\ $\mathbf{Fl}(\nu_{i}''-1,\nu_{i}'',\tilde{\nu}''_{i})$}, \ar@2{-}[u]
	}
	\]
	
	\[
	\xymatrix{
		\txt{$\dot{\mathbf{E}}_{\mathbf{V}_{r},\mathbf{W},i}$\\$\times$\\ $\mathbf{Gr}(\nu_{i}-r,\tilde{\nu}_{i})$} & Y_{1,r} \ar[l]_-{y_{1,r}} \ar[r]^{y_{2,r}}& Y_{2,r} \ar[r]^-{y_{3,r}}& \txt{$\dot{\mathbf{E}}_{\mathbf{V}'',\mathbf{W},i}$\\$\times$\\ $\mathbf{Gr}(\nu_{i}'',\tilde{\nu}''_{i})$} \\
		\txt{$\dot{\mathbf{E}}_{\mathbf{V}_{r},\mathbf{W},i}$\\$\times$\\ $\mathbf{Fl}(\nu_{i}-1-r+a,\nu_{i}-r,\tilde{\nu}_{i})$} \ar[u]^-{q_{1,r,a}}	  & Z_{1,r,a} \ar[l]_-{z_{1,r,a}} \ar[r]^{z_{2,r,a}} \ar[u]_{w_{1,a}} & Z_{2,r,a} \ar[r]^-{z_{3,r,a}} \ar[u]_{w_{2,a}} & \txt{$\dot{\mathbf{E}}_{\mathbf{V}'',\mathbf{W},i}$\\$\times$\\ $\mathbf{Fl}(\nu_{i}''-1,\nu_{i}'',\tilde{\nu}''_{i})$}, \ar[u]_-{q_{1}} 
	}
	\]
	where $z_{2,r}$ is defined by forgetting $\dot{\rho}$, $z_{3,r}$ is defined by forgetting $\mathbf{S}$, $w_{1}$ and $w_{2}$ are defined by forgetting $\mathbf{S}_{2}$, $i_{1,r,a}$ and $i_{2,r,a}$ are obvious embeddings, $w_{1,a}$ and $w_{2,a}$ are restriction of $w_{1}$ and $w_{2}$ respectively,  $z_{2,r,a}$ and  $z_{3,r,a}$ are restrictions of $z_{2,r}$ and $z_{3,r}$ respectively, and the morphism $z_{1,r,a}$ is defined by
	$$ z_{1,r,a}((\dot{x},\mathbf{S},\mathbf{S}_{2} \subseteq\mathbf{S}_{1},\dot{\rho} )  )=  ((\dot{\rho})_{\ast}(\dot{x}), \dot{\rho}(\mathbf{S}_{2}/\mathbf{S}^{\oplus a_{i,j}}) \subseteq \dot{\rho}(\mathbf{S}_{1}/\mathbf{S}^{\oplus a_{i,j}}) ).$$ 
	Then $(q_{1})^{\ast}(y_{3,r})_{!}(y_{2,r})_{\flat}(y_{1,r})^{\ast} \cong (z_{3,r})_{!}(z_{2,r})_{\flat}(w_{1})^{\ast} (y_{1,r})^{\ast}$. Note that there is a distinguished triangle $$(i_{1,r,1})_{!}(i_{1,r,1})^{\ast} \rightarrow \mathbf{Id} \rightarrow (i_{1,r,0})_{!}(i_{1,r,0})^{\ast} \xrightarrow{+1} $$ and the square of $z_{2,r},z_{2,r,a}, i_{1,r,a}$ and $i_{2,r,a}$ is Cartisian, we can see that for any object $A_{3}$ in $\mathcal{D}^{b}_{G_{\mathbf{V}}}(\dot{\mathbf{E}}_{\mathbf{V}_{r},\mathbf{W},i}\times \mathbf{Gr}(\nu_{i}-r,\tilde{\nu}_{i}) ) $, the object $(q_{1})^{\ast}(y_{3,r})_{!}(y_{2,r})_{\flat}(y_{1,r})^{\ast}(A_{3})$ is generated by objects $ (z_{3,r,a})_{!}(z_{2,r,a})_{\flat}(z_{1,r,a})^{\ast}(q_{1,r,a})^{\ast}(A_{3})$ with $a=0,1$.
	
	Let $X'_{2,r-a}$  be the variety consisting of $(x,\mathbf{S})$, where $x \in \mathbf{E}^{0}_{\mathbf{V}',\mathbf{W},i}$ and $\mathbf{S}$ is an $n$-dimensional subspace of $\mathbf{V}'_{j}$ such that ${\rm{Im}}(\bigoplus\limits_{h \in \hat{\Omega}, h'=i}(x_{h})) \cap \mathbf{S}^{\oplus a_{i,j}} $ is an $(r-a)$-dimensional subspace of $\tilde{\mathbf{V}}'_{i}=\bigoplus\limits_{h \in \Omega,h'=i}\mathbf{V}'_{h''} \oplus \mathbf{W}_{\hat{i}}$. Let $X'_{1,r-a}$ be the variety consisting of $(x,\mathbf{S},\rho)$ such that $(x,\mathbf{S}) \in X'_{2,r-a}$ and $\rho: \mathbf{V}'/\mathbf{S} \cong \mathbf{V}'''$ is a linear isomorphism of graded spaces. Let $Y'_{2,r-a}\subseteq \dot{\mathbf{E}}_{\mathbf{V}',\mathbf{W},i}\times \mathbf{Gr}(n,\nu'_{j})\times \mathbf{Gr}(\nu_{i}',\tilde{\nu}'_{i})$ be the variety which consists of $(\dot{x},\mathbf{S},\mathbf{S}_{1})$ where $\dot{x} \in \dot{\mathbf{E}}_{\mathbf{V}',\mathbf{W},i}$, $\mathbf{S}_{1}$ is an $\nu'_{i}$-dimensional subspace of $\tilde{\mathbf{V}}'_{i}$, and  $\mathbf{S}$ is an $n$-dimensional subspace of $\mathbf{V}'_{j}$ such that $\mathbf{S}_{1} \cap \mathbf{S}^{\oplus a_{i,j}} $ is an $(r-a)$-dimensional subspace of $\tilde{\mathbf{V}}'_{i}$. Let $Y'_{1,r-a}$ be the variety which consists of $(\dot{x},\mathbf{S},\mathbf{S}_{1},\dot{\rho})$ such that $(\dot{x},\mathbf{S},\mathbf{S}_{1}) \in Y'_{2,r-a}$ and  $\dot{\rho}:\dot{\mathbf{V}}'/\mathbf{S} \cong \dot{\mathbf{V}'''} $ is a linear isomorphism. Define morphisms $u'_{i,r-a},v'_{i,r-a},w'_{i,r-a}$ and $x'_{i,r-a}$, $y'_{i,r-a}$ for $a=0,1$, $r \leqslant \nu_{i}$ and $i=1,2,3$ similarly. Notice that $\tilde{\nu}''_{i}=\tilde{\nu}'_{i}$, $\tilde{\nu}'''_{i}=\tilde{\nu}_{i}$, $\dot{\mathbf{E}}_{\mathbf{V}'',\mathbf{W},i}=\dot{\mathbf{E}}_{\mathbf{V}',\mathbf{W},i}$ and $$\dot{\mathbf{E}}_{\mathbf{V},\mathbf{W},i}=\dot{\mathbf{E}}_{\mathbf{V}_{r},\mathbf{W},i}=\dot{\mathbf{E}}_{\mathbf{V}'''_{r-a},\mathbf{W},i}=\dot{\mathbf{E}}_{\mathbf{V}''',\mathbf{W},i},$$ we have the following commutative diagrams
	\[
	\xymatrix{
		\txt{$\dot{\mathbf{E}}_{\mathbf{V}'''_{r-a},\mathbf{W},i}$\\$\times$\\ $\mathbf{Fl}(\nu_{i}-1-r+a,\nu_{i}-r,\tilde{\nu}_{i})$} \ar[d]_-{q_{2,r,a}}	  & Z_{1,r,a} \ar[l]_-{z_{1,r,a}} \ar[r]^{z_{2,r,a}} \ar[d]_{w'_{1,a}} & Z_{2,r,a} \ar[r]^-{z_{3,r,a}} \ar[d]_{w'_{2,a}} & \txt{$\dot{\mathbf{E}}_{\mathbf{V}',\mathbf{W},i}$\\$\times$\\ $\mathbf{Fl}(\nu_{i}''-1,\nu_{i}'',\tilde{\nu}''_{i})$}, \ar[d]^-{q_{2}}  \\
		\txt{$\dot{\mathbf{E}}_{\mathbf{V}'''_{r-a},\mathbf{W},i}$\\$\times$\\ $\mathbf{Gr}(\nu_{i}-1-r+a,\tilde{\nu}_{i})$} & Y'_{1,r-a} \ar[l]_-{y'_{1,r-a}} \ar[r]^{y'_{2,r-a}}& Y'_{2,r-a} \ar[r]^-{y'_{3,r-a}}& \txt{$\dot{\mathbf{E}}_{\mathbf{V}',\mathbf{W},i}$\\$\times$\\ $\mathbf{Gr}(\nu_{i}''-1,\tilde{\nu}''_{i})$} 	,
	}
	\]
	\[
	\xymatrix{
		& 	\txt{$\dot{\mathbf{E}}_{\mathbf{V}'''_{r-a},\mathbf{W},i}$\\$\times$\\ $\mathbf{Gr}(\nu_{i}-1-r+a,\tilde{\nu}_{i})$} & Y'_{1,r-a} \ar[l]_-{y'_{1,r-a}} \ar[r]^{y'_{2,r-a}}& Y'_{2,r-a} \ar[r]^-{y'_{3,r-a}}& \txt{$\dot{\mathbf{E}}_{\mathbf{V}',\mathbf{W},i}$\\$\times$\\ $\mathbf{Gr}(\nu_{i}''-1,\tilde{\nu}''_{i})$} \\
		\mathbf{E}^{0}_{\mathbf{V}'''_{r-a},\mathbf{W},i} \ar[dr]_{\iota_{\mathbf{V}''',r-a}} \ar[ur]^-{\phi_{\mathbf{V}'''_{r-a},i} } & X'_{0,r-a} \ar[l]_{x'_{0,r-a}} \ar[ur]_{\iota'_{\mathbf{V}''',r-a}} \ar[dr]^{v'_{0,r-a}} &  &  & 
		\\
		&\mathbf{E}^{r-a}_{\mathbf{V}''',\mathbf{W},i}
		& X'_{1,r-a} \ar[l]^-{x'_{1,r-a}} \ar[uu]_{v'_{1,r-a}} \ar[r]_{x'_{2,r-a}}  & X'_{2,r-a}  \ar[uu]_{v'_{2,r-a}} \ar[r]_-{x'_{3,r-a}}  &\mathbf{E}^{0}_{\mathbf{V}',\mathbf{W},i} \ar[uu]_{\phi_{\mathbf{V}',i}} ,
	}
	\]
	
	Then by a similar argument as what we have done for equation (\ref{equationA2}), (\ref{sharp})  and (\ref{equationA3}), we have the following isomorphisms
	\begin{equation}\label{equationA4}
		(q_{2})_{!}(z_{3,r,a})_{!}(z_{2,r,a})_{\flat}(z_{1,r,a})^{\ast}(A_{4})\cong (y'_{3,r,a})_{!}(y'_{2,r-a})_{\flat}(y'_{1,r-a})^{\ast}(q_{2,r-a})_{!}(A_{4}),
	\end{equation}
	\begin{equation}\label{equationA5}
		\begin{split}
			&(\phi_{\mathbf{V}',i})^{\ast}(y'_{3,r-a})_{!}(y'_{2,r-a})_{\flat}(y'_{1,r-a})^{\ast}(A_{5})\\ \cong 
			&(x'_{3,r-a})_{!}(x'_{2,r-a})_{\flat}(x'_{1,r-a})^{\ast}(\iota_{\mathbf{V}''',r-a})_{_\clubsuit}(\phi_{\mathbf{V}'''_{r-a},i})^{\ast}(A_{5}),
		\end{split}
	\end{equation}
	\begin{equation}
		\begin{split}
			& (j_{\mathbf{V}',i})_{!}(x'_{3,r-a})_{!}(x'_{2,r-a})_{\flat}(x'_{1,r-a})^{\ast}(A_{6}) \\
			\cong & (p'_{3})_{!}(j'_{3})_{!}(u'_{2,r-a})_{!}(x'_{2,r-a})_{\flat}(x'_{1,r-a})^{\ast}(A_{6}) \\
			\cong & (p'_{3})_{!}(p'_{2})_{\flat}(j'_{2})_{!}(u'_{1,r-a})_{!} (x'_{1,r-a})^{\ast}(A_{6}) \\
			\cong & (p'_{3})_{!}(p'_{2})_{\flat}(j'_{2})_{!}(\tilde{p}'_{1})^{\ast} (j_{\mathbf{V}''',r-a})_{!}(A_{6}) \\
			\cong & (p'_{3})_{!}(p'_{2})_{\flat}(j'_{2})_{!}(j'_{2})^{\ast}(p'_{1})^{\ast} (j_{\mathbf{V}''',r-a})_{!}(A_{6}) .
		\end{split}
	\end{equation}
	Notice that the canonical morphism $(j'_{2})_{!}(j'_{2})^{\ast}(A_{7}) \rightarrow A_{7} $ has a mapping cone whose support is contained in $(p_{3}p_{2})^{-1}(\mathbf{E}^{\geqslant 1}_{\mathbf{V}',\mathbf{W},i} )$, the canonical morphism $$(p'_{3})_{!}(p'_{2})_{\flat}(j'_{2})_{!}(j'_{2})^{\ast}(p'_{1})^{\ast} (j_{\mathbf{V}''',r-a})_{!}(A_{6}) \rightarrow (p'_{3})_{!}(p'_{2})_{\flat}(p'_{1})^{\ast} (j_{\mathbf{V}''',r-a})_{!}(A_{6})$$ has a mapping cone whose support is contained in $\mathbf{E}^{\geqslant 1}_{\mathbf{V}',\mathbf{W},i}$, hence in localization at $i$, we have an isomorphism
	\begin{equation}\label{equationA6}
		(p'_{3})_{!}(p'_{2})_{\flat}(j'_{2})_{!}(j'_{2})^{\ast}(p'_{1})^{\ast} (j_{\mathbf{V}''',r-a})_{!}(A_{6}) \cong (p'_{3})_{!}(p'_{2})_{\flat}(p'_{1})^{\ast} (j_{\mathbf{V}''',r-a})_{!}(A_{6}).
	\end{equation} 
	
	Now for any object $A$, define the following object $$A_{i,r}=  (j_{\mathbf{V}',0})_{!}(\phi_{\mathbf{V}',i})^{\ast}(q_{2})_{!}(q_{1})^{\ast}(\phi_{\mathbf{V}'',i})_{\flat}(x_{3,r})_{!}(x_{2,r})_{\flat}(x_{1,r})^{\ast}(j_{\mathbf{V},r})^{\ast}(A),$$   
	$$A_{i,r,a}=  (j_{\mathbf{V}',0})_{!}(\phi_{\mathbf{V}',i})^{\ast}(q_{2})_{!}(z_{3,r,a})_{!}(z_{2,r,a})_{\flat}(z_{1,r,a})^{\ast}(q_{1,r,a})^{\ast}(\phi_{\mathbf{V}_{r},i})_{\flat}(\iota_{\mathbf{V},r})^{_\clubsuit}(j_{\mathbf{V},r})^{\ast}(A).$$
	
	Since $$\mathcal{E}_{i}\mathcal{F}^{(n),\vee}_{j}(A)=  (j_{\mathbf{V}',0})_{!}(\phi_{\mathbf{V}',i})^{\ast}(q_{2})_{!}(q_{1})^{\ast}(\phi_{\mathbf{V}'',i})_{\flat} (j_{\mathbf{V}'',0})^{\ast}(p_{3})_{!}(p_{2})_{\flat}(p_{1})^{\ast}(A),$$   
	then with the canonical distinguished triangles induced by the partition $X_{1,\geqslant r}= X_{1,r} \cup X_{1,\geqslant r+1}$,  we can consider the following commutative diagram and see that $\mathcal{E}_{i}\mathcal{F}^{(n),\vee}_{j}(A)$ is inductively generated by $A_{i,r}, 0 \leqslant r \leqslant \nu_{i}$.  Using equations (\ref{equationA2}), (\ref{equationA3}) and the canonical distinguished triangles induced by $Z_{1,r}= Z_{1,r,0} \cup Z_{1,r,1}$, we can see that $A_{i,r}$ is generated by $A_{i,r,a}, a=0,1$. Using equations
	(\ref{equationA4}), (\ref{equationA5}) and (\ref{equationA6}), we can see that $A_{i,r,a} \cong \mathcal{F}^{(n),\vee}_{j}\mathcal{E}_{i,r,a}(A)$ in localization. The lemma is proved.
	\[
	\xymatrix{
		\mathbf{E}_{\mathbf{V},\mathbf{W},\hat{\Omega}} & \mathbf{E}^{'}_{\mathbf{V}'',\mathbf{W},\hat{\Omega}}\ar[l]_{p_{1}} \ar[r]^{p_{2}}& \mathbf{E}^{''}_{\mathbf{V}'',\mathbf{W},\hat{\Omega}} \ar[r]^{p_{3}}& \mathbf{E}_{\mathbf{V}'',\mathbf{W},\hat{\Omega}} \\
		\mathbf{E}^{r}_{\mathbf{V},\mathbf{W},i} \ar[u]^{j_{\mathbf{V},r}} & X_{1,r} \ar[dd]_{v_{1,r}} \ar[l]_{x_{1,r}} \ar[r]^{x_{2,r}} \ar[u]_{j_{2}u_{1,r}} & X_{2,r} \ar[dd]_{v_{2,r}} \ar[r]^{x_{3,r}} \ar[u]_{j_{3}u_{2,r}} & \mathbf{E}^{0}_{\mathbf{V}'',\mathbf{W},i} \ar[dd]_{\phi_{\mathbf{V}'',i}} \ar[u]_{j_{\mathbf{V}'',0}} \\
		\mathbf{E}^{0}_{\mathbf{V}_{r},\mathbf{W},i} \ar[u]^{\iota_{\mathbf{V},r}} \ar[d]_{\phi_{\mathbf{V}_{r},i}}	& & & \\
		\txt{$\dot{\mathbf{E}}_{\mathbf{V}_{r},\mathbf{W},i}$\\$\times$\\ $\mathbf{Gr}(\nu_{i}-r,\tilde{\nu}_{i})$} & Y_{1,r} \ar[l]_-{y_{1,r}} \ar[r]^{y_{2,r}}& Y_{2,r} \ar[r]^-{y_{3,r}}& \txt{$\dot{\mathbf{E}}_{\mathbf{V}'',\mathbf{W},i}$\\$\times$\\ $\mathbf{Gr}(\nu_{i}'',\tilde{\nu}''_{i})$} \\
		\txt{$\dot{\mathbf{E}}_{\mathbf{V}_{r},\mathbf{W},i}$\\$\times$\\ $\mathbf{Fl}(\nu_{i}-1-r+a,\nu_{i}-r,\tilde{\nu}_{i})$} \ar[u]^-{q_{1,r,a}} \ar[d]_-{q_{2,r,a}}	  & Z_{1,r,a} \ar[l]_-{z_{1,r,a}} \ar[r]^{z_{2,r,a}} \ar[u]_{w_{1,a}} \ar[d]^{w'_{1,r-a}} & Z_{2,r,a} \ar[r]^-{z_{3,r,a}} \ar[u]_{w_{2,a}} \ar[d]^{w'_{2,r-a}} & \txt{$\dot{\mathbf{E}}_{\mathbf{V}'',\mathbf{W},i}$\\$\times$\\ $\mathbf{Fl}(\nu_{i}''-1,\nu_{i}'',\tilde{\nu}''_{i})$} \ar[u]_-{q_{1}} \ar[d]^-{q_{2}}\\
		\txt{$\dot{\mathbf{E}}_{\mathbf{V}'''_{r-a},\mathbf{W},i}$\\$\times$\\ $\mathbf{Gr}(\nu_{i}-1-r+a,\tilde{\nu}_{i})$} & Y'_{1,r-a} \ar[l]_-{y'_{1,r-a}} \ar[r]^{y'_{2,r-a}}& Y'_{2,r-a} \ar[r]^-{y'_{3,r-a}}& \txt{$\dot{\mathbf{E}}_{\mathbf{V}',\mathbf{W},i}$\\$\times$\\ $\mathbf{Gr}(\nu_{i}''-1,\tilde{\nu}''_{i})$} 	\\
		\mathbf{E}^{0}_{\mathbf{V}'''_{r-a},\mathbf{W},i} \ar[u]^-{\phi_{\mathbf{V}'''_{r-a},i}}	 \ar[d]_-{\iota_{\mathbf{V}''',r-a}} & & & \\
		\mathbf{E}^{r-a}_{\mathbf{V}''',\mathbf{W},i} \ar[d]_-{j_{\mathbf{V}''',r-a}}
		& X'_{1,r-a}  \ar[l]^-{x'_{1,r-a}} \ar[uu]_{v'_{1,r-a}} \ar[r]_{x'_{2,r-a}}  \ar[d]^-{j'_{2}u'_{1,r-a}} & X'_{2,r-a}  \ar[uu]_{v'_{2,r-a}} \ar[r]_-{x'_{3,r-a}} \ar[d]^-{j'_{3}u'_{2,r-a}} &\mathbf{E}^{0}_{\mathbf{V}',\mathbf{W},i} \ar[uu]_{\phi_{\mathbf{V}',i}} \ar[d]^{j_{\mathbf{V}',0}} \\
		\mathbf{E}_{\mathbf{V}''',\mathbf{W},\hat{\Omega}} & \mathbf{E}^{'}_{\mathbf{V}',\mathbf{W},\hat{\Omega}}\ar[l]_{p'_{1}} \ar[r]^{p'_{2}}& \mathbf{E}^{''}_{\mathbf{V}',\mathbf{W},\hat{\Omega}} \ar[r]^{p'_{3}}& \mathbf{E}_{\mathbf{V}',\mathbf{W},\hat{\Omega}}.
	}
	\]

\end{proof}

The following lemma is an analogy of Lemma \ref{rkey'} and Proposition \ref{rt'} for  objects which are not semisimple.
\begin{lemma}\label{key0}
	For graded space $\mathbf{V}$ and each $1\leqslant r \leqslant \nu_{j}$, we fix a decomposition $\mathbf{V}=\mathbf{V}_{r}\oplus \mathbf{S}_{r}$ of $\mathbf{V}$ such that $|\mathbf{S}_{r}|=rj$. For any object $A$ in $\mathcal{N}_{\mathbf{V},j}$,  we can find some object  $B_{r}$ in $\mathcal{D}^{b}_{G_{\mathbf{V}}}(\mathbf{E}_{\mathbf{V}_{r},\mathbf{W},\hat{\Omega}})$  for each $1\leqslant r \leqslant \nu_{j}$ such that those $\mathcal{F}^{(r),\vee}_{j}(B_{r})$ generate $A$ via distinguished triangles. More precisely, $A$ in $\mathcal{N}_{\mathbf{V},j}$ if and only if $A$ can be generated by finite objects of the form $\mathcal{F}^{(n),\vee}_{j}(B)$.
\end{lemma}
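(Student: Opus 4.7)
The plan is to prove both directions of the ``if and only if''. The reverse direction is immediate from support considerations: the induction diagram defining $\mathcal{F}^{(n),\vee}_j$ requires an $x$-stable subspace $\mathbf{S}\subseteq \mathbf{V}\oplus\mathbf{W}$ with $|\mathbf{S}|=nj$, $\mathbf{S}\cap\mathbf{W}=0$. Since $\mathbf{S}\subseteq \mathbf{V}_j$ is pure at $j$ and $j$-stability forces $x_h|_{\mathbf{S}}=0$ for every $h\in\hat{\Omega}$ with $h'=j$, the subspace $\mathbf{S}$ lies in $\textrm{Ker}\bigl(\bigoplus_{h'=j}x_h\bigr)$. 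Hence $\textrm{supp}(\mathcal{F}^{(n),\vee}_j(B))\subseteq \mathbf{E}^{\geqslant n}_{\mathbf{V},\mathbf{W},j}$ for $n\geqslant 1$, so any complex generated by such objects lies in $\mathcal{N}_{\mathbf{V},j}$.

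For the forward direction I would prove the stronger statement by downward induction on $r$: every $A$ with $\textrm{supp}(A)\subseteq \mathbf{E}^{\geqslant r}_{\mathbf{V},\mathbf{W},j}$ is generated by objects $\mathcal{F}^{(r'),\vee}_j(B_{r'})$ with $r'\geqslant r$. The base case $r=\nu_j$ places $\textrm{supp}(A)$ in the deepest stratum $\mathbf{E}^{\nu_j}_{\mathbf{V},\mathbf{W},j}$, where every $j$-source map vanishes. The induction equivalence $(\iota_{\mathbf{V},\nu_j})^{_\clubsuit}$ identifies such $A$ with a complex $B_{\nu_j}$ on $\mathbf{E}^{0}_{\mathbf{V}_{\nu_j},\mathbf{W},j}=\mathbf{E}_{\mathbf{V}_{\nu_j},\mathbf{W},\hat{\Omega}}$ (the latter identification holds because $|\mathbf{V}_{\nu_j}|_j=0$). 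Over this stratum, $\mathbf{V}_j$ is the unique $\nu_j j$-dimensional $x$-stable subspace, so the correspondence $p_3: \mathbf{E}''\to \mathbf{E}_{\mathbf{V},\mathbf{W},\hat{\Omega}}$ restricts to an isomorphism over $\mathbf{E}^{\nu_j}$, giving $\mathcal{F}^{(\nu_j),\vee}_j(B_{\nu_j})\cong A$.

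For the inductive step at $r<\nu_j$, let $j^r:\mathbf{E}^{r}_{\mathbf{V},\mathbf{W},j}\hookrightarrow \mathbf{E}_{\mathbf{V},\mathbf{W},\hat{\Omega}}$ be the locally closed inclusion of the open stratum of $\mathbf{E}^{\geqslant r}_{\mathbf{V},\mathbf{W},j}$, and use the recollement triangle
\[(j^r)_!(j^r)^*A\to A\to \tilde C \to +1\]
with $\tilde C$ supported in $\mathbf{E}^{\geqslant r+1}_{\mathbf{V},\mathbf{W},j}$. Using $(\iota_{\mathbf{V},r})^{_\clubsuit}$ and then extending along $j_{\mathbf{V}_r,j}:\mathbf{E}^{0}_{\mathbf{V}_r,\mathbf{W},j}\hookrightarrow \mathbf{E}_{\mathbf{V}_r,\mathbf{W},\hat{\Omega}}$, lift $(j^r)^*A$ to a complex $B_r$ on $\mathbf{E}_{\mathbf{V}_r,\mathbf{W},\hat{\Omega}}$. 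The geometric heart is that on the stratum $\mathbf{E}^{r}_{\mathbf{V},\mathbf{W},j}$ the kernel of $\bigoplus_{h'=j}x_h$ is exactly $r$-dimensional, so any $rj$-dimensional $x$-stable $\mathbf{S}$ must equal this kernel; consequently the induction correspondence $\mathbf{E}''\to \mathbf{E}_{\mathbf{V},\mathbf{W},\hat{\Omega}}$ is one-to-one over $\mathbf{E}^{r}$. Combined with base change along $\iota_{\mathbf{V},r}$ and $j_{\mathbf{V}_r,j}$ (in the same spirit as the commutative diagrams established in the proof of Lemma \ref{commute4}), this yields $(j^r)^*\mathcal{F}^{(r),\vee}_j(B_r)\cong (j^r)^*A$. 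Hence the cone of the adjunction morphism $\mathcal{F}^{(r),\vee}_j(B_r)\to A$ is supported in $\mathbf{E}^{\geqslant r+1}_{\mathbf{V},\mathbf{W},j}$, and the inductive hypothesis finishes the argument.

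The hard part will be the base-change identity $(j^r)^*\mathcal{F}^{(r),\vee}_j(B_r)\cong (j^r)^*A$. This requires stratifying $\mathbf{E}''$ by $(p_3)^{-1}(\mathbf{E}^{r'}_{\mathbf{V},\mathbf{W},j})$ for $r'\geqslant r$, carefully checking that the contribution of strata with $r'>r$ stays supported in $\mathbf{E}^{\geqslant r+1}_{\mathbf{V},\mathbf{W},j}$ (there the Grassmannian fiber of $p_3$ parametrizing choices of $\mathbf{S}$ inside a strictly larger kernel is positive-dimensional, not an isomorphism), and then matching $\mathbf{Ind}$ restricted over the open stratum with the composite $(j_{\mathbf{V}_r,j})^*(\iota_{\mathbf{V},r})_{_\clubsuit}$. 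Once this comparison is in hand, the triangulated assembly via the recollement and descending induction is routine.
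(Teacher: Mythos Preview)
Your approach is correct and follows the same line as the paper's proof: stratify $\mathbf{E}^{\geqslant 1}_{\mathbf{V},\mathbf{W},j}$ by kernel dimension, peel off one stratum at a time via the open--closed recollement, and use that over $\mathbf{E}^{r}_{\mathbf{V},\mathbf{W},j}$ the induction correspondence $p_3$ is a bijection (the stable subspace $\mathbf{S}$ is forced to equal the kernel). The paper runs the induction upward from $r=1$ while you run it downward from $r=\nu_j$, and the paper packages the lift of $(j^r)^\ast A$ as $(i_{1,r})_!(i_{1,r})^\ast\mathbf{Res}^{\mathbf{V}\oplus\mathbf{W}}_{\mathbf{V}_r\oplus\mathbf{W},\mathbf{S}_r}(C)$ rather than via $(\iota_{\mathbf{V},r})^{_\clubsuit}$; these are the same functor up to shift, so the difference is cosmetic.

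Two points where the paper's argument is sharper than your write-up. First, your worry in the last paragraph about contributions from strata $r'>r$ is unnecessary: once $B_r$ is the $!$-extension from $\mathbf{E}^{0}_{\mathbf{V}_r,\mathbf{W},j}$, the support of $(p_2)_\flat(p_1)^\ast(B_r\boxtimes\overline{\mathbb{Q}}_l)$ consists of pairs $(x,\mathbf{S})$ with $\bar{x}|_{\mathbf{V}/\mathbf{S}}\in \mathbf{E}^{0}$, which forces $\mathbf{S}$ to be the \emph{entire} kernel; hence $\mathcal{F}^{(r),\vee}_j(B_r)$ is supported \emph{exactly} on $\mathbf{E}^{r}$, not merely in $\mathbf{E}^{\geqslant r}$. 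The paper states this as the clean identity $\mathcal{F}^{(r),\vee}_{j}\bigl((i_{1,r})_!(i_{1,r})^\ast\mathbf{Res}(C)\bigr)\cong (i_{3,r})_!(i_{3,r})^\ast(C)$. Second, your phrase ``the adjunction morphism $\mathcal{F}^{(r),\vee}_j(B_r)\to A$'' is imprecise as stated; what you actually have, once the support observation above is in hand, is an isomorphism $\mathcal{F}^{(r),\vee}_j(B_r)\cong (j^r)_!(j^r)^\ast A$, and then the morphism to $A$ is the counit of $(j^r)_!\dashv (j^r)^\ast$ on $\mathbf{E}^{\geqslant r}$, whose cone is manifestly supported in $\mathbf{E}^{\geqslant r+1}$.
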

\begin{proof}
	By Remark \ref{remarkFD}, $\mathcal{F}^{(n),\vee}_{j}$ commutes with Fourier-Deligne transforms, thus we can assume $j$ is a source in $\hat{\Omega}$.
	Let $F_{r}$ be the closed subset of $\mathbf{E}_{\mathbf{V},\mathbf{W},\hat{\Omega}}$ consisting of $x$ such that $\mathbf{S}_{r}$ is $x$-stable, $Q_{r}$ be the stabilizer of $\mathbf{S}_{r}$ in $G_{\mathbf{V}}$ and $U_{r}$ be its unipotent radical. Notice that $E''_{\hat{\Omega}}\cong G_{\mathbf{V}} \times_{Q_{r}} F_{r}$, $E'_{\hat{\Omega}}\cong G_{\mathbf{V}} \times_{U_{r}} F_{r}$,  $F_{r} \cong \mathbf{E}_{\mathbf{V}_{r},\mathbf{W},\hat{\Omega}}$ and $p_{3}$ induces $ G_{\mathbf{V}} \times_{Q_{r}} \mathbf{E}^{0}_{\mathbf{V}_{r},\mathbf{W},j} \cong \mathbf{E}^{r}_{\mathbf{V},\mathbf{W},j}$. 
	Consider the following commutative diagram
	\[
	\xymatrix{
		G_{\mathbf{V}} \times_{U_{r}} F_{r} \ar@<0.5ex>[d]^{p_{1}} \ar[dr]_{p_{2}} &    \\
		F_{r}  \ar@<0.5ex>[u]^{i_{r}} \ar[r]^{\iota_{r}} &  G_{\mathbf{V}} \times_{Q_{r}} F_{r}   ,
	}
	\]
	where $i_{r},\iota_{r}$ are the natural embeddings. Then the induction functor $\mathcal{F}^{(r),\vee}_{j}$ is isomorphic to $(p_{3})_{!}(\iota_{r})_{_{\clubsuit}}$ up to shifts. In fact, $(p_{3})_{!}(\iota_{r})_{_{\clubsuit}}$ is exactly the definition of the induction functor  in \cite[10.3.5]{Achar}. Consider the following commutative diagram for $\mathcal{F}^{(r),\vee}_{j}$
	\[
	\xymatrix{
		F_{r} \ar[r]^{\iota_{r}} & G_{\mathbf{V}} \times_{Q_{r}} F_{r} \ar[r]^{p_{3}}&  \mathbf{E}_{\mathbf{V},\mathbf{W},\hat{\Omega}} \\
		\mathbf{E}^{0}_{\mathbf{V}_{r},\mathbf{W},j} \ar[u]^{i_{1,r}} \ar[r]^-{\iota'_{r}} &  G_{\mathbf{V}} \times_{Q_{r}} \mathbf{E}^{0}_{\mathbf{V}_{r},\mathbf{W},j}  \ar[r]^-{\cong} \ar[u]_{i_{2,r}}  & \mathbf{E}^{r}_{\mathbf{V},\mathbf{W},j} \ar[u]_{i_{3,r}},
	}
	\]
	where $i_{1},i_{2},i_{3}$ are the obvious embeddings and $\iota'_{r}$ is the restriction of $\iota_{r}$. Notice that the restriction functor is defined by $(p_{3}\iota_{r})^{\ast}$ up to shifts, then for any object $C$, we have the following isomorphism up to shifts
	$$\mathcal{F}^{(r),\vee}_{j} (i_{1,r})_{!}(i_{1,r})^{\ast} \mathbf{Res}^{\mathbf{V}\oplus\mathbf{W}}_{\mathbf{V}_{r}\oplus \mathbf{W},\mathbf{S}_{r}}(C) \cong (i_{3,r})_{!}(i_{3,r})^{\ast}(C). $$
	(In fact, similar isomorphisms for semisimple objects have been also used by Lusztig in the proof of  \cite[Proposition 6.6]{MR1088333}.)
	
	Let $B_{1}= \mathcal{F}^{(1),\vee}_{j} (i_{1,1})_{!}(i_{1,1})^{\ast} \mathbf{Res}^{\mathbf{V}\oplus\mathbf{W}}_{\mathbf{V}_{1}\oplus \mathbf{W},\mathbf{S}_{1}}(A) $. Since the support of $A$ is contained in $\mathbf{E}^{\geqslant 1}_{\mathbf{V},\mathbf{W},j}$,  there is a canonical distinguished triangle induced by $\mathbf{E}^{\geqslant 1}_{\mathbf{V},\mathbf{W},j}=\mathbf{E}^{ 1}_{\mathbf{V},\mathbf{W},j} \cup \mathbf{E}^{\geqslant 2}_{\mathbf{V},\mathbf{W},j}$  
	$$ B_{1} \rightarrow A \rightarrow B_{\geqslant 1} \xrightarrow{+1},$$
	where $B_{\geqslant 1}$ is an object whose support is contained  in $\mathbf{E}^{\geqslant 2}_{\mathbf{V},\mathbf{W},j}$. 
	
	By a similar argument as in the proof of Lemma \ref{commute4}, we can inductively define $B_{r}$ be the object $ \mathcal{F}^{(r),\vee}_{j} (i_{1,r})_{!}(i_{1,r})^{\ast} \mathbf{Res}^{\mathbf{V}\oplus\mathbf{W}}_{\mathbf{V}_{r}\oplus \mathbf{W},\mathbf{S}_{r}}(B_{\geqslant r-1}) $ and define $B_{\geqslant r}$ be the mapping cone of the  canonical morphism $B_{r} \rightarrow B_{\geqslant r-1}$  induced by the partition $\mathbf{E}^{\geqslant r}_{\mathbf{V},\mathbf{W},j}=\mathbf{E}^{ r}_{\mathbf{V},\mathbf{W},j} \cup \mathbf{E}^{\geqslant r+1}_{\mathbf{V},\mathbf{W},j}$. When $r > \nu_{j}$, $\mathbf{E}^{\geqslant r}_{\mathbf{V},\mathbf{W},j}$  is empty and  $B_{\geqslant r}$ is zero.   These $B_{r},1\leqslant r \leqslant \nu_{j}$ are exactly what we need. 
\end{proof}

\begin{proposition}\label{keypro}
	The functor  $\mathcal{E}^{(n)}_{i}:\mathcal{D}^{b}_{G_{\mathbf{V}}}(\mathbf{E}_{\mathbf{V},\mathbf{W},\hat{\Omega} })/\mathcal{N}_{\mathbf{V},i} \rightarrow \mathcal{D}^{b}_{G_{\mathbf{V}'}}(\mathbf{E}_{\mathbf{V}',\mathbf{W},\hat{\Omega}})/\mathcal{N}_{\mathbf{V}',i}$ sends objects of  $\mathcal{N}_{\mathbf{V},j}$ to objects of $\mathcal{N}_{\mathbf{V}',j}$ up to isomorphisms in localization at $i$. In particular,  it induces a functor  $\mathcal{E}^{(n)}_{i}:\mathcal{D}^{b}_{G_{\mathbf{V}}}(\mathbf{E}_{\mathbf{V},\mathbf{W},\hat{\Omega} })/\mathcal{N}_{\mathbf{V}} \rightarrow \mathcal{D}^{b}_{G_{\mathbf{V}'}}(\mathbf{E}_{\mathbf{V}',\mathbf{W},\hat{\Omega}})/\mathcal{N}_{\mathbf{V}'}$ between global localizations.
\end{proposition}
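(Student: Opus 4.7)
The plan is to combine Lemma \ref{key0} with Lemma \ref{commute4}, after an inductive reduction via Lemma \ref{Lemma 16}, treating the cases $i=j$ and $i\neq j$ separately. First I would dispose of the easy case $j=i$: by the very definition of $\mathcal{E}^{(n)}_i$ as a composition starting with $(j_{\mathbf{V},i})^{\ast}$, and as already observed in Definition \ref{defineE}, the functor $\mathcal{E}^{(n)}_i$ annihilates $\mathcal{N}_{\mathbf{V},i}$ outright, so nothing needs to be checked.

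For the case $i\neq j$, I would next reduce to $n=1$ using Lemma \ref{Lemma 16}, which gives
\[
\mathcal{E}_i\mathcal{E}^{(n-1)}_i \cong \bigoplus_{0\leqslant m<n}\mathcal{E}^{(n)}_i[n-1-2m].
\]
Hence $\mathcal{E}^{(n)}_i$ is a shift of a direct summand of $\mathcal{E}_i\mathcal{E}^{(n-1)}_i$. Since $\mathcal{N}_{\mathbf{V}',j}$ is thick (closed under shifts and direct summands), it suffices to establish the claim for $\mathcal{E}_i$; induction on $n$ then delivers the general statement.

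For the core case $n=1$, $i\neq j$, I would take $A\in\mathcal{N}_{\mathbf{V},j}$ and apply Lemma \ref{key0} to produce objects $B_r$, $1\leqslant r\leqslant \nu_j$, such that $A$ lies in the thick triangulated subcategory of $\mathcal{D}^{b}_{G_{\mathbf{V}}}(\mathbf{E}_{\mathbf{V},\mathbf{W},\hat{\Omega}})$ generated by the $\mathcal{F}^{(r),\vee}_j(B_r)$. Since $\mathcal{E}_i$ is triangulated, $\mathcal{E}_i(A)$ belongs to the thick subcategory generated by the $\mathcal{E}_i\mathcal{F}^{(r),\vee}_j(B_r)$. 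I then invoke Lemma \ref{commute4}, which tells us that, in the localization $\mathcal{D}^{b}_{G_{\mathbf{V}'}}(\mathbf{E}_{\mathbf{V}',\mathbf{W},\hat{\Omega}})/\mathcal{N}_{\mathbf{V}',i}$, each $\mathcal{E}_i\mathcal{F}^{(r),\vee}_j(B_r)$ is generated by objects of the form $\mathcal{F}^{(r),\vee}_j\mathcal{E}_{i,s,a}(B_r)$ with $s\leqslant \nu_i$ and $a\in\{0,1\}$. Each of these is in the image of $\mathcal{F}^{(r),\vee}_j$, so by the \emph{if} direction of Lemma \ref{key0} it lies in $\mathcal{N}_{\mathbf{V}',j}$. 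Thickness of $\mathcal{N}_{\mathbf{V}',j}$ then forces $\mathcal{E}_i(A)$ to be isomorphic, in the localization at $i$, to an object of $\mathcal{N}_{\mathbf{V}',j}$, which is the desired conclusion.

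For the global assertion, recall that $\mathcal{N}_{\mathbf{V}}$ is by construction the thick subcategory generated by the $\mathcal{N}_{\mathbf{V},k}$, $k\in I$; the two cases above show that $\mathcal{E}^{(n)}_i$ carries each generating $\mathcal{N}_{\mathbf{V},k}$ into $\mathcal{N}_{\mathbf{V}'}$ modulo $\mathcal{N}_{\mathbf{V}',i}\subseteq\mathcal{N}_{\mathbf{V}'}$, and therefore descends to a triangulated functor between the global localizations. The main obstacle is the $n=1$, $i\neq j$ case, where the categorified ``Leibniz'' relation of Lemma \ref{commute4} and the generation criterion of Lemma \ref{key0} must be combined; once these technical inputs are in hand, everything else is formal.
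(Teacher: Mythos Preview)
Your proposal is correct and follows essentially the same route as the paper: reduce to $n=1$ via Lemma \ref{Lemma 16}, use Lemma \ref{key0} to write any $A\in\mathcal{N}_{\mathbf{V},j}$ as generated by objects $\mathcal{F}^{(r),\vee}_j(B_r)$, then apply Lemma \ref{commute4} to commute $\mathcal{E}_i$ past $\mathcal{F}^{(r),\vee}_j$ up to generation by $\mathcal{F}^{(r),\vee}_j\mathcal{E}_{i,s,a}(B_r)$, and finally observe these land in $\mathcal{N}_{\mathbf{V}',j}$. The only point where the paper is slightly more explicit is the last step: rather than invoking the ``if'' direction of Lemma \ref{key0}, the paper spells out the Fourier--Deligne transform argument (Remark \ref{remarkFD}), transporting to an orientation where $j$ is a source so that the support condition for $\mathcal{N}_{\mathbf{V}',j}$ is visibly satisfied by $\mathcal{F}^{(r),\vee}_j(\text{anything})$; this is of course the same content packaged differently.
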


\begin{proof}
	Firstly, we prove the statement for $n=1$. Since $\mathcal{N}_{\mathbf{V},j}$ is a thick subcategory generated by objects of the form $\mathcal{F}^{(m),\vee}_{j}(B)$, we only need to check that,  $\mathcal{E}_{i}\mathcal{F}^{(m),\vee}_{j}(B)$ is isomorphic to an object of  $\mathcal{N}_{\mathbf{V}',j}$ in $\mathcal{D}^{b}_{G_{\mathbf{V}'}}(\mathbf{E}_{\mathbf{V}',\mathbf{W},\hat{\Omega}})/\mathcal{N}_{\mathbf{V}',i}$.  However, $\mathcal{E}_{i}\mathcal{F}^{(m),\vee}_{j}(B)$ is generated by those $\mathcal{F}^{(m),\vee}_{j}\mathcal{E}_{i,r,a}(B)$. By Remark \ref{remarkFD}, $\mathcal{F}^{(m),\vee}_{j}$ commutes with Fourier-Deligne transforms,  $\mathcal{F}_{\hat{\Omega}^{i},\hat{\Omega}^{j}}\mathcal{F}^{(m),\vee}_{j}\mathcal{E}_{i,r,a}(B) \cong \mathcal{F}^{(m),\vee}_{j}\mathcal{F}_{\hat{\Omega}^{i},\hat{\Omega}^{j}}\mathcal{E}_{i,r,a}(B)$ belongs to $\mathcal{N}_{\mathbf{V}',j}$. Therefore $\mathcal{F}^{(m),\vee}_{j}\mathcal{E}_{i,r,a}(B)$ belongs to $\mathcal{N}_{\mathbf{V}',j}$ and the statement holds for $n=1$. By Lemma \ref{Lemma 16}, we can inductively prove that $\mathcal{E}^{(n)}_{i}(A)$ is a direct summand of $(\mathcal{E}_{i})^{n}(A)$, hence the statement holds for general $n$. 
\end{proof}
In fact, one can similarly define $\mathcal{E}^{(n)}_{i,r,a}$ for $0 \leqslant a \leqslant n$ and prove that those $\mathcal{F}^{(m),\vee}_{j}\mathcal{E}^{(n)}_{i,r,a}(A)$ generate $\mathcal{E}_{i}^{(n)}\mathcal{F}^{(m),\vee}_{j}(A)$ as what we have done in the proof of Lemma \ref{commute4}. Then the corollary above holds for general $n$ without using Lemma \ref{Lemma 16}.

\begin{corollary}\label{corollary tilde E}
	The functor  $\mathcal{E}^{(n)}_{i}:\mathcal{D}^{b}_{G_{\mathbf{V}}}(\mathbf{E}_{\mathbf{V},\mathbf{W},\hat{\Omega} })/\mathcal{N}_{\mathbf{V}} \rightarrow \mathcal{D}^{b}_{G_{\mathbf{V}'}}(\mathbf{E}_{\mathbf{V}',\mathbf{W},\hat{\Omega}})/\mathcal{N}_{\mathbf{V}'}$ restricts to a functor $\mathcal{E}^{(n)}_{i}:  \mathcal{Q}_{\mathbf{V},\mathbf{W}}/\mathcal{N}_{\mathbf{V}}\rightarrow \mathcal{Q}_{\mathbf{V}',\mathbf{W}}/\mathcal{N}_{\mathbf{V}'}$.
\end{corollary}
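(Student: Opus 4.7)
The plan is to reduce to checking $\mathcal{E}^{(n)}_i(L_{\boldsymbol{\nu}\boldsymbol{d}})$ lies in $\mathcal{Q}_{\mathbf{V}',\mathbf{W}}/\mathcal{N}_{\mathbf{V}'}$ for a single flag type $\boldsymbol{\nu}$ of $\mathbf{V}$, and then to push $\mathcal{E}^{(n)}_i$ past the $\mathcal{F}$-functors that build $L_{\boldsymbol{\nu}\boldsymbol{d}}$ one factor at a time until there is nothing left for it to act on. By the proposition opening Section 3, $\mathcal{Q}_{\mathbf{V},\mathbf{W}}$ is additively generated, under shifts and direct summands, by the $L_{\boldsymbol{\nu}\boldsymbol{d}}$. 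Moreover, Proposition \ref{indres formula} together with the framed-quiver induction formula yields $L_{\boldsymbol{\nu}\boldsymbol{d}} \cong \mathcal{F}^{(a_1)}_{i_1}\cdots \mathcal{F}^{(a_m)}_{i_m}(\mathbf{1}_{\mathbf{W}})$, where $\mathbf{1}_{\mathbf{W}}$ denotes the appropriately shifted constant sheaf on $\mathbf{E}_{0,\mathbf{W},\hat{\Omega}}$. Since $\mathcal{E}^{(n)}_i$ is additive and commutes with shifts, and since $\mathcal{Q}_{\mathbf{V}',\mathbf{W}}/\mathcal{N}_{\mathbf{V}'}$ inherits the Krull--Schmidt property from $\mathcal{Q}_{\mathbf{V}'}$ by Lemma \ref{local}, this reduction is enough.

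I would then induct on the length $m$ of the flag type and on $n$. The base $m=0$ is immediate: $|\mathbf{V}|=0$ leaves no target for $\mathcal{E}^{(n)}_i$ with $n\geqslant 1$, and the functor vanishes. For the inductive step, decompose $L_{\boldsymbol{\nu}\boldsymbol{d}} = \mathcal{F}^{(a)}_j(L')$ with $L' \in \mathcal{Q}_{\mathbf{V}'',\mathbf{W}}$. If $j\neq i$, Lemma \ref{lemma c2} yields $\mathcal{E}^{(n)}_i\mathcal{F}^{(a)}_j \cong \mathcal{F}^{(a)}_j\mathcal{E}^{(n)}_i$ in the localization; invoking the induction hypothesis on $m$ and the fact that $\mathcal{F}^{(a)}_j$ preserves Lusztig sheaves (being Lusztig's induction against a constant sheaf) closes this case.

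When $j=i$, I would use Lemma \ref{Lemma 16} to realize $\mathcal{F}^{(a)}_i$ as a direct summand of $(\mathcal{F}_i)^a$ up to shifts, then iterate Lemma \ref{lemma c1} to move $\mathcal{E}^{(n)}_i$ past each copy of $\mathcal{F}_i$. Each such commutation produces the desired $\mathcal{F}_i\mathcal{E}^{(n)}_i$-term plus a controlled sum of shifts of $\mathcal{E}^{(n-1)}_i$, giving in the localization an identity
\begin{equation*}
\mathcal{E}^{(n)}_i (\mathcal{F}_i)^{a}(L') \oplus X \;\cong\; (\mathcal{F}_i)^{a}\mathcal{E}^{(n)}_i(L') \oplus Y,
\end{equation*}
where $X$ and $Y$ are direct sums of shifts of $(\mathcal{F}_i)^{b}\mathcal{E}^{(n-1)}_i(L')$ for $b<a$. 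By the hypotheses on $m$ and $n$, combined with the fact that $\mathcal{F}_i$ preserves Lusztig sheaves, the right-hand side and the error terms $X,Y$ are all Lusztig; cancelling via Krull--Schmidt gives the same for $\mathcal{E}^{(n)}_i(\mathcal{F}_i)^{a}(L')$, whence for its summand $\mathcal{E}^{(n)}_i\mathcal{F}^{(a)}_i(L')$.

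The main obstacle is the $j=i$ case. Lemma \ref{lemma c1} provides only a symmetric identity $A\oplus X \cong C\oplus Y$ rather than an explicit decomposition of $A$, so extracting the Lusztig property of $A$ depends on Krull--Schmidt in the Verdier quotient. Verifying this is not automatic, but Lemma \ref{local} identifies $\mathcal{Q}_{\mathbf{V}',\mathbf{W}}/\mathcal{N}_{\mathbf{V}',i}$ with $(j_{\mathbf{V}',i})^\ast \mathcal{Q}_{\mathbf{V}',\mathbf{W}}$ via quasi-inverse equivalences, so the Krull--Schmidt property descends from $\mathcal{Q}_{\mathbf{V}'}$; the passage from the local-at-$i$ quotient to the global quotient $\mathcal{N}_{\mathbf{V}'}$ is then routine, since the additional relations imposed by $\mathcal{N}_{\mathbf{V}',k}$ for $k\neq i$ have already been handled by Proposition \ref{keypro}.
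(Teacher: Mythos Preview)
Your approach is essentially the paper's: reduce to $L_{\boldsymbol{\nu}\boldsymbol{d}}$, induct on the length of $\boldsymbol{\nu}$ and on $n$, and use Lemmas \ref{lemma c1} and \ref{lemma c2} to commute $\mathcal{E}^{(n)}_i$ past the leading $\mathcal{F}$-factor. Two points where the paper's execution is cleaner than yours. First, rather than peel off $\mathcal{F}^{(a)}_j$ and then (for $j=i$) expand it as a summand of $(\mathcal{F}_i)^a$, the paper replaces $\boldsymbol{\nu}=(i_1^{a_1},\ldots,i_k^{a_k})$ by the fully expanded type $(i_1,\ldots,i_1,\ldots,i_k,\ldots,i_k)$ from the outset; every inductive step then peels off a single $\mathcal{F}_{i_1}$ and Lemma \ref{lemma c1} applies once, with no iteration or bookkeeping of compounded error terms. (Incidentally, your citation of Lemma \ref{Lemma 16} for the $\mathcal{F}$-side is a slip: that lemma concerns $\mathcal{E}^{(n)}_i$; the $\mathcal{F}$-analogue is Proposition \ref{relation2}.) Second, your worry that Lemma \ref{lemma c1} gives only a symmetric identity $A\oplus X\cong C\oplus Y$ requiring Krull--Schmidt cancellation is a misreading: the ``more precisely'' clause of Lemma \ref{lemma c1} is a trichotomy in which, for each sign of $N$, extra terms appear on at most one side. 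Thus $\mathcal{E}^{(n)}_i\mathcal{F}_i(L')$ is either directly a sum of Lusztig objects (case $N\leqslant 0$) or a summand of one (case $N\geqslant 1$), and no two-sided cancellation is needed.
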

\begin{proof}
	It suffices to prove that $\mathcal{E}^{(n)}_{i}(L_{\boldsymbol{\nu}\boldsymbol{d}} ) $ is isomorphic to a direct sum of some $L_{\boldsymbol{\nu}'\boldsymbol{d}}$. We argue by induction on the length of $\boldsymbol{\nu}$ and $n$. 
	
	Without loss of generality, we can replace the flag type $\boldsymbol{\nu}=( i_{1}^{a_{1}},i_{2}^{a_{2}},\cdots ,i_{k}^{a_{k}})$ by $$(i_{1},\cdots,i_{1},i_{2},\cdots,i_{2},\cdots,i_{k},\cdots,i_{k})$$ such that each $i_{l}$ appears repeatedly for $a_{l}$ times for $1\leqslant l\leqslant k$, then $L_{\boldsymbol{\nu}\boldsymbol{d}}=\mathcal{F}_{i_{1}} L_{\boldsymbol{\nu}'\boldsymbol{d}}$ for $\boldsymbol{\nu}'= (i_{1},i_{1},\cdots,i_{k})$ such that $i_{1}$ appears for $a_{1}-1$ times and the other $i_{l}$ appear repeatedly for $a_{l}$ times for $1<l\leqslant k$. 
	Assume $|\mathbf{V}'|=|\mathbf{V}|-ni,|\mathbf{V}''|=|\mathbf{V}|-i_{1}$ and $|\mathbf{V}'''|=|\mathbf{V}|-ni-i_{1}$, then up to shifts,
	\begin{equation*}
		\begin{split}
			\mathcal{E}^{(n)}_{i} L_{\boldsymbol{\nu}\boldsymbol{d}}  
			\cong  \mathcal{E}^{(n)}_{i}\mathcal{F}_{i_{1}} L_{\boldsymbol{\nu}'\boldsymbol{d}}.
		\end{split}
	\end{equation*}
	
	If $i_{1} \neq i$, by Lemma \ref{lemma c2},  $\mathcal{E}^{(n)}_{i}	L_{\boldsymbol{\nu}\boldsymbol{d}}  \cong   \mathcal{F}_{i_{1}} \mathcal{E}^{(n)}_{i}L_{\boldsymbol{\nu}'\boldsymbol{d}}$. Otherwise, by Lemma \ref{lemma c1}, $\mathcal{E}^{(n)}_{i}	 L_{\boldsymbol{\nu}\boldsymbol{d}}$ is either isomorphic to the direct sum of  $\mathcal{F}_{i_{1}} \mathcal{E}^{(n)}_{i} L_{\boldsymbol{\nu}'\boldsymbol{d}}$ and some shifts of $\mathcal{E}^{(n-1)}_{i} L_{\boldsymbol{\nu}'\boldsymbol{d}}$ or isomorphic to a direct summand of $\mathcal{F}_{i_{1}} \mathcal{E}^{(n)}_{i} L_{\boldsymbol{\nu}'\boldsymbol{d}}$. By inductive hypothesis and the fact that $\mathcal{F}_{i_{1}}$ sends an object of $\mathcal{Q}_{\mathbf{V}''',\mathbf{W}}$ to an object of $\mathcal{Q}_{\mathbf{V}',\mathbf{W}}$, we finish the proof.
\end{proof}

\begin{corollary}\label{commute3}
	For $i \neq j$ and graded spaces such that $|\mathbf{V}'|+ni=|\mathbf{V}|+j$,  there is an isomorphism of functors between global localizations  $\mathcal{Q}_{\mathbf{V},\mathbf{W}}/\mathcal{N}_{\mathbf{V}} \rightarrow \mathcal{Q}_{\mathbf{V}',\mathbf{W}}/\mathcal{N}_{\mathbf{V}'}$
	\begin{equation*}
		\mathcal{E}^{(n)}_{i}\mathcal{F}_{j}\cong \mathcal{F}_{j}\mathcal{E}^{(n)}_{i}.
	\end{equation*}
	For $i=j$ and graded spaces such that $|\mathbf{V}'|+(n-1)i=|\mathbf{V}|$, let $N=2\nu_{i}-\tilde{\nu}_{i}-n+1$, then there is an isomorphism of functors between global localizations $\mathcal{Q}_{\mathbf{V},\mathbf{W}}/\mathcal{N}_{\mathbf{V}} \rightarrow \mathcal{Q}_{\mathbf{V}',\mathbf{W}}/\mathcal{N}_{\mathbf{V}'}$
	\begin{equation*}
		\mathcal{E}^{(n)}_{i}\mathcal{F}_{i} \oplus \bigoplus\limits_{0\leqslant m \leqslant N-1} \mathcal{E}^{(n-1)}_{i} [N-1-2m] \cong \mathcal{F}_{i}\mathcal{E}^{(n)}_{i} \oplus \bigoplus\limits_{0\leqslant m \leqslant -N-1} \mathcal{E}^{(n-1)}_{i} [-2m-N-1].
	\end{equation*}
	More precisely, in this case we have
	\begin{align*}
		&{\mathcal{E}}^{(n)}_{i}{\mathcal{F}}_{i}\cong {\mathcal{F}}_{i}{\mathcal{E}}^{(n)}_{i}, &\textrm{if}\ N=0;\\
		&{\mathcal{E}}^{(n)}_{i}{\mathcal{F}}_{i}\oplus\bigoplus_{0\leqslant m\leqslant N-1}\mathcal{E}^{(n-1)}_{i} [N-1-2m]\cong {\mathcal{F}}_{i}{\mathcal{E}}^{(n)}_{i},&\textrm{if}\  N\geqslant 1;\\
		&{\mathcal{E}}^{(n)}_{i}\mathcal{F}_{i}\cong {\mathcal{F}}_{i}{\mathcal{E}}^{(n)}_{i}\oplus \bigoplus\limits_{0\leqslant m \leqslant -N-1} \mathcal{E}^{(n-1)}_{i} [-2m-N-1], &\textrm{if}\ N\leqslant -1.
	\end{align*}
\end{corollary}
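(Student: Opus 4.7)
The plan is to deduce this corollary directly from Lemmas \ref{lemma c1} and \ref{lemma c2} by descending their conclusions from the localization at $i$ to the global localization. The essential technical work has already been carried out: Lemma \ref{lemma c2} (for the case $m=1$) gives $\mathcal{E}^{(n)}_{i}\mathcal{F}_{j}\cong \mathcal{F}_{j}\mathcal{E}^{(n)}_{i}$ as functors $\mathcal{D}^{b}_{G_{\mathbf{V}}}(\mathbf{E}_{\mathbf{V},\mathbf{W},\hat{\Omega}})/\mathcal{N}_{\mathbf{V},i}\rightarrow \mathcal{D}^{b}_{G_{\mathbf{V}'}}(\mathbf{E}_{\mathbf{V}',\mathbf{W},\hat{\Omega}})/\mathcal{N}_{\mathbf{V}',i}$ when $i\neq j$, and Lemma \ref{lemma c1} gives the analogous direct-sum decomposition in the case $i=j$.

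What remains is to upgrade these isomorphisms from the localization at $i$ to the global localization, in other words, to pass from quotienting by $\mathcal{N}_{\mathbf{V},i}$ to quotienting by the larger thick subcategory $\mathcal{N}_{\mathbf{V}}$. First I would observe that $\mathcal{F}_{j}$ descends to the global localization by the lemma showing that $\mathcal{F}^{(n)}_{j}$ sends $\mathcal{N}_{\mathbf{V},k}$ to $\mathcal{N}_{\mathbf{V}'',k}$ for every $k$, and that $\mathcal{E}^{(n)}_{i}$ descends to the global localization by Proposition \ref{keypro}. Moreover, Corollary \ref{corollary tilde E} guarantees that both composite functors $\mathcal{E}^{(n)}_{i}\mathcal{F}_{j}$ and $\mathcal{F}_{j}\mathcal{E}^{(n)}_{i}$ restrict to functors $\mathcal{Q}_{\mathbf{V},\mathbf{W}}/\mathcal{N}_{\mathbf{V}}\rightarrow \mathcal{Q}_{\mathbf{V}',\mathbf{W}}/\mathcal{N}_{\mathbf{V}'}$.

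With these descent properties in hand, the corollary follows from a general principle: since $\mathcal{N}_{\mathbf{V}',i}\subseteq \mathcal{N}_{\mathbf{V}'}$, the global localization is a Verdier quotient of the localization at $i$, and so any isomorphism of functors taking values in the latter automatically descends to an isomorphism of the induced functors taking values in the former. Applying this to the isomorphisms in Lemmas \ref{lemma c1} and \ref{lemma c2} (whose targets, being obtained from base-change manipulations of the six functors, are natural) yields precisely the stated isomorphisms between global localizations.

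No serious obstacle is anticipated; the whole point of Proposition \ref{keypro} and Corollary \ref{corollary tilde E} was to make this descent possible. The only mild subtlety is to check that the explicit direct summands $\mathcal{E}^{(n-1)}_{i}[N-1-2m]$ appearing in Lemma \ref{lemma c1} remain well-defined objects of $\mathcal{Q}_{\mathbf{V}',\mathbf{W}}/\mathcal{N}_{\mathbf{V}'}$ after descent, but this is immediate from Corollary \ref{corollary tilde E} applied to $\mathcal{E}^{(n-1)}_{i}$.
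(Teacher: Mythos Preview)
Your proposal is correct and matches the paper's intended argument: the corollary is stated without proof precisely because it follows immediately from Lemmas \ref{lemma c1} and \ref{lemma c2} once Proposition \ref{keypro} and Corollary \ref{corollary tilde E} guarantee that all the functors involved descend to the global localization $\mathcal{Q}_{\mathbf{V},\mathbf{W}}/\mathcal{N}_{\mathbf{V}}$. Your observation that the global localization is a further Verdier quotient of the localization at $i$ (since $\mathcal{N}_{\mathbf{V}',i}\subseteq\mathcal{N}_{\mathbf{V}'}$), so that isomorphisms of functors pass down automatically, is exactly the mechanism the paper is relying on.
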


\subsection{The integrable highest weight modules}

In this subsection, we fix an orientation $\Omega$.
\begin{definition}	
	For $n\in \mathbb{N}$ and $i\in I$,  define functors between $\mathcal{L}_{\mathbf{V}}(\Lambda)=\mathcal{Q}_{\mathbf{V},\mathbf{W}}/\mathcal{N}_{\mathbf{V}}$ as follows.
	
	(1) For graded spaces $\mathbf{V}$ and $\mathbf{V}'$ such that $|\mathbf{V}|=|\mathbf{V}'|+ni$, define $$E^{(n)}_{i}:\mathcal{L}_{\mathbf{V}}(\Lambda) \rightarrow \mathcal{L}_{\mathbf{V}'}(\Lambda)$$ 
	\begin{equation*}
		E^{(n)}_{i}=\mathcal{F}_{\hat{\Omega}^{i},\hat{\Omega}}\mathcal{E}^{(n)}_{i}\mathcal{F}_{\hat{\Omega},\hat{\Omega}^{i}}.
	\end{equation*}
	
	(2) For graded spaces $\mathbf{V}$ and $\mathbf{V}''$ such that $|\mathbf{V}|+ni=|\mathbf{V}''|$, define $$F^{(n)}_{i}:\mathcal{L}_{\mathbf{V}}(\Lambda) \rightarrow \mathcal{L}_{\mathbf{V}''}(\Lambda)$$ 
	\begin{equation*}
		F^{(n)}_{i}=\mathcal{F}_i^{(n)}.
	\end{equation*}
	
	(3) Define $K_{i}: \mathcal{L}_{\mathbf{V}}(\Lambda) \rightarrow \mathcal{L}_{\mathbf{V}}(\Lambda)$ via
	\begin{equation*}
		K_{i}=\textrm{Id}\ [\tilde{\nu}_{i}-2\nu_{i}].
	\end{equation*}
	In particular, we denote by $E_{i}=E^{(1)}_{i}$ and $F_{i}=F^{(1)}_{i}$. Note that $K_{i}$ is invertible, and we define $K^{-}_{i}=\textrm{Id}\ [2\nu_{i}-\tilde{\nu}_{i}] $ to be its inverse.
\end{definition}

\begin{remark}
	Even though we need to choose an orientation $\Omega^{i}$ to define $\mathcal{E}^{(n)}_{i}$ for each $i$, it's easy to see that the definition of functor $E^{(n)}_{i}, i \in I$ does not rely on the choice of $\hat{\Omega}^{i}$ and  $\hat{\Omega}$. 
\end{remark}

By definitions and Corollary \ref{commute3}, we obtain the following proposition.
\begin{proposition}\label{relation1}
	Let $C=(c_{i,j})_{ i,j \in I}$ the Cartan matrix of $Q$, then functors $E_{i}$, $F_{i}$ and $K_{i},i\in I$ satisfy the following relations
	\begin{equation*} 
		K_{i}K_{j}=K_{j}K_{i},
	\end{equation*}
	\begin{equation*}
		E_{i}K_{j}=K_{j}E_{i}[-c_{j,i}],
	\end{equation*}
	\begin{equation*}
		F_{i}K_{j}=K_{j}F_{i}[c_{i,j}],
	\end{equation*}
	\begin{equation*}
		E_{i}F_{j}=F_{j}E_{i}\ \textrm{for}\ i \neq j,
	\end{equation*}
	\begin{equation*}
		E_{i}F_{i} \oplus \bigoplus\limits_{0\leqslant m \leqslant N-1} Id[N-1-2m] \cong F_{i}E_{i} \oplus \bigoplus\limits_{0\leqslant m \leqslant -N-1} Id[-2m-N-1]: \mathcal{L}_{\mathbf{V}}(\Lambda) \rightarrow \mathcal{L}_{\mathbf{V}}(\Lambda),
	\end{equation*}
	as endofunctors of $\mathcal{L}(\Lambda)= \coprod \limits_{\mathbf{V}} \mathcal{L}_{\mathbf{V}}(\Lambda)$, where $N=-\tilde{\nu}_{i}+2\nu_{i}$.
\end{proposition}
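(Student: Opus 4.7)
The plan is to derive all five relations from the commutation results already established---Lemma \ref{lemma c1}, Lemma \ref{lemma c2}, and their global version Corollary \ref{commute3}---together with the fact (Remark \ref{remarkFD}) that induction commutes with Fourier-Deligne transforms, and that cohomological shifts commute with every functor in sight. The relation $K_iK_j=K_jK_i$ is immediate since both sides are pure shifts by integers.

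For the relations $E_iK_j=K_jE_i[-c_{j,i}]$ and $F_iK_j=K_jF_i[c_{i,j}]$, since $K_j$ is the shift functor $[\tilde{\nu}_j-2\nu_j]$ on each $\mathcal{L}_\mathbf{V}(\Lambda)$ and both $E_i$ and $F_i$ commute with arbitrary shifts, the two sides differ only by the change of the defining shift of $K_j$ between source and target weight space. Since $E_i$ sends $\mathcal{L}_\mathbf{V}(\Lambda)$ to $\mathcal{L}_{\mathbf{V}'}(\Lambda)$ with $|\mathbf{V}'|=|\mathbf{V}|-i$, a direct computation using the formula for $\tilde{\nu}_j$ in a chosen orientation identifies $(\tilde{\nu}_j-2\nu_j)-(\tilde{\nu}'_j-2\nu'_j)$ with the Cartan integer $-c_{j,i}$; the case of $F_i$ is symmetric, producing $+c_{i,j}$.

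For $E_iF_j\cong F_jE_i$ with $i\neq j$, I unwind the definition $E_i=\mathcal{F}_{\hat{\Omega}^i,\hat{\Omega}}\mathcal{E}_i\mathcal{F}_{\hat{\Omega},\hat{\Omega}^i}$. Since induction commutes with Fourier-Deligne transforms, the transforms slide freely past $F_j=\mathcal{F}_j$, reducing the claim to $\mathcal{E}_i\mathcal{F}_j\cong\mathcal{F}_j\mathcal{E}_i$, which is exactly Corollary \ref{commute3} for $i\neq j$. The same conjugation-by-Fourier-Deligne reduces the final Serre-like relation to the $i=j$ case of Corollary \ref{commute3} with $n=1$, once one identifies $\mathcal{E}_i^{(0)}$ with the identity functor in the localization. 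The latter identification follows from Definition \ref{defineE}: at $n=0$ the flag variety $\mathbf{Fl}(\nu_i,\nu_i,\tilde{\nu}_i)$ collapses to $\mathbf{Gr}(\nu_i,\tilde{\nu}_i)$ and the maps $q_1,q_2$ are identities, so $\mathcal{E}_i^{(0)}\cong(j_{\mathbf{V},i})_!(j_{\mathbf{V},i})^*$, which is isomorphic to the identity in the localization at $i$ by Lemma \ref{local}. The exponent $N=-\tilde{\nu}_i+2\nu_i$ of the proposition matches $N=2\nu_i-\tilde{\nu}_i-n+1$ of Corollary \ref{commute3} upon setting $n=1$.

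The main obstacle I anticipate is the shift bookkeeping in the $K_j$ relations: since $\tilde{\nu}_j$ depends on the orientation $\Omega$ used to compute it, one must verify that the resulting shift is genuinely the orientation-independent Cartan integer $-c_{j,i}$. This can be confirmed by choosing, via Fourier-Deligne conjugation, an orientation in which $j$ is a source, where the formula $\tilde{\nu}_j=d_j+\sum_{l\neq j}n_{j,l}\nu_l$ directly yields $(\tilde{\nu}_j-2\nu_j)-(\tilde{\nu}'_j-2\nu'_j)=-c_{j,i}$ upon decrementing $\nu_i$ by $1$; the answer is independent of the original orientation because distinct orientations are related by Fourier-Deligne equivalences (Corollary \ref{FD2}).
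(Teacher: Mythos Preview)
Your proposal is correct and follows exactly the route the paper intends: the paper's entire proof is the single sentence ``By definitions and Corollary \ref{commute3}, we obtain the following proposition,'' and your argument is precisely the unpacking of that sentence---the $K$-relations from the shift bookkeeping, the $E_iF_j$ relations by Fourier-Deligne conjugation back to Corollary \ref{commute3}, and the identification of $\mathcal{E}_i^{(0)}$ with the identity via Lemma \ref{local}. Your handling of the orientation-dependence of $\tilde{\nu}_j$ is also the right one: the quantity $\tilde{\nu}_j-2\nu_j$ is the value of the weight $\Lambda-\nu$ on $\alpha_j^\vee$, which is intrinsically orientation-free, and computing it in an orientation with $j$ a source is merely a convenient way to see this.
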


By Theorem \ref{Lusztig1}, we also have the following proposition.
\begin{proposition}\label{relation2}
	The functors $F_{i}$ for $i\in I$ satisfy the following relations
	\begin{equation*}
		\bigoplus\limits_{0\leqslant m \leqslant 1+ a_{i,j},m\ is\ odd}F^{(m)}_{i}F_{j}F^{(1+a_{i,j}-m)}_{i}\cong 	\bigoplus\limits_{0\leqslant m \leqslant 1+ a_{i,j},m\ is \ even}F^{(m)}_{i}F_{j}F^{(1+a_{i,j}-m)}_{i},
	\end{equation*}
	\begin{equation*}
		\bigoplus \limits_{0 \leqslant m < n } F^{(n)}_{i}[n-1-2m] \cong F^{(n-1)}_{i}F_{i}\cong F_{i} F^{(n-1)}_{i}\ \textrm{for}\ n \geqslant 2,
	\end{equation*}
	as endofunctors of $\mathcal{L}(\Lambda)= \coprod \limits_{\mathbf{V}} \mathcal{L}_{\mathbf{V}}(\Lambda)$, where $a_{i,j}=a_{i,j}$ is the number of arrows between $i$ and $j$ when $i\neq j$ and $a_{i,i}=-2$ for any $i \in I$.
\end{proposition}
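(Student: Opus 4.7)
The plan is to reduce both relations to the corresponding categorical identities in Lusztig's category $\mathcal{Q}_{\mathbf{V},\Omega}$ (for the unframed quiver) and then transport them to $\mathcal{Q}_{\mathbf{V},\mathbf{W}}$ via the vector bundle $\pi_{\mathbf{W}}$, after which descent to the localization $\mathcal{L}(\Lambda)$ is automatic.

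First I would make explicit the identification $F_{j}^{(n)}\cong \mathbf{Ind}^{\mathbf{V}''\oplus \mathbf{W}}_{\mathbf{V}',\mathbf{V}\oplus\mathbf{W}}(L_{j^{(n)}}\boxtimes -)$ up to a shift, using the fact that $\mathbf{E}_{\mathbf{V}',\Omega}$ reduces to a point when $|\mathbf{V}'|=nj$ (since $\Gamma$ has no loops) and the definition of $L_{j^{(n)}}$ as the pushforward of the constant sheaf from the one-point flag variety. Combined with the associativity of induction (on both the unframed and framed sides), iterated compositions such as $F_{i}^{(m)}F_{j}F_{i}^{(1+a_{i,j}-m)}$ or $F_{i}F_{i}^{(n-1)}$ become, up to fixed overall shifts, Lusztig induction products of the form $\mathbf{Ind}(L_{i^{(m)}}\boxtimes L_{j}\boxtimes L_{i^{(1+a_{i,j}-m)}}\boxtimes -)$ and $\mathbf{Ind}(L_{i}\boxtimes L_{i^{(n-1)}}\boxtimes -)$, with the inner ``$-$'' remaining an object in $\mathcal{Q}_{\mathbf{V},\mathbf{W}}$.

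Next I would invoke the induction formula of Proposition \ref{indres formula}: for any flag type $\boldsymbol{\nu}$ one has
\begin{equation*}
\mathbf{Ind}(L_{i}\boxtimes L_{i^{(n-1)}}\boxtimes L_{\boldsymbol{\nu}\boldsymbol{d}})\cong L_{(i,\, i^{(n-1)},\, \boldsymbol{\nu},\, \boldsymbol{d})},\qquad \mathbf{Ind}(L_{i^{(n)}}\boxtimes L_{\boldsymbol{\nu}\boldsymbol{d}})\cong L_{(i^{(n)},\,\boldsymbol{\nu},\,\boldsymbol{d})}.
\end{equation*}
The divided-power decomposition
\begin{equation*}
\mathbf{Ind}(L_{i}\boxtimes L_{i^{(n-1)}})\cong \bigoplus_{0\leqslant m<n} L_{i^{(n)}}[n-1-2m]
\end{equation*}
is then the well-known computation of the pushforward of $\overline{\mathbb{Q}}_{l}$ along the projection from the flag variety $\mathbf{Fl}((n-1)i, ni)\cong \mathbb{P}^{n-1}$ to the point. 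Inducing with the extra factor $L_{\boldsymbol{\nu}\boldsymbol{d}}$ (again via Proposition \ref{indres formula}) yields the second asserted isomorphism on all generating objects $L_{\boldsymbol{\nu}\boldsymbol{d}}$ of $\mathcal{Q}_{\mathbf{V},\mathbf{W}}$, hence on all of $\mathcal{Q}_{\mathbf{V},\mathbf{W}}$ by additivity.

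For the quantum Serre relation I would appeal to the categorical Serre relation already established by Lusztig in \cite{MR1227098}: for $i\neq j$, one has an isomorphism of semisimple complexes
\begin{equation*}
\bigoplus_{m\text{ odd}}\mathbf{Ind}(L_{i^{(m)}}\boxtimes L_{j}\boxtimes L_{i^{(1+a_{i,j}-m)}})\cong \bigoplus_{m\text{ even}}\mathbf{Ind}(L_{i^{(m)}}\boxtimes L_{j}\boxtimes L_{i^{(1+a_{i,j}-m)}})
\end{equation*}
in $\mathcal{Q}_{\mathbf{V},\Omega}$. Applying $\mathbf{Ind}(-\boxtimes L_{\boldsymbol{\nu}\boldsymbol{d}})$ to both sides and using associativity together with Proposition \ref{indres formula} (in the form $\mathbf{Ind}(A\boxtimes L_{\boldsymbol{\nu}\boldsymbol{d}})\cong \mathbf{Ind}(\mathbf{Ind}(A\boxtimes L_{\boldsymbol{\nu}})\boxtimes L_{\boldsymbol{d}})$) transports the identity to $\mathcal{Q}_{\mathbf{V}'',\mathbf{W}}$, which is the required Serre relation for $F_{i}, F_{j}$. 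Finally, both isomorphisms descend to the quotient $\mathcal{L}(\Lambda)=\coprod_{\mathbf{V}}\mathcal{Q}_{\mathbf{V},\mathbf{W}}/\mathcal{N}_{\mathbf{V}}$ because the quotient functor is additive and triangulated.

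The main delicate point is the bookkeeping of shifts: one needs to verify that the auxiliary shifts produced by $\pi_{\mathbf{W}}^{\ast}$ (as in the identification $L_{\boldsymbol{\nu}\boldsymbol{d}}\cong (\pi_{\mathbf{W}})^{\ast}L_{\boldsymbol{\nu}}[\sum_{i}\nu_{i}d_{i}]$) and by the relative-dimension terms $d_{1}-d_{2}$ appearing in the framed induction functor cancel consistently on both sides, so that the isomorphisms obtained from Lusztig's unframed identities exactly match the stated shifts $[n-1-2m]$ in $\mathcal{L}(\Lambda)$. The Serre relation involves no shifts and so is the easier of the two; the divided-power relation requires the explicit computation of $H^{\ast}(\mathbb{P}^{n-1})$ which precisely supplies the required shifts $[n-1-2m]$, $0\leqslant m<n$.
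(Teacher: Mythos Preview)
Your approach is correct and essentially matches the paper's, which simply observes that by associativity of the (framed) induction functor one has
\[
F^{(a_{1})}_{i_{1}} F^{(a_{2})}_{i_{2}}\cdots F^{(a_{k})}_{i_{k}}=\mathbf{Ind}^{\mathbf{V}\oplus\mathbf{W}}_{\mathbf{V}',\mathbf{V}''\oplus \mathbf{W}}(L_{\boldsymbol{\nu}}\boxtimes -),
\]
so that the desired functor isomorphisms follow immediately from the corresponding isomorphisms among the complexes $L_{\boldsymbol{\nu}}$ in $\mathcal{Q}_{\mathbf{V}',\Omega}$, which are exactly the content of Theorem~\ref{Lusztig1}. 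Your extra step of transporting through $\pi_{\mathbf{W}}$ and tracking the associated shifts is unnecessary: since the first argument $L_{\boldsymbol{\nu}}$ of the framed induction lives in the unframed category $\mathcal{Q}_{\mathbf{V}',\Omega}$ (where $|\mathbf{V}'|$ has no $\hat{I}$-part), Lusztig's relations apply to that slot directly, and no shift bookkeeping from $\pi_{\mathbf{W}}$ enters the argument.
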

\begin{proof}
	Notice that for $\boldsymbol{\nu}=(i^{a_{1}}_{1}, \cdots i^{a_{k}}_{k})$, by definition we have $$ F^{(a_{1})}_{i_{1}} F^{(a_{2})}_{i_{2}}  \cdots  F^{(a_{k})}_{i_{k}} =\mathbf{Ind}^{\mathbf{V}\oplus\mathbf{W}}_{\mathbf{V}',\mathbf{V}''\oplus \mathbf{W}}(L_{\boldsymbol{\nu}}\boxtimes -).$$ Then the proposition follows from Theorem \ref{Lusztig1}.
\end{proof}

\begin{proposition}\label{relation3}
	The functors  $E_{i}$ for $i\in I$ satisfy the following relations
	\begin{equation*}
		\bigoplus\limits_{0\leqslant m \leqslant 1+ a_{i,j},m~odd}E^{(m)}_{i}E_{j}E^{(1+a_{i,j}-m)}_{i} \cong 	\bigoplus\limits_{0\leqslant m \leqslant 1+ a_{i,j},m~even}E^{(m)}_{i}E_{j}E^{(1+a_{i,j}-m)}_{i},
	\end{equation*}
	\begin{equation*}
		\bigoplus \limits_{0 \leqslant m < n } E^{(n)}_{i}[n-1-2m] \cong E^{(n-1)}_{i}E_{i}\cong E_{i}E^{(n-1)}_{i}, n \geqslant 2,
	\end{equation*}
	as endofunctors of $\mathcal{L}(\Lambda)= \coprod \limits_{\mathbf{V}} \mathcal{L}_{\mathbf{V}}(\Lambda)$.
\end{proposition}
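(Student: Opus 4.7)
The second isomorphism, the divided-power relation for $E_i$, is immediate from Lemma \ref{Lemma 16}. By definition $E_i^{(n)} = \mathcal{F}_{\hat{\Omega}^i,\hat{\Omega}}\mathcal{E}_i^{(n)}\mathcal{F}_{\hat{\Omega},\hat{\Omega}^i}$, and the Fourier-Deligne transforms are equivalences of triangulated categories that commute with shifts, so the identity $\bigoplus_{0\leqslant m<n}\mathcal{E}_i^{(n)}[n-1-2m]\cong\mathcal{E}_i^{(n-1)}\mathcal{E}_i\cong \mathcal{E}_i\mathcal{E}_i^{(n-1)}$ transports directly.

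For the quantum Serre relation I would proceed by induction on $|\mathbf{V}|$, establishing the desired isomorphism of functors on $\mathcal{L}_{\mathbf{V}}(\Lambda)$ for each $\mathbf{V}$ in turn. The base case $\mathbf{V} = 0$ is immediate: the only object of $\mathcal{L}_{0}(\Lambda)$ is (direct sums of shifts of) the constant sheaf on $\mathbf{E}_{0,\mathbf{W},\hat{\Omega}}$, and each $E_k$ sends it to the zero object because the hypothetical target $\mathcal{L}_{-k}(\Lambda)$ is empty; hence every summand on both sides of the Serre relation vanishes. For the inductive step, any object $L\in\mathcal{L}_{\mathbf{V}}(\Lambda)$ with $|\mathbf{V}|>0$ is a direct summand (up to shift) of $F_k(L')$ for some $k\in I$ and some $L'\in\mathcal{L}_{\mathbf{V}'}(\Lambda)$ with $|\mathbf{V}'|<|\mathbf{V}|$, so it suffices to check the Serre relation after applying an arbitrary $F_k$. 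Using Proposition \ref{relation1}, I would move each such $F_k$ to the outside of every composition $E_i^{(m)}E_jE_i^{(1+a_{i,j}-m)}$. When $k\notin\{i,j\}$ this is trivial, since $E_\ell F_k\cong F_k E_\ell$ for $\ell\ne k$, and the relation at $F_k(L')$ reduces, via the functor $F_k$, to the relation at $L'$, which holds by induction.

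The main obstacle is the bookkeeping of correction terms when $k\in\{i,j\}$: here the categorified commutator $E_i F_i \oplus \bigoplus_{0\leqslant m\leqslant N-1}\mathrm{Id}[N-1-2m]\cong F_iE_i\oplus \bigoplus_{0\leqslant m\leqslant -N-1}\mathrm{Id}[-2m-N-1]$ of Proposition \ref{relation1}, together with the divided-power relation just established, produces many shift-weighted residual terms whose cancellation between the odd and even parts of the Serre relation must be verified. A clean way to organize this is to note that, by Corollary \ref{corollary tilde E}, both sides of the relation lie in the additive category $\mathcal{Q}_{\mathbf{V}'',\mathbf{W}}/\mathcal{N}_{\mathbf{V}''}$ of semisimple complexes, and in that Krull--Schmidt setting the required isomorphism is equivalent to equality of classes in the Grothendieck group $\mathcal{K}_0(\Lambda)$ as an $\mathcal{A}$-module. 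Viewing $\mathcal{K}_0(\Lambda)$ as a left $\mathbf{U}^-$-module via $F_k$'s, the required equality is exactly the standard algebraic identity obtained by pushing all $E_i,E_j$ across all $F_k$'s occurring in $L=F_{k_1}\cdots F_{k_r}(L_{\boldsymbol{d}})$ and using that every term eventually involves $E_\ell$ acting on the highest weight object $L_{\boldsymbol{d}}$ (hence vanishes); the residual terms match up between the odd and even sides because of the $F$-Serre relation of Proposition \ref{relation2} (equivalently, Theorem \ref{Lusztig1}) together with the $[E,F]$-commutators of Proposition \ref{relation1}. This is a purely formal manipulation at the level of the bialgebra, and it establishes the isomorphism on every generator and hence on every object.
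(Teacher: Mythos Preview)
Your handling of the divided-power relation matches the paper exactly.

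For the Serre relation the paper takes a much shorter and more direct route. It observes that, in the description of $\mathcal{F}_i^{(n)}$ via equation~(\ref{equation 2}) and of $\mathcal{E}_i^{(n)}$ via Definition~\ref{defineE}, both functors are assembled from the \emph{same} proper smooth correspondence $(q_1,q_2)$ between Grassmannians, composed with quasi-inverse equivalences; consequently $\mathcal{E}_i^{(n)}[n(2\nu_i-\tilde\nu_i-n)]$ is right adjoint to $\mathcal{F}_i^{(n)}$ on the localizations. Since adjoints are unique up to natural isomorphism and survive Verdier localization, taking right adjoints of both sides of the $F$-Serre relation (Proposition~\ref{relation2}) immediately produces the $E$-Serre relation as a natural isomorphism of functors. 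One can check that the weight-dependent shifts contributed by the three adjoint factors in $E_i^{(m')}E_jE_i^{(m)}$ sum to a quantity independent of $m$, so no stray shifts appear.

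Your inductive argument, by contrast, has two genuine soft spots. First, the Krull--Schmidt reduction only yields $\mathrm{LHS}(L)\cong\mathrm{RHS}(L)$ for each object $L$, not the natural isomorphism of functors that the proposition asserts. Second, the step you describe as ``purely formal'' is not: after commuting $E_i^{(m)}E_jE_i^{(m')}$ past a single $F_j$, the residual is an alternating sum $\sum_m(-1)^m v^{\pm a_{ij}m'}\binom{1+a_{ij}}{m}_v\,E_i^{(1+a_{ij})}K_j^{\pm1}$, whose vanishing is a $q$-binomial identity; past $F_i$ the residuals are lower-degree expressions $E_i^{(a)}E_jE_i^{(b)}$ with $a+b=a_{ij}$ and weight-dependent coefficients, whose cancellation is a telescoping computation. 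Neither of these uses the $F$-Serre relation you cite. What you are implicitly invoking is the standard theorem that the $E$-Serre relations hold automatically in any integrable highest weight module, but that theorem is proved via $\mathfrak{sl}_2$-theory and local nilpotence of the $E_i$, not by the commutator bookkeeping you sketch. The adjunction argument bypasses all of this in one line.
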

\begin{proof}
	The second isomorphism follows from Lemma \ref{Lemma 16}. We only need to prove the first one. From the definition of $\mathcal{E}^{(n)}_{i}$ and the description of $\mathcal{F}^{(n)}_{i}$ in equation (\ref{equation 2}), we can see that the functor $$\mathcal{E}^{(n)}_{i}: \mathcal{D}^{b}_{G_{\mathbf{V}}}(\mathbf{E}_{\mathbf{V},\mathbf{W},\hat{\Omega}})/\mathcal{N}_{\mathbf{V},i}\rightarrow \mathcal{D}^{b}_{G_{\mathbf{V}'}}(\mathbf{E}_{\mathbf{V}',\mathbf{W},\hat{\Omega}})/\mathcal{N}_{\mathbf{V}',i}$$ and $$\mathcal{F}^{(n)}_{i}: \mathcal{D}^{b}_{G_{\mathbf{V}'}}(\mathbf{E}_{\mathbf{V}',\mathbf{W},\hat{\Omega}})/\mathcal{N}_{\mathbf{V}',i}\rightarrow \mathcal{D}^{b}_{G_{\mathbf{V}}}(\mathbf{E}_{\mathbf{V},\mathbf{W},\hat{\Omega}})/\mathcal{N}_{\mathbf{V},i}$$ are compositions of some quasi-inverse equivalences and the pull-back or push-forward of the  following  morphisms
	\begin{center}
		$\dot{\mathbf{E}}_{\mathbf{V},\mathbf{W},i} \times \mathbf{Gr}(\nu_{i},\tilde{\nu}_{i}) \xleftarrow{q_{1}} \dot{\mathbf{E}}_{\mathbf{V},\mathbf{W},i} \times \mathbf{Fl}(\nu_{i}-n,\nu_{i},\tilde{\nu}_{i}) \xrightarrow{q_{2}} \dot{\mathbf{E}}_{\mathbf{V}',\mathbf{W},i} \times \mathbf{Gr}(\nu_{i}-n,\tilde{\nu}_{i}) .$
	\end{center}
	Since $q_{1},q_{2}$ are proper and smooth with relative dimension $n(\nu_{i}-n)$ and $n(\tilde{\nu}_{i}-\nu_{i}-n)$ respectively, we can see that $\mathcal{F}^{(n)}_{i}$ has a right adjoint $\mathcal{E}^{(n)}_{i}[n(2\nu_{i}-\tilde{\nu}_{i}-n)]$. Since the localization preserves the adjointness, the Serre relation of  $E^{(n)}_{i}$ follows from that of   $F^{(n)}_{i}$. 
\end{proof}

\begin{proposition}
	The functors $F^{(n)}_{i},E^{(n)}_{i},K_{i}$ for $n\in \mathbb{N}, i \in I$ and Verdier duality functor $\mathbf{D}$ satisfy the following relations
	\begin{equation*}
		F^{(n)}_{i}\mathbf{D}\cong\mathbf{D}F^{(n)}_{i},
	\end{equation*}
	\begin{equation*}
		E^{(n)}_{i}\mathbf{D}\cong\mathbf{D}E^{(n)}_{i},
	\end{equation*}
	\begin{equation*}
		K_{i}\mathbf{D}\cong\mathbf{D}(K_{i})^{-1}.
	\end{equation*}
\end{proposition}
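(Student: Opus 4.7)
The plan is to handle the three relations separately. First note that $\mathbf{D}$ descends to a contravariant endofunctor of $\mathcal{L}(\Lambda)$: Verdier duality preserves the supports of objects, hence preserves each $\mathcal{N}_{\mathbf{V},i}$, and thus $\mathcal{N}_{\mathbf{V}}$. The identity $K_{i}\mathbf{D}\cong\mathbf{D}K_{i}^{-1}$ then follows at once from $K_{i}=\mathrm{Id}[\tilde{\nu}_{i}-2\nu_{i}]$ and $\mathbf{D}(A[m])\cong\mathbf{D}(A)[-m]$.

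For $F^{(n)}_{i}=(p_{3})_{!}(p_{2})_{\flat}(p_{1})^{\ast}(\bar{\mathbb{Q}}_{l}\boxtimes -)[d_{1}-d_{2}]$, I will move $\mathbf{D}$ past each of the six operations in turn, using the standard compatibilities: $\mathbf{D}(p_{3})_{!}\cong(p_{3})_{!}\mathbf{D}$ since $p_{3}$ is proper; $\mathbf{D}(p_{2})_{\flat}\cong(p_{2})_{\flat}\mathbf{D}[-2d_{2}]$ since $p_{2}$ is a principal bundle of relative dimension $d_{2}$; $\mathbf{D}(p_{1})^{\ast}\cong(p_{1})^{\ast}\mathbf{D}[2d_{1}]$ since $p_{1}$ is smooth of relative dimension $d_{1}$; and $\mathbf{D}(\bar{\mathbb{Q}}_{l}\boxtimes A)\cong\mathbf{D}\bar{\mathbb{Q}}_{l}\boxtimes\mathbf{D}A$. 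The key input is $\mathbf{D}\bar{\mathbb{Q}}_{l}\cong\bar{\mathbb{Q}}_{l}$ on $\mathbf{E}_{\mathbf{V}',0,\hat{\Omega}}\cong\mathbf{E}_{\mathbf{V}',\Omega}$: since $|\mathbf{V}'|=ni$ is concentrated at $i$ and the graph has no loops, $\mathbf{E}_{\mathbf{V}',\Omega}$ reduces to a single point, on which the constant sheaf is self-dual. Combining the shifts gives the net $2d_{1}-2d_{2}-(d_{1}-d_{2})=d_{1}-d_{2}$, which matches the outer shift of $F^{(n)}_{i}$ on the other side, so $\mathbf{D}F^{(n)}_{i}\cong F^{(n)}_{i}\mathbf{D}$.

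For $E^{(n)}_{i}$, since Fourier--Deligne transforms commute with $\mathbf{D}$ (and any shift produced by $\mathcal{F}_{\hat{\Omega},\hat{\Omega}^{i}}$ is cancelled by $\mathcal{F}_{\hat{\Omega}^{i},\hat{\Omega}}$), it suffices to prove $\mathbf{D}\mathcal{E}^{(n)}_{i}\cong\mathcal{E}^{(n)}_{i}\mathbf{D}$ in the localization at $i$. Applying $\mathbf{D}$ factor by factor to
\begin{equation*}
\mathcal{E}^{(n)}_{i}=(j_{\mathbf{V}',i})_{!}(\phi_{\mathbf{V}',i})^{\ast}(q_{2})_{!}(q_{1})^{\ast}(\phi_{\mathbf{V},i})_{\flat}(j_{\mathbf{V},i})^{\ast}[-n\nu_{i}],
\end{equation*}
Lemma \ref{local} identifies $(j_{\mathbf{V}',i})_{!}$ with $(j_{\mathbf{V}',i})_{\ast}$ in the localization, so it commutes with $\mathbf{D}$; the open pullback $(j_{\mathbf{V},i})^{\ast}$ commutes with $\mathbf{D}$ since $j^{!}=j^{\ast}$; and each of $\phi_{\mathbf{V}',i}$, $q_{1}$, $q_{2}$, $\phi_{\mathbf{V},i}$ contributes a shift $\pm 2$ times its relative dimension, namely $(\nu_{i}-n)^{2}$, $n(\nu_{i}-n)$, $0$, $\nu_{i}^{2}$, respectively. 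The arithmetic $2(\nu_{i}-n)^{2}+2n(\nu_{i}-n)-2\nu_{i}^{2}=-2n\nu_{i}$ combined with moving $\mathbf{D}$ past the outer $[-n\nu_{i}]$ yields net shift $-2n\nu_{i}+2n\nu_{i}=0$, producing the claimed isomorphism.

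The main subtlety is the $E^{(n)}_{i}$ case: as a functor on the unlocalized derived category, $(j_{\mathbf{V}',i})_{!}$ is not self-dual under $\mathbf{D}$ (its dual is $(j_{\mathbf{V}',i})_{\ast}$), so the commutation genuinely requires passing to the localization, where Lemma \ref{local} identifies the two. The normalizing shift $[-n\nu_{i}]$ in Definition \ref{defineE} is precisely the correction that forces all the remaining smooth and principal-bundle shifts to cancel exactly.
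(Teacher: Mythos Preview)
Your proof is correct and follows essentially the same approach as the paper: for $K_i$ it is immediate from the definition, for $F_i^{(n)}$ the paper simply cites that Lusztig's induction commutes with Verdier duality (which you unpack explicitly), and for $E_i^{(n)}$ both you and the paper push $\mathbf{D}$ past each factor of the composite, using properness of $q_2$, smoothness of $q_1$ and the $\phi$'s, the open-embedding identity $j^*=j^!$, and crucially Lemma~\ref{local} to identify $(j_{\mathbf{V}',i})_{!}$ with $(j_{\mathbf{V}',i})_{\ast}$ in the localization. The only cosmetic difference is that the paper rewrites $E_i$ by distributing the shift $[-n\nu_i]$ among the pieces so that each normalized factor is individually self-dual (and treats $n=1$, leaving general $n$ to a ``similar argument''), whereas you track the shifts arithmetically for arbitrary $n$.
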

\begin{proof}
	The first relation holds, since the induction functors commute with the  Verdier duality functor. The last relation can be easily checked by definition. We only prove the second relation for $n=1$, the other case can be proved by a similar argument. Notice that $E_{i}$ can be written as 
	\begin{equation*}
		E_{i}=(j_{\mathbf{V}',i})_{!} ((\phi_{\mathbf{V}',i})^{\ast}[(\nu'_{i})^{2}]) (q_{2})_{!}((q_{1})^{\ast}[\nu_{i}-1]) ((\phi_{\mathbf{V},i})_{\flat}[-\nu^{2}_{i}])(j_{\mathbf{V},i})^{\ast}.
	\end{equation*}
	Since $q_{2}$ is proper, $(q_{2})_{!}$ commutes with $\mathbf{D}$. Note that $$(j_{\mathbf{V}',i})_{!} \cong (j_{\mathbf{V}',i})_{\ast}:\mathcal{D}^{b}_{G_{\mathbf{V}}}(\mathbf{E}^{0}_{\mathbf{V},\mathbf{W},i}) \rightarrow \mathcal{D}^{b}_{G_{\mathbf{V}}}(\mathbf{E}_{\mathbf{V},\mathbf{W},\hat{\Omega}}) /\mathcal{N}_{\mathbf{V},i},$$ we can see that $(j_{\mathbf{V}',i})_{!}$ also commutes with $\mathbf{D}$. Since each functor in the expression commutes with the Verdier duality functor, so does $E_{i}$. 
\end{proof}
\begin{definition}
	Define $\mathcal{K}_{0}(\Lambda)=\mathcal{K}_{0}(\mathcal{L}(\Lambda))$ to be the Grothendieck group of $\mathcal{L}(\Lambda)$, which can be endowed with an $\mathcal{A}$-module structure. More precisely, $\mathcal{K}_{0}(\Lambda)$ is the $\mathcal{A}$-module spanned by objects $[L]$ in $\mathcal{L}(\Lambda)$ subject to relations
	\begin{equation*}
		[X \oplus Y]=[X]+[Y],
	\end{equation*}
	\begin{equation*}
		[X[1]]=v[X].
	\end{equation*}
	Similarly, we denote the Grothendieck group of $\mathcal{L}_{\mathbf{V}}(\Lambda)$ by $\mathcal{K}_{0,|\mathbf{V}|}(\Lambda)$.
	
	The functors $E^{(n)}_{i},F^{(n)}_{i},K^{\pm}_{i}$ for $n\in \mathbb{N},i \in I$ induces $\mathcal{A}$-linear operators on $\mathcal{K}_{0}(\Lambda)$, and we still denote these operators by $E^{(n)}_{i},F^{(n)}_{i},K^{\pm}_{i}$ for $n\in \mathbb{N},i \in I$, respectively.
\end{definition}

\begin{theorem}\label{thm1}
	The linear operators induced by functors $E^{(n)}_{i},F^{(n)}_{i},K^{\pm}_{i}$ for $n\in \mathbb{N}$ and $i \in I$ defines a $_{\mathcal{A}}\mathbf{U}$-module structure on $\mathcal{K}_{0}(\Lambda)$ which is isomorphic to the integrable highest weight $_{\mathcal{A}}\mathbf{U}$-module ${_{\mathcal{A}}L(\Lambda)}$ via the canonical isomorphism
	\begin{equation*}
		\varsigma^{\Lambda}:\mathcal{K}_{0}(\Lambda) \rightarrow {_{\mathcal{A}}L(\Lambda)}
	\end{equation*}
	such that $\varsigma^{\Lambda}$ sends the image of constant sheaf $[L_0]=[\overline{\mathbb{Q}}_{l}]$ on
	$\mathbf{E}_{0,\mathbf{W},\hat{\Omega}}\cong \{pt\}$
	to the highest weight vector $v_{\Lambda}\in {_{\mathcal{A}}L(\Lambda)}$.
	Moreover, the set
	$\bigcup\limits_{\mathbf{V}}\{\varsigma^{\Lambda}([L])|L$ is a nonzero simple perverse sheaf in $\mathcal{L}_{\mathbf{V}}(\Lambda)\}$ form a bar-invariant $\mathcal{A}$-basis of ${_{\mathcal{A}}L(\Lambda)}$, which is exactly the canonical basis of ${_{\mathcal{A}}L(\Lambda)}$ constructed by Lusztig.
\end{theorem}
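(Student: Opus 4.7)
The plan is to construct $\varsigma^{\Lambda}$ by combining Lusztig's original isomorphism $\mathcal{K}\cong{_{\mathcal{A}}\mathbf{U}^-}$ with the canonical quotient $\pi:{_{\mathcal{A}}\mathbf{U}^-}\twoheadrightarrow{_{\mathcal{A}}L(\Lambda)}$, transported to the framed setting via the vector-bundle pullback $\pi_{\mathbf{W}}^*$. The crux is matching the thick subcategory $\mathcal{N}_{\mathbf{V}}$ with the submodule $\sum_i{_{\mathcal{A}}\mathbf{U}^-}f_i^{d_i+1}$ of the Verma module, which is carried out through the source analysis of Section 2.4.

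First I check that $\mathcal{K}_0(\Lambda)$ is a well-defined $_{\mathcal{A}}\mathbf{U}$-module: all defining relations descend from the functor isomorphisms in Propositions \ref{relation1}, \ref{relation2}, \ref{relation3} and Corollary \ref{commute3}. Next I identify $[L_0]$ as a highest-weight vector of weight $\Lambda$: direct computation gives $E_i[L_0]=0$ and $K_i[L_0]=v^{d_i}[L_0]$, since the shift $\tilde{\nu}_i-2\nu_i$ equals $d_i$ at $\nu=0$. The integrability condition $F_i^{(d_i+1)}[L_0]=0$ is the crucial observation: the representing sheaf is the constant sheaf on $\mathbf{E}_{(d_i+1)i,\mathbf{W},\hat{\Omega}^i}\cong\mathrm{Hom}(\mathbf{k}^{d_i+1},\mathbf{k}^{d_i})$, every element of which has nontrivial kernel, so the entire space lies in $\mathbf{E}^{\geqslant 1}_{(d_i+1)i,\mathbf{W},i}$ and the class vanishes in $\mathcal{K}_0(\Lambda)$. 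Combined with Proposition 3.1 and the induction formula of Proposition \ref{indres formula}, which exhibits every $L_{\boldsymbol{\nu}\boldsymbol{d}}$ as an iterated $F^{(n)}_i$-application to $[L_0]$, this makes $\mathcal{K}_0(\Lambda)$ a cyclic integrable highest weight $_{\mathcal{A}}\mathbf{U}$-module of weight $\Lambda$.

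The candidate $\varsigma^{\Lambda}$ is then the composition $\mathcal{K}_0(\mathcal{Q}_{\mathbf{V},\mathbf{W}})\xrightarrow{\sim}\mathcal{K}_{\mathbf{V}}\xrightarrow{\sim}{_{\mathcal{A}}\mathbf{U}^-}\xrightarrow{\pi}{_{\mathcal{A}}L(\Lambda)}$ on each weight component, the first arrow being induced by Proposition 3.1 and the second by Theorem \ref{Lusztig1} in the Verma convention of the introduction. To see this descends to the localization, I verify $\varsigma^{\Lambda}[L]=0$ for every simple perverse $L=\pi_{\mathbf{W}}^*(K)$ in $\mathcal{N}_{\mathbf{V}}\cap\mathcal{Q}_{\mathbf{V},\mathbf{W}}$. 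Any such $L$ lies in some $\mathcal{F}_{\hat{\Omega},\hat{\Omega}^i}(\mathcal{N}_{\mathbf{V},i})$; unfolded in the $\hat{\Omega}^i$-picture, the support condition says the kernel of $\bigoplus_{h\in\hat{\Omega}^i,h'=i}x_h$ is nontrivial on every point of the support. Since the framing contributes at most $d_i$ dimensions, this forces $\dim\ker(\bigoplus_{h\in\Omega^i,h'=i}x_h)>d_i$ generically on $\mathrm{supp}(\mathcal{F}_{\Omega,\Omega^i}(K))$, i.e.\ $t_i^*(\mathcal{F}_{\Omega,\Omega^i}(K))\geqslant d_i+1$. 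Proposition \ref{rt'} converts this to $s_i^*(K)\geqslant d_i+1$, placing $[K]$ in ${_{\mathcal{A}}\mathbf{U}^-}f_i^{d_i+1}\subseteq\ker\pi$.

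Finally I verify $_{\mathcal{A}}\mathbf{U}$-equivariance and deduce the basis statement. Compatibility with $F_i^{(n)}$ and $K_i^{\pm}$ is automatic: $F_i^{(n)}$ is Lusztig's induction, which categorifies left multiplication by $f_i^{(n)}$, and the shift $[\tilde{\nu}_i-2\nu_i]$ corresponds to the scalar $v^{\langle\Lambda-|\mathbf{V}|,\alpha_i^{\vee}\rangle}$ because $i$ is a source in $\Omega^i$. The $E_i$-compatibility is the main obstacle, and the plan is to deduce it from Theorem 1.2, whose split exact sequence categorifies exactly the explicit formula $E_i(x\cdot v_{\Lambda})=(v^{(i,|x|-i)-\langle\Lambda,\alpha^{\vee}_i\rangle}\,{_{i}\bar{r}}(x)\cdot v_{\Lambda}-v^{\langle\Lambda,\alpha^{\vee}_i\rangle}\,\bar{r}_i(x)\cdot v_{\Lambda})/(v^{-1}-v)$ for the $e_i$-action on $L(\Lambda)$. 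Once equivariance is in place, the argument of the previous paragraph identifies the nonzero simple perverse sheaves in $\mathcal{L}_{\mathbf{V}}(\Lambda)$ bijectively with those $K\in\mathcal{P}_{\mathbf{V}}$ having $\pi[K]\neq 0$, and under $\varsigma^{\Lambda}$ they map bijectively onto Lusztig's canonical basis $\{\pi[K]\neq 0:K\in\mathcal{P}\}$ of ${_{\mathcal{A}}L(\Lambda)}$ recalled in the introduction. This simultaneously shows that $\varsigma^{\Lambda}$ is an isomorphism and that the canonical basis of ${_{\mathcal{A}}L(\Lambda)}$ is realized by simple perverse sheaves; bar-invariance is inherited from Lusztig's construction.
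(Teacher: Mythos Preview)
Your argument is essentially correct, but the route you take to obtain the $\mathbf{U}$-module isomorphism differs from the paper's and introduces an unnecessary forward reference.

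The paper never verifies $E_i$-equivariance of $\varsigma^{\Lambda}$ directly. Instead, having established that $\mathcal{K}_0(\Lambda)$ is a $\mathbf{U}$-module via Propositions \ref{relation1}--\ref{relation3}, it shows \emph{geometrically} that this module is integrable: for any simple $L$ one has $E_i^{(N)}[L]=0$ once $N>\nu_i$, and $F_i^{(N)}[L]=0$ once $N$ is large enough that the target weight space forces $\nu'_i>\sum_{h'=i}\nu'_{h''}+d_i$, in which case $\mathbf{E}^{\geqslant 1}_{\mathbf{V}',\mathbf{W},i}=\mathbf{E}_{\mathbf{V}',\mathbf{W},\hat{\Omega}^i}$ and the whole category $\mathcal{L}_{\mathbf{V}'}(\Lambda)$ vanishes. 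Uniqueness of integrable highest weight modules of a given highest weight then furnishes the $\mathbf{U}$-isomorphism $\varsigma^{\Lambda}$ for free. The commutative square $\varsigma^{\Lambda}\circ\pi=\tilde{\pi}\circ\varsigma$ follows afterwards because both sides are $\mathbf{U}^-$-maps sending $[L_0]$ to $v_{\Lambda}$, and this square is what yields the canonical-basis identification.

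You instead construct $\varsigma^{\Lambda}$ first as a $\mathbf{U}^-$-map (your support argument correctly shows descent: if $\pi_{\mathbf{W}}^{-1}(\mathrm{supp}\,K)\subseteq\mathbf{E}^{\geqslant 1}_{\mathbf{V},\mathbf{W},i}$ then every point of $\mathrm{supp}\,K$ has unframed kernel of dimension $>d_i$, hence $s_i^*(K)\geqslant d_i+1$), and then appeal to Theorem~1.2 to upgrade it to a $\mathbf{U}$-map. This is logically sound---the proof of Theorem~\ref{exact} uses only the commutation relations from Corollary~\ref{commute3} and Lemma~\ref{lemma29}, not Theorem~\ref{thm1}---but it is a forward reference and more laborious than necessary. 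Your integrability check $F_i^{(d_i+1)}[L_0]=0$ is weaker than the paper's local-nilpotency argument, though sufficient: combined with cyclicity it makes $\mathcal{K}_0(\Lambda)$ a nonzero quotient of ${_{\mathcal{A}}L(\Lambda)}$, and the basis comparison then forces bijectivity. Both approaches tacitly use that a simple perverse sheaf lying in the thick subcategory $\mathcal{N}_{\mathbf{V}}$ already lies in some single $\mathcal{N}_{\mathbf{V},i}$.
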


\begin{proof}
	By Proposition \ref{relation1}, \ref{relation2} and \ref{relation3}, $\mathcal{K}_{0}(\Lambda)$ is a $\mathbf{U}$-module. By Lemma \ref{lkey'} and Theorem \ref{Lusztig1}, for any simple perverse sheaf $L$, its image $[L]\in  \mathcal{K}_{0}(\Lambda)$ can be written as a $\mathcal{A}$-linear combination of some $[L_{\boldsymbol{\nu}\boldsymbol{d}}]= [F^{(a_{1})}_{i_{1}} F^{(a_{2})}_{i_{2}}  \cdots  F^{(a_{k})}_{i_{k}}L_{0}]$, hence $\mathcal{K}_{0}(\Lambda)$ is a highest weight module, where $[L_0]$ is the highest weight vector.
	
	It remains to prove that $\mathcal{K}_{0}(\Lambda)$ is integrable. Let $L$ be a simple perverse sheaf in $\mathcal{Q}_{\mathbf{V}}$. On the one hand, for $N> \nu_{i}$, we have $(E_{i})^{(N)}([L])=0$. On the other hand, note that if $\nu_{i}-\sum\limits_{h\in \Omega,h'=i}\nu_{h''}-d_{i}>0$, then $\mathbf{E}^{\geqslant 1 }_{\mathbf{V},\mathbf{W},i}=\mathbf{E}_{\mathbf{V},\mathbf{W},\hat{\Omega}^{i}}$, so any objects of $\mathcal{Q}_{\mathbf{V},\mathbf{W}}$ belong to $\mathcal{N}_{\mathbf{V},i}$ and $\mathcal{L}_{\mathbf{V}}(\Lambda)=0$. For large enough $N$,  we have $(F_{i})^{(N)}([L])\in \mathcal{L}_{\mathbf{V}'}(\Lambda)$, where $\mathbf{V}'$ satisfies the $\nu'_{i}-\sum\limits_{h'=i,h \in \Omega}\nu'_{h''}-d_{i}>0$, and so $(F_{i})^{(N)}([L])=0$.
	
	It is clear that nonzero simple perverse sheaves form a bar-invariant $\mathcal{A}$-basis of ${_{\mathcal{A}}L(\Lambda)}$. We only need to compare this basis with the canonical basis. 
	
	In fact, recall that the left ideal $ {_{\mathcal{A}}\mathbf{U}}^{-} f_{i}^{(\langle \Lambda,\alpha_{i}^{\vee} \rangle+1 )} $ is exactly the $\mathcal{A}$-module spanned by Lusztig's sheaves supported in $\mathbf{E}^{\geqslant d_{i}+1}_{\mathbf{V},\Omega^{i}}$  via the identification $\mathcal{K}=\bigoplus\limits_{\mathbf{V}}K_0(\mathcal{Q}_{\mathbf{V}})\cong  {_{\mathcal{A}}\mathbf{U}}^{-}$, by Proposition \ref{rt'}. Since $(\pi_{\mathbf{W}})^{-1}(\mathbf{E}^{\geqslant d_{i}+1}_{\mathbf{V},\Omega^{i}} )  \subseteq \mathbf{E}^{\geqslant 1}_{\mathbf{V},\mathbf{W},\hat{\Omega}^{i}}$,  $\textrm{supp}(A) \subseteq \mathbf{E}^{\geqslant d_{i}+1}_{\mathbf{V},\Omega^{i}}$ implies that $(\pi_{\mathbf{W}})^{\ast}(A)$  is contained in $\mathcal{N}_{\mathbf{V},i}$ for any $A$. Otherwise, if  $\textrm{supp}(A) \cap \mathbf{E}^{\leqslant d_{i}}_{\mathbf{V},\Omega^{i}} \neq \emptyset$, then we have $\textrm{supp}((\pi_{\mathbf{W}})^{\ast}(A)) \cap \mathbf{E}^{0}_{\mathbf{V},\mathbf{W},\hat{\Omega}^{i}} \neq \emptyset$. In conclusion, a Lusztig's simple perverse sheaf $L \in \mathcal{P}_{\mathbf{V}}$ induces a zero object $(\pi_{\mathbf{W}})^{\ast}(L)$ in $\mathcal{Q}_{\mathbf{V},\mathbf{W}}/\mathcal{N}_{\mathbf{V}}$ if and only if its image is contained in the left ideal $ {_{\mathcal{A}}\mathbf{U}}^{-} f_{i}^{(\langle \Lambda,\alpha_{i}^{\vee} \rangle+1 )} $ for some $i$. Let $\mathcal{I}$ be the left ideal $\sum\limits_{ i \in I } {_{\mathcal{A}}\mathbf{U}}^{-} f_{i}^{(\langle \Lambda,\alpha_{i}^{\vee} \rangle+1 )} $ and  $\tilde{\pi}$ be the canonical projection ${_{\mathcal{A}}\mathbf{U}}^{-} \rightarrow {_{\mathcal{A}}\mathbf{U}}^{-}/\mathcal{I} \cong {_{\mathcal{A}}L(\Lambda)}$, then we have the following commutative diagram and recover Lusztig's construction.
	\[
	\xymatrix{
		\mathcal{K} \ar[d]^{\varsigma} \ar[r]^{\pi}
		&  \mathcal{K}_{0}(\Lambda) \ar[d]^{\varsigma^{\Lambda}}
		\\	
		{_{\mathcal{A}}\mathbf{U}}^{-} \ar[r]^{\tilde{\pi}}
		& {_{\mathcal{A}}L(\Lambda)},
	}
	\]
	where $\pi:\mathcal{K}\rightarrow \mathcal{K}_{0}(\Lambda)$ is the $\mathcal{A}$-linear map induced by the composition of functors $$\mathcal{Q}_{\mathbf{V}}\xrightarrow{(\pi_{\mathbf{W}})^{\ast}[\sum\limits_{ i \in I }\nu_{i}d_{i}]} \mathcal{Q}_{\mathbf{V},\mathbf{W}}  \xrightarrow{\textrm{natural functor}} \mathcal{Q}_{\mathbf{V},\mathbf{W}}/\mathcal{N}_{\mathbf{V}}.$$  In particular, the set $\bigcup\limits_{\mathbf{V}}\{\varsigma^{\Lambda}([L])|L$ is a nonzero simple perverse sheaf in $\mathcal{L}_{\mathbf{V}}(\Lambda)\}$ is exactly identified with the subset of $\mathcal{P}_{\mathbf{V}}$ consisting of $L$ such that $\tilde{\pi} (\varsigma ([L])) \neq 0$, which is exactly the canonical basis of $_{\mathcal{A}}L(\Lambda)$  constructed by Lusztig.
\end{proof}

\begin{remark}
	Notice that the nonzero simple objects in $\mathcal{Q}_{\mathbf{V},\mathbf{W}}/\mathcal{N}_{\mathbf{V}}$ are exactly the simple perverse sheaves in $\mathcal{Q}_{\mathbf{V},\mathbf{W}}$ but not in  $\mathcal{N}_{\mathbf{V}}$.  We denote the set of these simple perverse sheaves by $\mathcal{P}_{\mathbf{V}} \backslash \mathcal{N}_{\mathbf{V}}$, then one can see that these simple objects form a basis of  ${_{\mathcal{A}}L(\Lambda)}$ and 
	\begin{equation*}
		|\mathcal{P}_{\mathbf{V}}\backslash \mathcal{N}_{\mathbf{V}}|={\rm{dim}}_{\mathbb{Q}(v)} L_{|\mathbf{V}|}(\Lambda).
	\end{equation*}
\end{remark}

Following \cite{MR3200442}, the Euler form of localization defines a bilinear form on  $\mathcal{K}_{0}(\Lambda)$ as the following:
\begin{definition}
	Define a bilinear form $(-,-)^{\Lambda}$ on $\mathcal{K}_{0}(\Lambda)$ by:
	\begin{equation*}
		([A],[B])^{\Lambda}=\sum\limits_{n \geq 0}{\rm{dim}} {\rm{Ext}}^{n}_{\mathcal{D}^{b}_{G_{\mathbf{V}}}(\mathbf{E}_{\mathbf{V},\mathbf{W},\hat{\Omega}})/ \mathcal{N}_{\mathbf{V}} }(\mathbf{D}A,B)v^{-n},
	\end{equation*}
	for any   objects $A,B$ of $\mathcal{L}_{\mathbf{V}}(\Lambda)$. Otherwise,	if $A$ is an object of $\mathcal{L}_{\mathbf{V}}(\Lambda)$ and $B$ is an object of $\mathcal{L}_{\mathbf{V}'}(\Lambda)$ such that $|\mathbf{V}| \neq |\mathbf{V}'|$, define
	\begin{equation*}
		([A],[B])^{\Lambda}=0.
	\end{equation*}
\end{definition}

\begin{proposition}
	For any $i \in I$, the bilinear form $(-,-)^{\Lambda}$ is contravariant with respect to $F_{i}$ and $E_{i}$. More precisely, for any objects $A,B$,
	\begin{equation*}
		( [ F_{i}A],[B] )^{\Lambda}=([A],v[K^{-1}_{i}E_{i}B])^{\Lambda}.
	\end{equation*}
	Moreover, for simple perverse sheaves $L$ and $L'$,\\
	(1)If $[L] \neq [L']$, then  $([L],[L'])^{\Lambda} \in v^{-1} \mathbb{Z}[[v^{-1}]] \cap \mathbb{Q}(v)$ ;\\
	(2)If $L$ is a nonzero simple perverse sheaf in $\mathcal{L}_{\mathbf{V}}(\Lambda)$, then $([L],[L])^{\Lambda} \in 1+ v^{-1}\mathbb{Z}[[v^{-1}]]\cap \mathbb{Q}(v)$ .
\end{proposition}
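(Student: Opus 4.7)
The plan is to treat the contravariance identity and the almost-orthogonality statements (1), (2) separately. For contravariance, I would use that $F_i$ commutes with Verdier duality $\mathbf{D}$ to rewrite
\begin{equation*}
([F_iA],[B])^\Lambda = \sum_{n\geqslant 0}\dim\mathrm{Hom}(F_i\mathbf{D}A, B[n])v^{-n},
\end{equation*}
and then invoke an adjunction $\mathrm{Hom}(F_iX,Y)\cong \mathrm{Hom}(X,G_iY)$ where $G_i$ is a right adjoint to $F_i$ in the localization. This reduces the problem to identifying the effect of $G_i$ on the Grothendieck group: one must show $[G_iB] = v[K_i^{-1}E_iB]$, after which the identity $([F_iA],[B])^\Lambda=([A],v[K_i^{-1}E_iB])^\Lambda$ follows immediately.

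To identify $G_i$ with $E_i$ up to shift, I would apply the six-functor formalism to the induction diagram defining $F_i = (p_3)_!(p_2)_\flat (p_1)^*[d_1-d_2]$. Since $p_3$ is proper and $(p_2)_\flat$ is an equivalence, the right adjoint is $G_i = (p_1)_*(p_2)^*(p_3)^![d_2-d_1]$, where $p_1$ is smooth of relative dimension $d_1$. Using the description of $\mathcal{E}_i$ on the locus $\mathbf{E}^0_{\mathbf{V},\mathbf{W},i}$ (where $(j_{\mathbf{V},i})^*$ can be replaced by $(j_{\mathbf{V},i})_!$ by Lemma \ref{local}) and base-changing $G_i$ along the same stratification, both functors reduce to compositions involving the same morphisms $q_1, q_2$ entering Definition \ref{defineE}, differing only by an overall shift. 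Bookkeeping of the relative dimensions forces this shift to be $[1+2\nu_i-\tilde{\nu}_i]$, and since $v\leftrightarrow [1]$ and $K_i^{-1}\leftrightarrow [2\nu_i-\tilde{\nu}_i]$, we obtain $[G_iB]=v[K_i^{-1}E_iB]$.

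For the almost-orthogonality, I would work with the natural perverse $t$-structure on the localization $\mathcal{D}^b_{G_{\mathbf{V}}}(\mathbf{E}_{\mathbf{V},\mathbf{W},\hat{\Omega}})/\mathcal{N}_{\mathbf{V}}$ (cf.\ \cite{BBD}). Since Lusztig sheaves in $\mathcal{Q}_{\mathbf{V},\Omega}$ are Verdier self-dual and $(\pi_{\mathbf{W}})^*[\mathrm{rank}(\pi_{\mathbf{W}})]$ preserves self-duality, every nonzero simple perverse sheaf $L\in\mathcal{L}_{\mathbf{V}}(\Lambda)$ satisfies $\mathbf{D}L\cong L$. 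The perverse $t$-structure then forces $\mathrm{Ext}^n(\mathbf{D}L,L')=\mathrm{Hom}(L,L'[n])$ to vanish for $n<0$; for $n=0$ the space is $\overline{\mathbb{Q}}_l$ if $L\cong L'$ and zero otherwise, since simple perverse sheaves are absolutely simple. Therefore
\begin{equation*}
([L],[L'])^\Lambda = \delta_{[L],[L']} + \sum_{n\geqslant 1}\dim\mathrm{Ext}^n(L,L')v^{-n},
\end{equation*}
which belongs to $v^{-1}\mathbb{Z}[[v^{-1}]]$ when $[L]\neq[L']$ and to $1+v^{-1}\mathbb{Z}[[v^{-1}]]$ when $L=L'$. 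Finite-dimensionality and vanishing of Ext groups in large degree guarantee membership in $\mathbb{Q}(v)$.

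The main technical obstacle I anticipate is the shift bookkeeping in identifying $G_i$ with $E_i[1+2\nu_i-\tilde{\nu}_i]$: one must carefully track the relative dimensions of $p_1,p_2,p_3$ appearing in the induction versus those of $q_1,q_2,\phi_{\mathbf{V},i},j_{\mathbf{V},i}$ appearing in $\mathcal{E}_i$, and verify through base change and the equivalences of Lemma \ref{local} that the discrepancy matches $K_i^{-1}[1]$ exactly. The identity obtained is a categorification of $[e_i,f_i]=(K_i-K_i^{-1})/(v-v^{-1})$, which provides a useful consistency check.
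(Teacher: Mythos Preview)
Your proposal is correct and follows essentially the same strategy as the paper: for almost-orthogonality you both invoke self-duality of Lusztig sheaves and the perverse $t$-structure on the localization, and for contravariance you both use $\mathbf{D}F_i\cong F_i\mathbf{D}$ together with a right adjoint to $F_i$.

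The one place where your route is slightly more circuitous is in identifying the right adjoint. You propose to compute $G_i=(p_1)_*(p_2)^*(p_3)^![d_2-d_1]$ from the induction diagram and then base-change along the stratification to compare with $\mathcal{E}_i$. The paper avoids this: it already rewrote $\mathcal{F}_i$ in Case~(I) (equation~(\ref{functorF})) as $(j_{\mathbf{V}'',i})_!(\phi_{\mathbf{V}'',i})^*(q_2)_!(q_1)^*(\phi_{\mathbf{V},i})_\flat(j_{\mathbf{V},i})^*$ up to shift, using the \emph{same} maps $q_1,q_2$ that appear in $\mathcal{E}_i$, only with their roles swapped. Since $q_1,q_2$ are both smooth and proper (with relative dimensions $\nu_i-1$ and $\tilde{\nu}_i-\nu_i$), the adjunction between $(q_2)_!(q_1)^*$ and $(q_1)_!(q_2)^*$ is immediate, and the shift discrepancy $[2\nu_i-\tilde{\nu}_i-1]$ drops out with no further base change. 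This observation was already recorded in the proof of Proposition~\ref{relation3}, so the paper simply cites it: $F_i$ is left adjoint to $E_iK_i^{-1}[-1]=K_i^{-1}E_i[1]$, and the contravariance identity follows in one line. Your approach would reach the same conclusion, but the paper's shortcut via the $(q_1,q_2)$ description of $\mathcal{F}_i$ spares you the base-change computation you flagged as the main technical obstacle.
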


\begin{proof}	
	The proof is similar to the proof of \cite[Theorem 4.13]{MR3200442}. Notice that Lusztig's simple perverse sheaves are self-dual, the almost orthogonality follows from  the perverse $t$-structure of $\mathcal{D}^{b}_{G_{\mathbf{V}}}(\mathbf{E}_{\mathbf{V},\mathbf{W},\hat{\Omega}})/ \mathcal{N}_{\mathbf{V}}$. Recall that in the proof of \ref{relation3}, we have proved that  $ F_{i}$ is left adjoint to  $E_{i}K_{i}[-1]$. Hence for any objects $A,B$,
	\begin{equation*}
		\begin{split}
			&{\rm{dim}} {\rm{Ext}}^{j}_{ \mathcal{D}^{b}_{G_{\mathbf{V}}}(\mathbf{E}_{\mathbf{V},\mathbf{W},\hat{\Omega}})/ \mathcal{N}_{\mathbf{V}} } (\mathbf{D}F_{i}A,B)\\
			=&
			{\rm{dim}} {\rm{Ext}}^{j}_{ \mathcal{D}^{b}_{G_{\mathbf{V}}}(\mathbf{E}_{\mathbf{V},\mathbf{W},\hat{\Omega}})/ \mathcal{N}_{\mathbf{V}} } (F_{i}\mathbf{D}A,B)\\
			=&  {\rm{dim}} {\rm{Ext}}^{j}_{ \mathcal{D}^{b}_{G_{\mathbf{V}'}}(\mathbf{E}_{\mathbf{V}',\mathbf{W},\hat{\Omega}})/ \mathcal{N}_{\mathbf{V}'}} (\mathbf{D}A, E_{i}K^{-1}_{i}B[-1]) \\
			=&  {\rm{dim}} {\rm{Ext}}^{j}_{ \mathcal{D}^{b}_{G_{\mathbf{V}'}}(\mathbf{E}_{\mathbf{V}',\mathbf{W},\hat{\Omega}})/ \mathcal{N}_{\mathbf{V}'}} (\mathbf{D}A, K^{-1}_{i}E_{i}B[1]) .
		\end{split}
	\end{equation*}
	Then by definition of $(-,-)^{\Lambda}$, we finish the proof. 
\end{proof}

\subsection{Compare the functor $E_{i}$ with derivation functors}

In this section, we construct a split exact sequence which is analogy to the exact sequence in \cite[Theorem 4.7]{MR2995184}. We fix a vertex $i\in I$ and an orientation $\Omega$ such that $i$ is a source. 

\begin{definition}
	
	(1) Define $\hat{\mathcal{Q}}_{\mathbf{V},\mathbf{W}}$ to be the full subcategory of $\mathcal{D}_{\mathbf{G}_{\mathbf{V}}}(\mathbf{E}_{\mathbf{V},\mathbf{W},\hat{\Omega}})$ whose objects are direct sums of Lusztig's sheaves in $\mathcal{Q}_{\mathbf{V},\mathbf{W}} $ such that multiplicity of each simple summand $A[n]$ is finite for any simple object $A$ and $n \in \mathbb{Z}$.
	
	(2) Define the localization $\hat{\mathcal{Q}}_{\mathbf{V},\mathbf{W}}/\mathcal{N}_{\mathbf{V},i}$ to be the full subcategory of $\mathcal{D}_{\mathbf{G}_{\mathbf{V}}}(\mathbf{E}_{\mathbf{V},\mathbf{W},\hat{\Omega}})/\mathcal{N}_{\mathbf{V},i}$ consisting of objects in $\hat{\mathcal{Q}}_{\mathbf{V},\mathbf{W}}$, and define the global localization $\hat{\mathcal{Q}}_{\mathbf{V},\mathbf{W}}/\mathcal{N}_{\mathbf{V}}$ to be the full subcategory of $\mathcal{D}_{\mathbf{G}_{\mathbf{V}}}(\mathbf{E}_{\mathbf{V},\mathbf{W},\hat{\Omega}})/\mathcal{N}_{\mathbf{V}}$  consisting of objects in $\hat{\mathcal{Q}}_{\mathbf{V},\mathbf{W}}$.
\end{definition}

Let $\hat{\mathcal{K}}_{\mathbf{V}}$ be the Grothendieck group of
$\hat{
	\mathcal{Q}}_{\mathbf{V},\mathbf{W}}$ and $\hat{\mathcal{K}}=\bigoplus\limits_{\mathbf{V}} \hat{\mathcal{K}}_{\mathbf{V}}$. They have  $\mathbb{Z}[[v,v^{-1}]]$-module structures and the set of simple objects forms a bar-invariant $\mathbb{Z}[[v,v^{-1}]]$-basis of $\hat{\mathcal{K}}$. 

Let  ${\rm{H}}^{\ast}(\mathbb{P}^{\infty})$ be the cohomology of the infinite projective variety, then following the proof of \cite[Lemma 12.3.6]{MR1227098},  $${\rm{H}}^{\ast}(\mathbb{P}^{\infty})\cong \bigoplus\limits_{m \geqslant 0  } \overline{\mathbb{Q}}_{l} [-2m].$$

\begin{definition}
	For graded spaces $\mathbf{V},\mathbf{V}',\mathbf{V}''$ such that $|\mathbf{V}|=|\mathbf{V}'|+i,|\mathbf{V}''|=i$ and $\mathbf{V}=\mathbf{V}'\oplus \mathbf{V}''$. 
	
	(1) define the functor $\hat{\mathcal{R}}^{\Lambda}_{i}: \mathcal{Q}_{\mathbf{V},\mathbf{W}} \rightarrow \hat{\mathcal{Q}}_{\mathbf{V}',\mathbf{W}}/\mathcal{N}_{\mathbf{V}'}$ by
	\begin{equation*}
		\hat{\mathcal{R}}^{\Lambda}_{i}(-)=\mathbf{Res}^{\mathbf{V}\oplus \mathbf{W}}_{\mathbf{V}'\oplus\mathbf{W},\mathbf{V}''}(-)\otimes ({\rm{H}}^{\ast}(\mathbb{P}^{\infty})) [-1].
	\end{equation*}

	(2) Define the functor ${_{i}\hat{\mathcal{R}}^{\Lambda}}: \mathcal{Q}_{\mathbf{V},\mathbf{W}} \rightarrow \hat{\mathcal{Q}}_{\mathbf{V}',\mathbf{W}}/\mathcal{N}_{\mathbf{V}'}$ by
	\begin{equation*}
		{_{i}\hat{\mathcal{R}}^{\Lambda}}(-)=\mathbf{Res}^{\mathbf{V}\oplus\mathbf{W}}_{\mathbf{V}'',\mathbf{V}'\oplus\mathbf{W}}(-)\otimes ({\rm{H}}^{\ast}(\mathbb{P}^{\infty})) [(i,|\mathbf{V}'\oplus \mathbf{W}|)-1].
	\end{equation*}
\end{definition}

\begin{lemma}\label{lemma29}
	For a graded space  $\mathbf{V}$ and $\boldsymbol{\nu} \in \mathcal{S}_{|\mathbf{V}|}$, ${_{i}\hat{\mathcal{R}}^{\Lambda}}(L_{\boldsymbol{\nu}\boldsymbol{d}})$ is a direct summand of $\hat{\mathcal{R}}^{\Lambda}_{i}(L_{\boldsymbol{\nu}\boldsymbol{d}})$ and its complement $C(L_{\boldsymbol{\nu}\boldsymbol{d}})$ is a finite direct sum of some shifted $[L_{\boldsymbol{\omega}\boldsymbol{d}}]$, where $\boldsymbol{\omega} \in \mathcal{S}_{\mathbf{V}'}$ runs over  flag types which can be obtained from $\boldsymbol{\nu}$ by removing $i$ from some $\nu^{k}=i^{a_{k}}$. In particular, $C(L_{\boldsymbol{\nu}\boldsymbol{d}})$ belongs to $\mathcal{Q}_{\mathbf{V}',\mathbf{W}}/\mathcal{N}_{\mathbf{V}'}$.
\end{lemma}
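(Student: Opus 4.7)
The strategy is to apply Lusztig's explicit restriction formula (Proposition \ref{indres formula}) to compute $\mathbf{Res}^{\mathbf{V}\oplus\mathbf{W}}_{\mathbf{V}'\oplus\mathbf{W},\mathbf{V}''}(L_{\boldsymbol{\nu}\boldsymbol{d}})$ and $\mathbf{Res}^{\mathbf{V}\oplus\mathbf{W}}_{\mathbf{V}'',\mathbf{V}'\oplus\mathbf{W}}(L_{\boldsymbol{\nu}\boldsymbol{d}})$ term by term, and then to identify ${_i\hat{\mathcal{R}}^{\Lambda}}(L_{\boldsymbol{\nu}\boldsymbol{d}})$ as an explicit subsummand of $\hat{\mathcal{R}}^{\Lambda}_{i}(L_{\boldsymbol{\nu}\boldsymbol{d}})$ after twisting by $\mathrm{H}^{\ast}(\mathbb{P}^{\infty})$ and shifting.

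First, since $|\mathbf{V}''|=i$ is a single letter, each decomposition $\boldsymbol{\tau}+\boldsymbol{\omega}=\boldsymbol{\nu}\boldsymbol{d}$ in which either $\boldsymbol{\tau}$ or $\boldsymbol{\omega}$ equals $(i)$ is parameterised by the position $k$ holding that single letter. Positions from the $\boldsymbol{d}$-part have vertex label $\hat j\in\hat I$ and cannot absorb $(i)$, so the admissible $k$ are precisely those in the $\boldsymbol{\nu}$-part with $i_k=i$; the complementary flag type then takes the form $\boldsymbol{\omega}_k\boldsymbol{d}$, where $\boldsymbol{\omega}_k\in\mathcal{S}_{\mathbf{V}'}$ is obtained from $\boldsymbol{\nu}$ by deleting one $i$ from the $k$-th segment. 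Setting $M_k=M(\boldsymbol{\omega}_k\boldsymbol{d},(i))$ and $M'_k=M((i),\boldsymbol{\omega}_k\boldsymbol{d})$, Proposition \ref{indres formula} yields
\[
\mathbf{Res}^{\mathbf{V}\oplus\mathbf{W}}_{\mathbf{V}'\oplus\mathbf{W},\mathbf{V}''}(L_{\boldsymbol{\nu}\boldsymbol{d}})=\bigoplus_{k:\,i_k=i}L_{\boldsymbol{\omega}_k\boldsymbol{d}}[M_k],\qquad \mathbf{Res}^{\mathbf{V}\oplus\mathbf{W}}_{\mathbf{V}'',\mathbf{V}'\oplus\mathbf{W}}(L_{\boldsymbol{\nu}\boldsymbol{d}})=\bigoplus_{k:\,i_k=i}L_{\boldsymbol{\omega}_k\boldsymbol{d}}[M'_k].
\]

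Next I would compute $M_k-M'_k$. In the formula for $M(\boldsymbol{\tau},\boldsymbol{\omega})$ the two symmetric pieces $\sum_{h\in H}(\dim\mathbf{T}_{h'}\dim\mathbf{W}_{h''}+\dim\mathbf{T}_{h''}\dim\mathbf{W}_{h'})$ and $-\sum_{j\in I}\dim\mathbf{T}_j\dim\mathbf{W}_j$ are invariant under swapping the two arguments and therefore cancel. The remaining sums collapse to contributions indexed by the single position $k$ at which $(i)$ is placed; pairing $h\leftrightarrow\bar h$ in $\hat H$ and using that $i$ is a source in $\hat\Omega$ (so all edges incident to $i$ are directed away from it, and the framing positions all lie after the $\boldsymbol{\nu}$-part) yields an identity of the shape
\[
M_k-M'_k=(i,|\mathbf{V}'\oplus\mathbf{W}|)+2N_k,
\]
for an explicit $N_k\in\mathbb{Z}_{\geqslant 0}$ weighted by the multiplicities of the $i$-segments of $\boldsymbol{\nu}$ situated after position $k$ together with a contribution from the framing positions.

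After tensoring with $\mathrm{H}^{\ast}(\mathbb{P}^{\infty})=\bigoplus_{m\geqslant 0}\overline{\mathbb{Q}}_{l}[-2m]$ and applying the outside shifts, the $k$-summand of ${_i\hat{\mathcal{R}}^{\Lambda}}(L_{\boldsymbol{\nu}\boldsymbol{d}})$ unfolds as
\[
\bigoplus_{m\geqslant 0}L_{\boldsymbol{\omega}_k\boldsymbol{d}}[M'_k+(i,|\mathbf{V}'\oplus\mathbf{W}|)-1-2m]=\bigoplus_{m'\geqslant N_k}L_{\boldsymbol{\omega}_k\boldsymbol{d}}[M_k-1-2m'],
\]
which is a direct summand of the corresponding $k$-summand $\bigoplus_{m'\geqslant 0}L_{\boldsymbol{\omega}_k\boldsymbol{d}}[M_k-1-2m']$ of $\hat{\mathcal{R}}^{\Lambda}_{i}(L_{\boldsymbol{\nu}\boldsymbol{d}})$. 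Summing over $k$ exhibits ${_i\hat{\mathcal{R}}^{\Lambda}}(L_{\boldsymbol{\nu}\boldsymbol{d}})$ as a direct summand, and the complement is the finite direct sum
\[
C(L_{\boldsymbol{\nu}\boldsymbol{d}})=\bigoplus_{k:\,i_k=i}\bigoplus_{m'=0}^{N_k-1}L_{\boldsymbol{\omega}_k\boldsymbol{d}}[M_k-1-2m'],
\]
which is of the announced form and therefore lies in $\mathcal{Q}_{\mathbf{V}',\mathbf{W}}/\mathcal{N}_{\mathbf{V}'}$.

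The main obstacle will be verifying the shift identity $M_k-M'_k=(i,|\mathbf{V}'\oplus\mathbf{W}|)+2N_k$ with $N_k\geqslant 0$. This is pure bookkeeping with the formula of Proposition \ref{indres formula}: one must track the signs from the $l<l'$ and $l>l'$ sums, pair the edges of $\hat H$ with their reverses, and carefully isolate the contribution of the framing edges $i\to\hat i$ (which always fall into the $l>k$ range) from that of the non-framing edges in $\Omega$. Once this identity is established, the rest is a direct matching of graded pieces.
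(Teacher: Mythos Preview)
Your overall plan --- compute both restrictions of $L_{\boldsymbol{\nu}\boldsymbol{d}}$ via Proposition~\ref{indres formula}, then match the $\mathbb{P}^{\infty}$-tails term by term --- is sound and is essentially the same skeleton as the paper's argument. But the proof has a genuine gap at the critical step.

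Your assertion that $N_k\geqslant 0$ for every admissible position $k$ is \emph{false} in general. In the paper's notation one has
\[
N_k \;=\; f_{\boldsymbol{\omega}_k}-\langle\,|\mathbf{V}'\oplus\mathbf{W}|,\,i\,\rangle,
\]
where $f_{\boldsymbol{\omega}_k}$ is the rank of the vector bundle $\alpha_{\boldsymbol{\omega}_k}$ appearing in Lusztig's proof of the restriction formula. Using that $i$ is a source in $\hat{\Omega}$, one computes $\langle|\mathbf{V}'\oplus\mathbf{W}|,i\rangle=\nu'_i$ and $f_{\boldsymbol{\omega}_k}=\sum_{l<k}\nu^{l}_{i}+\sum_{l>k,\,h\in\hat{\Omega},\,h'=i}\nu^{l}_{h''}$, and the difference $N_k$ is easily seen to be negative whenever the tail $\boldsymbol{\omega}'_k=(i^{a_k-1},\nu^{k+1},\dots,\nu^m)$ has too many $i$'s relative to its neighbours and the framing --- concretely, when $\nu''_{i}>\sum_{h\in\hat{\Omega},\,h'=i}\nu''_{h''}$ for the corresponding $\mathbf{V}''$. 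In that situation the ${_{i}\hat{\mathcal{R}}^{\Lambda}}$-summand indexed by $k$ is \emph{not} a direct summand of the $\hat{\mathcal{R}}^{\Lambda}_{i}$-summand before localizing; it is the other way around.

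What rescues the lemma is precisely the localization. When $N_k<0$ the complex $L_{\boldsymbol{\omega}_k\boldsymbol{d}}$ lies in $\mathcal{N}_{\mathbf{V}',i}$: the inequality forces $\mathbf{E}^{\geqslant 1}_{\mathbf{V}'',\mathbf{W},i}=\mathbf{E}_{\mathbf{V}'',\mathbf{W},\hat{\Omega}}$ for the tail dimension vector, so $L_{\boldsymbol{\omega}'_k\boldsymbol{d}}\in\mathcal{N}_{\mathbf{V}'',i}$, and then $L_{\boldsymbol{\omega}_k\boldsymbol{d}}\cong F^{(a_1)}_{i_1}\cdots F^{(a_{k-1})}_{i_{k-1}}L_{\boldsymbol{\omega}'_k\boldsymbol{d}}\in\mathcal{N}_{\mathbf{V}',i}$. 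Hence both infinite tails indexed by such $k$ vanish in $\hat{\mathcal{Q}}_{\mathbf{V}',\mathbf{W}}/\mathcal{N}_{\mathbf{V}'}$, and the direct-summand claim holds trivially for those $k$. Your write-up never invokes the localization and therefore cannot close this case; you must split into the two regimes $N_k\geqslant 0$ and $N_k<0$ and supply the vanishing argument for the latter.
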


\begin{proof}	
	Consider the following morphisms in the definition of  the restriction functors $\mathbf{Res}^{\mathbf{V}\oplus\mathbf{W}}_{\mathbf{V}'\oplus\mathbf{W},\mathbf{V}''}$ and $\mathbf{Res}^{\mathbf{V}\oplus\mathbf{W}}_{\mathbf{V}'',\mathbf{V}'\oplus\mathbf{W}}$ respectively
	\begin{equation*}
		\mathbf{E}_{\mathbf{V}',\mathbf{W},\hat{\Omega}} \xleftarrow{\kappa'_{\Omega}} F' \xrightarrow{ \iota'_{\Omega} }  \mathbf{E}_{\mathbf{V},\mathbf{W},\hat{\Omega}};
	\end{equation*}
	\begin{equation*}
		\mathbf{E}_{\mathbf{V}',\mathbf{W},\hat{\Omega}} \xleftarrow{\kappa_{\Omega}} F \xrightarrow{ \iota_{\Omega} }  \mathbf{E}_{\mathbf{V},\mathbf{W},\hat{\Omega}}.
	\end{equation*}
	It is well known that  the restriction functor is a hyperbolic localization functor, that is, there is a $k^*$-action on $\mathbf{E}_{\mathbf{V},\mathbf{W},\hat{\Omega}}$ such that the following diagrams commute
	\[
	\xymatrix{
		(\mathbf{E}_{\mathbf{V},\mathbf{W},\hat{\Omega}})^{k^*} \ar[d]_{\cong} 
		&  {\mathbf{E}^{+}_{\mathbf{V},\mathbf{W},\hat{\Omega}}} \ar[l]_{\pi^{+}} \ar[d]^{\cong} \ar[r]^{g^+} &\mathbf{E}_{\mathbf{V},\mathbf{W},\hat{\Omega}} \ar@{=}[d] 
		\\	
		{\mathbf{E}_{\mathbf{V}',\mathbf{W},\hat{\Omega}}} 
		& F \ar[l]_{\kappa_{\Omega}} \ar[r]^{\iota_{\Omega}} &\mathbf{E}_{\mathbf{V},\mathbf{W},\hat{\Omega}} ,
	}
	\]
	
	\[
	\xymatrix{
		{(\mathbf{E}_{\mathbf{V},\mathbf{W},\hat{\Omega}}})^{k^*} \ar[d]_{\cong} \ar[d]_{\cong} 
		&  {\mathbf{E}^{-}_{\mathbf{V},\mathbf{W},\hat{\Omega}}} \ar[l]_{\pi^{-}} \ar[d]^{\cong} \ar[r]^{g^{-}} &\mathbf{E}_{\mathbf{V},\mathbf{W},\hat{\Omega}} \ar@{=}[d]
		\\
		{\mathbf{E}_{\mathbf{V}',\mathbf{W},\hat{\Omega}}} 
		& F' \ar[l]_{\kappa'_{\Omega}} \ar[r]^{\iota'_{\Omega}} &\mathbf{E}_{\mathbf{V},\mathbf{W},\hat{\Omega}},
	}
	\]
	
	see \cite{Barden} and \cite[Proposition 2.10]{MR4524567} for details. By formula (1) and Theorem 1 in \cite{Barden}, we have 
	\begin{align*}
		\mathbf{Res}^{\mathbf{V}\oplus\mathbf{W}}_{\mathbf{V}'\oplus\mathbf{W},\mathbf{V}''} 
		\cong &(\kappa'_{\Omega})_{!}(\iota'_{\Omega})^{\ast} [-\langle|\mathbf{V}'\oplus \mathbf{W}|,i \rangle]
		\cong (\pi^{-})_{!}(g^{-})^{\ast}[-\langle|\mathbf{V}'\oplus \mathbf{W}|,i\rangle]\\
		\cong &(\pi^{+})_{\ast}(g^{+})^{!}[-\langle|\mathbf{V}'\oplus \mathbf{W}|,i\rangle]
		\cong  (\kappa_{\Omega})_{\ast}(\iota_{\Omega})^{!}[-\langle|\mathbf{V}'\oplus \mathbf{W}|,i\rangle].
	\end{align*}
	
	Since $i$ is a source,  $\iota_{\Omega}$ is the identity,  and so  
	\begin{align*}
		\hat{\mathcal{R}}^{\Lambda}_{i} \cong  \bigoplus\limits_{n\geqslant 0}(\kappa_{\Omega})_{\ast} [-\langle|\mathbf{V}'\oplus \mathbf{W}|,i\rangle-1-2n].
	\end{align*}
	By definition,
	\begin{align*}
		{_{i}\hat{\mathcal{R}}^{\Lambda}} \cong & \bigoplus\limits_{n\geqslant 0}(\kappa_{\Omega})_{!} [(i,|\mathbf{V}'\oplus \mathbf{W}|)-\langle i,|\mathbf{V}'\oplus \mathbf{W}|\rangle-1-2n]\\
		\cong & \bigoplus\limits_{n\geqslant 0}(\kappa_{\Omega})_{!} [\langle|\mathbf{V}'\oplus \mathbf{W}|,i\rangle-1-2n].
	\end{align*}

	By Proposition \ref{indres formula}, $\mathbf{Res}^{\mathbf{V}\oplus\mathbf{W}}_{\mathbf{V}'\oplus\mathbf{W},\mathbf{V}''}(L_{\boldsymbol{\nu}\boldsymbol{d}})$ is a direct sum of some $L_{\boldsymbol{\omega}\boldsymbol{d}}$ such that $\boldsymbol{\omega} \in \mathcal{S}_{\mathbf{V}'}$ is a flag type which can be obtained from $\boldsymbol{\nu}$ by removing $i$ from $\nu^{k}$ for some $k$. More precisely, $\boldsymbol{\omega}=(\nu^{1},\nu^{2},\cdots, \nu^{k-1},i^{a_{k}-1},\nu^{k+1},\cdots, \nu^{m} )$ for some $k$ such that $\nu^{k}=i^{a_{k}}, a_{k}>0$. Consider the following commutative diagram in the proof of \cite[Proposition 4.2]{MR1088333},
	\[
	\xymatrix{
		\mathcal{F}_{\boldsymbol{\nu}\boldsymbol{d},\hat{\Omega}} (\boldsymbol{\omega}\boldsymbol{d}) \ar[d]_{ \alpha_{\boldsymbol{\omega}} } \ar[r]
		& \mathcal{F}_{\boldsymbol{\nu}\boldsymbol{d},\hat{\Omega}} \ar[d]^{\kappa_{\Omega} \pi_{\boldsymbol{\nu}\boldsymbol{d},\hat{\Omega}}}
		\\	
		\mathcal{F}_{\boldsymbol{\omega}\boldsymbol{d},\hat{\Omega}} \ar[r]^{\pi_{\boldsymbol{\omega}\boldsymbol{d},\hat{\Omega}}}
		& \mathbf{E}_{\mathbf{V}',\mathbf{W},\hat{\Omega}}
	}
	\]
	where $\mathcal{F}_{\boldsymbol{\nu}\boldsymbol{d},\hat{\Omega}} (\boldsymbol{\omega})$ is the locally closed subset of the flag variety $\mathcal{F}_{\boldsymbol{\nu}\boldsymbol{d},\hat{\Omega}}$ consisting of $(x,f)$ such that $x \in \mathbf{E}_{\mathbf{V},\mathbf{W},\hat{\Omega}}$ and $f$ is a flag of $\mathbf{V}\oplus \mathbf{W}$ which  induces a flag of $\mathbf{V}'\oplus \mathbf{W}$ such that its type is $\boldsymbol{\omega}\boldsymbol{d}$. By \cite[Lemma 4.4]{MR1088333},  $\alpha_{\boldsymbol{\omega}} $ is a vector bundle, we denote its rank by $f_{\boldsymbol{\omega}}$. Then we have 
	$$(\kappa_{\Omega})_{!} (L_{\boldsymbol{\nu}\boldsymbol{d}})=\bigoplus\limits_{\boldsymbol{\omega}} L_{\boldsymbol{\omega}\boldsymbol{d}}[\dim \mathcal{F}_{\boldsymbol{\nu}\boldsymbol{d},\hat{\Omega}}-\dim \mathcal{F}_{\boldsymbol{\omega}\boldsymbol{d},\hat{\Omega}} -2 f_{\boldsymbol{\omega}} ],$$  
	$$(\kappa_{\Omega})_{\ast} (L_{\boldsymbol{\nu}\boldsymbol{d}})=\bigoplus\limits_{\boldsymbol{\omega}} L_{\boldsymbol{\omega}\boldsymbol{d}}[\dim \mathcal{F}_{\boldsymbol{\nu}\boldsymbol{d},\hat{\Omega}}-\dim \mathcal{F}_{\boldsymbol{\omega}\boldsymbol{d},\hat{\Omega}} ],$$ 
	where the direct sum follows from similar arguments as in the proof of \cite[Lemma 4.7]{MR1088333} or \cite[Corollary 3.7]{MR4524567}. Hence 
	$$\hat{\mathcal{R}}^{\Lambda}_{i}(L_{\boldsymbol{\nu}\boldsymbol{d}})=\bigoplus\limits_{\boldsymbol{\omega}} \bigoplus\limits_{n \geqslant 0}L_{\boldsymbol{\omega}\boldsymbol{d}}[\dim \mathcal{F}_{\boldsymbol{\nu}\boldsymbol{d},\hat{\Omega}}-\dim \mathcal{F}_{\boldsymbol{\omega}\boldsymbol{d},\hat{\Omega}}-\langle|\mathbf{V}'\oplus \mathbf{W}|,i\rangle-1-2n ], $$
	and
	$${_{i}\hat{\mathcal{R}}^{\Lambda}}(L_{\boldsymbol{\nu}\boldsymbol{d}})=\bigoplus\limits_{\boldsymbol{\omega}} \bigoplus\limits_{n \geqslant 0}L_{\boldsymbol{\omega}\boldsymbol{d}}[\dim \mathcal{F}_{\boldsymbol{\nu}\boldsymbol{d},\hat{\Omega}}-\dim \mathcal{F}_{\boldsymbol{\omega}\boldsymbol{d},\hat{\Omega}}-2f_{\boldsymbol{\omega}} +\langle|\mathbf{V}'\oplus \mathbf{W}|,i\rangle-1-2n ]. $$
	
	If $\langle|\mathbf{V}'\oplus \mathbf{W}|,i\rangle \leqslant f_{\boldsymbol{\omega}} $, then $-2f_{\boldsymbol{\omega}} +\langle|\mathbf{V}'\oplus \mathbf{W}|,i\rangle \leqslant  -\langle|\mathbf{V}'\oplus \mathbf{W}|,i\rangle$. For such $\boldsymbol{\omega}$, it is easy to see that $$\bigoplus\limits_{n \geqslant 0}L_{\boldsymbol{\omega}\boldsymbol{d}}[\dim \mathcal{F}_{\boldsymbol{\nu}\boldsymbol{d},\hat{\Omega}}-\dim \mathcal{F}_{\boldsymbol{\omega}\boldsymbol{d},\hat{\Omega}}-2f_{\boldsymbol{\omega}} +\langle|\mathbf{V}'\oplus \mathbf{W}|,i\rangle-1-2n ]$$ is a direct summand of $$\bigoplus\limits_{n \geqslant 0}L_{\boldsymbol{\omega}\boldsymbol{d}}[\dim \mathcal{F}_{\boldsymbol{\nu}\boldsymbol{d},\hat{\Omega}}-\dim \mathcal{F}_{\boldsymbol{\omega}\boldsymbol{d},\hat{\Omega}}-\langle|\mathbf{V}'\oplus \mathbf{W}|,i\rangle-1-2n ]$$ and its complement is a finite direct sum of some shifted $[L_{\boldsymbol{\omega}\boldsymbol{d}}]$.
	
	Otherwise, if $\langle|\mathbf{V}'\oplus \mathbf{W}|,i\rangle > f_{\boldsymbol{\omega}} $, we claim that $L_{\boldsymbol{\omega}\boldsymbol{d}} $ is contained in $\mathcal{N}_{\mathbf{V}',i}$, then the lemma is proved. In fact, with the notations above, we assume that $$\boldsymbol{\omega}=(\nu^{1},\nu^{2},\cdots, \nu^{k-1},i^{a_{k}-1},\nu^{k+1},\cdots, \nu^{m} )$$ and take $\boldsymbol{\omega}'= (i^{a_{k}-1},\nu^{k+1},\cdots, \nu^{m} )$. Then $L_{\boldsymbol{\omega}'\boldsymbol{d}}$ is contained in $\mathcal{Q}_{\mathbf{V}'',\mathbf{W}}$ for some $\mathbf{V}''$. Since $i$ is a source in $\hat{\Omega}$, $\langle|\mathbf{V}'\oplus \mathbf{W}|,i\rangle =\nu'_{i} $. By Proposition 4.2(b) in \cite{MR1088333}, we can see that 
	$f_{\boldsymbol{\omega}}=\sum\limits_{l < k}\nu^{l}_{i} + \sum\limits_{l>k,h\in \hat{\Omega},h'=i} \nu^{l}_{h''},$ then $0 < \langle|\mathbf{V}'\oplus \mathbf{W}|,i\rangle - f_{\boldsymbol{\omega}}= \nu''_{i} - \sum\limits_{h \in \hat{\Omega}, h'=i} \nu''_{h''}. $ In particular, $\mathbf{E}^{\geqslant 1}_{\mathbf{V}'',\mathbf{W},i}= \mathbf{E}_{\mathbf{V}'',\mathbf{W},\hat{\Omega}}$ and $L_{\boldsymbol{\omega}'\boldsymbol{d}}$ belongs to $\mathcal{N}_{\mathbf{V}'',i}$. Notice that $L_{\boldsymbol{\omega}\boldsymbol{d}} \cong F_{i_{1}}^{(a_{1})}\cdots F_{i_{k-1}}^{(a_{k-1})}L_{\boldsymbol{\omega}'\boldsymbol{d}} $, it is also contained in $\mathcal{N}_{\mathbf{V}',i}$ and we finish the proof.

\end{proof}

\begin{theorem}\label{exact}
	For any  $L$ in $\mathcal{Q}_{\mathbf{V},\mathbf{W}}$, there is a split exact sequence in $\hat{\mathcal{Q}}_{\mathbf{V}',\mathbf{W}}/ \mathcal{N}_{\mathbf{V}'}$ 
	\begin{equation*}
		0 \rightarrow {_{i}\hat{\mathcal{R}}^{\Lambda}}(L)  \rightarrow {\hat{\mathcal{R}}^{\Lambda}_{i}}(L)  \rightarrow E_{i}(L) \rightarrow 0.
	\end{equation*}
\end{theorem}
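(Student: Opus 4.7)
\smallskip

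The plan is to reduce first to the Lusztig generators $L_{\boldsymbol{\nu}\boldsymbol{d}}$ and then to exhibit the split sequence by a direct shift-by-shift comparison. Since every object of $\mathcal{Q}_{\mathbf{V},\mathbf{W}}$ is, up to the Fourier--Deligne transform fixing $\hat{\Omega}$, a direct summand of a direct sum of shifts of $L_{\boldsymbol{\nu}\boldsymbol{d}}$'s, and since all three functors $\hat{\mathcal{R}}^{\Lambda}_{i}$, ${_{i}\hat{\mathcal{R}}^{\Lambda}}$, $E_{i}$ are additive, commute with shifts, and commute with Verdier duality, the property of sitting in a split short exact sequence will descend from generators to summands. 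So I would only need to establish the sequence for $L=L_{\boldsymbol{\nu}\boldsymbol{d}}$.

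For such $L$, Lemma \ref{lemma29} already gives exactly the first half: after choosing an orientation in which $i$ is a source, ${_{i}\hat{\mathcal{R}}^{\Lambda}}(L_{\boldsymbol{\nu}\boldsymbol{d}})$ is realized as a direct summand of $\hat{\mathcal{R}}^{\Lambda}_{i}(L_{\boldsymbol{\nu}\boldsymbol{d}})$ inside $\hat{\mathcal{Q}}_{\mathbf{V}',\mathbf{W}}/\mathcal{N}_{\mathbf{V}'}$, and the complement $C(L_{\boldsymbol{\nu}\boldsymbol{d}})$ is explicitly a finite direct sum of shifted $L_{\boldsymbol{\omega}\boldsymbol{d}}$'s, where $\boldsymbol{\omega}$ ranges over flag types obtained from $\boldsymbol{\nu}$ by deleting one $i$ in some $\nu^{k}=i^{a_{k}}$. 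The inclusion map itself comes canonically from the natural transformation $(\kappa_{\Omega})_{!}\to(\kappa_{\Omega})_{\ast}$, twisted by the shifts $[\pm\langle|\mathbf{V}'\oplus\mathbf{W}|,i\rangle -1-2n]$ sitting in the definitions of ${_{i}\hat{\mathcal{R}}^{\Lambda}}$ and $\hat{\mathcal{R}}^{\Lambda}_{i}$. So the split short exact sequence will exist in the category $\hat{\mathcal{Q}}_{\mathbf{V}',\mathbf{W}}/\mathcal{N}_{\mathbf{V}'}$ as soon as the complement is identified with $E_{i}(L_{\boldsymbol{\nu}\boldsymbol{d}})$.

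The core step therefore is to compute $E_{i}(L_{\boldsymbol{\nu}\boldsymbol{d}})=\mathcal{E}_{i}(L_{\boldsymbol{\nu}\boldsymbol{d}})$ via the Grassmannian correspondence $\dot{\mathbf{E}}_{\mathbf{V},\mathbf{W},i}\times\mathbf{Gr}(\nu_{i},\tilde{\nu}_{i})\xleftarrow{q_{1}}\dot{\mathbf{E}}_{\mathbf{V},\mathbf{W},i}\times\mathbf{Fl}(\nu'_{i},\nu_{i},\tilde{\nu}_{i})\xrightarrow{q_{2}}\dot{\mathbf{E}}_{\mathbf{V},\mathbf{W},i}\times\mathbf{Gr}(\nu'_{i},\tilde{\nu}_{i})$ and match it with the explicit formula for $C(L_{\boldsymbol{\nu}\boldsymbol{d}})$ that drops out of the proof of Lemma \ref{lemma29}. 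The plan is to use Corollary \ref{corollary tilde E}, whose proof already gives $\mathcal{E}_{i}(L_{\boldsymbol{\nu}\boldsymbol{d}})\cong\bigoplus_{\boldsymbol{\omega}}L_{\boldsymbol{\omega}\boldsymbol{d}}[\text{shifts}]$ by induction on the length of $\boldsymbol{\nu}$ using the commutation relations $\mathcal{E}_{i}\mathcal{F}_{j}\cong\mathcal{F}_{j}\mathcal{E}_{i}$ for $i\neq j$ and the Serre-type relation of Lemma \ref{lemma c1} for $i=j$. The shift data on both sides are controlled by the combinatorial quantities $\dim\mathcal{F}_{\boldsymbol{\nu}\boldsymbol{d},\hat{\Omega}}-\dim\mathcal{F}_{\boldsymbol{\omega}\boldsymbol{d},\hat{\Omega}}$, $f_{\boldsymbol{\omega}}$, and $\langle|\mathbf{V}'\oplus\mathbf{W}|,i\rangle=\nu'_{i}$, and the matching should reduce to the identity
\[
(v^{-1}-v)^{-1}\bigl(v^{-d_{i}}\,[C(L_{\boldsymbol{\nu}\boldsymbol{d}})]-v^{d_{i}}\,[{_{i}\hat{\mathcal{R}}^{\Lambda}}(L_{\boldsymbol{\nu}\boldsymbol{d}})]+\cdots\bigr)
\]
already verified at the algebraic level via the formula in the introduction. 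The main obstacle I expect is precisely this combinatorial bookkeeping: tracking the interplay between the $\mathrm{H}^{\ast}(\mathbb{P}^{\infty})$-factors (which produce the infinite tail $\bigoplus_{n\geqslant 0}[-2n]$), the shift $[-2f_{\boldsymbol{\omega}}]$ distinguishing $(\kappa_{\Omega})_{!}$ from $(\kappa_{\Omega})_{\ast}$, and the shift $[-n\nu_{i}]$ in the definition of $\mathcal{E}_{i}^{(n)}$, to confirm that for each summand $L_{\boldsymbol{\omega}\boldsymbol{d}}$ the truncated range $0\leqslant n\leqslant f_{\boldsymbol{\omega}}-\nu'_{i}-1$ in $C(L_{\boldsymbol{\nu}\boldsymbol{d}})$ matches the multiplicity that $E_{i}$ produces. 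Once this matching is complete, the sequence is split by construction since the summands already live as subobjects and quotients inside $\hat{\mathcal{R}}^{\Lambda}_{i}(L_{\boldsymbol{\nu}\boldsymbol{d}})$ in the semisimple localization.
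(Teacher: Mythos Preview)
Your overall strategy matches the paper's: reduce to the generators $L_{\boldsymbol{\nu}\boldsymbol{d}}$, invoke Lemma~\ref{lemma29} to realise ${_{i}\hat{\mathcal{R}}^{\Lambda}}(L_{\boldsymbol{\nu}\boldsymbol{d}})$ as a direct summand of $\hat{\mathcal{R}}^{\Lambda}_{i}(L_{\boldsymbol{\nu}\boldsymbol{d}})$ with explicit complement $C(L_{\boldsymbol{\nu}\boldsymbol{d}})$, and then identify $C(L_{\boldsymbol{\nu}\boldsymbol{d}})\cong E_{i}(L_{\boldsymbol{\nu}\boldsymbol{d}})$. The extension to arbitrary $L$ via additivity and Krull--Schmidt is also how the paper proceeds.

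The difference lies in how the core identification $C(L_{\boldsymbol{\nu}\boldsymbol{d}})\cong E_{i}(L_{\boldsymbol{\nu}\boldsymbol{d}})$ is carried out. You propose to track the shifts on both sides explicitly and match them combinatorially, and you correctly flag this bookkeeping as the main obstacle. The paper avoids this entirely by passing to the Grothendieck group: it defines a linear operator $\tilde{E}_{i}$ by $[L_{\boldsymbol{\nu}\boldsymbol{d}}]\mapsto[C(L_{\boldsymbol{\nu}\boldsymbol{d}})]$, and then uses the known commutation formulas for the restriction functors (namely $\hat{\mathcal{R}}^{\Lambda}_{i}F_{j}\cong F_{j}\hat{\mathcal{R}}^{\Lambda}_{i}\oplus(\cdots)$ and the analogous one for ${_{i}\hat{\mathcal{R}}^{\Lambda}}$) to check that $\tilde{E}_{i}$ satisfies exactly the same commutation relations with the $F_{j}$'s as $E_{i}$ does. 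An induction starting from the highest weight vector then forces $\tilde{E}_{i}=E_{i}$ on $\mathcal{K}_{0}(\Lambda)$; since both $C(L_{\boldsymbol{\nu}\boldsymbol{d}})$ and $E_{i}(L_{\boldsymbol{\nu}\boldsymbol{d}})$ are bounded semisimple complexes, equality of their classes forces isomorphism. This Grothendieck-group detour is what buys the paper a clean proof without ever computing a single shift explicitly---whereas your route, while in principle workable, would require carrying out precisely the combinatorics you anticipate as difficult.
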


\begin{proof}
	By Lemma \ref{lemma29}, we can define an $\mathcal{A}$-linear operator $\tilde{E}_{i}$ by sending $[L_{\boldsymbol{\nu}\boldsymbol{d}}]$ to $[C(L_{\boldsymbol{\nu}\boldsymbol{d}})] $ for any flag type $\boldsymbol{\nu}$. By definition, we have $\tilde{E}_{i}([L_{\boldsymbol{\nu}\boldsymbol{d}}]) = [{\hat{\mathcal{R}}^{\Lambda}_{i}}(L_{\boldsymbol{\nu}\boldsymbol{d}})  ]-[{_{i}\hat{\mathcal{R}}^{\Lambda}}(L_{\boldsymbol{\nu}\boldsymbol{d}})]. $
	From \cite{MR1088333} and Corollary  3.1 and 3.3 in \cite{zhao2022derivation}, we can see that
	$${\hat{\mathcal{R}}^{\Lambda}_{i}}F_{j} \cong (F_{j}{\hat{\mathcal{R}}^{\Lambda}_{i}}) \oplus (\delta_{i,j} \rm{Id} \otimes H^{\ast}(\mathbb{P}^{\infty})[-(i,|\mathbf{V}'\oplus\mathbf{W}|)-1] ),$$
	$${_{i}\hat{\mathcal{R}}^{\Lambda}}F_{j} \cong ({F_{j}} {_{i}\hat{\mathcal{R}}^{\Lambda}}) \oplus (\delta_{i,j} \rm{Id} \otimes H^{\ast}(\mathbb{P}^{\infty})[ (i,|\mathbf{V}'\oplus\mathbf{W}|)-1] ).$$
	Hence, we can check that the linear operator $\tilde{E}_{i}=[{\hat{\mathcal{R}}^{\Lambda}_{i}}]-[{_{i}\hat{\mathcal{R}}^{\Lambda}}] :\mathcal{K} \rightarrow \mathcal{K}_{0}(\Lambda)$ satisfies the relations 
	$$ \tilde{E}_{i}F_{j}= F_{j}\tilde{E}_{i}, i\neq j; $$
	$$	\tilde{E}_{i}F_{i} + \sum \limits_{0\leqslant m \leqslant N-1} v^{N-1-2m} {\rm{Id}} =F_{i}\tilde{E}_{i}+ \sum \limits_{0\leqslant m \leqslant -N-1} v^{-2m-N-1}{\rm{Id}},$$
	where $N=2\nu'_{i}-\tilde{\nu}'_{i}.$ Since the operator $E_{i}$ also satisfies the same commutative relation with $F_{j}$, we can argue by induction to show that $$[\tilde{E}_{i}(L_{\boldsymbol{\nu}'\boldsymbol{d}})]=[E_{i}(L_{\boldsymbol{\nu}'\boldsymbol{d}})]$$ for any flag type  $\boldsymbol{\nu}'=(\nu'^{1},\nu'^{2}, \cdots \nu'^{k-1}, \nu'^{k})$ with each $\nu'^{l} \in I$. (It means that $\boldsymbol{\nu}'$ is a flag type of complete flags). Note that each $L_{\boldsymbol{\nu}'\boldsymbol{d}}$ can be written as $L_{\boldsymbol{\nu}\boldsymbol{d}}\otimes {\rm{H}}^{\ast}(X)$ for some projective variety $X$ (up to shifts), then $C(L_{\boldsymbol{\nu}\boldsymbol{d}})\otimes {\rm{H}}^{\ast}(X) \cong C(L_{\boldsymbol{\nu}'\boldsymbol{d}}) $ (up to shifts) and we must have $[\tilde{E}_{i}(L_{\boldsymbol{\nu}\boldsymbol{d}})]=[E_{i}(L_{\boldsymbol{\nu}\boldsymbol{d}})]$  for any flag type $\boldsymbol{\nu}$. In particular, $E_{i}(L_{\boldsymbol{\nu}\boldsymbol{d}}) \cong C(L_{\boldsymbol{\nu}\boldsymbol{d}})$, since both of them are bounded semisimple complexes (up to isomorphisms). Then by definition, for any $\boldsymbol{\nu}$, $$E_{i}(L_{\boldsymbol{\nu}\boldsymbol{d}}) \oplus {_{i}\hat{\mathcal{R}}^{\Lambda}}(L_{\boldsymbol{\nu}\boldsymbol{d}}) \cong {\hat{\mathcal{R}}^{\Lambda}_{i}}(L_{\boldsymbol{\nu}\boldsymbol{d}}). $$ 
	
	Notice that from Lemma \ref{lkey'} and Proposition \ref{lt'}, we can deduce the following fact: for any simple object $L$ in $\mathcal{Q}_{\mathbf{V},\mathbf{W}}$, there exist families of flag types $\boldsymbol{\omega}$,$\boldsymbol{\tau}$ and integers $N(\boldsymbol{\omega},n)$ and $N(\boldsymbol{\tau},n)$ such that $$ L \oplus \bigoplus\limits_{ \boldsymbol{\omega},n } L_{\boldsymbol{\omega}\boldsymbol{d}}^{\oplus N(\boldsymbol{\omega},n)}[n] \cong \bigoplus\limits_{ \boldsymbol{\tau},n } L_{\boldsymbol{\tau}\boldsymbol{d}}^{\oplus N(\boldsymbol{\tau},n)}[n],$$ hence for any $L$, $E_{i}(L) \oplus {_{i}\hat{\mathcal{R}}^{\Lambda}}(L) \cong {\hat{\mathcal{R}}^{\Lambda}_{i}}(L)$  and we finish the proof.
	
\end{proof}

\begin{remark}
	Recall that the derivation operators $\bar{r}_{i}$ and $_{i}\bar{r}$ can be realized by the restriction functor, see \cite{MR1227098} or \cite{zhao2022derivation}. If we formally write $\frac{1}{v-v^{-1}}= \sum \limits_{n\geqslant 0 } v^{-1-2n}$, then $\hat{\mathcal{R}}^{\Lambda}_{i}$ and $ {_{i}\hat{\mathcal{R}}^{\Lambda}}$ indeed realize the  linear operators  $\frac{ 1  }{v-v^{-1}}\bar{r}_{i}, \frac{ v^{(i,|\mathbf{V}'\oplus\mathbf{W}|)}}{v-v^{-1}} {_{i}\bar{r}}: {_{\mathcal{A}}\mathbf{U}^{-}  } \rightarrow \mathbf{U}^{-}  $ respectively. Therefore, Theorem \ref{exact} indeed categorify the equation
	$$E_{i}(x \cdot  v_{\Lambda} )=( v^{(i,|x|-i)- \langle \Lambda,\alpha^{\vee}_{i} \rangle}{_{i} \bar{r}}(x)\cdot  v_{\Lambda} - v^{\langle \Lambda,\alpha^{\vee}_{i} \rangle} \bar{r}_{i}(x)\cdot  v_{\Lambda} ) /(v^{-1}-v ).  $$
\end{remark}

\section{Compare with Nakajima's realization}
In this section, we recall Nakajima's realization of integrable highest weight modules via his quiver varieties in \cite{MR1302318} and \cite{MR1604167} and compare our construction (at $v \rightarrow 1$) with the realization of Nakajima. We omit  $\mathbb{G}_{m}$-action defined in \cite[Section 3.iv]{MR1604167} in order to simplify the notations.
\subsection{Nakajima quiver variety}
For a given quiver $Q=(I,H,\Omega)$, Nakajima considered the double version of framed quiver $\hat{Q}$, its set of vertices is $I\cup \hat{I}$ and its set of arrows is $\hat{H}=H \cup \{ i \rightarrow \hat{i},\hat{i} \rightarrow i|i\in I\}$.

For dimension vectors $\nu,\omega$, $I$-graded space $\mathbf{V}$ and $\hat{I}$-graded space $\mathbf{W}$ such that $|\mathbf{V}|=\nu$ and $|\mathbf{W}|=\omega$, we consider the moduli space
\begin{equation*}
	\begin{split}
		\mathbf{E}_{\mathbf{V},\mathbf{W}}&= \bigoplus\limits_{h \in \hat{H}} \mathbf{Hom}(\mathbf{V}_{h'},\mathbf{V}_{h''})\\
		&=   \bigoplus\limits_{h \in H} \mathbf{Hom}(\mathbf{V}_{h'},\mathbf{V}_{h''}) \oplus \bigoplus\limits_{i \in I} \mathbf{Hom}(\mathbf{V}_{i},\mathbf{W}_{\hat{i}}) \oplus \bigoplus\limits_{i \in I} \mathbf{Hom}(\mathbf{W}_{\hat{i}},\mathbf{V}_{i}).
	\end{split}
\end{equation*}

We denote an element of $\mathbf{E}_{\mathbf{V},\mathbf{W}}$ by $(B,i,j)$, where $B$ is an element of $\mathbf{E}_{\mathbf{V}}$, and $ i,j$ are elements of $ \bigoplus\limits_{i \in I} \mathbf{Hom}(\mathbf{V}_{i},\mathbf{W}_{\hat{i}})$ and $\bigoplus\limits_{i \in I} \mathbf{Hom}(\mathbf{W}_{\hat{i}},\mathbf{V}_{i})$ respectively. The moment map of $\mathbf{E}_{\mathbf{V},\mathbf{W}}$ is defined by
\begin{equation*}
	(\mu( (B,i,j) ))_{k \in I}= \sum\limits_{h \in H, h''=k} \epsilon(h)B_{h}B_{\bar{h}} +j_{k}i_{k} .
\end{equation*} 
The algebraic group
\begin{equation*}
	G=G_{\mathbf{V}}= \prod\limits_{i \in I} \mathbf{GL}(\mathbf{V}_{i})
\end{equation*} 
acts on $\mathbf{E}_{\mathbf{V},\mathbf{W}}$ by 
\begin{equation*}
	g \cdot (B,i,j) =(g^{-1}Bg,ig,g^{-1}j).
\end{equation*}

\begin{definition}
	Nakajima quiver variety $\mathfrak{m}_{0}(\nu,\omega)$ and $\mathfrak{m}(\nu,\omega)$ are defined as the affine and projective geometric quotients of $\mu^{-1}(0)$ respectively. More precisely, let $A(\mu^{-1}(0))$ be the coordinate ring of the affine variety $\mu^{-1}(0)$, the affine quiver variety $\mathfrak{m}_{0}(\nu,\omega)=\mathfrak{m}_{0}$ is defined by
	\begin{equation*}
		\mathfrak{m}_{0}(\nu,\omega)=\mu^{-1}(0)//G={\rm{Spec}} \ A(\mu^{-1}(0))^{G}.
	\end{equation*}
	Take character $\chi_{G}:G \rightarrow \mathbb{C}; g \mapsto \prod\limits_{k\in I} {\rm{det}} g_{k}^{-1}$ and set
	\begin{equation*}
		A(\mu^{-1}(0))^{G,\chi_{G}^{n}}=\{f\in A(\mu^{-1}(0))|f(g(B,i,j))=\chi_{G}(g)^{n}f((B,i,j))\}
	\end{equation*}
	then the projective quiver variety $\mathfrak{m}(\nu,\omega)=\mathfrak{m}$ is defined by 
	\begin{equation*}
		\mathfrak{m}(\nu,\omega)={\rm{Proj}}( \bigoplus \limits_{n \geqslant 0} 	A(\mu^{-1}(0))^{G,\chi_{G}^{n}} ).
	\end{equation*}
\end{definition}	

In order to describe $\mathfrak{m}(\nu,\omega)$, Nakajima introduced the following stable conditions.
\begin{lemma}
	Assume that $(B,i,j)\in \mu^{-1}(0)$, then $(B,i,j)$ is stable if and only if the following statement holds,
	
	(S)\ For any $B$-stable subspace $S \subset \mathbf{V}$ such that $S \subset {\rm{Ker}}i$, we have $S=0$.
\end{lemma}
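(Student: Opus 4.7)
The approach is to apply the Hilbert--Mumford numerical criterion for GIT stability to the action of $G_{\mathbf{V}}$ on $\mu^{-1}(0)$ linearized by the character $\chi_{G}$. Concretely, $(B,i,j)\in\mu^{-1}(0)$ is $\chi_{G}$-stable iff for every non-trivial one-parameter subgroup $\lambda:\mathbb{G}_{m}\to G_{\mathbf{V}}$ with $\lim_{t\to 0}\lambda(t)\cdot(B,i,j)$ existing, one has $\langle\chi_{G},\lambda\rangle<0$. This strict negativity, which depends on Nakajima's convention $\chi_{G}=\prod_{k}\det(g_{k})^{-1}$, is verified by the one-dimensional toy case $I=\{*\}$, $\nu=\omega=1$, $H=\emptyset$: there (S) singles out the locus $\{i\neq 0,\ j=0\}$, which matches the HM computation only with this sign.

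A 1-PS $\lambda$ corresponds to an $I$-graded $\mathbb{Z}$-grading $\mathbf{V}=\bigoplus_{n}\mathbf{V}(n)$, with $\lambda(t)$ acting as $t^{n}$ on $\mathbf{V}(n)$. A direct weight computation from $g\cdot(B,i,j)=(g^{-1}Bg,ig,g^{-1}j)$ shows that the limit exists iff (i) each $B_{h}$ preserves $\mathbf{V}_{\leq n}:=\bigoplus_{m\leq n}\mathbf{V}(m)$; (ii) $i|_{\mathbf{V}(n)}=0$ for $n<0$; (iii) $j(\mathbf{W})\subset\mathbf{V}_{\leq 0}$. Under these constraints one computes $\langle\chi_{G},\lambda\rangle=-\sum_{n}n\dim\mathbf{V}(n)$. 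Since $\mu$ is $G$-invariant, the limit automatically lies in $\mu^{-1}(0)$.

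For the forward direction, given a non-zero $B$-stable $S\subset\ker i$, pick any graded complement $\mathbf{V}=S\oplus U$ and set $\mathbf{V}(-1):=S$, $\mathbf{V}(0):=U$. Conditions (i)--(iii) are immediate (using $B$-stability of $S$, $i(S)=0$, and $j(\mathbf{W})\subset\mathbf{V}=\mathbf{V}_{\leq 0}$), while $\langle\chi_{G},\lambda\rangle=\dim S>0$, contradicting stability. For the converse, assume (S) and let $\lambda$ be a non-trivial 1-PS whose limit exists. By (i) and (ii), $\mathbf{V}_{\leq -1}$ is a $B$-stable subspace contained in $\ker i$, so (S) forces $\mathbf{V}_{\leq -1}=0$, and hence all weights of $\lambda$ are non-negative. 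Non-triviality then gives some $\mathbf{V}(n)\neq 0$ with $n>0$, so $\sum_{n}n\dim\mathbf{V}(n)>0$ and $\langle\chi_{G},\lambda\rangle<0$, as required.

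The main subtlety is pinning down the Hilbert--Mumford sign for Nakajima's non-standard character $\chi_{G}=\det^{-1}$; once this is fixed via the 1-dimensional check, both implications reduce to routine weight-space manipulations. It is worth noting that condition (iii) enters only through existence of the limit---the converse direction never invokes a dual condition on $j$, reflecting that on $\mu^{-1}(0)$ the asymmetric condition (S) alone suffices.
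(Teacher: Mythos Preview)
The paper does not give its own proof of this lemma; it is recalled from Nakajima's work without argument. Your proof via the Hilbert--Mumford criterion is the standard one---this is King's criterion specialized to the quiver setting, as used by Nakajima---and the weight-space analysis, the destabilizing one-parameter subgroup built from a nonzero $B$-stable $S\subset\ker i$, and the converse via $\mathbf{V}_{\leq -1}=0$ are all carried out correctly.

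One caution on the sign you flag as the main subtlety: your one-dimensional check only fixes the direction of the Hilbert--Mumford inequality so as to make the lemma hold, rather than verifying it independently against the GIT definition. In fact, computing the semi-invariant ring directly in that toy case with the action $g\cdot(B,i,j)=(g^{-1}Bg,ig,g^{-1}j)$ and the character $\chi_G=\prod_k\det(g_k)^{-1}$ exactly as the paper writes them yields $A(\mu^{-1}(0))^{G,\chi_G^n}$ spanned by $j^n$ for $n>0$, hence semistable locus $\{j\neq 0\}$ rather than $\{i\neq 0\}$. This points to a sign slip in the paper's transcription of Nakajima's conventions (in the action or the character). Once a self-consistent convention is adopted, your argument goes through verbatim.
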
	

Let $\mu^{-1}(0)^{s}$ be the subset of stable points, then Nakajima proved the following lemma and corollary in \cite{MR1604167}.

\begin{lemma}
	If $(B,i,j) \in \mu^{-1}(0)^{s}$,then:\\
	(1) ${\rm{Stab}}_{G}((B,i,j))=\{1\}$;\\
	(2) The tangent map $d\mu$ is surjective at $(B,i,j)$. In particular $\mu^{-1}(0)^{s}$ is nonsingular and of dimension $\sum\limits_{k\in I} (2\nu_{k}\omega_{k}-\nu_{k}^{2}) +(\nu,\nu)$.
\end{lemma}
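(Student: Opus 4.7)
The plan is to prove (1) first by a standard $\mathrm{Im}(g-1)$ argument, then deduce (2) from (1) using moment-map yoga.

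For (1), suppose $g\in\mathrm{Stab}_{G}((B,i,j))$. Unpacking $g\cdot(B,i,j)=(B,i,j)$ gives, componentwise, $B_{h}g_{h'}=g_{h''}B_{h}$ for every $h\in\hat{H}$, $i_{k}g_{k}=i_{k}$ and $g_{k}^{-1}j_{k}=j_{k}$ for every $k$. I would then form the $I$-graded subspace $S\subset\mathbf{V}$ with $S_{k}:=\mathrm{Im}(g_{k}-1)$. The first identity rearranges as $B_{h}(g_{h'}-1)=(g_{h''}-1)B_{h}$, which immediately gives $B_{h}(S_{h'})\subset S_{h''}$, so $S$ is $B$-stable; the second identity rewrites as $i_{k}(g_{k}-1)=0$, so $S_{k}\subset\ker i_{k}$. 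Condition (S) then forces $S=0$, i.e. $g_{k}=\mathrm{id}$ for all $k$.

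For (2), the key is that $\mathbf{E}_{\mathbf{V},\mathbf{W}}$ carries the standard symplectic form pairing $B_{h}$ with $\epsilon(h)B_{\bar h}$ and $i_{k}$ with $j_{k}$, and the given $\mu$ is precisely a moment map for the symplectic $G$-action $\sigma$. Identifying $\mathfrak{g}^{*}\cong\mathfrak{g}$ via the trace form, the defining identity of a moment map reads
\[
\langle d\mu_{(B,i,j)}(\delta),\xi\rangle=\omega_{(B,i,j)}\bigl(\delta,\,d\sigma_{(B,i,j)}(\xi)\bigr),\qquad \xi\in\mathfrak{g},\ \delta\in T_{(B,i,j)}\mathbf{E}_{\mathbf{V},\mathbf{W}}.
\]
Non-degeneracy of $\omega$ then gives $\ker d\mu_{(B,i,j)}=(\mathrm{Im}\,d\sigma_{(B,i,j)})^{\perp_{\omega}}$, whence $\mathrm{rk}\,d\mu_{(B,i,j)}=\mathrm{rk}\,d\sigma_{(B,i,j)}$. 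The kernel of $d\sigma_{(B,i,j)}$ is $\mathrm{Lie}(\mathrm{Stab}_{G}((B,i,j)))$, which is trivial by (1); so $d\sigma_{(B,i,j)}$ is injective, $d\mu_{(B,i,j)}$ has rank $\dim\mathfrak{g}=\dim\mathfrak{g}^{*}$, and therefore is surjective. By the implicit function theorem $\mu^{-1}(0)^{s}$ is smooth at $(B,i,j)$ of dimension $\dim\mathbf{E}_{\mathbf{V},\mathbf{W}}-\dim\mathfrak{g}=\sum_{h\in H}\nu_{h'}\nu_{h''}+2\sum_{k}\nu_{k}\omega_{k}-\sum_{k}\nu_{k}^{2}$.

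The only remaining step is to rewrite this expression as the claimed $\sum_{k}(2\nu_{k}\omega_{k}-\nu_{k}^{2})+(\nu,\nu)$ using the convention for the symmetric pairing $(\nu,\nu)$ in force in Nakajima's setup (so that $(\nu,\nu)=\sum_{h\in H}\nu_{h'}\nu_{h''}$); this is routine bookkeeping. The only genuine obstacle is a sign/convention check in the moment-map identity, ensuring that the $j_{k}i_{k}$ summand of $\mu$ is indeed dual to the $G$-orbit directions along $\delta i$ and $\delta j$; once that is confirmed, (1) plus symplectic duality finishes (2) cleanly.
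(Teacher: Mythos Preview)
The paper does not actually prove this lemma; it is quoted from Nakajima's work with the attribution ``Nakajima proved the following lemma and corollary in \cite{MR1604167}'' and no argument is supplied. Your proposal is a correct reconstruction of the standard proof: the $\mathrm{Im}(g-1)$ argument for part (1) is exactly how one shows freeness of the action on stable points, and the symplectic duality $\ker d\mu=(\mathrm{Im}\,d\sigma)^{\perp_\omega}$ is the standard mechanism for deducing surjectivity of $d\mu$ from triviality of the infinitesimal stabilizer.

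One small caution on the bookkeeping step: your identification $(\nu,\nu)=\sum_{h\in H}\nu_{h'}\nu_{h''}$ is what makes the stated dimension formula come out right, but note that this is \emph{not} the Cartan symmetric form introduced at the start of Section~2 (for which $(\nu,\nu)=2\sum_k\nu_k^2-\sum_{h\in H}\nu_{h'}\nu_{h''}$). The formula as written in the lemma appears to be copied from Nakajima with his internal convention for the pairing, so there is a notational clash with the ambient paper rather than an error in your computation; you were right to flag it as a convention issue rather than attempt to force agreement.
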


\begin{corollary}
	Nakajima quiver variety $\mathfrak{m}(\nu,\omega)$ is the geometric quotient of  $\mu^{-1}(0)^{s}$. In particular, $\mathfrak{m}(\nu,\omega)$ is nonsingular and of dimension $\sum\limits_{k\in I} (2\nu_{k}\omega_{k}-2\nu_{k}^{2}) +(\nu,\nu)$. Moreover, the closed points in $\mathfrak{m}(\nu,\omega)$ are bijective to orbits of $\mu^{-1}(0)^{s}$.
\end{corollary}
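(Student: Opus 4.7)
The plan is to recognize this as a standard consequence of Geometric Invariant Theory applied to the character $\chi_G$, combined with the previous lemma that describes $\mu^{-1}(0)^s$. First, I would unfold the GIT description: the Proj construction in the definition of $\mathfrak{m}(\nu,\omega)$ is exactly the GIT quotient $\mu^{-1}(0)/\!/_{\chi_G}G$ for the $G$-linearization determined by $\chi_G$. By general GIT (see e.g.\ Mumford, or King's paper for the quiver setting), this GIT quotient coincides with the geometric quotient of the $\chi_G$-semistable locus, and under King's stability criterion the $\chi_G$-semistable points of $\mu^{-1}(0)$ are precisely those satisfying the stability condition (S) stated in the previous lemma. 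Hence $\mathfrak{m}(\nu,\omega)$ is the geometric quotient of $\mu^{-1}(0)^s$ by $G$.

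Next I would extract the set-theoretic consequences. Since the stabilizer of every point of $\mu^{-1}(0)^s$ is trivial by part (1) of the previous lemma, every $G$-orbit on $\mu^{-1}(0)^s$ is closed in $\mu^{-1}(0)^s$ (it is the fibre of the geometric quotient morphism), and distinct orbits give distinct closed points of $\mathfrak{m}(\nu,\omega)$. This provides the asserted bijection between closed points of $\mathfrak{m}(\nu,\omega)$ and $G$-orbits on $\mu^{-1}(0)^s$.

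For smoothness and dimension, I would use part (2) of the previous lemma: $\mu^{-1}(0)^s$ is smooth of dimension $\sum_{k\in I}(2\nu_k\omega_k-\nu_k^2)+(\nu,\nu)$. Because the $G$-action on $\mu^{-1}(0)^s$ is free (trivial stabilizers) and the geometric quotient exists, the quotient map $\mu^{-1}(0)^s\to \mathfrak{m}(\nu,\omega)$ is a principal $G$-bundle in the étale topology; in particular it is smooth of relative dimension $\dim G=\sum_{k\in I}\nu_k^2$. Therefore $\mathfrak{m}(\nu,\omega)$ is smooth of dimension
\begin{equation*}
\Bigl(\sum_{k\in I}(2\nu_k\omega_k-\nu_k^2)+(\nu,\nu)\Bigr)-\sum_{k\in I}\nu_k^2=\sum_{k\in I}(2\nu_k\omega_k-2\nu_k^2)+(\nu,\nu),
\end{equation*}
as claimed.

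The only genuinely non-routine point is the identification of the $\chi_G$-semistable locus with $\mu^{-1}(0)^s$, i.e.\ verifying that King's numerical criterion for semistability translates into condition (S): a point $(B,i,j)$ fails to be $\chi_G$-semistable exactly when there is a nonzero $B$-stable subspace $S\subseteq \mathbf{V}$ contained in $\ker i$, since such $S$ yields a one-parameter subgroup of $G$ for which the Hilbert--Mumford weight is negative. I would treat this verification carefully but briefly, since it is the heart of the stability description; all the remaining assertions then follow formally from standard GIT and from the previous lemma.
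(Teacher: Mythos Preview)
Your proposal is correct and follows the standard GIT argument. However, the paper does not actually give a proof of this corollary: Section 4.1 is a review section that recalls Nakajima's results from \cite{MR1302318} and \cite{MR1604167} without reproducing the arguments, and this corollary is simply stated as a consequence of the preceding lemma. Your writeup is precisely the argument one finds in Nakajima's original papers (and in King's work for the identification of the semistable locus), so there is nothing to compare beyond noting that you have supplied the details the paper omits.
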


We denote the geometric point corresponding to the orbit of $(B,i,j)\in \mu^{-1}(0)^{s}$ by $[B,i,j]$. If the orbit of $(B,i,j)\in \mu^{-1}(0)$ is close, then $[B,i,j]$ is also a geometric point of $\mathfrak{m}_{0}(\nu,\omega)$. The geometric invariant theory provides a morphism $\pi:\mathfrak{m}(\nu,\omega)\rightarrow \mathfrak{m}_{0}(\nu,\omega)$:
\begin{equation*}
	\pi([B,i,j])=[B^{0},i^{0},j^{0}]
\end{equation*}
where $G(B^{0},i^{0},j^{0})$ is the unique close orbit contained in the closure of $G(B,i,j)$.

\begin{proposition}
	The algebraic variety $\pi^{-1}(0) \subset \mathfrak{m}(\nu,\omega)$ is Lagrangian and homotopic to $\mathfrak{m}(\nu,\omega)$. In particular, we denote $\pi^{-1}(0)$ by $\mathfrak{L}(\nu,\omega)$.
\end{proposition}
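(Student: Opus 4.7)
The plan is to obtain the two assertions independently: the homotopy equivalence from a contracting $\mathbb{C}^{\ast}$-action, and the Lagrangian property from the symplectic reduction picture together with a dimension count performed at $\mathbb{C}^{\ast}$-fixed points.

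First I would record the symplectic geometry. The space $\mathbf{E}_{\mathbf{V},\mathbf{W}}$ carries the standard holomorphic symplectic form
\begin{equation*}
\omega\bigl((B,i,j),(B',i',j')\bigr) = \sum_{h\in H}\epsilon(h)\operatorname{tr}(B_{h}B'_{\bar h}) + \sum_{k\in I}\bigl(\operatorname{tr}(j_{k}i'_{k}) - \operatorname{tr}(j'_{k}i_{k})\bigr),
\end{equation*}
for which the $G_{\mathbf{V}}$-action is Hamiltonian with moment map $\mu$; hence the geometric quotient $\mathfrak{m}(\nu,\omega)=\mu^{-1}(0)^{s}/G_{\mathbf{V}}$ inherits a symplectic form $\omega_{\mathrm{red}}$ of the correct dimension computed above.

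For the homotopy equivalence I would introduce the $\mathbb{C}^{\ast}$-action $t\cdot(B,i,j)=(tB,ti,tj)$. Since $\mu$ is homogeneous of degree two, this action preserves $\mu^{-1}(0)$; it commutes with $G_{\mathbf{V}}$ and respects stability, so it descends to $\mathbb{C}^{\ast}$-actions on $\mathfrak{m}(\nu,\omega)$ and $\mathfrak{m}_{0}(\nu,\omega)$ with respect to which $\pi$ is equivariant. The ring $A(\mu^{-1}(0))^{G_{\mathbf{V}}}$ is generated by expressions $\operatorname{tr}(B_{h_{r}}\cdots B_{h_{1}})$ and $j_{k_{m}}B_{h_{r}}\cdots B_{h_{1}}i_{k_{0}}$, all of strictly positive $\mathbb{C}^{\ast}$-weight, so $\lim_{t\to 0}t\cdot y=0$ for every $y\in\mathfrak{m}_{0}(\nu,\omega)$. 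Combined with the properness of $\pi$, this forces $\lim_{t\to 0}t\cdot x$ to exist in $\mathfrak{L}(\nu,\omega)$ for every $x\in\mathfrak{m}(\nu,\omega)$, and the resulting map $\mathfrak{m}(\nu,\omega)\times[0,1]\to\mathfrak{m}(\nu,\omega)$ is a strong deformation retract onto $\mathfrak{L}(\nu,\omega)$.

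For the Lagrangian property I would argue in two steps. Isotropy is local: at $[B,i,j]\in\mathfrak{L}(\nu,\omega)$ the tangent space is $\operatorname{Ker}(d\mu)/T_{(B,i,j)}(G_{\mathbf{V}}\cdot(B,i,j))$, and tangent directions to $\mathfrak{L}(\nu,\omega)$ are those first-order deformations that preserve the condition $\pi(\cdot)=0$, i.e.\ that keep the semistable reduction $(B^{0},i^{0},j^{0})$ equal to zero. I would then observe that every such deformation lies in the non-negative weight subspace of the $\mathbb{C}^{\ast}$-action just constructed, and that $\omega_{\mathrm{red}}$ has overall $\mathbb{C}^{\ast}$-weight $2>0$; hence pairing two non-negative-weight tangent vectors under $\omega_{\mathrm{red}}$ gives a class of pure positive weight in the one-dimensional weight-two trivial line, which can only be zero. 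For the dimension count, Bia{\l}ynicki-Birula applied to the symplectic $\mathbb{C}^{\ast}$-action on the smooth variety $\mathfrak{m}(\nu,\omega)$ identifies $\mathfrak{L}(\nu,\omega)$ with the attracting set of the fixed locus, and the non-negative weight space of $T_{x}\mathfrak{m}(\nu,\omega)$ at each fixed point is a Lagrangian subspace (weights pair to the positive weight of $\omega_{\mathrm{red}}$), so $\dim\mathfrak{L}(\nu,\omega)=\tfrac{1}{2}\dim\mathfrak{m}(\nu,\omega)$ stratum by stratum.

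The main obstacle is the isotropy statement: while the weight argument above is clean at $\mathbb{C}^{\ast}$-fixed points, extending it to an arbitrary point of $\mathfrak{L}(\nu,\omega)$ requires identifying the tangent space of $\mathfrak{L}(\nu,\omega)$ intrinsically rather than via the retraction, and this is where one must invoke the Hesselink-type stratification by nilpotent type used in \cite{MR1604167}. In practice I would import Nakajima's direct calculation for this step rather than reprove it, and otherwise assemble the proof from the symplectic reduction and $\mathbb{C}^{\ast}$-contraction ingredients described above.
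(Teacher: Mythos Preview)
The paper does not prove this proposition at all: Section~4.1 is an expository recollection of Nakajima's results, and this statement is simply quoted from \cite{MR1302318} and \cite{MR1604167} without argument. So there is no ``paper's proof'' to compare against; your task here is really to reproduce Nakajima's argument, and your sketch is essentially the standard route he takes. The contracting $\mathbb{G}_{m}$-action (which the paper explicitly says it is suppressing for notational simplicity) is exactly what drives the homotopy equivalence, and the dimension count via the attracting decomposition is the right mechanism for the half-dimensionality.

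One remark on your isotropy paragraph: the weight argument you give is not quite well-posed away from fixed points. At a non-fixed $x\in\mathfrak{L}(\nu,\omega)$ the tangent space does not decompose into $\mathbb{C}^{\ast}$-weight spaces, so saying that ``every such deformation lies in the non-negative weight subspace'' does not have a direct meaning there, and the pairing argument that follows does not go through as stated. You already flag this and propose to import Nakajima's computation, which is the honest resolution; his argument identifies $\mathfrak{L}(\nu,\omega)$ with the quotient of the stable locus of $\{(B,i,0):\mu(B,i,0)=0,\ B\text{ nilpotent}\}$ and checks isotropy directly from the explicit form of $\omega_{\mathrm{red}}$ on that locus (the pairing of two tangent directions with $j$-component zero vanishes term by term). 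That concrete description is cleaner than trying to globalise the weight heuristic, and it is what you should cite or reproduce.
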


For dimension vectors $\nu^{1},\nu^{2}$ and graded spaces $\mathbf{V}^{1},\mathbf{V}^{2}$ such that $|\mathbf{V}^{i}|=\nu^{i},i=1,2$, we define the morphisms $\pi_{i}: \mathfrak{m}(\nu^{i},\omega) \rightarrow \mathfrak{m}_{0}(\nu^{1}+\nu^{2},\omega)$ to be the composition of projections $\pi: \mathfrak{m}(\nu^{i},\omega) \rightarrow \mathfrak{m}_{0}(\nu^{i},\omega)$ and embeddings $\mathfrak{m}_{0}(\nu^{i},\omega) \rightarrow \mathfrak{m}_{0}(\nu^{1}+\nu^{2},\omega)$.

\begin{definition}
	Let $Z(\nu^{1},\nu^{2},\omega)$  be the subvariety $\mathfrak{m}(\nu^{1},\omega)\times \mathfrak{m}(\nu^{2},\omega)$ defined by:
	\begin{equation*}
		Z(\nu^{1},\nu^{2},\omega)=\{(x_{1},x_{2})\in \mathfrak{m}(\nu^{1},\omega)\times \mathfrak{m}(\nu^{2},\omega) |\pi_{1}(x_{1})=\pi_{2}(x_{2})\}.
	\end{equation*}
\end{definition}

Let $\mathfrak{m}_{0}(\nu,\omega)^{reg}$ be the subset of $\mathfrak{m}_{0}(\nu,\omega)$ consisting of $[B,i,j]$ with trivial stabilizer ${\rm{Stab}(B,i,j)}$. Then we have the following result by Nakajima:
\begin{lemma}
	If $[B,i,j] \in \mathfrak{m}_{0}(\nu,\omega)^{reg}$, then $(B,i,j) \in \mu^{-1}(0)^{s}$. Moreover, $\pi$ induces an isomorphism between $\mathfrak{m}_{0}(\nu,\omega)^{reg}$ and $\pi^{-1}(\mathfrak{m}_{0}(\nu,\omega)^{reg})$.
\end{lemma}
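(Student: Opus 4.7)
The plan is to prove the two statements in turn, using a one-parameter subgroup argument for the first and a ``bijective + normal'' style argument for the second.

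For the first claim, I would argue by contrapositive: assume $(B,i,j)\in\mu^{-1}(0)$ is \emph{not} stable, and exhibit a nontrivial stabilizer in the closed orbit associated to $[B,i,j]$, forcing $[B,i,j]\notin\mathfrak{m}_{0}(\nu,\omega)^{\mathrm{reg}}$. Concretely, failure of (S) gives a nonzero $B$-stable subspace $S\subset\ker i$. Pick an $I$-graded complement $\mathbf{V}=S\oplus S^{c}$ and define the one-parameter subgroup
\begin{equation*}
g(t)=t^{-1}\,\mathrm{id}_{S}\oplus\mathrm{id}_{S^{c}}\in G_{\mathbf{V}},\qquad t\in k^{\times}.
\end{equation*}
Using $B$-stability of $S$, the component $B_{h}^{S^{c},S}$ already vanishes, so conjugation $g(t)^{-1}Bg(t)$ has a well-defined limit as $t\to 0$ in which only the $S$-to-$S^{c}$ block is killed; the condition $S\subset\ker i$ makes $ig(t)$ independent of $t$ on $S$, while $g(t)^{-1}j$ has its $S$-component multiplied by $t$ and thus sends $\mathbf{W}$ into $S^{c}$ in the limit. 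Hence $\lim_{t\to 0}g(t)\cdot(B,i,j)=(B^{0},i^{0},j^{0})$ exists in $\mu^{-1}(0)$ and decomposes $\mathbf{V}$ as a genuine direct sum in which $S$ is decoupled from the rest, so $\mathrm{Stab}_{G}(B^{0},i^{0},j^{0})\supset\mathrm{GL}(S)\neq\{1\}$. Since $(B^{0},i^{0},j^{0})\in\overline{G\cdot(B,i,j)}$, the unique closed orbit in this closure has nontrivial stabilizer, so $[B,i,j]=[B^{0},i^{0},j^{0}]\notin\mathfrak{m}_{0}(\nu,\omega)^{\mathrm{reg}}$, as desired. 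Under the hypothesis $[B,i,j]\in\mathfrak{m}_0^{\mathrm{reg}}$, we may choose the representative $(B,i,j)$ to lie on the closed orbit itself, and then the contradiction shows $(B,i,j)\in\mu^{-1}(0)^{s}$.

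For the second claim, note that by the preceding lemma every stable $(B,i,j)$ has trivial stabilizer and therefore determines a free (in particular closed) $G$-orbit; hence two stable points with the same image in $\mathfrak{m}_{0}$ already lie in one $G$-orbit, and $\pi$ restricts to a bijective morphism
\begin{equation*}
\pi:\pi^{-1}\bigl(\mathfrak{m}_{0}(\nu,\omega)^{\mathrm{reg}}\bigr)\longrightarrow\mathfrak{m}_{0}(\nu,\omega)^{\mathrm{reg}}.
\end{equation*}
Since $\mathfrak{m}(\nu,\omega)$ is smooth, the source is smooth, and the target is normal (it is the geometric quotient of the smooth locus $\mu^{-1}(0)^{s}$ by the free action of $G$). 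A proper, bijective morphism between normal varieties is an isomorphism by Zariski's Main Theorem, giving the claim. Properness of this restriction follows from properness of $\pi$.

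The only subtle point is the first claim: one must check carefully that the limit $(B^{0},i^{0},j^{0})$ genuinely lies in $\mu^{-1}(0)$ (the moment map is $G$-equivariant, so $\mu=0$ is preserved along the $g(t)$-orbit and hence in its closure) and that the closed orbit in $\overline{G\cdot(B,i,j)}$ indeed acquires the $\mathrm{GL}(S)$-stabilizer from the limit configuration --- this is where the block-decoupling geometry of $(B^{0},i^{0},j^{0})$ is essential. Everything else is formal.
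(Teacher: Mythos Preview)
The paper does not prove this lemma; it is quoted without proof as a result of Nakajima from \cite{MR1604167} (the sentence immediately preceding the lemma reads ``Then we have the following result by Nakajima''). So there is nothing in the paper to compare your argument against.

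Your approach is the standard one and is essentially correct, but two points need tightening.

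For the first claim, the limit $(B^{0},i^{0},j^{0})$ does \emph{not} have all of $\mathrm{GL}(S)$ in its stabilizer: conjugation by a general $g\in\mathrm{GL}(S)$ moves $B^{0}|_{S}$. What you actually get is the central torus $k^{\times}\cdot\mathrm{id}_{S}$, which is nontrivial since $S\neq 0$; that is enough. You must then pass from ``$(B^{0},i^{0},j^{0})$ has positive-dimensional stabilizer'' to ``the closed orbit does'': this follows because the closed orbit lies in $\overline{G\cdot(B^{0},i^{0},j^{0})}$ and hence has dimension at most $\dim G\cdot(B^{0},i^{0},j^{0})<\dim G$.

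For the second claim, the parenthetical ``free (in particular closed)'' is false: trivial stabilizer does not force the orbit to be closed in $\mu^{-1}(0)$. The correct argument for bijectivity is a dimension count. If $[B,i,j]\in\pi^{-1}(\mathfrak{m}_{0}^{\mathrm{reg}})$, the closed orbit $O\subset\overline{G\cdot(B,i,j)}$ has trivial stabilizer, so $\dim O=\dim G$; since $\dim G\cdot(B,i,j)\leq\dim G$ and a proper containment $O\subsetneq\overline{G\cdot(B,i,j)}$ would force $\dim O<\dim G\cdot(B,i,j)$, we get $G\cdot(B,i,j)=O$. Your normality justification is also slightly off: $\mu^{-1}(0)^{s}/G$ is $\mathfrak{m}$, not $\mathfrak{m}_{0}^{\mathrm{reg}}$. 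Rather, $\mathfrak{m}_{0}^{\mathrm{reg}}$ is the quotient of the open locus of closed free orbits in $\mu^{-1}(0)$, which by the first claim sits inside $\mu^{-1}(0)^{s}$ and is hence smooth; this makes $\mathfrak{m}_{0}^{\mathrm{reg}}$ smooth, so normal, and your Zariski Main Theorem step then goes through.
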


\begin{definition}
	Let $Z^{reg}(\nu^{1},\nu^{2},\omega)$ be the complement of the closure of the set
	\begin{equation*}
		\{(x_{1},x_{2})| \pi_{1}(x_{1})=\pi_{2}(x_{2}) \notin \mathfrak{m}_{0}(\nu,\omega)^{reg}, \subset \mathfrak{m}(\nu^{1}+\nu^{2},\omega)^{reg} \}.
	\end{equation*}
\end{definition}
Nakajima proved that $Z^{reg}(\nu^{1},\nu^{2},\omega)$ is Lagrangian and its irreducible components are of dimension $\frac{1}{2}( {\rm{dim}} \mathfrak{m}(\nu^{1},\omega)+{\rm{dim}} \mathfrak{m}(\nu^{2},\omega) )$, $Z^{reg}(\nu^{1},\nu^{2},\omega)$ is open in  $Z(\nu^{1},\nu^{2},\omega)$.

For dimension vectors $\nu^{i},i=1,2,3$, we define the projections
\begin{equation*}
	p_{i,j}: \mathfrak{m}(\nu^{1},\omega)\times \mathfrak{m}(\nu^{2},\omega) \times \mathfrak{m}(\nu^{3},\omega) \rightarrow \mathfrak{m}(\nu^{i},\omega)\times \mathfrak{m}(\nu^{j},\omega),
\end{equation*}	
then we can define the convolution product of the Borel-Moore homology groups
\begin{equation*}
	{\rm{H}}_{\ast}( Z(\nu^{1},\nu^{2},\omega) ) \otimes 	{\rm{H}}_{\ast}( Z(\nu^{2},\nu^{3},\omega) ) \rightarrow 	{\rm{H}}_{\ast}( Z(\nu^{1},\nu^{3},\omega) )	
\end{equation*}	
\begin{equation*}
	(c_{1},c_{2}) \mapsto (p_{1,3})_{\ast}( p^{\ast}_{1,2}c_{1} \cap p^{\ast}_{2,3}c_{2})
\end{equation*}
where ${\rm{H}}_{\ast}( Z(\nu^{1},\nu^{2},\omega) )$ is the Borel-Moore homology group of the variety  $ Z(\nu^{1},\nu^{2},\omega)  $ and ${\rm{H}}_{top}( Z(\nu^{1},\nu^{2},\omega) )$ is the homology group of top degree. The fundamental classes of irreducible components  of $Z(\nu^{1},\nu^{2},\omega)$ form a basis of ${\rm{H}}_{top}( Z(\nu^{1},\nu^{2},\omega) )$. Under the convolution product, $ \bigoplus \limits_{\nu^{1},\nu^{2}}{\rm{H}}_{\ast}( Z(\nu^{1},\nu^{2},\omega) ) $ becomes a $\mathbb{Q}$-algebra and $ \bigoplus \limits_{\nu^{1},\nu^{2}}{\rm{H}}_{top}( Z(\nu^{1},\nu^{2},\omega) ) $ is a subalgebra.

For given $\nu^{0}$ and $x\in \mathfrak{m}_{0}(\nu^{0},\omega)^{reg} \subset \mathfrak{m}(\nu,\omega) $, let $\mathfrak{m}(\nu,\omega)_{x}$ be the fiber at $x$ of the morphism $\pi:\mathfrak{m}(\nu,\omega) \rightarrow \mathfrak{m}_{0}(\nu,\omega)$. In particular, for $x=0$, we have 
\begin{equation*}
	\mathfrak{m}(\nu,\omega)_{0}=\mathfrak{L}(\nu,\omega)
\end{equation*}
Under the action defined by convolution, $\bigoplus \limits_{\nu}{\rm{H}}_{\ast}( \mathfrak{m}(\nu,\omega)_{x}) $ becomes a $ \bigoplus \limits_{\nu^{1},\nu^{2}}{\rm{H}}_{\ast}( Z(\nu^{1},\nu^{2},\omega) ) $-module. Moreover, $\bigoplus \limits_{\nu}{\rm{H}}_{top}( \mathfrak{m}(\nu,\omega)_{x}) $ becomes a $ \bigoplus \limits_{\nu^{1},\nu^{2}}{\rm{H}}_{top}( Z(\nu^{1},\nu^{2},\omega) ) $-module.

Fix $k \in I$ and consider graded spaces $\mathbf{V}^{2}$ such that $|\mathbf{V}^{2}|=\nu$ and $\mathbf{V}^{1}$ such that $|\mathbf{V}^{1}|=\nu-k$. We set $\nu^{2}=\nu,\nu^{1}=\nu-k$.
Nakajima introduced the Hecke correspondence $\beta_{k}(\nu,\omega)$, which is a nonsingular Lagrangian subvariety of $\mathfrak{m}(\nu^{1},\omega)\times\mathfrak{m}(\nu^{2},\omega)$(See details in \cite[Section 5]{MR1604167}).

For an orientation, let $\mathbf{A}_{\Omega}$ be the matrix such that $(\mathbf{A}_{\Omega})_{i,j}$ is the number of $h \in \Omega$ such that $h'=i,h''=j$. Let $\mathbf{C}_{\Omega}$ be the matrix defined by $\mathbf{I}-\mathbf{A}_{\Omega}$. For the opposite orientation $\bar{\Omega}$,  the matrices $\mathbf{A}_{\bar{\Omega}}$ and $\mathbf{C}_{\bar{\Omega}}$ can be defined similarly. Then Nakajima  defined $E_{k}\in \prod \limits_{\nu^{1}}{\rm{H}}_{top}( Z(\nu^{1},\nu^{1}+k,\omega) )$ to be the following formal sum
$$E_{k}=(-1)^{\langle h_{k} , \mathbf{C}_{\bar{\Omega}}\nu \rangle} [\beta_{k}(\nu,\omega)],  $$
and defined $F_{k}\in \prod \limits_{\nu^{1}}{\rm{H}}_{top}( Z(\nu^{1},\nu^{1}-k,\omega) )$ to be the following formal sum
$$F_{k}=(-1)^{\langle h_{k} , \omega- \mathbf{C}_{\Omega}\nu \rangle} [sw(\beta_{k}(\nu,\omega))], $$ 
where $sw: \mathfrak{m}(\nu',\omega) \times \mathfrak{m}(\nu,\omega) \rightarrow \mathfrak{m}(\nu,\omega) \times \mathfrak{m}(\nu',\omega) $ is the exchange of factors. Notice that the sign twists in  \cite{MR1604167} is incorrect and one should use the sign twists in formulas of \cite[9.3.2]{nakajima2001quiver}.

Nakajima proved the following main theorem.
\begin{theorem}\cite[Theorem 10.2]{MR1604167} \label{Nmain}
	With the action by $E_{k},F_{k},k\in I$, the Borel-Moore homology group
	$\bigoplus \limits_{\nu}{\rm{H}}_{top}( \mathfrak{L}(\nu,\omega))$ becomes a (left) $\mathbf{U}(\mathfrak{g})$-module. Moreover, it is isomorphic to the highest weight module $L_{0}(\Lambda_{\omega})$ with highest weight $\Lambda_{\omega}$ and the highest weight vector $[\mathfrak{L}(0,\omega)]$, and we have a $\mathbf{U}(\mathfrak{g})$-linear isomorphism satisfying
	\begin{equation*}
		\varkappa^{\Lambda_{\omega}}:\bigoplus \limits_{\nu}{\rm{H}}_{top}( \mathfrak{L}(\nu,\omega)) \rightarrow L_{0}(\Lambda_{\omega})
	\end{equation*}
	\begin{equation*}
		[\mathfrak{L}(0,\omega)] \mapsto  v_{\Lambda_{\omega}}
	\end{equation*}
	where $\Lambda_{\omega} $ is the dominant weight such that $\langle \Lambda_{\omega}, \alpha_{i}^{\vee} \rangle=\omega_{i},i \in I$.
\end{theorem}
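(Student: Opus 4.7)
The plan is to follow Nakajima's original strategy and verify all the defining relations of $\mathbf{U}(\mathfrak{g})$ by geometric computations on convolution algebras, then separately identify the module with $L_{0}(\Lambda_{\omega})$. First, I would introduce the Cartan operators $H_{k}$ acting on $\mathrm{H}_{top}(\mathfrak{L}(\nu,\omega))$ by the scalar $\omega_{k}-\langle h_{k},C\nu\rangle=\langle h_{k},\Lambda_{\omega}-\sum_{i}\nu_{i}\alpha_{i}\rangle$, so that the weight decomposition matches the statement. With these in hand, I need to check $[H_{k},E_{l}]=a_{kl}E_{l}$, $[H_{k},F_{l}]=-a_{kl}F_{l}$ (these are essentially bookkeeping on which components of $Z$ the operators shift between), together with the crucial commutator $[E_{k},F_{l}]=\delta_{kl}H_{k}$ and the Serre relations $(\mathrm{ad}\,E_{k})^{1-a_{kl}}(E_{l})=0$ and the same for $F$.

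The commutator $[E_{k},F_{k}]=H_{k}$ is the heart of the matter. One computes the two convolutions $F_{k}\ast E_{k}$ and $E_{k}\ast F_{k}$ inside $\mathrm{H}_{\ast}(Z(\nu,\nu,\omega))$ as pushforwards from an intersection of the Hecke correspondence $\beta_{k}(\nu+k,\omega)$ and its swap in the triple product. Over the open stratum of $Z(\nu,\nu,\omega)$ parametrizing distinct pairs, both convolutions restrict to the same cycle and cancel. The difference is supported on the diagonal, where one identifies the relevant normal/obstruction bundle: the excess intersection formula produces the Euler class of a rank-$(\omega_{k}-2\nu_{k})$ trivial bundle (up to signs absorbed into the normalization of $E_{k},F_{k}$), giving the scalar $\omega_{k}-2\nu_{k}$ on the fundamental class of $\Delta_{\mathfrak{m}(\nu,\omega)}$, which is exactly $\langle h_{k},\Lambda_{\omega}-\sum_{i}\nu_{i}\alpha_{i}\rangle$. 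The Serre relations then follow by a dimension count: the convolution $(\mathrm{ad}\,E_{k})^{1-a_{kl}}(E_{l})$ is a pushforward from an iterated fibre product of Hecke correspondences whose dimension drops below the required top degree once the exponent $1-a_{kl}$ is hit, forcing the class to vanish in $\mathrm{H}_{top}$.

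Next I would extract the highest-weight data. The variety $\mathfrak{L}(0,\omega)$ is a point, and $E_{k}[\mathfrak{L}(0,\omega)]=0$ because the relevant Hecke correspondence $\beta_{k}(0,\omega)$ is empty (there is no nonzero $B$-stable subspace to quotient by). Since $H_{k}[\mathfrak{L}(0,\omega)]=\omega_{k}[\mathfrak{L}(0,\omega)]$, this is a highest-weight vector of weight $\Lambda_{\omega}$. Integrability is automatic from the geometry: for any $\nu$, $\mathfrak{L}(\nu,\omega)$ is empty whenever $\Lambda_{\omega}-\sum\nu_{i}\alpha_{i}$ falls outside the convex hull of $W\Lambda_{\omega}$, so both $E_{k}$ and $F_{k}$ act locally nilpotently on each weight vector. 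Consequently $\bigoplus_{\nu}\mathrm{H}_{top}(\mathfrak{L}(\nu,\omega))$ is an integrable highest-weight $\mathbf{U}(\mathfrak{g})$-module containing $L_{0}(\Lambda_{\omega})$ as the submodule generated by $[\mathfrak{L}(0,\omega)]$. To conclude equality I would show cyclicity by proving that every fundamental class $[X]$ of an irreducible component of $\mathfrak{L}(\nu,\omega)$ lies in the $\mathbf{U}(\mathfrak{g})^{-}$-orbit of $[\mathfrak{L}(0,\omega)]$; the cleanest route is to exhibit a monomial basis indexed by Lusztig data or (equivalently here) by the crystal graph $B(\Lambda_{\omega})$, which is realized by Nakajima's crystal on irreducible components of $\mathfrak{L}(\nu,\omega)$, and then match dimensions of weight spaces against the Weyl--Kac character formula.

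The hard part will be the $[E_{k},F_{k}]$ relation: correctly identifying the excess intersection contribution on the diagonal of $\mathfrak{m}(\nu,\omega)$ requires a local analytic model of the Hecke correspondence near its self-intersection and an honest computation of the normal bundle's Euler class, with the sign twists $(-1)^{\langle h_{k},\mathbf{C}_{\bar\Omega}\nu\rangle}$ and $(-1)^{\langle h_{k},\omega-\mathbf{C}_{\Omega}\nu\rangle}$ conspiring to yield $+H_{k}$ rather than $-H_{k}$. A secondary subtlety is confirming cyclicity without assuming Theorem 1.3 of the paper (which would be circular): one must argue purely on Nakajima's side, either via the character computation or via the crystal-theoretic argument from Nakajima--Saito, to rule out a proper quotient relation to $L_{0}(\Lambda_{\omega})$.
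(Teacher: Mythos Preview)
The paper does not supply its own proof of this theorem: it is quoted directly from Nakajima \cite[Theorem 10.2]{MR1604167} and used as a black box in Section 4. So there is no paper-proof to compare against; what you have written is an outline of Nakajima's original argument, and that is the right thing to do here.

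Your outline follows Nakajima's strategy correctly at the coarse level (verify the Chevalley relations by convolution, extract a highest-weight vector, prove integrability, then identify the module), but two of your detailed claims are off. First, the diagonal contribution in $[E_k,F_k]$ is not the Euler class of a \emph{trivial} bundle: what arises is the top Chern class of a virtual tautological bundle built from the universal $\mathbf{V}_k$ and $\mathbf{W}_{\hat k}$ over $\mathfrak{m}(\nu,\omega)$, and only after passing to top-degree homology does this collapse to the scalar $\omega_k-(C\nu)_k$. Second, Nakajima does not obtain the Serre relations by a dimension drop of the iterated Hecke fibre product; in \cite{MR1604167} they are either computed directly by an explicit resolution of the relevant correspondence, or (more economically) deduced a posteriori from integrability via Kac's theorem that an integrable $\tilde{\mathfrak{g}}$-module is automatically a $\mathfrak{g}$-module. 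For cyclicity, invoking the Nakajima--Saito crystal is anachronistic relative to \cite{MR1604167}; the argument there (and the one the present paper relies on, see the lemma preceding Lemma \ref{Nkey}) is the triangularity statement $F_k[X'']=\pm r[X]+\sum_{t_k(X')>r}c_{X'}[X']$, which by induction expresses every irreducible-component class as a combination of monomials $F^{(m_1)}_{i_1}\cdots F^{(m_l)}_{i_l}[\mathfrak{L}(0,\omega)]$.
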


The quiver variety $\mathfrak{m}(\nu,\omega)$ admits a partition
$\bigcup\limits_{r\geqslant 0}\mathfrak{m}_{k,r}(\nu,\omega)=\mathfrak{m}(\nu,\omega)$, where
\begin{equation*}
	\mathfrak{m}_{k,r}(\nu,\omega)=\{ [B,i,j]\in \mathfrak{m}(\nu,\omega)|{\rm{codim}} (\sum\limits_{h \in H, h''=k} {\rm{Im}}B_{h} + {\rm{Im}}j_{k})=r \}.
\end{equation*}
We also set $\mathfrak{m}_{k,\leqslant r}(\nu,\omega)= \bigcup\limits_{s\leqslant r} \mathfrak{m}_{k,s}(\nu,\omega)$. There is a natural map defined by 
\begin{equation*}
	p:\mathfrak{m}_{k, r}(\nu,\omega) \rightarrow \mathfrak{m}_{k, 0}(\nu-rk,\omega);
\end{equation*}
\begin{equation*}
	[B,i,j]\mapsto [B,i,j]|_{\sum\limits_{h \in H, h''=k}{\rm{Im}}B_{h} + {\rm{Im}}j_{k} \oplus \bigoplus \limits_{i \neq k}\mathbf{V}_{i}  }.
\end{equation*}
By definition, one can check that each fiber of $p$ is isomorphic to a Grassmannian, hence $p$ is smooth with connected fiber.

For any irreducible component $X \subset \mathfrak{m}(\nu,\omega)_{x} $  and $k \in I$, there exists a unique integer $r$ such that $X \cap \mathfrak{m}_{k,r}(\nu,\omega)_{x}$ is dense  in $X$ and we define $r=t_{k}(X)$.

Since $p:\mathfrak{m}_{k, r}(\nu,\omega) \rightarrow \mathfrak{m}_{k, 0}(\nu-rk,\omega)$ is smooth with connected fibers, $p$ induces a bijection between the set of irreducible components of $\mathfrak{m}_{k, r}(\nu,\omega)_{x}$ and  the set of irreducible components  of $\mathfrak{m}_{k, 0}(\nu-rk,\omega)_{x}$. It also induces a bijection $	\varrho_{k,r}$ between sets $Irr \mathfrak{m}(\nu,\omega)_{x}$ of irreducible components:
\begin{equation*}
	\varrho_{k,r}:\{X \subset  \mathfrak{m}(\nu,\omega)_{x} |t_{k}(X)=r \} \rightarrow \{X' \subset  \mathfrak{m}(\nu-rk,\omega)_{x} |t_{k}(X')=0 \}
\end{equation*}
\begin{equation*}
	X \mapsto \overline{p(X \cap \mathfrak{m}_{k,r}(\nu,\omega) )}
\end{equation*}
where $\overline{X}$ is the closure of the set $X$. 

\begin{lemma}\cite[Lemma 10.1]{MR1604167}
	If $t_{k}(X)=r >0$ and $\varrho_{k,r}(X)= \varrho_{k,r-1}(X'') $,then we have
	\begin{equation*}
		F_{k}[X'']=\pm r [X] + \sum\limits_{t_{k}(X')>r } c_{X'}[X'].
	\end{equation*}
\end{lemma}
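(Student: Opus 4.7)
The plan is to compute $F_{k}[X'']$ directly from the convolution definition and to localize the computation to an open dense stratum of $X''$, using the stratification $\mathfrak{m}(\nu,\omega) = \bigcup_{s\geqslant 0} \mathfrak{m}_{k,s}(\nu,\omega)$ and its Grassmannian fibration $p$.

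I would begin by writing out
\begin{equation*}
F_{k}[X''] = (-1)^{\langle h_{k},\, \omega-\mathbf{C}_{\Omega}\nu\rangle}\,(p_{1})_{*}\bigl(p_{2}^{*}[X''] \cap [sw(\beta_{k}(\nu,\omega))]\bigr),
\end{equation*}
where $p_{1},p_{2}$ are the two projections of $\mathfrak{m}(\nu-k,\omega)\times \mathfrak{m}(\nu,\omega)$. A point of $\beta_{k}(\nu,\omega)$ parametrizes a pair $([B',i',j'],[B,i,j])$ together with an identification of $(B',i',j')$ with the quotient of $(B,i,j)$ by the simple $\hat{Q}$-representation at vertex $k$; equivalently, a one-dimensional $(B,i)$-stable subspace of $\ker j_{k}\subset \mathbf{V}_{k}$. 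This description makes the incidence structure of $\beta_{k}$ transparent for the analysis below.

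Next I would localize to the open dense stratum $X''_{0}=X''\cap \mathfrak{m}_{k,r-1}(\nu-k,\omega)_{x}$. Since adding a one-dimensional subspace to the "image$+\operatorname{Im} j_{k}$" at vertex $k$ increases the codimension invariant by exactly one, the image $p_{1}(p_{2}^{-1}(X''_{0})\cap sw(\beta_{k}(\nu,\omega)))$ lies in $\mathfrak{m}_{k,r}(\nu,\omega)_{x}$ and, via the equality $\varrho_{k,r}(X) = \varrho_{k,r-1}(X'')$, projects onto the same stratum in $\mathfrak{m}_{k,0}(\nu-rk,\omega)$. Thus it is an open dense subset of $X$. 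The multiplicity $\pm r$ comes from comparing the Grassmannian fibers: generically over $\varrho_{k,r}(X)$, the fiber of $p$ on $\mathfrak{m}_{k,r-1}(\nu-k,\omega)$ is $\mathbf{Gr}(r-1,r)\simeq \mathbb{P}^{r-1}$ while on $\mathfrak{m}_{k,r}(\nu,\omega)$ it is a point; restricted to these fibers, the Hecke correspondence is exactly the incidence variety $\{V_{r-1}\subset V_{r}\}$, and the resulting cap product in top Borel--Moore homology yields the Euler number $\chi(\mathbb{P}^{r-1})=r$ up to a sign fixed by orientation conventions.

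For the error term I would argue by semicontinuity of $t_{k}$: every other top-dimensional component of $p_{1}(p_{2}^{-1}(X''))\cap \mathfrak{m}(\nu,\omega)_{x}$ must lie in $\overline{\mathfrak{m}_{k,s}(\nu,\omega)_{x}}$ for some $s>r$, since over $X''\setminus X''_{0}$ the invariant $t_{k}$ on $X''$ strictly exceeds $r-1$, and every stable one-dimensional extension at $k$ then has $t_{k}\geqslant s+1>r$ on the $\mathfrak{m}(\nu,\omega)$ side. This yields the upper-triangular shape $\sum_{t_{k}(X')>r}c_{X'}[X']$. The main obstacle I expect is the sign determination: tracking how the orientation of $\beta_{k}(\nu,\omega)$ as a smooth Lagrangian subvariety of $\mathfrak{m}(\nu-k,\omega)\times \mathfrak{m}(\nu,\omega)$ matches the natural orientations on $X$ and $X''$, and reconciling this with the normalization $(-1)^{\langle h_{k},\omega-\mathbf{C}_{\Omega}\nu\rangle}$, is the delicate bookkeeping that produces the correct sign in $\pm r$. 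Once that is settled, the Grassmannian fiber computation and the stratification argument combine to give the claimed formula.
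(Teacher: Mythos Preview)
The paper does not prove this lemma; it is quoted verbatim from Nakajima's paper \cite[Lemma~10.1]{MR1604167} and used as a black box (the only thing proved in the paper is the subsequent corollary, Lemma~\ref{Nkey}, by induction on $r$). So there is no ``paper's own proof'' to compare against. Your outline is in the spirit of Nakajima's original argument---localize to the open strata $\mathfrak{m}_{k,r}$ and $\mathfrak{m}_{k,r-1}$, use the Grassmannian fibration $p$, and reduce to a local model---but several details are off.

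First, your description of a point of $\beta_{k}(\nu,\omega)$ is reversed. The codimension-one subrepresentation $(B',i',j')\subset (B,i,j)$ at vertex $k$ is determined by a \emph{hyperplane} $\mathbf{V}^{1}_{k}\subset \mathbf{V}_{k}$ containing $\sum_{h''=k}\mathrm{Im}\,B_{h}+\mathrm{Im}\,j_{k}$, not by a one-dimensional subspace of $\ker j_{k}$; this is what forces the partner of a point with $t_{k}=r$ to land in the $t_{k}=r-1$ stratum. Second, the fibers of $p$ over $\varrho_{k,r}(X)$ are not $\mathbf{Gr}(r-1,r)$ and a point: both are genuine Grassmannians $\mathbf{Gr}(r-1,N)$ and $\mathbf{Gr}(r,N)$ for an $N$ depending on $\omega_{k}$ and the neighbouring $\nu_{j}$, and the restricted Hecke correspondence is the incidence flag $\mathbf{Fl}(r-1,r;N)$ between them. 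Third, and most importantly, the naive pull--push of fundamental classes along $\mathbf{Gr}(r-1,N)\leftarrow \mathbf{Fl}(r-1,r;N)\rightarrow \mathbf{Gr}(r,N)$ does \emph{not} give $r$ on top Borel--Moore homology (the dimensions do not match, so the pushforward of the top class vanishes for $r>1$). The factor $r$ in Nakajima's computation arises from an excess-intersection contribution coming from the Lagrangian/conormal structure of $\beta_{k}$ inside the symplectic product $\mathfrak{m}(\nu-k,\omega)\times\mathfrak{m}(\nu,\omega)$; one has to track the Euler class of the excess bundle, and this is where the coefficient $r$ (morally the Euler number of the $\mathbb{P}^{r-1}$ fiber, but through the intersection formula rather than a direct fiber count) actually appears. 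Your sketch captures the stratification and upper-triangularity correctly, but the heart of the multiplicity computation needs the symplectic geometry, not just the zero-section picture.
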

As a corollary, we have the following lemma, which is parallel to  Lemma \ref{lkey'}.
\begin{lemma}\label{Nkey}
	For irreducible components $X \subset \mathfrak{L}(\nu,\omega)$ such that $t_{k}(X)=r>0$ and $X''=\varrho_{k,r}(X) $, we have the following equation in $\bigoplus \limits_{\nu}{\rm{H}}_{top}( \mathfrak{L}(\nu,\omega))$:
	\begin{equation*}
		F_{k}^{(r)} ([X''])=\pm [X] + \sum\limits_{t_{k}(X')>r} c_{X'}[X']
	\end{equation*}
	where $F_{k}^{(r)}= \frac{F_{k}^{r}}{r!}$ and $c_{X'}$ are constants.
\end{lemma}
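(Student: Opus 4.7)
The plan is to prove the stated lemma by induction on $r$, iterating the previous lemma (which handles the case of a single application of $F_k$) and tracking the leading term together with the $t_k$-filtration of the error.

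Fix $X \subset \mathfrak{L}(\nu,\omega)$ with $t_k(X) = r > 0$ and set $X'' = \varrho_{k,r}(X)$, so $X'' \subset \mathfrak{L}(\nu - rk, \omega)$ and $t_k(X'') = 0$. For $0 \leqslant j \leqslant r$, let $X_j$ be the unique irreducible component of $\mathfrak{L}(\nu - (r-j)k, \omega)$ with $t_k(X_j) = j$ and $\varrho_{k,j}(X_j) = X''$; existence and uniqueness come from the bijection $\varrho_{k,j}$ recalled just before the previous lemma. In particular $X_0 = X''$ and $X_r = X$. The strategy is to show by induction on $j$ that
\begin{equation*}
F_k^{j}([X'']) = \varepsilon_j\, j!\, [X_j] + \sum_{\substack{Y \subset \mathfrak{L}(\nu-(r-j)k,\omega) \\ t_k(Y) > j}} a_{Y,j}\, [Y]
\end{equation*}
for some sign $\varepsilon_j \in \{\pm 1\}$ and constants $a_{Y,j} \in \mathbb{Q}$; dividing the $j=r$ case by $r!$ yields the required formula.

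The base case $j=0$ is trivial. For the inductive step, apply $F_k$ to the displayed equation at level $j-1$. The leading term $\varepsilon_{j-1}(j-1)![X_{j-1}]$ satisfies $t_k(X_{j-1}) = j-1$ and $\varrho_{k,j-1}(X_{j-1}) = X'' = \varrho_{k,j}(X_j)$, so the previous lemma (applied with $r$ there equal to $j$, with $X = X_j$ and $X'' = X_{j-1}$) gives $F_k[X_{j-1}] = \pm j\,[X_j] + \sum_{t_k(Y) > j} c_Y [Y]$. For each correction term $[Y]$ with $t_k(Y) = s \geqslant j$ appearing on the right of the level-$(j-1)$ formula, applying the previous lemma again produces $F_k[Y] = \pm (s+1)[Y'] + \sum_{t_k(Y'') > s+1}[Y'']$ for some $Y'$ with $t_k(Y') = s+1 \geqslant j+1$; in particular every term produced has $t_k$ strictly greater than $j$. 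Collecting, the only contribution to components of filtration exactly $j+1$ at level $j$ comes from the leading term, which produces $\varepsilon_{j-1}(j-1)! \cdot (\pm j) [X_j] = \varepsilon_{j} j! [X_j]$, and all other contributions sit in filtration $> j$. This closes the induction.

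The main obstacle is the bookkeeping of error terms: one must verify that when $F_k$ is applied to a component $Y$ with $t_k(Y) > j-1$, no new contribution to $[X_j]$ (or to any component of $t_k$-level $\leqslant j$) is produced. This is exactly what the filtration estimate $F_k[Y] \in \bigoplus_{t_k(Z) \geqslant t_k(Y)+1} \mathbb{Q}[Z]$ (immediate from the previous lemma) guarantees, so once this filtration property is isolated, the inductive step is forced. The signs $\varepsilon_j$ are the products of the signs from the previous lemma along the chain $X_0, X_1, \ldots, X_r$; one could package them as $\mathrm{sgn}(X) := \varepsilon_r$, matching the convention used in Theorem 1.3 of the introduction.
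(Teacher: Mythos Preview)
Your proof is correct and follows essentially the same inductive strategy as the paper's: both argue by induction on the number of applications of $F_k$, using the previous lemma at each step to identify the leading term and to push all error terms into strictly higher $t_k$-filtration. The only cosmetic difference is that you work with $F_k^{j}$ and carry the factor $j!$ explicitly via the chain $X_0,\ldots,X_r$, whereas the paper applies the recursion $F_k^{(r)}=\frac{1}{r}F_kF_k^{(r-1)}$ directly to the divided power; the underlying computation is identical.
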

\begin{proof}
	We prove by induction on $r$. If $r=1$, the lemma  trivially holds. For general $r$, we have
	\begin{equation*}
		\begin{split}
			F_{k}^{(r)} ([X''])=&\frac{1}{r}F_{k}  F_{k}^{(r-1)}([X''])\\
			=&\frac{1}{r} F_{k}  (\pm [\tilde{X}] + \sum\limits_{t_{k}(X')>r-1} b_{X'}[X'])\\
			=&\pm [X] + \sum\limits_{t_{k}(X')>r} c_{X'}[X']
		\end{split}
	\end{equation*}
	where $\tilde{X}$ is the irreducible component such that $X''=\varrho_{k,r-1}(\tilde{X})$. The second equation holds by the inductive assumption and  the third equation holds by applying the above lemma to $\tilde{X}$ and $X'$ respectively.
\end{proof}

\subsection{The left graphs and the isomorphism $\Phi^{\Lambda}$}

We recall the left graph of the canonical basis defined by Lusztig. In this and the next section, we usually omit the notation of Fourier-Deligne transforms if there is no ambiguity. For example, we denote $t_{i}(\mathcal{F}_{\hat{\Omega},\hat{\Omega}_{i}}(L) )$ by $t_{i}(L)$ and denote  $\mathcal{F}_{\hat{\Omega}_{i},\hat{\Omega}}\pi_{i,p} \mathcal{F}_{\hat{\Omega},\hat{\Omega}_{i}}$ by $\pi_{i,p}$.
\begin{definition}
	We define the left graph $\mathcal{G}_{1}=(\mathcal{V}_{1},\mathcal{E}_{1}) $ to be the $I \times \mathbb{N}_{>0}$-colored graph consisting of the set of vertices  $\mathcal{V}_{1}$ and the set of arrows $\mathcal{E}_{1}$ as follows,
	\begin{equation*}
		\mathcal{V}_{1}=\{ [L]  \in \mathcal{K}| L \in \mathcal{P}_{\mathbf{V}} , |\mathbf{V}|  \in \mathbb{N}[I] \},
	\end{equation*}
	\begin{equation*}
		\mathcal{E}_{1}=\{ [L] \xrightarrow{(k,r)} [L']|k\in I, 0 < r \in \mathbb{N}, \pi_{k,r}(L')=L \text{ for some } |\mathbf{V}| \in \mathbb{N}[I] \}.
	\end{equation*}
\end{definition}

Notice that objects in $\mathcal{Q}_{\mathbf{V},\mathbf{W}}$ are also $G_{\mathbf{W}}$-equivariant, Lusztig's key Lemma \ref{lkey'} also work for objects in $\mathcal{Q}_{\mathbf{V},\mathbf{W}}$. Now we can introduce the left graph of $\mathcal{L}(\Lambda)$, which is isomorphic to a subgraph of $\mathcal{G}_{1}$.
\begin{definition}
	We define the left graph  of $\mathcal{L}(\Lambda)$ to be the $I \times \mathbb{N}_{>0}$-colored graph $\mathcal{G}_{1}(\Lambda)=(\mathcal{V}_{1}(\Lambda),\mathcal{E}_{1}(\Lambda)) $ , which consists of the following set of vertices  $\mathcal{V}_{1}(\Lambda)$ and the set of arrows $\mathcal{E}_{1}(\Lambda)$
	\begin{equation*}
		\mathcal{V}_{1}(\Lambda)=\{ [L]  \in \mathcal{K}_{0}(\Lambda)| L \in \mathcal{P}_{\mathbf{V}}, L \ncong 0 \textrm{ in } \mathcal{L}_{\mathbf{V}}(\Lambda), |\mathbf{V}|  \in \mathbb{N}[I] \},
	\end{equation*}
	\begin{equation*}
		\mathcal{E}_{1}(\Lambda)=\{ [L] \xrightarrow{(k,r)} [L']|[L],[L']\in \mathcal{V}_{1}(\Lambda)  ,k\in I, 0 < r \in \mathbb{N}, \pi_{k,r}(L')=L \text{ for some } \mathbf{V} \}.
	\end{equation*}
\end{definition}

We say a sequence $\underline{s}=((i_{1},n_{1}),(i_{2},n_{2}),\cdots ,(i_{l},n_{l}) )$ of $I \times \mathbb{N}_{>0}$ is a left admissible path of $L \in \mathcal{P}_{\mathbf{V}}$, if $\pi_{i_{1},n_{1}} \pi_{i_{2},n_{2}}\cdots \pi_{i_{l},n_{l}}(L_{0})=L$, where $L_{0}$ is the unique simple perverse sheaf in $\mathcal{P}_{\mathbf{V}}$ with $|\mathbf{V}|=0$. By Lemma 7.2 in \cite{MR1088333}, for any $L \in \mathcal{P}_{\mathbf{V}}$, there exists a left admissible path of $L$.

Consider the affine variety $\Lambda_{\mathbf{V},\mathbf{W}} \subset \mathbf{E}_{\mathbf{V},\mathbf{W}}$
\begin{equation*}
	\Lambda_{\mathbf{V},\mathbf{W}} =\{ (B,i,j) \in \mu^{-1}(0)|j=0,B \text{ is nilpotent } \}
\end{equation*}
In particular, for $\mathbf{W}=0$, we denote $\Lambda_{\mathbf{V},0}$ by $\Lambda_{\mathbf{V}}$.

Let $\Lambda_{\mathbf{V},i,p}$ be the close subset of $\Lambda_{\mathbf{V}}$ defined by
\begin{equation*} 
	\Lambda_{\mathbf{V},i,p}=\{x\in \Lambda_{\mathbf{V}}| {\rm{codim}}_{\mathbf{V}_{i}} ( {\rm{Im}} \sum\limits_{h \in H, h''=i} x_{h}) =p\},
\end{equation*} 
then for any irreducible component $Z$, there exists a unique $p$ such that $Z \cap \Lambda_{\mathbf{V},i, p}$ is dense in $Z$ and we define $t_{i}(Z)=p$. Similarly, we can consider the close subsets 
\begin{equation*} 
	\Lambda_{\mathbf{V},i}^{p}=\{x\in \Lambda_{\mathbf{V}}| {\rm{dim}}_{\mathbf{V}_{i}} ( {\rm{Ker}} \bigoplus\limits_{h \in H, h'=i} x_{h}) =p\}
\end{equation*} 
and define $t_{i}^{\ast}(Z)=p$ if $p$ is the unique integer such that $Z \cap \Lambda_{\mathbf{V},i}^{p}$ is dense in $Z$. 

Given $\nu'+\nu''=\nu \in \mathbb{N}[I] $ and graded vector spaces $\mathbf{V},\mathbf{V}',\mathbf{V}''$ with dimension vectors $\nu, \nu', \nu''$ respectively, let  $\Lambda'$ be the set which consists of $(x,\tilde{\mathbf{W}},\rho_{1},\rho_{2})$ where $x \in \Lambda_{\mathbf{V}}$, $\tilde{\mathbf{W}}$ is a $x$-stable subspace of $\mathbf{V}$ with dimension $\nu''$ and $\rho_{1}:\mathbf{V}/\tilde{\mathbf{W}} \simeq \mathbf{V}', \rho_{2}: \tilde{\mathbf{W}} \simeq \mathbf{V}''$ are linear isomorphisms, and let $\Lambda''$ be the set which consists of $(x,\tilde{\mathbf{W}})$ as above. Consider the morphisms
\begin{center}
	$\Lambda_{\mathbf{V}'} \times \Lambda_{\mathbf{V}''} \xleftarrow{p} \Lambda' \xrightarrow{r} \Lambda'' \xrightarrow{q} \Lambda_{\mathbf{V}}$
\end{center}
where $$p(x,\tilde{\mathbf{W}},\rho_{1},\rho_{2})=(\rho_{1,\ast}(\bar{x}|_{\mathbf{V}/\tilde{\mathbf{W}}}),\rho_{2,\ast}(x|_{\tilde{\mathbf{W}}})  ),$$ $$r(x,\tilde{\mathbf{W}},\rho_{1},\rho_{2}) =(x, \tilde{\mathbf{W}}),~q(x,\tilde{\mathbf{W}})=x.$$ Note that $r$ is a principle $G_{\mathbf{V}'} \times G_{\mathbf{V}''}$-bundle and $q$ is proper. 

We assume that $|\mathbf{V}''|+pi=|\mathbf{V}|$ and denote $qr: \Lambda' \rightarrow \Lambda_{\mathbf{V}}$ by $q'$, then $$p^{-1} (\Lambda_{\mathbf{V''},i,0})= (q'^{-1})(\Lambda_{\mathbf{V},i, p}).$$ We denote $p^{-1} (\Lambda_{\mathbf{V''},i,0})$ by $\Lambda'_{i, p}$ and denote $(q^{-1})(\Lambda_{\mathbf{V},i, p})$ by $\Lambda''_{i,p}$. Then we have the following commutative diagram

\[
\xymatrix{
	\Lambda_{\mathbf{V}'',i,0}\ar[d]^{i_{1}} & \Lambda'_{i,p}\ar[d]^{i_{2}}\ar[l]_{p} \ar[r]^{r}& \Lambda''_{i,p} \ar[d]^{i_{3}}\ar[r]^{q}& \Lambda_{\mathbf{V},i, p} \ar[d]^{i_{4}}\\
	\Lambda_{\mathbf{V}''} & \Lambda' \ar[l]_{p} \ar[r]^{r} & \Lambda''  \ar[r]^{q} & \Lambda_{\mathbf{V}}
}
\]
here $i_{1},i_{2},i_{3}$ and $i_{4}$ are the natural embeddings.

\begin{lemma}\cite[4.17]{MR3202708}
	(1)The morphism $q': \Lambda'_{i,p} \rightarrow \Lambda_{\mathbf{V},i, p}$ is a principle $G_{\mathbf{V''}} \times G_{\mathbf{V}'}$ -bundle.\\
	(2)The morphism $p: \Lambda'_{i, p} \rightarrow \Lambda_{\mathbf{V}'',i,0}$ is a smooth map whose fibers are connected of dimension $(\sum \limits_{i \in I} \nu_{i}^{2} )- p(\nu'',i )$.
\end{lemma}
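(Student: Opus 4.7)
The plan is to exploit the special structure of $x$-stable subspaces $\tilde{\mathbf{W}} \subset \mathbf{V}$ of codimension $pi$ when $x$ lies in the stratum $\Lambda_{\mathbf{V},i,p}$. The pivotal observation is that since $Q$ has no loops at $i$ and $|\mathbf{V}/\tilde{\mathbf{W}}| = pi$, any such $\tilde{\mathbf{W}}$ must have $\tilde{\mathbf{W}}_j = \mathbf{V}_j$ for $j \neq i$; then $x$-stability applied to arrows $h$ with $h'' = i$ forces $\tilde{\mathbf{W}}_i \supseteq \sum_{h \in H, h'' = i} {\rm{Im}}\, x_h$, and the defining stratum condition says both have codimension exactly $p$ in $\mathbf{V}_i$, so they coincide. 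Hence $\tilde{\mathbf{W}}$ is uniquely determined by $x$.

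For part (1), I would first use this observation to show that $q : \Lambda''_{i,p} \to \Lambda_{\mathbf{V},i,p}$ is a set-theoretic bijection, and upgrade it to a scheme-theoretic isomorphism by producing a regular section: the rank of $\sum_{h'' = i} {\rm{Im}}\, x_h$ is constantly $\nu_i - p$ on the stratum, so it defines a regular subbundle of the trivial bundle $\mathbf{V}_i \times \Lambda_{\mathbf{V},i,p}$, making $x \mapsto (x, \tilde{\mathbf{W}}(x))$ regular. The remaining forgetful map $r : \Lambda'_{i,p} \to \Lambda''_{i,p}$ drops only the isomorphisms $\rho_1, \rho_2$, whose space of choices over each $(x, \tilde{\mathbf{W}})$ is exactly a $G_{\mathbf{V}'} \times G_{\mathbf{V}''}$-torsor. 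Composing shows $q' = q \circ r$ is a principal $G_{\mathbf{V}'} \times G_{\mathbf{V}''}$-bundle.

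For part (2), I would first verify that the image lands in $\Lambda_{\mathbf{V}'',i,0}$: the $\mathbf{E}_{\mathbf{V}'}$-component is trivial because $|\mathbf{V}'| = pi$ with no loops forces $\mathbf{E}_{\mathbf{V}'} = 0$, and on the $\mathbf{V}''$-side the image $\sum_{h'' = i} {\rm{Im}}(x|_{\tilde{\mathbf{W}}})_h$ equals $\sum_{h'' = i} {\rm{Im}}\, x_h = \tilde{\mathbf{W}}_i$ (since $h'' = i$ implies $h' \neq i$ so $\tilde{\mathbf{W}}_{h'} = \mathbf{V}_{h'}$), which transports under $\rho_2$ to all of $\mathbf{V}''_i$. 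Then I would compute the fiber over $y \in \Lambda_{\mathbf{V}'',i,0}$ by parametrizing $(x, \tilde{\mathbf{W}}, \rho_1, \rho_2)$ in stages: choose $\tilde{\mathbf{W}}_i \in \mathbf{Gr}(\nu_i - p, \mathbf{V}_i)$, choose $\rho_2 \in G_{\mathbf{V}''}$ and $\rho_1 \in G_{\mathbf{V}'}$ (together these determine $x|_{\tilde{\mathbf{W}}}$ via $y$), then extend $x$ to $\mathbf{V}$. For each arrow $h$ the restriction $x_h|_{\tilde{\mathbf{W}}_{h'}}$ and the induced quotient map (zero) are fixed, so the only genuine freedom sits in picking $x_{\bar h}|_{\mathbf{U}_i}$ for a complement $\mathbf{U}_i$ of $\tilde{\mathbf{W}}_i$ and each $h$ with $h'' = i$; nilpotency of the extension is automatic because $\bar x|_{\mathbf{V}/\tilde{\mathbf{W}}} = 0$ forces $x(\mathbf{V}) \subseteq \tilde{\mathbf{W}}$. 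The moment-map equation at $i$ cuts this down via the surjection $M : \bigoplus_{h'' = i} \mathbf{V}_{h'} \twoheadrightarrow \tilde{\mathbf{W}}_i$, $(v_h) \mapsto \sum \epsilon(h) x_h v_h$, imposing that $(x_{\bar h}|_{\mathbf{U}_i})_h$ factor through $\ker M$. Summing dimensions and simplifying using $\nu_i = \nu''_i + p$ together with $(\nu'', i) = 2\nu''_i - \sum_{j \neq i} a_{ij}\nu''_j$ yields exactly $\sum_k \nu_k^2 - p(\nu'', i)$; smoothness holds because each stage is a smooth morphism (Grassmannian, principal bundle, and a transverse linear constraint cut out by a surjective map), and connectedness follows from the irreducibility of each parameter space.

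The main obstacle will be handling the moment-map equation at vertex $i$ cleanly: one must confirm that the linear map $M$ is surjective on the fiber (which is precisely the content of the stratum condition via $\tilde{\mathbf{W}}_i = \sum_{h'' = i} {\rm{Im}}\, x_h$), so that the constraint is transverse and the extension locus becomes an honest vector subbundle of the expected rank rather than acquiring spurious components. Once this transversality is in place, the remaining computations reduce to bookkeeping with Grassmannians, general linear groups, and kernels of surjective linear maps.
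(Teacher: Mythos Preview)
The paper does not supply its own proof of this lemma; it is cited directly from Schiffmann's lectures \cite[4.17]{MR3202708} and stated without argument. Your proposal is correct and follows the standard line of reasoning for this result (as in Schiffmann, and originally in Lusztig's work on semicanonical bases): the uniqueness of the $x$-stable subspace $\tilde{\mathbf{W}}$ on the stratum $\Lambda_{\mathbf{V},i,p}$ gives the principal bundle structure in (1), and the fiber of $p$ is computed by parametrizing the choice of codimension-$p$ subspace $\tilde{\mathbf{W}}_i$, the framings $\rho_1,\rho_2$, and the extension data for the arrows out of $i$, cut down by the moment-map relation at $i$. Your dimension count, using surjectivity of the map $M$ onto $\tilde{\mathbf{W}}_i$ and the identity $(\nu'',i)=2\nu''_i-\sum_{j\neq i}a_{ij}\nu''_j$, is accurate and yields exactly $\sum_k \nu_k^2 - p(\nu'',i)$.
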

\begin{corollary}
	There is a bijection $\eta_{i,p}:Irr \Lambda_{\mathbf{V}'',i,0} \rightarrow  Irr \Lambda_{\mathbf{V},i, p} $ from the set of irreducible components of $\Lambda_{\mathbf{V}'',i,0}$ to the set of irreducible components $\Lambda_{\mathbf{V},i, p}$ induced by $q'p^{-1}$. The map $\eta_{i,p}$ also induces a bijection ( still denoted by $\eta_{i,p}$) from $\{Z \in Irr\Lambda_{\mathbf{V}''}| t_{i}(Z)=0 \}$ to $\{ Z \in Irr \Lambda_{\mathbf{V}}|t_{i}(Z)=p \} $ defined by: $$\eta_{i,p}( \bar{Z})= \overline{\eta_{i,p}(Z)}$$ for $Z \in Irr \Lambda_{\mathbf{V}'',i,0}$. Here $\bar{Z}$ and $\overline{\eta_{i,p}(Z)}$ are the closure of $Z$ and $\eta_{i,p}(Z)$ respectively.
\end{corollary}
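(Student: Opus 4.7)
The proof will be essentially a formal consequence of the preceding lemma combined with standard facts about how smooth surjections with connected fibers act on irreducible components. The plan is to first construct the bijection at the locally closed stratum level, then pass to the closure.

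First I will use part (1) of the preceding lemma, which says $q' \colon \Lambda'_{i,p} \to \Lambda_{\mathbf{V},i,p}$ is a principal $G_{\mathbf{V}''}\times G_{\mathbf{V}'}$-bundle, hence in particular a smooth surjection with connected (geometrically irreducible) fibers. Such a morphism induces a bijection between irreducible components via $Y \mapsto \overline{(q')(Y)}$ (one direction) and $X \mapsto (q')^{-1}(X)$ (the other direction). Similarly, part (2) says $p \colon \Lambda'_{i,p} \to \Lambda_{\mathbf{V}'',i,0}$ is smooth with connected fibers of a fixed dimension, and therefore also induces a bijection on irreducible components by the same pullback/image construction. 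Composing these two bijections through the common vertex $\mathrm{Irr}\,\Lambda'_{i,p}$ yields the desired bijection
\[
\eta_{i,p}\colon \mathrm{Irr}\,\Lambda_{\mathbf{V}'',i,0} \longrightarrow \mathrm{Irr}\,\Lambda_{\mathbf{V},i,p},
\]
which on sets of irreducible components is exactly the rule $X \mapsto \overline{q'\bigl(p^{-1}(X)\bigr)}$, as claimed.

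Second, I will upgrade this to a bijection on irreducible components of the full nilpotent varieties restricted by the $t_i$-invariant. The key observation is that $\Lambda_{\mathbf{V}'',i,0}$ is open in $\Lambda_{\mathbf{V}''}$ (its complement is the closed locus where the relevant codimension is strictly positive), so taking the closure gives a bijection between $\mathrm{Irr}\,\Lambda_{\mathbf{V}'',i,0}$ and the set of $\bar Z \in \mathrm{Irr}\,\Lambda_{\mathbf{V}''}$ whose intersection with $\Lambda_{\mathbf{V}'',i,0}$ is dense; by definition of $t_i$ this is exactly the set $\{\bar Z \in \mathrm{Irr}\,\Lambda_{\mathbf{V}''} \mid t_i(\bar Z)=0\}$. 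In the same way, $\Lambda_{\mathbf{V},i,p}$ is locally closed in $\Lambda_{\mathbf{V}}$ (as the difference of two closed subsets defined by the codimension condition $\geq p$ and $\geq p{+}1$), and closure gives a bijection between $\mathrm{Irr}\,\Lambda_{\mathbf{V},i,p}$ and $\{X \in \mathrm{Irr}\,\Lambda_{\mathbf{V}} \mid t_i(X)=p\}$. Composing with the stratum-level bijection produces the second $\eta_{i,p}$, compatible with the rule $\bar Z \mapsto \overline{\eta_{i,p}(Z)}$ as stated.

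There is no real obstacle here: the content is entirely in the preceding lemma (the principal-bundle and smoothness-with-connected-fibers statements), and the only thing to be careful about is the bookkeeping between irreducible components of a locally closed subset and the corresponding irreducible components of its closure, which is a standard and harmless verification. The only mildly delicate point worth spelling out is that since $p$ has connected fibers of constant dimension, an irreducible component of $\Lambda_{\mathbf{V}'',i,0}$ pulls back to an irreducible (not merely equidimensional) subset of $\Lambda'_{i,p}$, so that composing pullback by $p$ with image under $q'$ is well-defined on components; this is immediate from smoothness and connectedness of fibers.
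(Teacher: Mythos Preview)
Your proposal is correct and is precisely the standard argument that the paper leaves implicit (the corollary is stated without proof, as it is taken to follow immediately from the cited lemma). Your two-step plan---compose the bijections on irreducible components induced by the principal bundle $q'$ and the smooth connected-fiber map $p$, then pass to closures on both sides---is exactly the intended reasoning.

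One small point you might make explicit: on the $\Lambda_{\mathbf{V},i,p}$ side, the map ``take closure in $\Lambda_{\mathbf{V}}$'' sends an irreducible component of the locally closed stratum to an irreducible component of $\Lambda_{\mathbf{V}}$ (rather than to something strictly smaller than a component). This uses the fact that $\Lambda_{\mathbf{V}}$ is pure of dimension $\dim \mathbf{E}_{\mathbf{V},\Omega}$, together with the dimension count coming from parts (1) and (2) of the lemma, which shows every component of $\Lambda_{\mathbf{V},i,p}$ arising via $\eta_{i,p}$ already has that top dimension. Without this, a component of the stratum could in principle close up inside a strictly larger component of $\Lambda_{\mathbf{V}}$ with $t_i<p$. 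This is a routine check and is implicit in the reference the paper cites, but it is the only place where the argument is not purely formal.
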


Now we can define the left graph $\mathcal{G}_{2}$ for $Irr \Lambda_{\mathbf{V}}$.
\begin{definition}
	We define the left graph $\mathcal{G}_{2}=(\mathcal{V}_{2},\mathcal{E}_{2}) $ to be the $I \times \mathbb{N}_{>0}$-colored graph consisting of the following set of vertices  $\mathcal{V}_{2}$ and the set of arrows $\mathcal{E}_{2}$ as follows,
	\begin{equation*}
		\mathcal{V}_{2}=\{ Z | Z \in Irr \Lambda_{\mathbf{V}} , |\mathbf{V}|  \in \mathbb{N}[I] \},
	\end{equation*}
	\begin{equation*}
		\mathcal{E}_{2}=\{ Z \xrightarrow{(k,r)} Z'|k\in I, 0 < r \in \mathbb{N}, \eta_{k,r}(Z')=Z \text{ for some } |\mathbf{V}| \in \mathbb{N}[I] \}.
	\end{equation*}
\end{definition}

Similarly, we say a sequence $\underline{s}=((i_{1},n_{1}),(i_{2},n_{2}),\cdots ,(i_{l},n_{l}) )$ of $I \times \mathbb{N}_{>0}$ is a left admissible path of $Z \in Irr \Lambda_{\mathbf{V}}$, if $\eta_{i_{1},n_{1}} \eta_{i_{2},n_{2}}\cdots \eta_{i_{l},n_{l}}(Z_{0})=Z$, where $Z_{0}$ is the unique irreducible component of $\Lambda_{0}$. By Corollary 1.6 in \cite{MR1758244}, for any  $Z \in Irr \Lambda_{\mathbf{V}}$, there exists a left admissible path of $Z$.
Then by \cite{MR1458969} or Theorem 7.1 in \cite{fang2022correspondence}, we have the following result:
\begin{proposition}
	There is a bijection $\Phi: \mathcal{V}_{1} \rightarrow \mathcal{V}_{2}$ such that $\Phi$ commutes with the arrows in $\mathcal{E}_{1}$ and $\mathcal{E}_{2}$ and $\Phi$ preserves the value $t_{i}$ and $t^{\ast}_{i}$.
	\begin{equation*}
		\Phi([L]) \xrightarrow{(k,r)} \Phi([L']) \iff [L] \xrightarrow{(k,r)} [L'],k \in I, 0< r\in \mathbb{N}; 
	\end{equation*}
	\begin{equation*}
		t_{i}(L)=t_{i}(\Phi([L])),t^{\ast}_{i}(L)=t^{\ast}_{i}(\Phi([L])), i\in I.
	\end{equation*}
	Moreover, if $\underline{s}=((i_{1},n_{1}),(i_{2},n_{2}),\cdots ,(i_{l},n_{l}) )$ is a left admissible path of $L$, then  $$\Phi([L])=\eta_{i_{1},n_{1}} \eta_{i_{2},n_{2}}\cdots \eta_{i_{l},n_{l}}(Z_{0}).$$
\end{proposition}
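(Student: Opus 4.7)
The plan is to construct $\Phi$ recursively along left admissible paths, mirroring Lusztig's key lemma on the sheaf side and its geometric counterpart on the Lusztig-variety side. I would first fix the base case $\Phi([L_0]) = Z_0$, which is forced since both sides have a unique vertex at dimension zero. For $|\mathbf{V}| > 0$, a left admissible path $\underline{s} = ((i_1, n_1), \ldots, (i_l, n_l))$ of any simple $L \in \mathcal{P}_{\mathbf{V}}$ exists by \cite[Lemma 7.2]{MR1088333}, and then the tentative definition is
\begin{equation*}
\Phi([L]) := \eta_{i_1, n_1} \eta_{i_2, n_2} \cdots \eta_{i_l, n_l}(Z_0),
\end{equation*}
which is the only possibility compatible with the asserted commutation with edges. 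The task then splits into establishing well-definedness, bijectivity, and preservation of $t_i$ and $t_i^*$.

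The main obstacle is the well-definedness step: a priori $\Phi([L])$ could depend on the chosen admissible path. To handle this I would appeal to \cite[Theorem 7.1]{fang2022correspondence} (equivalently \cite{MR1458969}), which identifies both $\mathcal{V}_1$ and $\mathcal{V}_2$ with the crystal $B(\infty)$ of $\mathbf{U}^-_v$ in such a way that $\pi_{i,r}$ and $\eta_{i,r}$ implement the Kashiwara operator $\tilde f_i^{\,r}$ restricted to vertices with $\varepsilon_i = 0$, and that the functions $t_i$ on each side compute $\varepsilon_i$. Since $B(\infty)$ is determined up to unique isomorphism by this Kashiwara data together with the choice of highest weight vertex, the unique crystal isomorphism $\mathcal{V}_1 \xrightarrow{\sim} \mathcal{V}_2$ fixing $[L_0] \mapsto Z_0$ must agree with any value computed along an admissible path, settling the independence.

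Given well-definedness, the remaining verifications are direct. Bijectivity of $\Phi$ follows by induction on $|\mathbf{V}|$, since each $\pi_{i,r}$ and each $\eta_{i,r}$ is a bijection between vertices distinguished by the value of $t_i$. Commutation with the arrows $\xrightarrow{(k,r)}$ is built into the recursion: if $\pi_{k,r}([L']) = [L]$ then appending $(k,r)$ to an admissible path of $[L]$ yields one for $[L']$, and by definition $\Phi([L']) = \eta_{k,r}(\Phi([L]))$. Preservation of $t_i$ is immediate because $t_i(L)$ and $t_i(\Phi([L]))$ are each the maximal integer $r$ such that there is an admissible path for the given vertex beginning with $(i,r)$. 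Preservation of $t_i^*$ follows by the dual argument using Proposition \ref{rt'} in place of Lemma \ref{lkey'}, together with the observation that the Fourier-Deligne transform (which interchanges source and sink at $i$) commutes with the whole construction; concretely, one passes to an orientation in which $i$ is a source and rereads the same recursion.
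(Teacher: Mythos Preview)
Your proposal is correct and in fact more detailed than what the paper provides: the paper does not give a proof of this proposition at all, but simply cites \cite{MR1458969} and \cite[Theorem~7.1]{fang2022correspondence} as the source of the result. Your sketch---constructing $\Phi$ along admissible paths and invoking the crystal identification of both $\mathcal{V}_1$ and $\mathcal{V}_2$ with $B(\infty)$ to settle well-definedness and the preservation of $t_i$, $t_i^*$---is exactly the content of those references, so your approach and the paper's are the same in substance.
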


We can also define the left graph $\mathcal{G}_{0}(\Lambda_{\omega})$ for Nakajima's quiver variety, which is an analogy of  $\mathcal{G}_{1}(\Lambda)$.
\begin{definition}
	We define the left graph $\mathcal{G}_{0}(\Lambda_{\omega})=(\mathcal{V}_{0},\mathcal{E}_{0}) $ of Nakajima's quiver variety (associated with $\omega$) to be the $I \times \mathbb{N}_{>0}$-colored graph, which consists of the following set of vertices  $\mathcal{V}_{0}$ and the set of arrows $\mathcal{E}_{0}$
	\begin{equation*}
		\mathcal{V}_{0}=\{ X \in Irr \mathfrak{L}(\nu,\omega)| \nu \in \mathbb{N}[I] \},
	\end{equation*}
	\begin{equation*}
		\mathcal{E}_{0}=\{ X' \xrightarrow{(k,r)} X|k\in I, 0 < r \in \mathbb{N}, \varrho_{k,r}(X)=X' \text{ for some } \nu \in \mathbb{N}[I] \}.
	\end{equation*}
\end{definition}

\begin{lemma}\cite[Lemma 5.8]{MR1302318}
	(1) $\mathfrak{L}(\nu,\omega)$ is isomorphic to the geometric quotient of $\Lambda_{\mathbf{V},\mathbf{W}} \cap \mu^{-1}(0)^{s}$. In particular, the set $Irr \mathfrak{L}(\nu,\omega)$ is bijective to the set of $G_{\mathbf{V}}$-invariant Lagrangian irreducible components of $\Lambda_{\mathbf{V},\mathbf{W}}\cap \mu^{-1}(0)^{s}$.\\
	(2) The projection $\pi_{\mathbf{W}}:\Lambda_{\mathbf{V},\mathbf{W}} \rightarrow \Lambda_{\mathbf{V}};(B,i,0) \mapsto B$
	is a vector bundle. In particular, the set $Irr \Lambda_{\mathbf{V},\mathbf{W}}$ is bijective to the set $ Irr \Lambda_{\mathbf{V}}$. 
\end{lemma}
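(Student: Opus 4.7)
The plan is to address the two statements separately, since they rely on quite different ingredients: part (1) is an application of geometric invariant theory together with Nakajima's description of the fiber $\pi^{-1}(0)$, while part (2) is an elementary linear-algebraic observation about defining equations.

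For part (1), I would first unravel the definitions. By construction $\mathfrak{m}(\nu,\omega)$ is the geometric quotient of $\mu^{-1}(0)^{s}$ by the free $G_{\mathbf{V}}$-action, while $\mathfrak{m}_{0}(\nu,\omega)$ is the affine categorical quotient of $\mu^{-1}(0)$, and $\pi$ is induced by the inclusion of invariant rings. The origin $0\in\mathfrak{m}_{0}(\nu,\omega)$ corresponds to the unique closed $G_{\mathbf{V}}$-orbit in $\mu^{-1}(0)$ whose closure contains $(0,0,0)$, namely that of $(0,0,0)$ itself. Hence a stable point $(B,i,j)$ lies over $0$ if and only if $\overline{G_{\mathbf{V}}\cdot(B,i,j)}\ni(0,0,0)$. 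The key claim is that this is equivalent to $j=0$ and $B$ nilpotent, that is, to $(B,i,j)\in\Lambda_{\mathbf{V},\mathbf{W}}\cap\mu^{-1}(0)^{s}$. For the "if" direction I would apply the Hilbert–Mumford criterion: given a Jordan-type filtration of the nilpotent $B$, one writes down an explicit one-parameter subgroup of $G_{\mathbf{V}}$ that degenerates $(B,i,0)$ to $(0,0,0)$. For the converse I would invoke the Lusztig–Nakajima description of the ring of $G_{\mathbf{V}}$-invariants on $\mathbf{E}_{\mathbf{V},\mathbf{W}}$, which is generated by traces of oriented cycles in $B$ together with "hook" polynomials of the form $\mathrm{tr}(j\circ B_{h_{1}}\cdots B_{h_{r}}\circ i)$; the simultaneous vanishing of all such invariants forces $B$ to be nilpotent and $j=0$. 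Passing to geometric quotients, and using that the free $G_{\mathbf{V}}$-action produces a bijection between $G_{\mathbf{V}}$-invariant irreducible components upstairs and irreducible components of the quotient, yields the claim about $\mathrm{Irr}\,\mathfrak{L}(\nu,\omega)$.

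For part (2), I would observe that once $j=0$ is imposed, the moment-map equation at each vertex $k\in I$ reduces to $\sum_{h\in H,\,h''=k}\epsilon(h)B_{h}B_{\bar{h}}=0$, which is precisely the defining equation cutting out the nilpotent variety $\Lambda_{\mathbf{V}}$ inside $\mathbf{E}_{\mathbf{V}}$ (in particular the component $i$ of the datum disappears entirely from the equations). Therefore the projection $\pi_{\mathbf{W}}:(B,i,0)\mapsto B$ is surjective onto $\Lambda_{\mathbf{V}}$, and the fiber over each $B$ is the full affine space $\bigoplus_{i\in I}\mathrm{Hom}(\mathbf{V}_{i},\mathbf{W}_{\hat{i}})$ with no additional constraints. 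The structure of the projection is linear in $i$, so $\pi_{\mathbf{W}}$ is in fact a trivial vector bundle of rank $\sum_{i\in I}\nu_{i}\omega_{i}$. The desired bijection $\mathrm{Irr}\,\Lambda_{\mathbf{V},\mathbf{W}}\cong\mathrm{Irr}\,\Lambda_{\mathbf{V}}$ then follows formally from the standard fact that pullback under a Zariski-locally-trivial fiber bundle with irreducible fiber induces a bijection on irreducible components.

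The main obstacle is the nontrivial orbit-closure analysis required in part (1), namely the characterization of those stable points that lie over $0\in\mathfrak{m}_{0}$. All the other ingredients (GIT quotients, freeness of the action on $\mu^{-1}(0)^{s}$, pullback of irreducible components along a vector bundle) are formal; the genuinely structural input is the identification $\pi^{-1}(0)\cap\mathfrak{m}(\nu,\omega)\cong(\Lambda_{\mathbf{V},\mathbf{W}}\cap\mu^{-1}(0)^{s})/G_{\mathbf{V}}$, which is exactly the content of \cite[Lemma 5.8]{MR1302318}. I would therefore give the argument sketched above and refer to Nakajima's original paper for the technical verification of the Hilbert–Mumford and invariant-theoretic steps.
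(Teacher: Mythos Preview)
The paper does not give its own proof of this lemma: it is stated with a citation to \cite[Lemma 5.8]{MR1302318} and used as a black box. Your proposal is a correct reconstruction of Nakajima's original argument, and you yourself end by deferring the technical steps to the same reference; in that sense your approach coincides with the paper's, which is simply to quote the result.

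One small point worth tightening in your sketch of part (1): the vanishing of all invariants by itself only gives nilpotency of $B$ and $i\circ B^{\text{path}}\circ j=0$ for all paths, not immediately $j=0$. The missing step is to combine this with stability: the smallest $B$-stable subspace $S$ containing $\mathrm{Im}(j)$ is spanned by vectors of the form $B_{h_{1}}\cdots B_{h_{r}}j(w)$, so the vanishing of all hook invariants gives $S\subset\mathrm{Ker}\,i$, and then condition (S) forces $S=0$, hence $j=0$. This is implicit in Nakajima's proof and is the one place where stability is genuinely used in the orbit-closure direction.
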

Note that the set of $G_{\mathbf{V}}$-invariant Lagrangian irreducible components of $\Lambda_{\mathbf{V},\mathbf{W}}\cap \mu^{-1}(0)^{s}$ can be naturally regarded as a subset of $Irr \Lambda_{\mathbf{V},\mathbf{W}}$. With the lemma above, consider an injective map $\Psi:Irr \mathfrak{L}(\nu,\omega) \rightarrow  Irr \Lambda_{\mathbf{V}}$ defined by composing the two bijections above. If $\Lambda=\Lambda_{\omega}$, then for some $\nu \in \mathbb{N}[I]$ and $X \in Irr \mathfrak{L}(\nu,\omega)$, we define
$\Phi^{\Lambda}(X)= (\Phi)^{-1} (\Psi(X)) $. By definition, $\Phi^{\Lambda}:\mathcal{V}_{0} \rightarrow \mathcal{V}_{1} $ is injective.

\begin{theorem}\label{thm2}
	The image of $\Phi^{\Lambda}$ is contained in $\mathcal{V}_{1}(\Lambda)$. Moreover, $\Phi^{\Lambda}: \mathcal{V}_{0} \rightarrow \mathcal{V}_{1}(\Lambda)$ is exactly an isomorphism of ($I \times \mathbb{N}_{>0}$-colored) left graphs,
	\begin{equation*}
		X' \xrightarrow{(k,r)} X \iff 	\Phi^{\Lambda}(X') \xrightarrow{(k,r)} \Phi^{\Lambda}(X) ,k \in I, 0< r\in \mathbb{N}.
	\end{equation*}
\end{theorem}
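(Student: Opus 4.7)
The strategy is to leverage the already-established left-graph isomorphism $\Phi:\mathcal{V}_1\to\mathcal{V}_2$ and reduce the theorem to an analysis of the auxiliary map $\Psi:\mathcal{V}_0\to\mathcal{V}_2$, $X\mapsto\Phi(\Phi^\Lambda(X))$, which factors through the identifications $Irr\,\mathfrak{L}(\nu,\omega)\hookrightarrow Irr\,\Lambda_{\mathbf{V},\mathbf{W}}\leftrightarrow Irr\,\Lambda_{\mathbf{V}}$. My plan has three stages: (A) show $\Psi$ preserves the statistic $t_k$ and intertwines Nakajima's bijection $\varrho_{k,r}$ with Lusztig's $\eta_{k,r}$, so that $\Phi^\Lambda$ preserves arrows; (B) verify by induction along a left admissible path that $\Phi^\Lambda(X)\in\mathcal{V}_1(\Lambda)$, i.e.\ the resulting simple perverse sheaf is nonzero in the localization; (C) conclude surjectivity onto $\mathcal{V}_1(\Lambda)$ by a dimension count.

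For (A), the key geometric fact is that since $j=0$ on $\Lambda_{\mathbf{V},\mathbf{W}}$, the Nakajima statistic $t_k(X)=\mathrm{codim}(\sum_{h''=k}\mathrm{Im}\,B_h+\mathrm{Im}\,j_k)$ collapses to the $B$-only statistic defining the stratification $\Lambda_{\mathbf{V}}=\bigcup_p\Lambda_{\mathbf{V},k,p}$, and the vector bundle $\pi_{\mathbf{W}}$ preserves generic ranks, so $\Psi$ preserves $t_k$. The smooth projection $p:\mathfrak{m}_{k,r}(\nu,\omega)\to\mathfrak{m}_{k,0}(\nu-rk,\omega)$ with connected Grassmannian fibers, used to define $\varrho_{k,r}$, pulls back under $(\Lambda_{\mathbf{V},\mathbf{W}}\cap\mu^{-1}(0)^s)/G_{\mathbf{V}}=\mathfrak{L}(\nu,\omega)$ to exactly the diagram $\Lambda_{\mathbf{V}'',k,0}\xleftarrow{p}\Lambda'_{k,p}\xrightarrow{q'}\Lambda_{\mathbf{V},k,p}$ (principal bundle together with smooth fibration with connected fibers) used to define $\eta_{k,r}$; hence $\Psi\circ\varrho_{k,r}=\eta_{k,r}^{-1}\circ\Psi$ on the relevant strata. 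Combining with the admissible-path characterization of $\Phi$ recalled in the preceding proposition, $\Phi^\Lambda=\Phi^{-1}\circ\Psi$ inherits arrow preservation.

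For (B), I would induct on $|\mathbf{V}|$ along a left admissible path $\underline{s}=((i_1,n_1),\ldots,(i_l,n_l))$ of $X$. The base case is $\mathbf{V}=0$: $\Phi^\Lambda(X_0)=[L_0]$ maps under $\varsigma^\Lambda$ to the highest weight vector $v_\Lambda$, hence is nonzero by Theorem \ref{thm1}. For the inductive step, suppose $X'\xrightarrow{(k,r)}X$ with $\Phi^\Lambda(X')=[L']$ nonzero in $\mathcal{L}_{\mathbf{V}'}(\Lambda)$; by (A), $L=\pi_{k,r}(L')$ and $\Phi^\Lambda(X)=[L]$. Lemma \ref{lkey'} expresses $F_k^{(r)}[L']$ in $\mathcal{K}$ as a $t_k$-triangular sum $v^?[L]+\sum_{t_k(L'')>r}c_{L''}[L'']$, while Lemma \ref{Nkey} gives the analogous Nakajima expansion $F_k^{(r)}[X']=\pm[X]+\sum_{t_k(\tilde X)>r}c_{\tilde X}[\tilde X]$ in $L(\Lambda_\omega)$. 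Transporting both through the commutative square formed by $\varsigma^\Lambda$ and $\varkappa^{\Lambda_\omega}$ (both realize $L(\Lambda_\omega)$ at $v\to 1$) and using $t_k$-triangularity, the leading coefficient on $[L]$ cannot be cancelled by strictly higher-$t_k$ terms, forcing $[L]\neq 0$ in $\mathcal{K}_0(\Lambda)$. Finally (C) follows from the equality $|\mathcal{V}_0\cap Irr\,\mathfrak{L}(\nu,\omega)|=\dim L_{|\mathbf{V}|}(\Lambda)=|\mathcal{V}_1(\Lambda)\cap\mathcal{P}_{\mathbf{V}}|$ provided by Theorems \ref{Nmain} and \ref{thm1}, combined with injectivity of $\Phi^\Lambda$. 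The main obstacle I anticipate is step (B): although $t_k$-triangularity holds in $\mathcal{K}$, some intermediate terms $[L'']$ may themselves vanish in $\mathcal{L}(\Lambda)$, so the passage from nonvanishing of $F_k^{(r)}[L']$ to nonvanishing of the specific $[L]$-coefficient requires careful bookkeeping of the triangular system over all strata simultaneously rather than term-by-term.
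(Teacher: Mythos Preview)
Your steps (A) and (C) are essentially what the paper does: $\Psi$ intertwines $\varrho_{k,r}$ with $\eta_{k,r}^{-1}$ because both are computed by restricting $B$ to $\sum_{h''=k}\mathrm{Im}\,B_h\oplus\bigoplus_{i\neq k}\mathbf{V}_i$ at generic points, and bijectivity follows from the dimension count $|\mathcal{P}_{\mathbf{V}}\setminus\mathcal{N}_{\mathbf{V}}|=\dim L_{|\mathbf{V}|}(\Lambda)=|Irr\,\mathfrak{L}(\nu,\omega)|$.

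Step (B), however, is where you diverge from the paper, and your approach has a real gap that you correctly flag but underestimate. The commutative square you want to exploit relates $\varsigma^{\Lambda}$ and $\varkappa^{\Lambda}$ only through the highest weight vector; you have no relation between $\varsigma^{\Lambda}([L'])$ and $\varkappa^{\Lambda}([X'])$ at the inductive stage, so you cannot transport the $t_k$-triangular expansions against each other. Any such comparison is precisely what the \emph{later} monomial-basis section establishes (and only up to an upper-triangular matrix, not equality), so invoking it here is circular. Concretely, if $[L]=0$ in $\mathcal{K}_0(\Lambda)$ then $F_k^{(r)}[L']$ lies in $F_k^{(r+1)}\mathcal{K}_0(\Lambda)$; nothing you have written rules this out, because you have not linked $[L']$ to any specific element on the Nakajima side.

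The paper instead proves (B) by a short direct geometric argument that bypasses induction entirely. Suppose $\Phi^{\Lambda}(X)=[L]$ with $L\in\mathcal{N}_{\mathbf{V},i}$; via the proof of Theorem~\ref{thm1} this forces $t_i^{\ast}(L)=r>d_i$, hence $t_i^{\ast}(Z)=r>d_i$ for $Z=\Phi([L])=\Psi(X)$. Then for generic $(B,\mathfrak{i},0)\in\pi_{\mathbf{W}}^{-1}(Z)$ the subspace $S=\mathrm{Ker}\bigoplus_{h'=i}B_h\subset\mathbf{V}_i$ is $B$-stable of dimension $r>d_i$, so $S\cap\mathrm{Ker}\,\mathfrak{i}\neq 0$ (the map $\mathfrak{i}|_S:S\to\mathbf{W}_{\hat{i}}$ goes from dimension $r$ to dimension $d_i<r$). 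This violates the stability condition (S), contradicting the fact that stable points are dense in the component corresponding to $X$. This argument is both simpler and logically prior to the module-theoretic comparison you attempt.
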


\begin{proof}
	We firstly prove that ${\rm{Im}}(\Phi^{\Lambda}) \subset \mathcal{V}_{1}(\Lambda)$. Assume that  $\Phi^{\Lambda}(X)=[L]$ and $L \in \mathcal{N}_{\mathbf{V}}$ for some $X$, then there exists some $i\in I$ such that $L$ in $\mathcal{N}_{\mathbf{V},i}$. Then by the proof of Theorem \ref{thm1},  a simple perverse sheaf $L$, viewed as an object in $\mathcal{Q}_{\mathbf{V}}$ via the inverse of $(\pi_{\mathbf{W}})^{\ast}$, satisfies $t^{\ast}_{i}(L)=r >d_{i}$, hence the irreducible component $Z=\Phi(L) \subset \Lambda_{\mathbf{V}}$ satisfies $t_{i}^{\ast}(Z)=r> d_{i}$. Let $Z'=\pi_{\mathbf{W}}^{-1}(Z)$ be the irreducible component of $\Lambda_{\mathbf{V},\mathbf{W}}$ corresponding to $Z$. By definition, $\pi_{\mathbf{W}}^{-1}(\Lambda_{\mathbf{V},i}^{r} \cap Z)$ is dense in $Z'$.
	We claim that $\pi_{\mathbf{W}}^{-1}(\Lambda_{\mathbf{V},i}^{r} \cap Z) \cap \mu^{-1}(0)^{s}$ is empty. In fact, for any $(B,i,0)\in  \pi_{\mathbf{W}}^{-1}(\Lambda_{\mathbf{V},i}^{r} \cap Z)$, we consider $S={\rm{Ker}} \bigoplus \limits_{h \in H, h'=i} B_{h} $. $S$ is a $B$-stable subspace, and $S \cap {\rm{Ker}} i \neq 0$. (Otherwise, $i|_{S}$ is an injective linear map from the $r$-dimensional space $S$ to the $d_{i}$-dimensional space $W_{\hat{i}}$). Hence $S \cap {\rm{Ker}} i \subset {\rm{Ker}} i $ fails the stability condition (S). However, $Z' \cap \mu^{-1}(0)^{s}$ is dense in $Z'$. We get a contradiction and have proved that ${\rm{Im}}(\Phi^{\Lambda}) \subset \mathcal{V}_{1}(\Lambda)$.
	
	Notice that by Theorem \ref{thm1}, we have
	\begin{equation*}
		|\mathcal{P}_{\mathbf{V}}\backslash \mathcal{N}_{\mathbf{V}}|={\rm{dim}}_{\mathbb{Q}(v)} L_{|\mathbf{V}|}(\Lambda)
	\end{equation*}
	where $\mathcal{P}_{\mathbf{V}}\backslash \mathcal{N}_{\mathbf{V}}$ is the set of nonzero simple perverse sheaves in $\mathcal{L}_{\mathbf{V}}(\Lambda)$. And by Theorem \ref{Nmain}, we have
	\begin{equation*}
		{\rm{dim}}_{\mathbb{Q}(v)} L_{|\mathbf{V}|}(\Lambda)={\rm{dim}}_{\mathbb{Q}} L_{0,|\mathbf{V}|}(\Lambda) = |Irr\mathfrak{L}(\nu,\omega)|
	\end{equation*} 
	hence $\Phi^{\Lambda}: \mathcal{V}_{0} \rightarrow \mathcal{V}_{1}(\Lambda)$ is a bijection. Notice that $\Psi$ commutes with $\eta^{-1}_{k,r}$ and $\varrho_{k,r}$ by definition. In fact, both  $\eta^{-1}_{k,r}$ and $\varrho_{k,r}$ can be obtained by restricting $B$ to $\sum\limits_{h \in H, h''=k}{\rm{Im}}B_{h} \oplus \bigoplus \limits_{i \neq k}\mathbf{V}_{i} $ for generic $B$. Hence  $\Phi^{\Lambda}$ commutes with the arrows. More precisely, we have
	\begin{equation*}
		X' \xrightarrow{(k,r)} X \iff 	\Phi^{\Lambda}(X') \xrightarrow{(k,r)} \Phi^{\Lambda}(X) ,k \in I, 0< r\in \mathbb{N},
	\end{equation*}
	as desired.
\end{proof}
\nocite{MR1942245}

\subsection{Monomial bases from left graphs}
Let $\mathcal{S}$ be the set of sequences of $I \times \mathbb{N}_{>0}$.
\begin{definition}
	For a given order $(i_{1}\prec i_{2} \prec \cdots \prec
	i_{n})$ of $I$, we inductively define a map $\underline{s}^{\prec}=\underline{s}: \mathcal{V}_{1}(\Lambda) \rightarrow \mathcal{S}$ as follows: (1) If $L$ is the unique simple perverse sheaf on $\mathbf{E}_{\mathbf{V},\mathbf{W},\hat{\Omega}},|\mathbf{V}|=pi,p \leqslant d_{i} $,  define $\underline{s}([L])=((i,p))$. In particular $\underline{s}([L_{0}])=\emptyset $. (2) For the other $L \in \mathcal{V}_{1}(\Lambda)$, there exists a unique $r \in \mathbb{N}$ such that $t_{i_{r}}(L)=n>0$ but $t_{i_{s}}(L)=0$ holds for any $r<s$, take $K$ such that $L \xrightarrow{(i_{r},n)} K$ and  define $\underline{s}([L])=((i_{r},n),\underline{s}([K]))$.
\end{definition}

\begin{lemma}
	The map $\underline{s}^{\prec}=\underline{s}: \mathcal{V}_{1}(\Lambda) \rightarrow \mathcal{S}$ is well-defined and injective.
\end{lemma}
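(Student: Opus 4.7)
The proof has two parts: showing that the recursive definition of $\underline{s}$ makes sense on all of $\mathcal{V}_{1}(\Lambda)$, and then showing injectivity. Both parts are structured as an induction on $|\mathbf{V}|$.

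\emph{Well-definedness.} First I would check that the recursion terminates and has no ambiguity. Termination is immediate: in case (2) the successor $K$ satisfies $|K|=|L|-n i_{r}$ with $n>0$, so the total dimension strictly decreases and after finitely many steps we land in case (1) or at $[L_{0}]$. In case (2) itself the index $r$ is forced to be the largest one with $t_{i_{r}}(L)>0$ and $n=t_{i_{r}}(L)$ is then forced; the successor $K$ is then the unique preimage of $L$ under the bijection $\pi_{i_{r},n}$ of Lemma \ref{lkey'}(3) (after Fourier transforming so that $i_{r}$ is a sink, which is harmless because the equivalence $\mathcal{F}_{\hat\Omega,\hat\Omega^{i_r}}$ preserves isomorphism classes of simples). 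What must still be verified is that this $K$ again lies in $\mathcal{V}_{1}(\Lambda)$, i.e.\ that $K$ is nonzero in the localization; this is exactly the content of the isomorphism of left graphs $\Phi^{\Lambda}:\mathcal{V}_{0}\to \mathcal{V}_{1}(\Lambda)$ established in Theorem \ref{thm2}, since $\varrho_{i_{r},n}$ takes irreducible components of $\mathfrak{L}(\nu,\omega)$ to irreducible components of $\mathfrak{L}(\nu-n i_{r},\omega)$ and $\Phi^{\Lambda}$ intertwines $\pi_{i_{r},n}$ with $\varrho_{i_{r},n}$.

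\emph{Existence of the index $r$ in case (2).} The one nontrivial point in the inductive definition is to ensure that whenever $L\in \mathcal{V}_{1}(\Lambda)$ is not covered by case (1), some $t_{i_{r}}(L)$ is strictly positive. This is essentially Lusztig's Lemma 7.2 of \cite{MR1088333}: for any simple $L\in \mathcal{P}_{\mathbf{V}}$ with $|\mathbf{V}|\neq 0$ there exist $i\in I$ and $n>0$ with $t_{i}(L)=n$, so we may pull $L$ back to a smaller simple via $\pi_{i,n}^{-1}$. I would invoke this directly and simply observe that among the $i\in I$ with $t_{i}(L)>0$ there is a maximal one in the ordering $\prec$, which is our $i_{r}$. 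I expect this to be the main conceptual obstacle, because one must also ensure that the new simple $K$ stays inside $\mathcal{V}_{1}(\Lambda)$; as noted above this is handled by Theorem \ref{thm2}.

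\emph{Injectivity.} With well-definedness in hand, I would prove injectivity by induction on the length $\ell(\underline{s}([L]))$, or equivalently on $|\mathbf{V}|$. If $\underline{s}([L])=\emptyset$ then $L$ must live on a zero-dimensional $\mathbf{V}$, so $L=L_{0}$. If $\underline{s}([L])=((i,p))$ arises from case (1) then $|\mathbf{V}|=pi$ is read off from the single entry, and on such $\mathbf{E}_{pi,\mathbf{W},\hat\Omega}$ there is only one simple perverse sheaf up to isomorphism, so $L$ is determined. Otherwise $\underline{s}([L])=((i_{r},n),\underline{s}([K]))$ comes from case (2); the first pair recovers $i_{r}$ and $n=t_{i_{r}}(L)$ (and with them the information that $t_{i_{s}}(L)=0$ for $s>r$), while the tail $\underline{s}([K])$ determines $K$ by the inductive hypothesis applied to the strictly smaller dimension vector $|\mathbf{V}|-n i_{r}$. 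Finally $L=\pi_{i_{r},n}(K)$ is determined by the injectivity of the bijection $\pi_{i_{r},n}$ in Lemma \ref{lkey'}(3). This completes the argument.
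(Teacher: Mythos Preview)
Your injectivity argument is fine and matches what the paper leaves implicit.  The difference lies in how you establish that the successor $K=\pi_{i_r,n}^{-1}(L)$ stays in $\mathcal{V}_1(\Lambda)$.

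The paper does not invoke Theorem~\ref{thm2} at all.  It argues directly: if $K$ were zero in the localization, then $K\in\mathcal{N}_{\mathbf{V}',j}$ for some $j$, hence (by Proposition~\ref{rt'}) $K$ is a summand of a Lusztig complex of flag type ending in $j^{d}$ with $d>d_{j}$.  Since $L=\pi_{i_r,n}(K)$ is a summand of $F_{i_r}^{(n)}K$ and the induction functor preserves $\mathcal{N}_{\cdot,j}$ (Lemma~3.7), $L$ itself would lie in $\mathcal{N}_{\mathbf{V},j}$, contradicting $L\in\mathcal{V}_1(\Lambda)$.  This is a two-line, purely sheaf-theoretic argument requiring nothing from Section~4.

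Your route via Theorem~\ref{thm2} is legitimate, since that theorem is proved earlier and does not depend on the present lemma.  But there is a subtlety you gloss over: the \emph{statement} of Theorem~\ref{thm2} only asserts a bijection of the colored graphs $\mathcal{G}_0(\Lambda_\omega)$ and $\mathcal{G}_1(\Lambda)$.  From that statement alone, knowing that $L$ has an outgoing $(i_r,n)$-arrow in $\mathcal{E}_1$ does not immediately tell you it has one in $\mathcal{E}_1(\Lambda)$ --- the latter is exactly what you are trying to prove.  What you really need is that $\Phi^{\Lambda}$ preserves the values $t_i$, so that $t_{i_r}((\Phi^{\Lambda})^{-1}(L))=n$ and hence $\varrho_{i_r,n}$ is applicable on the Nakajima side.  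That $t_i$-preservation does hold (it is implicit in the proof of Theorem~\ref{thm2}, via the $t_i$-preservation of $\Phi$ and the compatibility of $\Psi$ with the stratifications), but you should invoke it explicitly rather than the graph isomorphism alone.  Once patched this way your argument is correct, just heavier than the paper's direct one.
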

\begin{proof}
	It suffices to show that the inductive definition can be continued.
	
	If $t_{i_{r}}(L)=n>0$ in $\mathcal{V}_{1}(\Lambda)$, but $K$ such that $L \xrightarrow{(i_{r},n)} K$ belongs to $\mathcal{N}_{\mathbf{V}'}$. Then there exists $j$ such that $s_{j}^{\ast}(K)> 0$ and $K$ is a direct summand of $L_{(\boldsymbol{\nu},j^{d})},d>0$. By definition of $\pi_{i_{r},n}$, we know that $L$ is a  direct summand of $L_{(i_{r}^{n},\boldsymbol{\nu},j^{d})}$ and belongs to $\mathcal{N}_{\mathbf{V},j}$, a contradiction.
\end{proof}

Similarly, we can inductively define $\underline{s}':\mathcal{V}_{0} \rightarrow \mathcal{S}$. By definition, $\underline{s}([L])$ is a left admissible path of $L$ for any $L \in \mathcal{P}_{\mathbf{V}}\backslash \mathcal{N}_{\mathbf{V}}$ and $\underline{s}'(X)$ is a left admissible path of $X$ for any $X\in Irr \mathfrak{L}(\nu,\omega)$.

\begin{definition}
	Given two sequences $$\underline{s}_{1}=((i_{n_{1}},m_{1} ), (i_{n_{2}},m_{2}),\cdots,(i_{n_{k}},m_{k}) ),\ \underline{s}_{2}=((i_{n'_{1}},m'_{1} ), (i_{n'_{2}},m'_{2}),\cdots,(i_{n'_{l}},m'_{l}) )  \in \mathcal{S}$$ such that $\sum \limits_{1 \leqslant s \leqslant k}m_{s} i_{n_{s}}=\sum\limits_{1 \leqslant s \leqslant l}m'_{s}i_{n'_{s}},$ we say $\underline{s}_{2}\prec \underline{s}_{1}$ if there exists $r\in \mathbb{N}$ such that $(i_{n_{t}},m_{t})=(i_{n'_{t}},m'_{t})$ for $1 \leqslant t< r$ and $(i_{n'_{r}},m'_{r})\prec (i_{n_{r}},m_{r})$ with respect to the lexicographical order. 
\end{definition}
Recall that $\mathcal{S}_{|\mathbf{V}|}$ is naturally bijective to  the subset of $\mathcal{S}$ which consists of the sequences $\underline{s}=((i_{n_{1}},m_{1} ), (i_{n_{2}},m_{2}),\cdots,(i_{n_{k}},m_{k}) )$ such that $\sum \limits_{1 \leqslant s \leqslant k}m_{s} i_{n_{s}}=|\mathbf{V}|$, then $(\mathcal{S}_{|\mathbf{V}|},\prec)$ becomes a partially ordered set. Regraded as  subsets of $\mathcal{S}$ via $\underline{s}$ and $\underline{s}'$, $\mathcal{V}_{1}(\Lambda)$ and  $\mathcal{V}_{0}$ also become  partially ordered sets. We still denote their partial order by $\prec$.

For any sequence
\begin{equation*}
	\underline{s}=((i_{n_{1}},m_{1} ), (i_{n_{2}},m_{2}),\cdots,(i_{n_{k}},m_{k}))  \in \mathcal{S},
\end{equation*}
we define
\begin{equation*}
	m^{\Lambda}_{\underline{s}}=F_{i_{n_{1}}}^{(m_{1})}   F_{i_{n_{2}}}^{(m_{2})}   \cdots   F_{i_{n_{k}}}^{(m_{k})} [\mathfrak{L}(0,\omega)] \in \bigoplus \limits_{\nu}{\rm{H}}_{top}( \mathfrak{L}(\nu,\omega)) 
\end{equation*} 
and 
\begin{equation*}
	M^{\Lambda}_{\underline{s}}=F_{i_{n_{1}}}^{(m_{1})} F_{i_{n_{2}}}^{(m_{2})} \cdots F_{i_{n_{k}}}^{(m_{k})}[L_{0}] \in \mathcal{K}_{0}(\Lambda).
\end{equation*}

\begin{proposition}
	For fixed $|\mathbf{V}|=\nu$, we set
	\begin{equation*}
		\mathbf{M}^{\Lambda}_{\mathbf{V}}= \{M^{\Lambda}_{\underline{s}([L])}|[L] \in \mathcal{V}_{1}(\Lambda) , L \in \mathcal{P}_{\mathbf{V}} \},
	\end{equation*}
	\begin{equation*}
		\mathbf{M'}^{\Lambda}_{\mathbf{V}}= \{m^{\Lambda}_{\underline{s}'(X)}|X \in Irr \mathfrak{L}(\nu,\omega)  \}
	\end{equation*}
	then we have \\
	(1) The set $\mathbf{M}^{\Lambda}_{\mathbf{V}}$ is an $\mathcal{A}$-basis of $\mathcal{K}_{0,\nu}(\Lambda)$.\\
	(2) The set $\mathbf{M'}^{\Lambda}_{\mathbf{V}}$ is a  $\mathbb{Q}$-basis of ${\rm{H}}_{top}( \mathfrak{L}(\nu,\omega))$.\\
	(3) The transition matrix from $\mathbf{B}^{\Lambda}_{1,\mathbf{V}}=\{[L]
	|L \in \mathcal{V}_{1}(\Lambda) ,L \in \mathcal{P}_{\mathbf{V}} \}$ to $\mathbf{M}^{\Lambda}_{\mathbf{V}}$ is upper triangular  (with respect to $\prec$) and with diagonal entries all equal to 1.\\
	(4) The transition matrix from $\mathbf{B}^{\Lambda}_{2,\mathbf{V}}=\{[X]|X \in Irr \mathfrak{L}(\nu,\omega)   \}$ to  $\mathbf{M'}^{\Lambda}_{\mathbf{V}}$ is upper triangular  (with respect to $\prec$) and with diagonal entries all equal to $\pm 1$.
\end{proposition}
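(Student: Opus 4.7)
The plan is to prove (3) and (4) first by induction on $|\mathbf{V}|$ (equivalently on the length of the sequence $\underline{s}$), using Lemma \ref{lkey'}(2) on the Lusztig side and Lemma \ref{Nkey} on the Nakajima side. Parts (1) and (2) then follow automatically: an upper-triangular change-of-basis matrix with unit diagonal entries (over $\mathcal{A}$ in case (3), over $\mathbb{Q}$ in case (4)) is invertible, so $\mathbf{M}^{\Lambda}_{\mathbf{V}}$ and $\mathbf{M'}^{\Lambda}_{\mathbf{V}}$ are bases because $\mathbf{B}^{\Lambda}_{1,\mathbf{V}}$ and $\mathbf{B}^{\Lambda}_{2,\mathbf{V}}$ are (by Theorem \ref{thm1} and Theorem \ref{Nmain} respectively).

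For (3), fix $[L]\in\mathbf{B}^{\Lambda}_{1,\mathbf{V}}$ with $\underline{s}([L])=((i_{r},n),\underline{s}([K]))$, where $K$ is the simple perverse sheaf with $\pi_{i_{r},n}(K)=L$. By Proposition \ref{indres formula}, $M^{\Lambda}_{\underline{s}([L])}$ equals the class of some $L_{\boldsymbol{\nu}\boldsymbol{d}}$, and by the inductive hypothesis $M^{\Lambda}_{\underline{s}([K])}=[K]+\sum_{[K']\succ [K]}c_{K'}[K']$ in $\mathcal{K}_{0}(\Lambda)$. Applying $F_{i_{r}}^{(n)}$ and using Lemma \ref{lkey'}(2) (combined with Proposition \ref{lt'} to handle the direction of the Fourier--Deligne transform), the functor $\mathcal{F}^{(n)}_{i_{r}}=\mathbf{Ind}^{\mathbf{V}\oplus\mathbf{W}}_{\mathbf{V}',\mathbf{V}\oplus\mathbf{W}}(\bar{\mathbb{Q}}_{l}\boxtimes-)$ sends each simple $K'$ to a complex containing the corresponding $\pi_{i_{r},n}(K')=:L''$ with coefficient $1$ (the $g=0$ summand in Lemma \ref{lkey'}) plus summands $L'$ with $t_{i_{r}}(L')>n$. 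The key observation is that any such $L'$ with $t_{i_{r}}(L')>n$ satisfies $\underline{s}([L'])\succ\underline{s}([L])$: the first entry of $\underline{s}([L'])$ is either $(i_{s'},n')$ with $s'>r$ (hence lexicographically larger) or $(i_{r},n')$ with $n'>n$ (again lexicographically larger). Combining this with the inductive hypothesis proves that $M^{\Lambda}_{\underline{s}([L])}=[L]+\sum_{\underline{s}([L'])\succ\underline{s}([L])}c_{L'}[L']$ with $c_{L'}\in\mathcal{A}$, which is exactly the desired upper-triangular expansion.

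For (4), the argument is parallel. By induction on the length of $\underline{s}'(X)$, with base case $X=\mathfrak{L}(0,\omega)$ corresponding to the empty sequence, we write $\underline{s}'(X)=((i_{r},n),\underline{s}'(X''))$ where $X''=\varrho_{i_{r},n}(X)$. Assuming $m^{\Lambda}_{\underline{s}'(X'')}=\pm[X'']+\sum_{\underline{s}'(X'')\prec\underline{s}'(Y)}c_{Y}[Y]$ and applying $F^{(n)}_{i_{r}}$, Lemma \ref{Nkey} gives $F_{i_{r}}^{(n)}[X'']=\pm[X]+\sum_{t_{i_{r}}(X')>n}c_{X'}[X']$. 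By the same ordering argument as in (3), every $X'$ appearing in the sum satisfies $\underline{s}'(X)\prec\underline{s}'(X')$. Collecting terms gives $m^{\Lambda}_{\underline{s}'(X)}=\pm[X]+\sum_{\underline{s}'(X)\prec\underline{s}'(X')}c_{X'}[X']$, which is the claimed upper-triangular expansion with $\pm 1$ diagonal entries.

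The main subtlety, and the step I would check most carefully, is the \emph{monotonicity of the ordering} under the step $L\mapsto F_{i_{r}}^{(n)}[K]$: one must verify that every ``error term'' $L'$ produced by Lemma \ref{lkey'}(2) (those with $t_{i_{r}}(L')>n$) is genuinely later in the lexicographic order on sequences, which requires looking at \emph{all} indices $s>r$ as well and using the specific recipe defining $\underline{s}([L'])$. The same care is needed on the Nakajima side in applying Lemma \ref{Nkey}. Once this monotonicity is established, the induction closes cleanly, and parts (1) and (2) are immediate consequences of the invertibility of unit/sign triangular matrices over $\mathcal{A}$ and $\mathbb{Q}$ respectively.
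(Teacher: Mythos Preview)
Your approach is the same as the paper's---induction on the length of $\underline{s}$, using Lemma \ref{lkey'}(2) and Lemma \ref{Nkey}, with (1) and (2) following from the invertibility of the triangular transition matrices---but the inductive step as written has a gap that the paper fills and you only flag as a ``subtlety'' without resolving.

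The issue is your sentence that $F_{i_r}^{(n)}$ ``sends each simple $K'$ to a complex containing the corresponding $\pi_{i_r,n}(K')=:L''$ with coefficient $1$ plus summands $L'$ with $t_{i_r}(L')>n$''. This description comes from Lemma \ref{lkey'}(2), which requires $t_{i_r}(K')=0$; the bijection $\pi_{i_r,n}$ is only defined on such $K'$. The error terms $K'$ in the inductive expansion $M^{\Lambda}_{\underline{s}([K])}=[K]+\sum_{[K']\succ[K]}c_{K'}[K']$ need not satisfy $t_{i_r}(K')=0$, so you must split into two cases. When $t_{i_r}(K')>0$, the paper invokes Proposition \ref{lt'}: then $[K']\in F_{i_r}\mathcal{K}_0(\Lambda)$, so $F_{i_r}^{(n)}[K']\in F_{i_r}^{(n+1)}\mathcal{K}_0(\Lambda)$, which is spanned by classes $[L''']$ with $t_{i_r}(L''')>n$, and your $(\clubsuit)$-type observation applies. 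When $t_{i_r}(K')=0$, Lemma \ref{lkey'}(2) does apply, but the leading term $\tilde{L}=\pi_{i_r,n}(K')$ has $t_{i_r}(\tilde{L})=n$ (not $>n$), so your ordering argument for the error terms does not cover it; one must separately check that $\underline{s}([\tilde{L}])\succ\underline{s}([L])$. This is \emph{not} automatic: it requires knowing that the first entry of $\underline{s}([\tilde{L}])$ is either $(i_r,n)$ (in which case $\underline{s}([\tilde{L}])=((i_r,n),\underline{s}([K']))\succ((i_r,n),\underline{s}([K]))=\underline{s}([L])$) or has first index on the correct side of $r$, and the paper verifies this by a short case analysis. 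The Nakajima side needs the same two-case treatment.
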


\begin{proof}
	We claim that for any $[L] \in \mathcal{V}_{1}(\Lambda)$,
	\begin{equation*}
		M^{\Lambda}_{\underline{s}([L])}=[L]+\sum \limits_{\underline{s}([L'])\succ \underline{s}([L]) } c_{L,L'}[L']
	\end{equation*}
	where $c_{L,L'}$ are constants in $\mathcal{A}$. We argue by induction on the length $k$ of $$\underline{s}([L])=((i_{n_{1}},m_{1} ), (i_{n_{2}},m_{2}),\cdots,(i_{n_{k}},m_{k})).$$ If $k=1$ and $\underline{s}([L])=((i_{n_{1}},m_{1} ))$, then $\nu= m_{1}i_{n_{1}}$ and 
	\begin{equation*}
		[L_{i_{n_{1}}^{m_{1}} }] =F_{i_{n_{1}}}^{(m_{1})}[L_{0}]=M^{\Lambda}_{\underline{s}([L])}
	\end{equation*}
	holds trivially. If $k>1$, by Lemma \ref{lkey'}, we can take $L' \in \mathcal{P}_{\mathbf{V}'}$ ,$|\mathbf{V}|= |\mathbf{V'}|+m_{1}i_{n_{1}}$  such that $t_{i_{1}}(L')=0, \pi_{i_{1},m_{1}}(L')=L$, and then
	\begin{equation*}
		F_{i_{n_{1}}}^{(m_{1})}[L']=[L]+\sum\limits_{t_{i_{n_{1}}}(L'')>m_{1} } c_{L',L''}[L'']
	\end{equation*}
	By the inductive assumption, 
	\begin{equation*}
		M^{\Lambda}_{\underline{s}([L'])}=[L']+\sum \limits_{\underline{s}([L''])\succ \underline{s}([L']) }c_{L',L''}[L''].
	\end{equation*}
	Notice that $\underline{s}([L])=((i_{n_{1}},m_{1}), \underline{s}([L'])) $, we have  
	\begin{equation*}
		\begin{split}	
			M^{\Lambda}_{\underline{s}([L])}=& F_{i_{n_{1}}}^{(m_{1})} M^{\Lambda}_{\underline{s}([L'])} \\
			=& F_{i_{n_{1}}}^{(m_{1})} ([L']+\sum \limits_{\underline{s}([L''])\succ \underline{s}([L']) }c_{L',L''}[L''] ) \\
			=&[L]+\sum\limits_{t_{i_{n_{1}}}(L'')>m_{1} } c_{L',L''}[L'']+\sum \limits_{\underline{s}([L''])\succ \underline{s}([L']) }c_{L',L''}F_{i_{n_{1}}}^{(m_{1})}[L''].
		\end{split}
	\end{equation*}
	Notice that if $t_{i_{n_{1}}}(L'')=m>m_{1}$, then $\underline{s}([L''])$ either starts with $(i_{n_{1}},m)$ or starts  with $(i_{r},m')$ for some $r<n_{1}$, hence  
	\begin{center}
		$(\clubsuit)\ \ \underline{s}([L''])\succ \underline{s}([L])$ holds for $L''$ which satisfies $t_{i_{n_{1}}}(L'')=m>m_{1}$.
	\end{center}

	We only need to show
	\begin{equation*}
		F_{i_{n_{1}}}^{(m_{1})}[L'']= \sum \limits_{\underline{s}([L'''])\succ \underline{s}([L]) } d_{L'''}[L''']
	\end{equation*}
	for those $L''$ which satisfy $\underline{s}([L''])\succ \underline{s}([L'])$. For those $L''$ such that $t_{i_{n_{1}}}(L'')>0$, by Proposition \ref{lt'}, $[L'']$ belongs to the $\mathcal{A}$- submodule  
	$F_{i_{n_{1}}} \mathcal{K}_{0}(\Lambda)$,
	hence
	$F_{i_{n_{1}}}^{(m_{1})}  [L'']$ belongs to $F_{i_{n_{1}}}^{(m_{1}+1)} \mathcal{K}_{0}(\Lambda).$ By Proposition \ref{lt'}, $\{[L]|L\in \mathcal{P}, t_{i_{n_{1}}}(L) \geqslant m_{1}+1  \}$ form a basis of $[L_{i_{n_{1}}^{(m_{1}+1)}}] \mathcal{K}$ in  $\mathcal{K}$. Project this basis to $\mathcal{K}_{0}(\Lambda)$, we can see that the set $$\{[L]|L\in \mathcal{P}, L \notin \mathcal{N}, t_{i_{n_{1}}}(L) \geqslant m_{1}+1  \}$$ forms a basis of $F_{i_{n_{1}}}^{(m_{1}+1)} \mathcal{K}_{0}(\Lambda)$.
	\begin{equation*}
		F_{i_{n_{1}}}^{(m_{1})}  [L'']=\sum \limits_{t_{i_{n_{1}}}(L''')>m_{1} } e_{L'''}[L'''].
	\end{equation*}
	By $(\clubsuit)$, those $L'''$ satisfy $t_{i_{n_{1}}}(L''')=m>m_{1}$, hence  $\underline{s}([L'''])\succ \underline{s}([L])$. For those $L''$ such that $t_{i_{n_{1}}}(L'')=0$,
	\begin{equation*}
		F_{i_{n_{1}}}^{(m_{1})}  [L'']=[\tilde{L}] + \sum \limits_{t_{i_{n_{1}}}(L''')>m_{1} } e_{L'''}[L''']
	\end{equation*} 
	where $\tilde{L}$ is  a simple perverse sheaf such that $t_{i_{n_{1}}}(\tilde{L})=m_{1}$. By $(\clubsuit)$,  $\underline{s}([L'''])\succ \underline{s}([L])$. So we only need to show $\underline{s}([\tilde{L}])\succ \underline{s}([L])$. In fact, if $t_{i_{r}}(\tilde{L}) >0$ holds for some $r< n_{1}$, then $\underline{s}([\tilde{L}])$ starts with $(i_{r},m'),r<n_{1}$ and $\underline{s}([\tilde{L}])\succ \underline{s}([L])$ holds by definition. Otherwise, we have $\underline{s}([\tilde{L}])=((i_{n_{1}},m_{1}),\underline{s}([L''])$. Since $\underline{s}([L''])\succ \underline{s}([L'])$, 
	\begin{equation*}
		\underline{s}([\tilde{L}])=((i_{n_{1}},m_{1}),\underline{s}([L''])) \succ ((i_{n_{1}},m_{1}),\underline{s}([L']))=\underline{s}([L]).
	\end{equation*}
	In conclusion, we have proved our claim. Then (1) and (3) follow by basic linear algebra and our claim. Similarly, we can prove
	\begin{equation*}
		m^{\Lambda}_{\underline{s}'(X)}=\pm[X]+\sum \limits_{\underline{s}'(X')\succ \underline{s}'(X) } c_{X,X'}[X'] 
	\end{equation*}
	where $c_{X,X'}$ are constants in $\mathbb{Q}$. Then (2) and (4) hold. 
\end{proof}

Recall that there are isomorphisms $$\varsigma^{\Lambda}: \mathcal{K}_{0}({\Lambda}) \rightarrow {_{\mathcal{A}}L(\Lambda)},~ \varkappa^{\Lambda}:\bigoplus \limits_{\nu}{\rm{H}}_{top}( \mathfrak{L}(\nu,\omega)) \rightarrow L_{0}(\Lambda),$$ and we still denote the composition of $\varsigma^{\Lambda}: \mathcal{K}_{0}(\Lambda) \rightarrow {_{\mathcal{A}}L(\Lambda)}$ and the classical limits by $$\varsigma^{\Lambda}: \mathbb{Z} \otimes _{\mathcal{A}}\mathcal{K}_{0}(\Lambda) \rightarrow  {_{\mathbb{Z}}L_{0}(\Lambda)}.$$

\begin{definition}
	We define $sgn=sgn^{\succ}: \bigcup \limits_{\nu} Irr \mathfrak{L}(\nu,\omega) \rightarrow \{\pm 1\} $ to be the unique function which satisfies
	\begin{equation*}
		m^{\Lambda}_{\underline{s}'(X)}=sgn(X)[X]+\sum \limits_{\underline{s}'(X')\succ \underline{s}'(X) } c_{X,X'}[X'].
	\end{equation*}
\end{definition}	

For the integrable highest weight module $L_{0}(\Lambda)$ of the universal enveloping algebra, we define 
\begin{equation*}
	\tilde{\mathbf{B}}^{\Lambda}_{1,\mathbf{V}}=\{\varsigma^{\Lambda}([L]
	)|L \in \mathcal{V}_{1}(\Lambda) ,L \in \mathcal{P}_{\mathbf{V}} \};\tilde{\mathbf{B}}_{1}=\tilde{\mathbf{B}}^{\Lambda}_{1}=\bigcup \limits_{\mathbf{V}}\tilde{\mathbf{B}}^{\Lambda}_{1,\mathbf{V}}
\end{equation*}
\begin{equation*}
	\tilde{\mathbf{B}}^{\Lambda}_{2,\mathbf{V}}=\{ \varkappa^{\Lambda}(sgn(X)[X])|X \in Irr \mathfrak{L}(\nu,\omega) \};  \tilde{\mathbf{B}}_{2}=\tilde{\mathbf{B}}^{\Lambda}_{2}=\bigcup \limits_{\mathbf{V}}\tilde{\mathbf{B}}^{\Lambda}_{2,\mathbf{V}}
\end{equation*}

\begin{theorem}
	The transition matrix from $\tilde{\mathbf{B}}_{1}$ to $\tilde{\mathbf{B}}_{2}$ is upper triangular  (with respect to $\prec$) and with diagonal entries all equal to 1.
	More precisely, if $X \in \mathcal{V}_{0}$ and $[L]=\Phi^{\Lambda}(X)$, then we have 
	\begin{equation*}
		\varsigma^{\Lambda}([L])=\varkappa^{\Lambda}(sgn(X)[X]) +\sum\limits_{\underline{s}'(X')\succ \underline{s}'(X) } c_{X'}\varkappa^{\Lambda} (sgn(X')[X'])
	\end{equation*}
	where $c_{X'} \in \mathbb{Q}$ are constants.
\end{theorem}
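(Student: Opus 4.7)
The plan is to exploit the fact that both $\tilde{\mathbf{B}}_{1}$ and $\tilde{\mathbf{B}}_{2}$ admit upper triangular expansions (with respect to $\prec$) in terms of a common monomial basis of $L_{0}(\Lambda)$, so that the transition matrix between them becomes a product of two unitriangular matrices.

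The first key point I would establish is that $\underline{s}([L])=\underline{s}'(X)$ whenever $[L]=\Phi^{\Lambda}(X)$. Both $\underline{s}$ and $\underline{s}'$ are defined by the same recursion: find the largest index $r$ in the fixed order of $I$ with $t_{i_{r}}>0$, record the pair $(i_{r},t_{i_{r}})$, and pass to the image under the inverse of $\pi_{i_{r},n}$ (respectively $\varrho_{i_{r},n}$). Theorem~\ref{thm2} guarantees that $\Phi^{\Lambda}$ preserves the values $t_{i}$ and commutes with the arrows of the two colored left graphs, so the two recursions run in lockstep.

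Next, for every $\underline{s}\in\mathcal{S}$ I would verify that $\varsigma^{\Lambda}(M^{\Lambda}_{\underline{s}})=\varkappa^{\Lambda}(m^{\Lambda}_{\underline{s}})$ inside $L_{0}(\Lambda)$. Both $\varsigma^{\Lambda}$ (specialized at $v\to 1$) and $\varkappa^{\Lambda}$ are $\mathbf{U}(\mathfrak{g})$-module isomorphisms onto $L_{0}(\Lambda)$ sending the distinguished highest weight vectors $[L_{0}]$ and $[\mathfrak{L}(0,\omega)]$ to $v_{\Lambda}$, and $M^{\Lambda}_{\underline{s}}$ and $m^{\Lambda}_{\underline{s}}$ are by definition the same monomial $F_{i_{n_{1}}}^{(m_{1})}\cdots F_{i_{n_{k}}}^{(m_{k})}$ applied to the respective highest weight vectors. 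Denote this common element by $\mathcal{M}_{\underline{s}}\in L_{0}(\Lambda)$. Applying $\varsigma^{\Lambda}$ and $\varkappa^{\Lambda}$ to the two triangularity statements of the preceding proposition then yields
\begin{align*}
\mathcal{M}_{\underline{s}([L])}&=\varsigma^{\Lambda}([L])+\sum_{\underline{s}([L'])\succ\underline{s}([L])}c_{L,L'}\,\varsigma^{\Lambda}([L']),\\
\mathcal{M}_{\underline{s}'(X)}&=\varkappa^{\Lambda}(sgn(X)[X])+\sum_{\underline{s}'(X')\succ\underline{s}'(X)}c_{X,X'}\,\varkappa^{\Lambda}([X']).
\end{align*}
After identifying indexing sets through $\Phi^{\Lambda}$, both change-of-basis matrices are upper unitriangular with respect to $\prec$ (the second after rewriting each $\varkappa^{\Lambda}([X'])$ as $sgn(X')\varkappa^{\Lambda}(sgn(X')[X'])$, since $sgn(X')^{2}=1$). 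Composing one with the inverse of the other, which is again upper unitriangular, produces the claimed expansion of $\varsigma^{\Lambda}([L])$ in $\tilde{\mathbf{B}}_{2}$ with coefficients $c_{X'}\in\mathbb{Q}$.

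The main obstacle is the first step: confirming that the two combinatorial procedures $\underline{s}$ and $\underline{s}'$ produce identical outputs under $\Phi^{\Lambda}$. Everything else reduces to formal inversion of upper triangular matrices, but this combinatorial compatibility genuinely requires Theorem~\ref{thm2}, which in turn rests on Lusztig's key lemma (Lemma~\ref{lkey'}) and Nakajima's Hecke-correspondence analysis recalled in Lemma~\ref{Nkey}.
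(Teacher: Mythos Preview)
Your proposal is correct and follows essentially the same approach as the paper: use Theorem~\ref{thm2} to identify $\underline{s}([L])=\underline{s}'(X)$, observe that $\varsigma^{\Lambda}(M^{\Lambda}_{\underline{s}})=\varkappa^{\Lambda}(m^{\Lambda}_{\underline{s}})$ is a common monomial basis of $L_{0}(\Lambda)$, and then compose the two unitriangular transition matrices from the preceding proposition. Your treatment of the sign twist via $sgn(X')^{2}=1$ makes explicit a step the paper leaves implicit, but otherwise the arguments coincide.
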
 
\begin{proof}
	Notice that $\underline{s}(\Phi^{\Lambda}(X))=\underline{s}'(X)$, we have 
	\begin{equation*}
		\begin{split}
			\varkappa^{\Lambda}(m^{\Lambda}_{\underline{s}'(X)})=&F_{i_{n_{1}}}^{(m_{1})}F_{i_{n_{2}}}^{(m_{2})}\cdots F_{i_{n_{k}}}^{(m_{k})} v_{\Lambda} \\
			= &	\varsigma^{\Lambda} ( M^{\Lambda}_{\underline{s}([L])})
		\end{split}
	\end{equation*}
	hence $\varkappa^{\Lambda}(\mathbf{M'}^{\Lambda})=\varsigma^{\Lambda}(\mathbf{M}^{\Lambda})$  is a basis of $L_{0}(\Lambda)$, denoted by $\tilde{\mathbf{M}}^{\Lambda}$. Let $P_{i},i=1,2$ be the transition matrices from $\tilde{\mathbf{M}}^{\Lambda}$ to $\tilde{\mathbf{B}}_{i},i=1,2$ respectively, then each  $P_{i}$ is upper triangular  (with respect to $\prec$) and with diagonal entries all equal to 1. Since $\Phi^{\Lambda}$ preserves the order $\prec$, the transition matrix $P_{2}P_{1}^{-1}$ from $\tilde{\mathbf{B}}_{1}$ to $\tilde{\mathbf{B}}_{2}$ is also upper triangular and with diagonal entries all equal to 1.
\end{proof}

\begin{remark}
	We can see that the theorem does not depend on the choice of the order of $I$. More precisely, if we choose another order $\tilde{\prec}$ of $I$, then $\tilde{\prec}$ induces an order of $\mathcal{S}$. We can also define $\underline{s}^{\tilde{\prec}}:\mathcal{V}_{1} \rightarrow \mathcal{S}$ and $\underline{s}'^{\tilde{\prec}}:\mathcal{V}_{2} \rightarrow \mathcal{S}$ in a similar way. Then with the notations above, we still have $\varsigma([L])=\varkappa(f_{Z}) +\sum\limits_{\underline{s}'^{\tilde{\prec}}(f_{Z'})\tilde{\succ} \underline{s}'^{\tilde{\prec}}(f_{Z}) } c_{Z'}\varkappa (f_{Z'})$. Moreover, the transition matrix $P_{2}P_{1}^{-1}$ does not depend on the choice of $\prec$ (up to a permutation). In particular, the function $sgn$ does not depend on the choice of $\prec$, either.
\end{remark}

\begin{definition}
	For $X,X' \in \mathcal{V}_{0}$, we define $[X] \preceq' [X']$ if and only if for any order $\prec$ of $I$ and the induced map $\underline{s}'^{\prec}: \mathcal{V}_{0} \rightarrow \mathcal{S}$, we always have $\underline{s}'^{\prec}([X]) \prec \underline{s}'^{\prec}([X'])$. 
	
	Similarly, given $[L],[K] \in \mathcal{V}_{1}(\Lambda)$, we say $[L] \preceq [K]$ if and only if for any order $\prec$ of $I$ and the induced map $\underline{s}^{\prec}: \mathcal{V}_{1} \rightarrow \mathcal{S}$, we have $\underline{s}^{\prec}([L]) \prec \underline{s}^{\prec}([K])$.  
\end{definition}

One can check that $X \preceq' X'$  if and only if $\Phi^{\Lambda}(X) \preceq \Phi^{\Lambda}(X')$.

\begin{corollary}
	The transition matrix from $\tilde{\mathbf{B}}_{1}$ to $\tilde{\mathbf{B}}_{2}$ is upper triangular  (with respect to $\preceq$ and $\preceq'$) and with diagonal entries all equal to 1.
	More precisely, if $X \in \mathcal{V}_{0}$ and $[L]=\Phi^{\Lambda}(X)$, then we have 
	\begin{equation*}
		\varsigma^{\Lambda}([L])=\varkappa^{\Lambda}(sgn(X)[X]) +\sum\limits_{X \preceq X' } c_{X'}\varkappa^{\Lambda} (sgn(X')[X'])
	\end{equation*}
	where $c_{X'} \in \mathbb{Q}$ are constants.
\end{corollary}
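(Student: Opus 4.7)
The plan is to read the corollary off from the preceding theorem by the observation that the coefficients in the expansion of $\varsigma^{\Lambda}([L])$ with respect to the fixed basis $\tilde{\mathbf{B}}_{2}$ are determined intrinsically by $\varsigma^{\Lambda}([L])$, and hence are the same for every choice of total order on $I$ used to set up the preceding theorem.

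First I would invoke the preceding theorem for each total order $\prec$ on $I$ to obtain
\[
\varsigma^{\Lambda}([L])=\varkappa^{\Lambda}(sgn^{\prec}(X)[X])+\sum_{\underline{s}'^{\prec}(X')\succ \underline{s}'^{\prec}(X)}c_{X'}^{\prec}\varkappa^{\Lambda}(sgn^{\prec}(X')[X']).
\]
Since $\{\varkappa^{\Lambda}([X'])\}_{X'\in \bigcup_{\nu}Irr\,\mathfrak{L}(\nu,\omega)}$ is a $\mathbb{Q}$-basis of $L_{0}(\Lambda)$ by Nakajima's Theorem \ref{Nmain}, expansions in it are unique. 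Therefore the signed coefficients $sgn^{\prec}(X')\cdot c_{X'}^{\prec}$, together with the leading sign $sgn^{\prec}(X)$, are determined by $\varsigma^{\Lambda}([L])$ alone and do not depend on $\prec$. This recovers the claim in the Remark preceding the corollary that $sgn$ is $\prec$-independent; write $c_{X'}$ for the common value of these coefficients.

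Next I would derive the support statement. If $c_{X'}\neq 0$ with $X'\neq X$, then applying the preceding theorem with \emph{every} choice of total order $\prec$ on $I$ forces $\underline{s}'^{\prec}(X)\prec \underline{s}'^{\prec}(X')$ simultaneously for all such $\prec$. By the very definition of the partial order $\preceq'$ on $\mathcal{V}_{0}$, this is exactly $X\preceq' X'$, and via the identification $X\preceq' X'\iff \Phi^{\Lambda}(X)\preceq \Phi^{\Lambda}(X')$ it reads $X\preceq X'$ in the convention of the corollary. Restricting the sum accordingly gives
\[
\varsigma^{\Lambda}([L])=\varkappa^{\Lambda}(sgn(X)[X])+\sum_{X\preceq X'}c_{X'}\varkappa^{\Lambda}(sgn(X')[X']),
\]
so the diagonal entry of the transition matrix with respect to the signed basis $\tilde{\mathbf{B}}_{2}=\{\varkappa^{\Lambda}(sgn(X')[X'])\}$ is exactly $1$, as required.

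No substantive obstacle arises: the content is already packaged in the preceding theorem, and the only refinement needed is to pass from one fixed order $\prec$ to the intersection over all orders, which is precisely how $\preceq'$ was defined. The sole item demanding a moment of care is confirming that $sgn$ and the $c_{X'}$ are genuinely $\prec$-independent, which is immediate from the uniqueness of basis expansion of a single vector in a fixed basis of $L_{0}(\Lambda)$.
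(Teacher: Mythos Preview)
Your argument is correct and matches the paper's approach: the paper does not write out a separate proof of the corollary, but the Remark just before the definition of $\preceq'$ already records that the transition matrix $P_{2}P_{1}^{-1}$ and the sign function are independent of the chosen order $\prec$ (by exactly the basis-expansion uniqueness you invoke), so the corollary is obtained by intersecting the support condition over all $\prec$, which is the definition of $\preceq'$. You have simply spelled out explicitly what the paper leaves implicit.
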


\end{document}